\numberwithin{equation}{section}
\newtheorem{theorem}{Theorem}[section]
\newtheorem*{theorem*}{Theorem}
\newtheorem{lemma}[theorem]{Lemma}
\newtheorem{corollary}[theorem]{Corollary}
\theoremstyle{definition}
\newtheorem{setting}[theorem]{Setting}
\newtheorem*{remark*}{Remark}
\newcommand{\1}{\mathbbm{1}}
\newtheorem*{examples*}{Examples}
\crefname{lemma}{Lemma}{Lemmas}
\crefname{proposition}{Proposition}{Propositions}
\crefname{theorem}{Theorem}{Theorems}
\crefname{setting}{Setting}{Settings}
\crefname{claim}{Claim}{Claims}
\crefname{enumi}{Item}{Items}
\crefname{equation}{}{}
\crefname{assumption}{Setting}{Settings}
\crefname{definition}{Definition}{Definitions}
\crefname{remark}{Remark}{Remarks}
\crefname{figure}{Fig.}{Figs.}
\crefname{chapter}{Chapter}{Chapters}
\Crefname{chapter}{chapter}{chapters}
\crefname{section}{Section}{Sections}
\Crefname{section}{section}{sections}
\crefname{subsection}{Section}{Sections}
\Crefname{subsection}{Section}{Sections}
\Crefname{theorem}{theorem}{theorems}
\Crefname{lemma}{lemma}{lemmas}
\crefname{step}{Step}{Steps}
\newcommand{\pconst}{M}
\newcommand{\defeq}{\leftarrow}
\newcommand{\rdown}{\underline{r}}
\newcommand{\rup}{\overline{r}}
\newcommand{\etan}[1]{E_{#1}^\tau }
\newcommand{\etanT}[1]{V_{#1}^\tau }
\newcommand{\mean}[2]{\left\langle#1\right\rangle_{#2}}
\newcommand{\enor}[1]{E_{#1}^\nu}
\newcommand{\enorT}[1]{V_{#1}^{\nu}}
\newcommand{\efull}[1]{E_{#1}^{\#}}
\newcommand{\constMac}{\mathfrak{m}}
\newcommand{\constPI}{c_{\mathrm{PI}}}
\newcommand{\disDiff}[2]{\mathsf{D}^{#1}_{#2}}
\newcommand{\Laplace}{\mathop{}\!\mathbin\bigtriangleup}
\newcommand{\ballO}[2]{\mathbbm{B}^d_r}
\newcommand{\ballC}[2]{\bar{\mathbbm{B}}^d_r}
\newcommand{\ballB}[2]{\partial\mathbbm{B}^d_r}
\newcommand{\unit}[2]{\mathbf{e}^{#1}_{#2}}
\newcommand{\fourier}[1]{\ensuremath{\mathcal{F}\left(#1\right)}}
\newcommand{\DM}{\mathcal{D}}
\newcommand{\NM}{\mathcal{N}}
 \newcommand{\ima}{\mathbf{i}}
\renewcommand{\tilde}{\widetilde}
\newcommand{\N}{\mathbbm{N}}
\newcommand{\E}{\mathbbm{E}}
\newcommand{\R}{\mathbbm{R}}
\newcommand{\K}{\mathbbm{K}}
\newcommand{\Z}{\mathbbm{Z}}
\newcommand{\I}{\mathbbm{I}}
\newcommand{\C}{\mathbbm{C}}\renewcommand{\P}{\mathbbm{P}}
\newcommand{\perfct}[2]{\mathcal{P}_{#1}\big(#2,\C\big)}
\renewcommand{\epsilon}{\varepsilon}
\newcommand{\disint}[2]{\ensuremath{[#1,#2]\cap\Z}}
\DeclareMathOperator{\var}{\mathsf{var}}
\DeclareMathOperator{\lvar}{\mathsf{locvar}}
\title{An $L^p$-comparison, $p\in (1,\infty)$, on the finite differences of a discrete harmonic function at the boundary of a discrete box}
\author{Tuan Anh Nguyen\\Faculty of Mathematics, University of Duisburg-Essen,\\
45117 Essen, Germany, e-mail: \url{tuan.nguyen@uni-due.de}
}
\begin{document}\maketitle
\begin{abstract}
It is well-known that 
for a harmonic function $u$ defined on the unit ball of the $d$-dimensional Euclidean space, $d\geq 2$,
the tangential and normal component of the gradient $\nabla u$
on the sphere are comparable by means of the $L^p$-norms,
$p\in(1,\infty)$, up to multiplicative constants that depend only on $d,p$. This paper formulates and proves a discrete analogue of this result for discrete harmonic functions defined on a  discrete box on the $d$-dimensional lattice
with multiplicative constants that do not depend on the size of the box.
\end{abstract}
\fbox{\parbox{.95\textwidth}{
\emph{Mathematics subject classification}: 65N22, 35J25\\
\emph{Keyword}:  discrete harmonic function, discrete boundary problems, discrete Fourier multiplier theorem, discrete Poisson kernel\\
\emph{Comment}: 3 figures}
}
\tableofcontents
\section{Introduction}
This paper formulates and proves a discrete analogue of a classical
result
in the continuum setting which states that the tangential and normal component
of the gradient of a harmonic function on the boundary of a domain are comparable by means of $L^p$-norms, $p\in (1,\infty)$. 
For convenience  we give a simplified version of this result in \cref{d1} below.
For complete formulations and proofs we refer the reader to Maergoiz~\cite{Mae73} (see, e.g., Theorems 1 and 2), Mikhlin~\cite{Mik65} (see § 44 p. 208), and Bella, Fehrman, and Otto~\cite{BFO18} (see Lemma 4).
This result can be viewed as a \emph{stability estimate for harmonic extensions} of given Dirichlet or Neumann boundary conditions. 
Thus, it
 plays an important role in the proof of a Liouville theorem for a class of elliptic equations with degenerate random coefficient fields
(see formulas (40) and (41)~in~\cite{BFO18}) where the so-called idea \emph{perturbing around the homogenized coefficients} is realized 
by  \emph{harmonic extensions} of given boundary conditions.
The discrete analogue that we want to show here can be applied to prove a Liouville theorem for the random conductance model under degenerate conditions, which is the discrete analogue of \cite{BFO18} (see the paragraph below Lemma 4 in \cite{BFO18} and the PhD thesis of the author \cite{Ngu17} (e.g., Section III.2.3 for an outline).
\begin{theorem}\label{d1}
For every $d\in \N$, $x\in \R^d$ denote by $\|x\|$ the Euclidean norm of $x$.
Denote by $\int d\sigma$ the usual surface integral.
For every $d\in\N$, $r\in (0,\infty)$ let $\ballO{d}{r},\ballC{d}{r}, \ballB{d}{r}$
be the sets given by
\begin{align}
\ballO{d}{r}=\{x\in\R^d\colon \|x\|_{\R^d}<r \},\quad
\ballC{d}{r}=\{x\in\R^d\colon \|x\|_{\R^d}\leq r \},\quad
\ballB{d}{r} =\{x\in\R^d\colon \|x\|_{\R^d}=r \}. 
\end{align}
For every 
$d\in \N\cap[2,\infty)$, 
$r\in (0,\infty)$, $u\in C^\infty(\ballB{d}{r})$ let
$\partial u\colon\ballB{d}{r}\to\R ^d $ be the gradient of $u$, 
let $\partial_\tau u \colon\ballB{d}{r}\to\R ^d$ be the tangential component of 
$\partial u$, and
let
$\partial_\nu u\colon\ballB{d}{r}\to\R ^d $ be the normal component of $\partial u$, i.e., it holds for all $x\in\ballB{d}{r}$  that $(\partial_\tau u)(x)$ is the orthogonal projection of $\partial u (x)$ onto the tangential space of $\ballB{d}{r}$ at $x$ and $(\partial u)(x)=(\partial_\tau u)(x)+(\partial_\nu u)(x)$. 
Then there exists a function
$C\colon ([2,\infty)\cap\N)\times(1,\infty)\to(0,\infty)$ such that
for all $d\in \mathbbm{N}\cap [2,\infty)$, $p\in(1,\infty)$, 
$r\in(0,\infty)$, $u\colon\ballC{d}{r} \to\mathbbm{R}$
with 
$
u\upharpoonright_{\ballB{d}{r}}\in C^\infty(\ballB{d}{r})$ and $
\forall\, x\in \ballB{d}{r} \colon  \sum_{i=1}^d(\partial_{ii}^2u)(x)=0
$
it holds that
\begin{align}
\frac{1}{C(d,p)}\int_{\ballB{d}{r}} \|\partial_\tau u\|^p\,d\sigma\leq 
\int_{\ballB{d}{r}} \|\partial_\nu u\|^p\,d\sigma\leq C(d,p)\int_{\ballB{d}{r}} \|\partial_\tau u\|^p\,d\sigma.\label{d1b}
\end{align}
\end{theorem}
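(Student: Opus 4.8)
\emph{Proof sketch.} The plan is to reduce to the unit sphere by scaling, to settle the case $p=2$ by a spherical-harmonics computation that at the same time identifies the operators involved, and then to recognise both inequalities in \eqref{d1b} as the $L^p$-boundedness of Calderón--Zygmund operators on the sphere $S^{d-1}$ --- equivalently, after localisation, of the Riesz transforms on a half-space. Throughout I identify the normal component of the gradient with the scalar radial derivative $\partial_r u$ and the tangential component with the intrinsic gradient $\nabla_{S^{d-1}}\bigl(u\!\upharpoonright_{S^{d-1}}\bigr)$, which leaves the integrands in \eqref{d1b} unchanged.

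First I would apply the dilation $u\mapsto u(r\,\cdot)$, under which both sides of \eqref{d1b} acquire the same power of $r$, reducing to $r=1$. Expanding the boundary trace in spherical harmonics, $u\!\upharpoonright_{S^{d-1}}=\sum_{k\ge0}Y_k$ (a series converging in $C^m(S^{d-1})$ for every $m$ by the smoothness hypothesis), uniqueness of harmonic extensions gives $u(x)=\sum_{k\ge0}\|x\|^k Y_k(x/\|x\|)$ throughout the ball, whence $\partial_\nu u=\sum_{k\ge1}kY_k$ (the $k=0$ term disappearing under $\partial_r$) and $\partial_\tau u=\sum_{k\ge1}\nabla_{S^{d-1}}Y_k$. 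Using $-\Delta_{S^{d-1}}Y_k=k(k+d-2)Y_k$ and orthogonality one gets $\|\partial_\tau u\|_{L^2}^2=\sum_{k\ge1}k(k+d-2)\|Y_k\|_{L^2}^2$ and $\|\partial_\nu u\|_{L^2}^2=\sum_{k\ge1}k^2\|Y_k\|_{L^2}^2$; since $1\le k(k+d-2)/k^2\le d-1$ for every $k\ge1$, this proves \eqref{d1b} for $p=2$ with $C(d,2)=d-1$.

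For general $p$ I would pass to singular integrals. The identities above show that the linear map $A$ sending $\partial_\nu u$ to $\partial_\tau u$, and the map $B$ sending $\partial_\tau u$ to $\partial_\nu u$, are bounded on the relevant mean-zero (resp.\ exact-gradient) subspaces of $L^2(S^{d-1})$. Moreover both are classical boundary operators: $A=\nabla_{S^{d-1}}\circ\Lambda^{-1}$ and $B=-\Lambda\circ(-\Delta_{S^{d-1}})^{-1}\circ\operatorname{div}_{S^{d-1}}$, where $\Lambda$ is the Dirichlet-to-Neumann map of the ball --- a first-order elliptic operator on $S^{d-1}$ with $\Lambda Y_k=kY_k$, represented by differentiating the Poisson kernel of the ball normally at the boundary --- and $\Lambda^{-1}$ is of single-layer type, with weakly singular kernel of size $\sim\|x-y\|^{2-d}$ (of size $\sim\log\|x-y\|$ when $d=2$). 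Being compositions of the indicated orders, $A$ and $B$ have order $0$; I would check that their kernels satisfy $|K(x,y)|\lesssim_d\|x-y\|^{1-d}$ together with the Hörmander regularity estimate --- read off from the explicit Poisson and Newtonian kernels and the composition rules --- so that $A$ and $B$ are Calderón--Zygmund operators on the space of homogeneous type $(S^{d-1},d\sigma)$. Combined with the $L^2$-bound of the previous step, the Calderón--Zygmund theorem then gives boundedness on $L^p$ for every $p\in(1,\infty)$, with constant depending only on $d$ and $p$, which is \eqref{d1b}. (Alternatively one may invoke that a pseudodifferential operator of order $0$ on a compact manifold is bounded on $L^p$ for $1<p<\infty$.)

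The hard part will be exactly this verification --- the size and Hörmander bounds for the kernels of $A$ and $B$ on the curved sphere, with constants depending only on $d$. Should a global treatment prove cumbersome, an equivalent route is to localise: take a partition of unity subordinate to graph patches of $S^{d-1}$, flatten each patch to a piece of a hyperplane, and use the model fact that for a harmonic function on a half-space each tangential derivative is a Riesz transform of the normal derivative and, conversely, the normal derivative is minus the sum of the Riesz transforms of the tangential derivatives; the Riesz transforms being bounded on $L^p$ for $1<p<\infty$, the flat case is immediate, while the curvature and cut-off commutators are lower-order (smoothing) operators that are absorbed by interpolating against the $L^2$-estimate. The mean-zero and exact-gradient constraints cause no difficulty, since constant functions lie in the kernel of both $\partial_\nu$ and $\partial_\tau$ and the ranges correspond accordingly.
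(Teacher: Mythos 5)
Your sketch is a correct route to \cref{d1} (up to the verification you yourself flag), but note that the paper does not prove this theorem at all: it is quoted as a known continuum result, with the proofs delegated to Maergoiz, Mikhlin, and Lemma~4 of Bella--Fehrman--Otto, and the paper's own analytic machinery is developed only for the discrete analogue, \cref{d06}. Your main line --- scaling to $r=1$, spherical harmonics giving the exact $p=2$ constants via $\partial_\nu u=\sum_k kY_k$, $\|\partial_\tau u\|_{L^2}^2=\sum_k k(k+d-2)\|Y_k\|_{L^2}^2$, and then recognising $\nabla_{S^{d-1}}\circ\Lambda^{-1}$ and $\Lambda\circ(-\Delta_{S^{d-1}})^{-1}\circ\operatorname{div}_{S^{d-1}}$ as order-zero operators bounded on $L^p(S^{d-1})$ for $1<p<\infty$ --- is essentially the Mikhlin-style singular-integral/pseudodifferential argument on the sphere; it is clean and the constants visibly depend only on $(d,p)$, but the Calder\'on--Zygmund kernel bounds (or the appeal to $L^p$-boundedness of zeroth-order pseudodifferential operators on a compact manifold) is exactly the nontrivial input being imported. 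Your alternative route --- localise, flatten, use that on a half-space tangential derivatives of a harmonic function are Riesz transforms of the normal derivative and vice versa, and absorb curvature/cut-off errors as lower-order terms --- is much closer in spirit to the proof the paper actually mimics in the discrete setting (BFO~Lemma~4 and \cref{se01,v03,se02}): there the comparison is run on half-spaces with periodic boundary data via Poisson kernels and Marcinkiewicz-type multiplier theorems, with even/odd reflections replacing your partition of unity, precisely because dilations, spherical harmonics, and exact Riesz-transform identities are unavailable on a discrete box; that is also why the paper's constants are uniform in the box size $N$, the issue your continuum scaling argument makes trivial. One small caution: the hypothesis in \cref{d1} is stated literally as $\Delta u=0$ on the sphere, but, as in the cited sources, it is to be read as harmonicity in the ball with smooth boundary trace --- your use of the Poisson extension and uniqueness presupposes this intended reading, which is fine.
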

\begin{figure}\center
\includegraphics[]{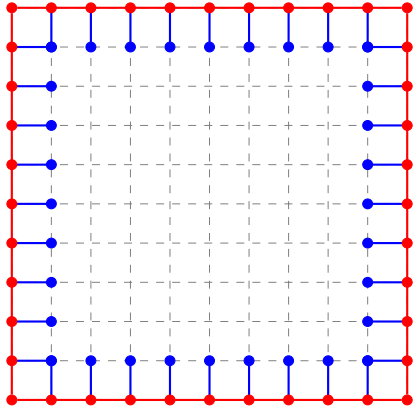}
\caption{Tangential and normal edges of a two dimensional box}\label{f01}
\end{figure}
In order to formulate the discrete analogue of \cref{d1} let us introduce our notation. For the rest of this paper we always use the notation given in \cref{d05} below.
\begin{setting}[Notation for the whole paper]\label{d05}
Let $\K\in\{\R,\C\}$.
For every $d\in \N$ let
$\unit{d}{i}$, $i\in\disint{1}{d}$, be the standard $d$-dimensional basis vectors and let 
$E_d$ be the set of all \emph{oriented nearest neighbour edges} (for short: \emph{edges}) on the $d$-dimensional lattice, i.e., the set given by
$E_d=\{(x,y)\colon (x,y\in \Z^d)\wedge 
(\sum_{i=1}^{d}|x_i-y_i|=1)
\}$.
For every $d\in\N$, $A\subset \Z^d$, 
$u\colon A\to\K$ let 
$
\Laplace u\colon\{x\in\Z^d\colon  \forall\, i\in \disint{1}{d}\colon (x+\unit{d}{i}\in A)\wedge( x-\unit{d}{i}\in A)\}\to\K
$
be the \emph{discrete Laplacian} of $u$, i.e., the function which satisfies  for all $x\in A$ with 
$\forall\, i\in \disint{1}{d}\colon (x+\unit{d}{i}\in A)\wedge( x-\unit{d}{i}\in A)$
that \begin{align}
(\Laplace u)(x)= \left[\sum_{i=1}^{d}u(x+\unit{d}{i})+u(x-\unit{d}{i})\right]-2du(x)
\end{align}
and let
$\nabla u \colon 
(A\times A)\cap E_d
\to\K$ be the function  which satisfies that for all $x,y\in A$ with $(x,y)\in E_d$ it holds that
$\nabla_{(x,y)}u=u(y)-u(x)$. For every $d\in \N\cap[2,\infty)$, $p\in [1,\infty]$,
 every finite set $A$, and every function
 $f\colon A\to\K$ let $\|f\|_{L^p(A)}\in [0,\infty)$ be the real number given by
\begin{align}
\|f\|_{L^p(A)}^p=
\begin{cases}
\sum_{x\in A}|f(x)|^p&\colon p\in [1,\infty)\\
\max_{x\in A}|f(x)|&\colon p=\infty.
\end{cases}\label{x01d}
\end{align}
\end{setting}
Note that in \cref{d05} above the arguments of $\nabla u$ are edges.
We will also introduce another notation for discrete derivatives which are functions of vertices
(see \cref{v03}). 
However, to formulate the main result let us temporarily use the notation in \cref{d02} below.
\begin{setting}\label{d02}
For every $d,N\in \N\cap[2,\infty)$ let
$\etan{d,N},\enor{d,N}\subseteq E_d$ be the sets of edges which satisfy that
\begin{align}
&\etan{d,N}=\left\{(x,y)\in E_d\colon x,y\in 
([0,N]^d)\setminus((0,N)^d)\right\},\\[3pt]
&\enor{d,N}=\left\{(x,y)\in E_d\colon x\in([0,N]^d)\setminus((0,N)^d)\quad\text{and}\quad y\in [1,N-1]^{d} \right\},
\end{align}
let 
$\etanT{d,N}\subseteq \Z^d$ be the set of vertices   given by
$\etanT{d,N}=\Z^d \cap ([0,N]^{d})\setminus ((0,N)^d)$,
let $\mathcal{D}_{d,N} $ be the set of 
 functions $v\colon \etanT{d,N}\to\R$,
let $\mathcal{N}_{d,N} $ be the set of  functions
$v\colon\enor{d,N}\to\R $ with $\sum_{e\in \enor{d,N}}v(e)=0$,
and
let $\mathbbm{Q}_{d,N}$ be the set of  functions
$u\colon (\disint{0}{N})^{d}\to\R$ which satisfy that
$
\forall\, x\in (\disint{1}{N-1})^d\colon   (\Laplace u)(x) =0.
$
\end{setting}
In \cref{d02} above for $d\in[2,\infty)\cap\Z$ we consider boxes on the \emph{$d$-dimensional lattice}
instead of balls on the $d$-dimensional Euclidean space.
\cref{f01} illustrates 
the sets $\etanT{d,N}$, $\etan{d,N}$,
$\enor{d,N}$ for $d=2$, $N=10$: $\etanT{d,N}$ contains all red points, $\etan{d,N}$ contains all red edges, and $\enor{d,N}$ contains all blue edges.
For fixed $d,N\in \N\cap[2,\infty)$ 
the set $\etanT{d,N}$
can be viewed as a \emph{discrete boundary} of the box $(\disint{0}{N})^d$,
a function $v\colon \etanT{d,N}\to\R$ is thus a \emph{discrete Dirichlet condition}, 
an edge $e\in \etan{d,N}$ can be viewed as a tangential vector, 
an edge $e\in \enor{d,N}$ can be viewed as a normal vector, and a function 
$v\colon\enor{d,N}\to\R $ with $\sum_{e\in \enor{d,N}}v(e)=0$ is thus a \emph{discrete Neumann condition}. Here, the vanishing mean is a necessary condition for the Neumann problem to have a solution, which also holds in the continuum setting.  Finally, the set 
$\mathbbm{Q}_{d,N}$ is the set of  functions which is defined on the box 
$(\disint{0}{N})^{d}$
and harmonic in the interior $(\disint{1}{N-1})^d$ of the box.



\begin{theorem}[Main result]\label{d06}Assume \cref{d02}. Then there exists a function $C\colon ([2,\infty)\cap\N)\times(1,\infty)\to(0,\infty)$,
there exists a unique family of linear operators
\begin{align}
\left\{\Phi^{\mathcal{D}}_{d,N}\colon \mathcal{D}_{d,N}\to \mathbbm{Q}_{d,N}\colon d,N\in [2,\infty)\cap\N \right\} ,
\end{align}
and there exists a family of linear operators
\begin{align}
\left\{\Phi^{\mathcal{N}}_{d,N}\colon \mathcal{N}_{d,N}\to \mathbbm{Q}_{d,N}\colon d,N\in [2,\infty)\cap\N\right\} 
\end{align}
such that 
\begin{enumerate}[i)]
\item \label{d06a}
it holds 
for all $d,N\in [2,\infty)\cap\N$, $f\in \mathcal{D}_{d,N}$, $x\in \etanT{d,N}$ that
$(\Phi^{\mathcal{D}}_{d,N}f)(x) =f(x)$,
\item it holds 
for all $d,N\in [2,\infty)\cap\N$, $p\in (1,\infty)$, $f\in \mathcal{D}_{d,N}$  that 
\begin{align}
\left\|\nabla(\Phi^{\mathcal{D}}_{d,N}f)\right\|_{L^p(\enor{d,N})}
\leq C(d,p)\left\|\nabla f\right\|_{L^p(\etan{d,N})},
\label{a01}
\end{align}
 \item\label{d06b} it holds 
for all $d,N\in [2,\infty)\cap\N$, $e\in \enor{d,N}$, $g\in \mathcal{N}_{d,N}$ that 
$\nabla_e (\Phi^{\mathcal{N}}_{d,N}g)  =g(e)$,
and
\item it holds 
for all $d,N\in [2,\infty)\cap\N$, $p\in (1,\infty)$, $g\in \mathcal{N}_{d,N}$  that 
\begin{align}\label{a03}
\left\|\nabla(\Phi^{\mathcal{N}}_{d,N}g)\right\|_{L^p(\etan{d,N})}
\leq C(d,p)\|g\|_{L^p(\enor{d,N})}.
\end{align}
\end{enumerate}
\end{theorem}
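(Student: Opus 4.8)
The plan is to prove the theorem by passing to discrete Fourier analysis on a doubled torus and applying a discrete Fourier multiplier (Marcinkiewicz-type) theorem. First I would reduce the geometry: by reflecting the box $(\disint{0}{N})^d$ across its faces one obtains, after identifying opposite faces, a discrete torus $(\Z/2N\Z)^d$ on which the box becomes a fundamental domain; the discrete Dirichlet problem on $(\disint{0}{N})^d$ with data on $\etanT{d,N}$ translates, via even reflection, into solving $\Laplace \tilde u = h$ on the torus with $h$ supported near the "boundary" copy. Concretely, the harmonic extension operator $\Phi^{\mathcal D}_{d,N}$ is built from the discrete Poisson kernel of the box, which in turn is expressed through the torus Green's function $G$ whose Fourier symbol is $\bigl(\sum_{i=1}^d (2-2\cos\theta_i)\bigr)^{-1}$ on the dual torus $\frac{\pi}{N}\Z^d \cap (-\pi,\pi]^d$. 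Uniqueness of $\Phi^{\mathcal D}_{d,N}$ is immediate from the discrete maximum principle on $(\disint{1}{N-1})^d$: two functions in $\mathbbm Q_{d,N}$ agreeing on $\etanT{d,N}$ agree everywhere.

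The heart of the matter is the $L^p$ bound \eqref{a01}. Having reduced to the torus, $\nabla(\Phi^{\mathcal D}_{d,N}f)$ on a normal edge and $\nabla f$ on a tangential edge are each obtained from the Dirichlet data $f$ (extended by reflection) by applying a discrete convolution operator whose Fourier symbol $m(\theta)$ is a ratio of trigonometric polynomials — roughly, $m(\theta)$ behaves like $\theta_\nu / |\theta|$ near $\theta = 0$, i.e. like a discrete Riesz-transform-type symbol, and is smooth and bounded away from the origin because the relevant edges sit on the boundary where cancellation prevents the denominator from vanishing in a harmful way. I would then verify that $m$ satisfies the hypotheses of a discrete Mikhlin–Marcinkiewicz multiplier theorem on $(\Z/2N\Z)^d$ (bounds on $m$ and on its discrete mixed difference-derivatives $\theta^\alpha \Delta^\alpha m$, uniformly in $N$), which yields $\|T_m\|_{\ell^p \to \ell^p} \le C(d,p)$ with $C$ independent of $N$. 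This is where I expect the main obstacle to lie: one must check the multiplier estimates uniformly in the box size $N$, carefully controlling the behaviour of the symbol near $\theta=0$ (the singular frequency) and near $\theta_i = \pi$ (the lattice artefact), and one must confirm that restricting the convolution output to boundary edges only (rather than all of $\Z^d$) does not destroy the bound — this typically requires an explicit decay estimate on the discrete Poisson kernel of the half-space / box, showing it concentrates near the boundary.

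For the Neumann part, I would derive $\Phi^{\mathcal N}_{d,N}$ from $\Phi^{\mathcal D}_{d,N}$ by a duality/inversion argument: the map sending a Dirichlet datum $f$ (with the constant quotiented out) to its discrete normal derivative $g = \nabla(\Phi^{\mathcal D}_{d,N}f)\upharpoonright_{\enor{d,N}}$ is, by the maximum principle and a discrete integration-by-parts (summation by parts) identity, a bijection between $\mathcal D_{d,N}/\R$ and $\mathcal N_{d,N}$ (the vanishing-mean condition $\sum_{e\in\enor{d,N}} g(e)=0$ being exactly the compatibility condition coming from $\sum_x \Laplace u(x) = 0$). Composing the inverse of this Dirichlet-to-Neumann map with $\Phi^{\mathcal D}_{d,N}$ gives $\Phi^{\mathcal N}_{d,N}$, satisfying \ref{d06b}; the bound \eqref{a03} then follows from \eqref{a01} once I know the Dirichlet-to-Neumann map is bounded below in $L^p$, which is the symmetric statement obtained by running the same multiplier theorem on the inverse symbol $1/m$ — and the inverse symbol satisfies the same Mikhlin-type bounds by the same cancellation structure. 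The $L^p \to L^p$ constants on both sides being controlled by the multiplier theorem with $N$-independent bounds gives the final uniform $C(d,p)$.
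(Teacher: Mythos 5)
There is a genuine gap at the center of your plan: you treat the comparison between normal and tangential differences on the boundary of the \emph{box} as a single global Fourier multiplier estimate on a doubled torus, but the boundary of a box is not a coordinate hyperplane, and the Dirichlet-to-Neumann operator of the box is not a convolution operator on $(\Z/2N\Z)^d$. Your reduction already breaks at the reflection step: even (or odd) reflection of the harmonic extension across a face is harmonic across that face only if the normal (resp.\ tangential) data vanish there, so the reflected function solves $\Laplace\tilde u=h$ with a source on the reflection hyperplanes that involves precisely the unknown normal derivative; the representation through the torus Green's function is therefore a layer-potential identity, not an explicit multiplier relation between $f$ and $\nabla(\Phi^{\mathcal D}_{d,N}f)$. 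The multiplier description (your ``$m(\theta)\sim\theta_\nu/|\theta|$'') is valid only in the half-space/strip geometry with periodic lateral conditions, which is exactly why the paper does not work on the torus: it proves the multiplier estimates for half-spaces (\cref{t01}, using the symbols $\DM_i,\NM_i$ of \cref{y15}), and then reaches the box by a face-by-face telescope construction on strips (\cref{p06,p07,s01} for Dirichlet, \cref{q06,q14,s12} for Neumann) combined with odd reflections (\cref{r15}) resp.\ even reflections (\cref{r15a}) so that each new face extension does not destroy the boundary conditions already imposed; in addition it needs the genuinely non-multiplier ingredient \cref{p01} (discrete Poisson kernel asymptotics from Lawler--Limic plus a weak-$L^1$/interpolation argument) to control the gradient on the lateral faces, and a Poincar\'e inequality to remove the $\tfrac1N\|u\|$ term. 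None of these steps is recovered by ``check Mikhlin bounds uniformly in $N$''; in particular your worry about ``restricting the convolution output to boundary edges'' is precisely where \cref{p01} is needed and your proposal does not supply an argument for it.

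Two further points would also fail as written. In the Dirichlet direction there is no single symbol: bounding the normal difference by the $(d-1)$ tangential differences forces you to divide by $e^{-\ima t_i}-1$, which vanishes on the hyperplane $t_i=0$, so one needs a different multiplier on each dyadic block (chosen by the largest frequency coordinate) and a Marcinkiewicz theorem for such block-wise multipliers --- this is the content of \cref{z16,d133,s26}, and a standard Mikhlin--Marcinkiewicz statement does not cover it. For the Neumann case, inverting the Dirichlet-to-Neumann map of the box and invoking ``the inverse symbol $1/m$'' is again only meaningful on a half-space (there indeed $\NM_i=1/\DM_i$, which the paper exploits in \cref{z17,s28}); on the box the lower bound in $L^p$ for the DtN map is essentially equivalent to the statement you are trying to prove, so your reduction is circular unless you first perform a strip-by-strip construction with even reflections and track the vanishing-mean compatibility condition through the recursion, as done in \cref{r20a}. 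Your observation that uniqueness of $\Phi^{\mathcal D}_{d,N}$ follows from the maximum principle is correct and agrees with the paper, but the core $L^p$ bounds \eqref{a01} and \eqref{a03} are not established by the proposed torus-multiplier argument.
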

\cref{d06a} in \cref{d06} above
implies that for every $d,N\in [2,\infty)\cap\N$, 
$f\in \mathcal{D}_{d,N}$
the function $\Phi^{\mathcal{D}}_{d,N}f$ is  the solution $u\colon (\disint{0}{N})^{d}\to\R$ to the \emph{discrete Dirichlet problem}
\begin{align}\label{d03}
\begin{cases}
\forall\, x\in (\disint{1}{N-1})^d\colon \quad (\Laplace u)(x)=0,\\
\forall\, x\in \etanT{d,N}\colon\quad  u(x)=f(x)
\end{cases}
\end{align}
and  the family $(\Phi^{\mathcal{D}}_{d,N})_{d,N\in [2,\infty)\cap\N}$ therefore exists uniquely as in the statement of \cref{d06}.
Next,
\cref{d06b} in \cref{d06} above
implies that for every $d,N\in [2,\infty)\cap\N$, 
$g\in \mathcal{N}_{d,N}$
 the function $\Phi^{\mathcal{N}}_{d,N}g$ is a solution
$u\colon (\disint{0}{N})^{d}\to\R$ to \emph{the discrete
Neumann  problem}
\begin{align}\label{d04}
\begin{cases}
\forall\, x\in (\disint{1}{N-1})^d\colon \quad (\Laplace u)(x)=0,\\
\forall\, e\in \enor{d,N}\colon\quad  \nabla_eu=g(e).
\end{cases}
\end{align}
Note that there is no full statement on the uniqueness
of the Neumann problem \eqref{d04}. More precisely,
the uniqueness of the Neumann problem \eqref{d04} only holds up to a constant  on $(\disint{1}{N-1})^d$, i.e., if 
$\forall\, e\in \enor{d,N}\colon g(e)=0$ and if
$u$ is a solution to  \eqref{d04}, then the restriction of $u$ on ${(\disint{1}{N-1})^d}$ is a constant function. 
In addition, note that for fixed $d,N\in [2,\infty)\cap\N$, $g\in \enor{d,N}$
there exists no real numbers $C\in (0,\infty)$ such that for \emph{every} solution
$u$
to \eqref{d04} it holds that
$\|\nabla u\|_{L^p(\etan{d,N})}\leq C\|g\|_{L^p(\enor{d,N})}$. Indeed, 
e.g.,  in the case $d=2$ we can freely change the value of $u$ at the four corners of the rectangle in \cref{f01} to make $\|\nabla u\|_{L^p(\etan{d,N})}$ arbitrary large without damaging the fact that $u$ is a solution to \eqref{d04}.
Consequently,  it is impossible to make any claims on the uniqueness of the family
$(\Phi^{\mathcal{N}}_{d,N}) _{d,N\in [2,\infty)\cap\N}$
in the statement of \cref{d06}.

Next, let us give a brief and rough explanation why \cref{d06} is useful for  the idea of using \emph{harmonic extensions} in the proof of the Liouville theorem in \cite{Ngu17}. Let $u$ be a function defined on the box in \cref{f01}. We keep the Dirichlet condition of $u$ at red points and replace the values of $u$ at other points by an extension that is harmonic in the interior of the box. This will clearly erase the Neumann condition of $u$. However, \cref{d06} claims that the new Neumann condition can still be bounded by the remaining Dirichlet condition.

Discrete Laplacian and discrete harmonic functions are interesting topics that date back to 1920s (see, e.g., the fundamental works by
Lewy, Friedrichs, and Courant~\cite{LFC28},
Heilbronn~\cite{Hei49}, Duffin~\cite{Duf53}). Discrete boundary problems have been widely studied in numerical analysis, e.g., to approximate the continuum solutions (see, e.g., the classical work by Stummel~\cite{Stu67} and for further references see, e.g.,
G\"urlebeck and Hommel \cite{Hom98}, \cite{GH01}, \cite{GH02},
who studied Dirichlet and Neumann boundary problems 
on general two-dimensional discretized domains
using difference potentials,
and the references therein). 

Although discrete and continuum objects often have many similar properties, it is not always trivial to adapt things from the continuum case to the discrete case and vice verse. To the best of the author's knowledge, there exists no result in the discrete case which deals with the bounds \eqref{a01} and \eqref{a03}, while $L^p$-comparisons, $p\in (1,\infty)$, between the tangential and non-tangential components 
of harmonic functions on Lipschitz and $C^1$-domains and related topics have been studied by several papers, e.g., in chronological order:
Mikhlin~\cite{Mik65}, Maergoiz~\cite{Mae73},
Calderon, Calderon, Fabes, Jodeit, and Rivi\`erie~\cite{CCF+78}, 
Fabes, Jodeit, and Rivi\`erie~\cite{FEBJR78}, 
Jerison and Kenig~\cite{JK81}, Verchota~\cite{Ver84}, Dahlberg and Kenig~\cite{DK87}, Mitrea and Mitrea~\cite{MM13}. 
The main issue in the discrete case is to show that the functions $C$ in \eqref{a01} and \eqref{a03} do not depend on the size $N$ of the discrete box while in the continuum case this is not an issue due to a simple scaling argument. In fact, for \eqref{d1b} we only need to consider $r=1$.

The proof of \cref{d06} that we represent here essentially mimics the proof of Lemma~4 in Bella, Fehrman, and Otto \cite{BFO18} who formulate and prove 
\cref{d1} with balls replaced by boxes
in the continuum case. We separate the proof into several steps and organize the paper as follows. 
\cref{se01} formulates and proves a discrete counterpart of inequality (88) in~\cite{BFO18}, which was shown by using the continuum Poison kernels. In order to adapt this idea to the discrete case we use a result in Lawler and Limic~\cite{LL10} to \emph{approximate the discrete Poison kernels by the continuum Poison kernels}.
Estimates by means of the \emph{Marcinkiewicz multiplier theorem}, 
e.g., inequalities (78), (79), (82), and (99) 
in \cite{BFO18}
are adapted in \cref{v03} which focuses on \emph{discrete harmonic functions on haft spaces with periodic boundary conditions}.
In order to avoid many tedious calculations with higher derivatives of the multipliers we apply Cauchy's integral formula. In addition, with some elementary arguments, \cref{d133c} provides a result of independent interest that the author has not
found in the literature. Finally,
\cref{se02} 
applies the results obtained in \cref{se01,v03} to prove the main result, \cref{d06}. 
As Bella, Fehrman, and Otto \cite{BFO18} we call estimate~\eqref{a01} 
\emph{the Dirichlet case} and estimate~\eqref{a03} \emph{the Neumann case} and prove them separately.
The main techniques here are basically to adapt two ideas
learnt from~\cite{BFO18} to the discrete case:
i) returning to the case of periodic boundary conditions by using \emph{even and odd reflections} and ii) reducing to the case of haft spaces. 
Concerning the idea of using reflections, Section IV.2.1 in the author's dissertation \cite{Ngu17} may provide a simple illustration with figures in the two-dimensional case that may help to understand the general case.
Another  interesting application of even and odd reflections  and
the discrete Marcinkiewicz multiplier theorem is
to prove $L^p$-estimates for discrete Poisson equations (see Section~2.5.2 in Jovanovi{\'c} and S{\"u}li~\cite{JS14}).

For convenience, throughout this paper, 
the arguments here are often compared with that in the continuum case in \cite{BFO18}. However, since there are several differences between the discrete case and the continuum case, this paper is organized so that the reader can easily start from scratch. 

Finally, the proof shows that the functions $C$ in \cref{d06} may depend exponentially on the dimension: this result, as finite difference method in general, may not be quite useful for high-dimensional applications (the so-called \emph{curse of dimensionality}).

\bigskip
Our notation will be defined clearly in the formulation of each result. In addition, remember that throughout this paper we always use the notation in \cref{d05} above and the usual conventions in \cref{d05b} below.
\begin{setting}[Conventions]\label{d05b}
Denote by $\ima$ the imaginary unit. 
Denote by $\Re(z)$ and $\Im(z)$ the real and imaginary part of $z\in\K$, respectively, where $\K\in \{\R,\C\}$.
Write $\N=\{1,2,\ldots\}$ and $\N_0=\N\cup\{0\}$. 
For  $d\in\N$, $x,y\in\K^d$, 
$i\in\disint{1}{d}$ denote by $x_i$ the $i$-th coordinate of $x$ (if no confusion can arise),
denote by $x\cdot y$ the standard scalar product of $x$ and $y$,
i.e., $ x\cdot y= \sum_{i=1}^{d}x_i\bar{y}_i$, and
denote by 
$|x|_\infty$ the maximum norm of $x$, i.e.,
$|x|_\infty=\max_{i=1}^d|x_i|$. For every set $A$ denote by $|A|$ the  cardinality of~$A$. Partial derivatives will be denoted by $\partial_i$, $\tfrac{\partial}{\partial t_i}$, $\tfrac{\partial}{\partial \xi_i}$.
When applying a result we often use a phrase like
'\cref{z15} with $d\defeq d-1$' that should be read as
'\cref{z15} applied with $d$ (in the notation of \cref{z15}) replaced by $d-1$ (in the current notation)' and we often omit a trivial replacement to lighten the notation, e.g., we rarely write, e.g., '\cref{s26} with $d\defeq d$'.

\end{setting}
\subsection*{Acknowledgement}This paper is based on a part of the author's dissertation \cite{Ngu17} written  under supervision of Jean-Dominique Deuschel at Technische Universit\"at Berlin. The author thanks Benjamin Fehrman and Felix Otto for useful discussions and for sending him the manuscript of \cite{BFO18}. The author gratefully acknowledges financial support of the DFG Research
Training Group (RTG 1845) ”Stochastic Analysis with Applications in Biology, Finance
and Physics” and the Berlin Mathematical School (BMS).
\section{Potential-theoretic results for harmonic functions on  haft spaces}\label{se01}
\subsection{Main result}
In this section we essentially prove
\cref{p01} below, which formulates a discrete analogue of 
inequality (88) in Bella, Fehrman, and Otto~\cite{BFO18}. We basically follow the proof in \cite{BFO18}. However, to make the argument more illustrative we introduce a  simple random walk in \cref{x01c}.
 \cref{x14,x15} are discrete counterparts of inequality (92) and (93) in \cite{BFO18}.
Combining \cref{x14,x15} with a Marcinkiewicz-type interpolation argument we obtain
\cref{s32}.
Approximating the discrete Poisson kernels by the continuum counterparts we obtain 
\cref{z02}. This and \cref{s32} imply \cref{p01}.
\begin{setting}\label{p01b}
For every $L\in\N $, $d\in [2,\infty)\cap \N$
let $\I_L$  be the set given by $\I_L=\disint{-L+1}{L} $ and let
$\mathbbm{H}_{d,L,\geq 0}$ be the set of all bounded functions
$u\colon \Z^{d-1}\times \N_0\to \mathbbm{R}$ with the properties that
\begin{enumerate}[i)]
\item 
it holds
for all $x\in \Z^{d-1}\times \N_0$, $  i\in \disint{1}{d-1}$ that
$u(x)=u(x+2L\unit{d}{i})$ and 
\item 
it holds for all $x\in \Z^{d-1}\times \N$
that $(\Laplace u)(x)=0$.
\end{enumerate}
\end{setting}
\begin{corollary}\label{p01}Assume \cref{p01b}.
Then there exists 
$C\colon ([2,\infty)\cap \N)\times (1,\infty)\times(0,\infty)\to (0,\infty)$ such that 
for all $d\in [2,\infty)\cap \N$, $p\in (1,\infty)$, 
$\rup\in (0,\infty)$, $L\in\N$, $N\in (0,L\rup]\cap\N$,
$u\in \mathbbm{H}_{d,L,\geq 0}$ it holds 
that
$
\|u\|_{L^p(\{0\}\times \I_L^{d-2}\times (\disint{1}{N}))}\leq  C(d,p,\rup)
\|u\|_{L^p(\I_L^{d-1}\times\{0\})}.
$
\end{corollary}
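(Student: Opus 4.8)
The plan is to put $u$ into Poisson form, peel off the mean of the boundary datum (the only step that uses the hypothesis $N\le L\rup$, and the source of the $\rup$-dependence of $C$), replace the discrete Poisson kernel by the continuum one via the Lawler--Limic asymptotics, and prove the resulting scale-invariant model estimate by interpolating a weak-type $(1,1)$ bound against an $L^2$ bound.

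First I would record the Poisson representation. Write points of $\Z^{d-1}$ as $y'=(y_1,y'')\in\Z\times\Z^{d-2}$, so that the slice $\{y_1=0\}$ consists of the points $(0,x'')$. Fix $u\in\mathbbm{H}_{d,L,\geq 0}$ and set $f:=u(\cdot,0)$, a bounded $2L$-periodic function on $\Z^{d-1}$. With $S$ the simple random walk of \cref{x01c} and $T$ its first hitting time of $\Z^{d-1}\times\{0\}$, the $d$-th coordinate of $S$ is a recurrent lazy walk on $\Z$, so $T<\infty$ almost surely; hence $x\mapsto\E_x[f(S_T)]$ is the unique bounded discrete harmonic extension of $f$ (by optional stopping), whence $u(x',x_d)=\sum_{y'\in\Z^{d-1}}\Pi_{x_d}(x'-y')\,f(y')$, where $\Pi_k(z'):=\P_{(0,k)}[S_T=(z',0)]$ is the half-space Poisson kernel (a probability kernel). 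Periodicity of $f$ turns this into a convolution on the torus $(\Z/2L\Z)^{d-1}$ with the periodized kernel. The statement to prove then reads
\[
\sum_{k=1}^{N}\ \sum_{x''\in\I_L^{d-2}}\bigl|u(0,x'',k)\bigr|^{p}\ \lesssim_{d,p,\rup}\ \|f\|_{L^p(\I_L^{d-1})}^{p},
\]
that is: the $L^p$-norm of the restriction of $u$ to the codimension-one slice $\{x_1=0\}$ inside the slab $\disint{1}{N}$ is controlled by that of the boundary datum. (Putting the coordinate $x_1$ back would already cost a factor $N^{1/p}\sim L^{1/p}$ by summing the height-by-height bound $\|\Pi_k\ast f\|_{L^p}\le\|f\|_{L^p}$, so the restriction to $\{x_1=0\}$ must be exploited essentially.)

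Next I would split off the mean: $f=\bar f+f_0$ with $\bar f$ the average of $f$ over one period and $\sum f_0=0$. Constants are harmonic, so $u=\bar f+u_0$ with $u_0$ the Poisson extension of $f_0$; the constant part contributes $|\bar f|\,(N(2L)^{d-2})^{1/p}$, and since H\"older gives $|\bar f|\le(2L)^{-(d-1)/p}\|f\|_{L^p(\I_L^{d-1})}$ this is, using $N\le L\rup$, at most $(\rup/2)^{1/p}\|f\|_{L^p(\I_L^{d-1})}$ --- the only place the hypothesis on $N$ and the $\rup$-dependence of $C$ are needed. For the mean-zero part, the Lawler--Limic asymptotics encoded in \cref{z02} give $\Pi_k(z')=\Phi_k(z')+O\!\bigl(k(|z'|^2+k^2)^{-(d+1)/2}\bigr)$ uniformly, with $\Phi_k(z'):=c_d\,k\,(|z'|^2+k^2)^{-d/2}$ the continuum Poisson kernel; the error kernel gains a power of decay, and --- together with the periodization tails $\sum_{n\neq0}\Phi_k(\cdot+2Ln)$, which for $k\le N\le L\rup$ are small on the slice $\{x_1=0\}$ --- its contribution after summing in $k$ is $\lesssim_{d,p,\rup}\|f_0\|_{L^p(\I_L^{d-1})}$. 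This reduces matters to the model operator $T_kg:=\sum_{y'\in\Z^{d-1}}\Phi_k\bigl((0,\cdot)-y'\bigr)g(y')$ and the \emph{scale-invariant} estimate $\sum_{k\ge1}\|T_kg\|_{L^p(\Z^{d-2})}^{p}\lesssim_{d,p}\|g\|_{L^p(\Z^{d-1})}^{p}$, which is \cref{s32} and in which neither $L$ nor $N$ appears.

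Finally I would prove \cref{s32} by interpolation. The $L^2$ bound (\cref{x15}, the discrete analogue of inequality~(93) in~\cite{BFO18}) I would obtain on the Fourier side: Plancherel in $x''$ decouples the sum over the frequency $\xi''$, leaving for each $\xi''$ a quadratic form in $\hat g(\cdot,\xi'')$ whose kernel is bounded by a constant times $(|\xi_1|+|\eta_1|+|\xi''|)^{-1}$, and this is bounded on $\ell^2$ uniformly by Hilbert's inequality. The weak-type $(1,1)$ bound (\cref{x14}, the analogue of~(92) in~\cite{BFO18}) is the main obstacle: as $\Phi_k\ge0$ there is no cancellation, and the ``dual mass'' $\int_0^\infty\!\!\int_{\R^{d-2}}\Phi_k(y_1,x'')\,dx''\,dk$ diverges logarithmically, so the estimate genuinely fails on $L^1$ and any naive Schur or Young argument loses a factor $\log N\sim\log L$; one must instead run a Calder\'on--Zygmund / level-set argument adapted to the $k$-dependent scale of $\Phi_k$, for which the random walk of \cref{x01c} and standard harmonic-measure and Green's-function estimates for the discrete half-space are convenient. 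Marcinkiewicz interpolation then gives $p\in(1,2]$, and $p\in[2,\infty)$ follows by the duality coming from the reflection $x_1\leftrightarrow x_d$ of $\Z^d$. I expect this weak-type endpoint, and making the discrete-to-continuum passage uniform in $L$ and $N$, to be where the real work lies; assembling the pieces and tracking $C(d,p,\rup)$ is then routine.
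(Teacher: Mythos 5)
Your skeleton overlaps with the paper's at the outer layer (random-walk Poisson representation as in \cref{z01}; peeling off a mean at the cost of $(\rup/2)^{1/p}$ using $N\le L\rup$, which is exactly the role of \cref{z04}; Lawler--Limic asymptotics for the discrete kernel), but the core analytic step is replaced by something the paper does not do and which you do not prove. You reduce to a ``model operator'' $g\mapsto(T_kg)_{k\ge1}$ built from the \emph{continuum} Poisson kernel and claim the needed scale-invariant bound ``is \cref{s32}'', to be proved by interpolating a weak-type $(1,1)$ bound against an $L^2$ bound. That is not what \cref{s32} says, and neither endpoint you describe is in the paper: \cref{x14} is an $\ell^\infty$-in-height estimate (Jensen plus the $L^p$ contraction \cref{x13}), not a weak $(1,1)$ bound for an operator, and \cref{x15} is a weak $\ell^1$ estimate \emph{in the single height variable $z$}, not an $L^2$/Plancherel bound. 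You yourself flag the weak-type $(1,1)$ endpoint for the sliced Poisson operator as ``where the real work lies'' and sketch only that a Calder\'on--Zygmund/level-set argument ``must'' be run; that endpoint is therefore a genuine gap, and so is the $L^p$-uniform (in $L,N$) control of the discrete-to-continuum kernel replacement and periodization tails, which your reduction needs but which is nowhere carried out.

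The paper's actual mechanism is lighter and avoids both issues. One subtracts, at each height $z$, the average of $u$ over the first coordinate $x\in\I_L$ (not the global mean of the datum), telescopes in $x$ to bring in the discrete derivative $\disDiff{+}{x}u$, and writes that derivative through the \emph{difference} kernel $P_z(\cdot,\cdot)-P_z(\cdot-1,\cdot)$; Lawler--Limic enters only through \cref{z02}, i.e.\ the single scalar bound $\sup_z\big[z\sum_{x,y}|P_z(x,y)-P_z(x-1,y)|\big]=\pconst<\infty$, which immediately gives the pointwise decay $f(z)\le \pconst A/z$ for $f(z)=\big[\sum_y|u(0,y,z)-\tfrac1{|\I_L|}\sum_xu(x,y,z)|^p\big]^{1/p}$ with $A=\sum_x\big[\sum_y|u(x,y,0)|^p\big]^{1/p}$, hence a trivial weak-$\ell^1_z$ bound (\cref{x15}). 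Interpolating this in the one variable $z$ against the $\ell^\infty_z$ bound of \cref{x14} (via \cref{r21b}) and combining the mixed norms on the right through $\ell^1_x\cdot(\ell^\infty_x)^{p-1}\le\ell^p_{x}$ yields \eqref{s33}; \cref{z04} handles the averaged part and \cref{p01} follows. So no operator-valued weak $(1,1)$ theory, no Fourier/$L^2$ bound, and no quantitative kernel replacement in $L^p$ are needed. As written, your proposal would require substantial new arguments (the weak $(1,1)$ endpoint and the uniform discrete-to-continuum transfer) that are neither supplied by you nor available from the cited lemmas, so it does not yet constitute a proof.
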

\subsection{Results which directly follow from the simple random walk representation}
Throughout this section we use the notation given in \cref{x01c} below. 
Due to the Riesz-Thorin interpolation argument 
for \cref{x13} we have to consider 
the function $u$ in \cref{x01c} as a complex-valued function. For other results we only need to replace $\K$ by $\R$.
\begin{setting}[Simple random walks]\label{x01c}
Let 
$d\in [2,\infty)\cap \N$ be fixed,
let $(\Omega,\mathcal{A},\P)$ be a probability space with expectation denoted by $\mathbbm{E}$,
let $X_n\colon \Omega\to \Z^d$, $n\in \N$, be independent random variables which satisfy for all 
$n\in\N$, $i \in\disint{1}{d}$
that  
$
  \P(X_n= \unit{d}{i})=\P(X_n= -\unit{d}{i})=\tfrac{1}{2d},$
and let $S_n\colon \Omega\to\Z^d$, $n\in\N_0$, 
$T\colon \Omega\to \N_0$ be the random variables which
satisfy for all $n\in\N$ that 
\begin{align}\label{y33}
  S_n = \sum_{j=1}^{n}X_j
\quad\text{and}\quad
T=\inf\left\{j\in\N_0:S_j\in \mathbbm{Z}^{d-1}\times \{0\} \right\}.
\end{align}
\end{setting}
\begin{lemma}\label{z01}Assume \cref{p01b,x01c} and let $L\in\N$, $u\in\mathbbm{H}_{d,L,\geq 0} $.
Then 
\begin{enumerate}[i)]
\item \label{z01a}
it holds for all $(x,y)\in \Z^{d-1}\times\N_0$ that
$u(x,y)
=\mathbbm{E}\!\left[u\left(S_T+(x,0)\right)\middle| S_0= (0,y)\right]$ and
\item \label{z01b}it holds for all $y\in \N_0$ that
$\sum_{x\in \I_L^{d-1}}u(x,y)=\sum_{x\in \I_L^{d-1}}u(x,0)$.
\end{enumerate}
\end{lemma}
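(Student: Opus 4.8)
The plan is to obtain part~\cref{z01a} from the optional stopping theorem applied to the stopped process $M_n:=u\big(S_{n\wedge T}+(x,0)\big)$, $n\in\N_0$, under $\P(\cdot\mid S_0=(0,y))$, and then to deduce part~\cref{z01b} from part~\cref{z01a} together with the $2L$-periodicity of $u$ in its first $d-1$ variables.

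For \cref{z01a}, fix $(x,y)\in\Z^{d-1}\times\N_0$; the case $y=0$ is trivial since then $T=0$, so let $y\ge 1$. First I would record the purely geometric fact that on $\{n<T\}$ the last coordinate of $S_n$ stays in $\N$ (it starts at $y\ge 1$ and moves in steps of size at most $1$, so it cannot reach $0$ before time $T$); consequently $S_{n\wedge T}+(x,0)$ always lies in the domain $\Z^{d-1}\times\N_0$ of $u$, and lies in $\Z^{d-1}\times\N$ on $\{n<T\}$. Using that $\{n<T\}$ is $\sigma(X_1,\dots,X_n)$-measurable, that $X_{n+1}$ is independent of $X_1,\dots,X_n$, that the transition probabilities of the walk match the coefficients of $\Laplace$, and that $(\Laplace u)(S_n+(x,0))=0$ on $\{n<T\}$, one checks that $(M_n)_{n\in\N_0}$ is a martingale; it is bounded because functions in $\mathbbm{H}_{d,L,\geq 0}$ are bounded. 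Since the last coordinate of $(S_n)$ is a recurrent one-dimensional random walk, $T<\infty$ almost surely, so $M_n\to u\big(S_T+(x,0)\big)$ almost surely and, by dominated convergence, in $L^1$; therefore $u(x,y)=M_0=\E[M_0\mid S_0=(0,y)]=\lim_n\E[M_n\mid S_0=(0,y)]=\E[u(S_T+(x,0))\mid S_0=(0,y)]$, which is \cref{z01a}.

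For \cref{z01b}, write $S_T=(S_T',0)$ with $S_T'\in\Z^{d-1}$ (legitimate by the definition of $T$), so that $u(S_T+(x,0))=u(S_T'+x,0)$. By \cref{z01a} and interchanging the finite sum with the expectation,
\begin{align}
\sum_{x\in\I_L^{d-1}}u(x,y)=\E\!\left[\sum_{x\in\I_L^{d-1}}u(S_T'+x,0)\,\middle|\,S_0=(0,y)\right].
\end{align}
Now $u(\cdot,0)$ is $2L$-periodic in each of its $d-1$ variables, and $\I_L=\disint{-L+1}{L}$ is a complete residue system modulo $2L$; hence for every $z\in\Z^{d-1}$ the substitution $x\mapsto x+z$ gives $\sum_{x\in\I_L^{d-1}}u(z+x,0)=\sum_{x\in\I_L^{d-1}}u(x,0)$. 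Applying this with $z=S_T'(\omega)$ for each $\omega$ shows that the integrand above equals the deterministic constant $\sum_{x\in\I_L^{d-1}}u(x,0)$, and \cref{z01b} follows. (Alternatively, summing $(\Laplace u)(x,y)=0$ over $x\in\I_L^{d-1}$ and using periodicity shows that $y\mapsto\sum_{x\in\I_L^{d-1}}u(x,y)$ has vanishing second difference on $\N$, hence is affine, hence constant by boundedness of $u$.)

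The only points that need genuine care are the martingale property of $(M_n)$ across the stopping time---immediate once the geometric observation about the last coordinate is in place---and the limit passage in the optional stopping argument, which uses boundedness of $u$ and $T<\infty$ a.s.; this last fact is the one place where the half-space structure, via recurrence of the one-dimensional projection of the walk, is essential. Everything else reduces to translation invariance and the periodicity of $u$.
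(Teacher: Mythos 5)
Your proposal is correct and follows essentially the same route as the paper: part~i) via the optional stopping theorem for the bounded martingale $u(S_{\cdot\wedge T}+(x,0))$ (you merely spell out the details the paper leaves implicit, namely $T<\infty$ a.s. and the dominated-convergence passage), and part~ii) via the $2L$-periodicity of $u(\cdot,0)$ over the complete residue system $\I_L^{d-1}$, exactly as in the paper's proof of \cref{z01}.
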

The following proof relies on martingale theory. For an elementary proof see \cref{c01}.
\begin{proof}[Proof of \cref{z01}]
The assumption that
$\forall\, (x,y)\in \Z^{d-1}\times \N\colon (\Laplace u)(x,y)=0$ 
and the assumption that $u$ is bounded
demonstrate for all $ x\in \Z^{d-1}$  that $(u(S_n+(x,0)))_{n\in\N_0}$ is a bounded martingale. The optional stopping theorem 
proves that
\begin{align}\label{z01c}
u(x,y)=
\mathbbm{E}\!\left[u\left(S_0+(x,0)\right)\middle| S_0= (0,y)\right]=
\mathbbm{E}\!\left[u\left(S_T+(x,0)\right)\middle| S_0= (0,y)\right].
\end{align}
This shows \cref{z01a}. Furthermore, \eqref{z01c}, linearity, and periodicity imply
for all $y\in \N_0$ that 
\begin{align}
\sum_{x\in \I_L^{d-1}}u(x,y)
=\mathbbm{E}\!\!\left[\sum_{x\in \I_L^{d-1}}u\left(S_T+(x,0)\right)\middle| S_0= (0,y)\right]
=
\sum_{x\in \I_L^{d-1}}u(x,0).
\end{align}
The proof of \cref{z01} is thus completed.
\end{proof}
\begin{lemma}\label{x02}Assume \cref{p01b} and let $d\in [2,\infty)\cap \N$,
$L\in\N$, $u\in\mathbbm{H}_{d,L,\geq 0} $. Then it holds that
\begin{align}
\left\|u\right\|_{L^1(\I_L^{d-1}\times\{N\})}\leq \left\|u\right\|_{L^1(\I_L^{d-1}\times\{0\})}
\quad\text{and}\quad
\left\|u\right\|_{L^\infty(\I_L^{d-1}\times\{N\})}\leq \left\|u\right\|_{L^\infty(\I_L^{d-1}\times\{0\})}.
\end{align}
\end{lemma}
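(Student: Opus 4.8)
The plan is to obtain both inequalities directly from the simple random walk representation of \cref{z01}, using nothing more than the $2L$-periodicity of $u$ in its first $d-1$ coordinates together with Jensen's inequality. The index $N$ in the statement is meant to range over $\N_0$; the case $N=0$ is trivial, so throughout I fix $N\in\N$.

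For the $L^\infty$-bound I would start from the identity $u(x,N)=\E[\,u(S_T+(x,0))\mid S_0=(0,N)\,]$ provided by \cref{z01}, which by convexity of $|\cdot|$ gives $|u(x,N)|\leq\E[\,|u(S_T+(x,0))|\mid S_0=(0,N)\,]$ for every $x\in\I_L^{d-1}$. Since $T$ is, by its very definition, the first time the last coordinate of the walk equals $0$, we have $S_T\in\Z^{d-1}\times\{0\}$, so that $u(S_T+(x,0))$ is almost surely one of the values $u(z,0)$, $z\in\Z^{d-1}$; periodicity then bounds each such $|u(z,0)|$ by $\|u\|_{L^\infty(\I_L^{d-1}\times\{0\})}$. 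Hence $|u(x,N)|\leq\|u\|_{L^\infty(\I_L^{d-1}\times\{0\})}$, and taking the maximum over $x\in\I_L^{d-1}$ yields the second asserted inequality.

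For the $L^1$-bound I would use the same pointwise estimate, sum it over the finite set $x\in\I_L^{d-1}$, and interchange the sum with the expectation, which produces $\|u\|_{L^1(\I_L^{d-1}\times\{N\})}\leq\E[\sum_{x\in\I_L^{d-1}}|u(S_T+(x,0))|\mid S_0=(0,N)]$. The crucial observation is that on the event $\{S_T=(a,0)\}$ with $a\in\Z^{d-1}$ the inner sum equals $\sum_{x\in\I_L^{d-1}}|u(x+a,0)|$, and since $\I_L^{d-1}=\disint{-L+1}{L}^{d-1}$ is a complete system of residues modulo $2L$ in each coordinate while $u$ has period $2L$ in each of its first $d-1$ variables, this sum does not depend on $a$ and equals $\|u\|_{L^1(\I_L^{d-1}\times\{0\})}$. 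Plugging this back in finishes the proof. (Both invocations of \cref{z01} tacitly use that $T<\infty$ almost surely, which is part of that lemma and is due to recurrence of the one-dimensional random walk performed by the last coordinate of $S$.)

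The only step that is not pure bookkeeping is this translation invariance of the $\ell^1$-mass of $|u|$ over one full period of $u$ — it is precisely the absolute-value strengthening of the identity $\sum_{x\in\I_L^{d-1}}u(x,y)=\sum_{x\in\I_L^{d-1}}u(x,0)$ established in \cref{z01} — and I expect the only genuine (and still minor) difficulty to be phrasing it cleanly; everything else is Jensen's inequality and Fubini for a finite sum.
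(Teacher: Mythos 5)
Your argument is correct and is essentially the paper's own proof: both rest on the representation from \cref{z01}, the pointwise bound $|u(x,N)|\leq\E[\,|u(S_T+(x,0))|\mid S_0=(0,N)\,]$, interchanging the sum (resp.\ maximum) with the expectation, and the observation that $S_T\in\Z^{d-1}\times\{0\}$ almost surely together with $2L$-periodicity makes $\sum_{x\in\I_L^{d-1}}|u(S_T+(x,0))|$ and $\max_{x\in\I_L^{d-1}}|u(S_T+(x,0))|$ equal to the corresponding quantities at level $0$. Your phrasing of the $L^\infty$ case via a direct almost-sure bound instead of swapping max and expectation is an immaterial variation.
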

\begin{proof}[Proof of \cref{x02}]
Recall that we use the notation in \cref{x01c}.
The fact that 
$\P$-almost surely it holds that
$S_T\in\Z^ {d-1}\times\{0\}  $ and the assumption on periodicity, i.e., 
$\forall\, (x,y)\in \Z^{d-1}\times \N_0,\,  i\in \disint{1}{d-1}\colon u(x,y)=u(x+2L\unit{d-1}{i},y)$ imply that 
$\P$-almost surely it holds that
\begin{align}
\sum_{x\in\I_L^{d-1}} |u(S_T +(x,0))| = 
\sum_{x\in\I_L^{d-1}} |u(x,0)|
\quad\text{and}\quad
\max_{x\in\I_L^{d-1}} |u(S_T +(x,0))| = 
\max_{x\in\I_L^{d-1}} |u(x,0)|.\label{y35}
\end{align}
Furthermore,  \cref{z01}  shows  for all 
$(x,y)\in \Z^d\times \N_0$ that 
\begin{align}\label{y34}
\left|u(x,y)\right|
=\Big|\mathbbm{E}\left[u\left(S_T+(x,0)\right)\middle| S_0= (0,y)\right]\!\Big|
\leq \mathbbm{E}\left[\Big.\!\left|u\left(S_T+(x,0)\right)\right|\middle| S_0= (0,y)\right].
\end{align} 
This and \eqref{y35} establish that
\begin{align}\begin{split}   
\sum_{x\in\I_L^{d-1}} |u(x,N)|&\leq \sum_{x\in\I_L^{d-1}}
\mathbbm{E}\!\left[\Big. |u\left(S_T+(x,0)\right)| \middle| S_0= (0,N)\right]\\
&=
\mathbbm{E}\!\left[\sum_{x\in\I_L^{d-1}} |u\left(S_T+(x,0)\right)| \middle| S_0= (0,N)\right]
= \sum_{x\in\I_L^{d-1}} |u(x,0)|
\end{split}
\end{align}
and
\begin{align}\begin{split}   
\max_{x\in\I_L^{d-1}} |u(x,N)|&\leq \max_{x\in\I_L^{d-1}}
\mathbbm{E}\!\left[\Big. |u\left(S_T+(x,0)\right)| \middle| S_0= (0,N)\right]\\
&\leq 
\mathbbm{E}\!\left[\max_{x\in\I_L^{d-1}} |u\left(S_T+(x,0)\right)| \middle| S_0= (0,N)\right]
= \max_{x\in\I_L^{d-1}} |u(x,0)|
\end{split}
\end{align}
This  completes the proof of \cref{x02}.
\end{proof}
Combining 
\cref{x02} with
a Riesz-Thorin interpolation  we obtain \cref{x13} below.
\begin{corollary}\label{x13}
Assume \cref{p01b} and let $d\in [2,\infty)\cap \N$, 
$L\in\N$,
$u\in\mathbbm{H}_{d,L,\geq 0} $, $p\in [1,\infty]$, $N\in\N_0$.
Then it holds that
$
\left\|u\right\|_{L^p(\I_L^{d-1}\times\{N\})}\leq
\left\|u\right\|_{L^p(\I_L^{d-1}\times\{0\})}.
$
\end{corollary}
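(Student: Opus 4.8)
The plan is to realise the inequality as a norm bound for a single linear operator and then invoke the Riesz–Thorin interpolation theorem. Fix $d\in[2,\infty)\cap\N$, $L\in\N$ and $N\in\N_0$, set $V=\K^{\I_L^{d-1}}$, and for $f\in V$ let $\tilde f\colon\Z^{d-1}\to\K$ be its $2L$-periodic extension, i.e.\ the unique function with $\tilde f=f$ on $\I_L^{d-1}$ and $\tilde f(x)=\tilde f(x+2L\unit{d}{i})$ for all $i\in\disint{1}{d-1}$. Using the simple random walk of \cref{x01c} I would define the linear operator $\Phi_N\colon V\to V$ by
\begin{align}
(\Phi_Nf)(x)=\E\!\left[\tilde f(\hat S_T+x)\,\middle|\,S_0=(0,N)\right],\qquad x\in\I_L^{d-1},
\end{align}
where $\hat S_T\in\Z^{d-1}$ denotes the first $d-1$ coordinates of $S_T$ (recall $S_T\in\Z^{d-1}\times\{0\}$ $\P$-almost surely). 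First I would check that $\Phi_N$ computes the level-$N$ values of functions in $\mathbbm{H}_{d,L,\geq 0}$. On one hand, for $u\in\mathbbm{H}_{d,L,\geq 0}$ the periodicity of $u$ gives $u(S_T+(x,0))=\tilde f(\hat S_T+x)$ with $f:=u(\cdot,0)\upharpoonright_{\I_L^{d-1}}$, so \cref{z01} yields $\Phi_N\big(u(\cdot,0)\upharpoonright_{\I_L^{d-1}}\big)=u(\cdot,N)\upharpoonright_{\I_L^{d-1}}$. On the other hand — the one point that really needs checking — for every $f\in V$ the function $w(x,y):=\E[\tilde f(\hat S_T+x)\mid S_0=(0,y)]$ lies in $\mathbbm{H}_{d,L,\geq 0}$: it is bounded and $2L$-periodic in the first $d-1$ variables by construction, and discrete harmonic on $\Z^{d-1}\times\N$ by conditioning on the first step $X_1$ and using the Markov property together with translation invariance of the walk in the first $d-1$ coordinates (note $T\geq 1$ off the hyperplane); moreover $w(\cdot,0)=f$ on $\I_L^{d-1}$ and $w(\cdot,N)=\Phi_Nf$.

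Next I would record the two endpoint bounds $\|\Phi_N\|_{L^1(\I_L^{d-1})\to L^1(\I_L^{d-1})}\leq 1$ and $\|\Phi_N\|_{L^\infty(\I_L^{d-1})\to L^\infty(\I_L^{d-1})}\leq 1$. By the previous step $\Phi_Nf=w(\cdot,N)\upharpoonright_{\I_L^{d-1}}$ and $f=w(\cdot,0)\upharpoonright_{\I_L^{d-1}}$ with $w\in\mathbbm{H}_{d,L,\geq 0}$, so \cref{x02} applied to $w$ gives exactly $\|\Phi_Nf\|_{L^1}\leq\|f\|_{L^1}$ and $\|\Phi_Nf\|_{L^\infty}\leq\|f\|_{L^\infty}$. (Equivalently one repeats the short proof of \cref{x02}: since $\hat S_T+\I_L^{d-1}$ contains exactly one representative of every residue class modulo $2L$ in each coordinate, $\P$-almost surely $\sum_{x\in\I_L^{d-1}}|\tilde f(\hat S_T+x)|=\|f\|_{L^1(\I_L^{d-1})}$ and $\max_{x\in\I_L^{d-1}}|\tilde f(\hat S_T+x)|=\|f\|_{L^\infty(\I_L^{d-1})}$, and the bounds follow from Jensen's inequality for $|\cdot|$ and from $\max_x\E[\,\cdot\,]\leq\E[\max_x\,\cdot\,]$.)

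Then I would apply the Riesz–Thorin interpolation theorem to the single, $p$-independent linear operator $\Phi_N$ on the finite counting-measure space $\I_L^{d-1}$; here all the spaces $L^p(\I_L^{d-1})$ coincide as vector spaces, so there is no domain issue, and the complex scalar field required by the theorem is available since $\K=\C$ is permitted. With endpoints $(1,1)$ and $(\infty,\infty)$ this gives $\|\Phi_N\|_{L^p(\I_L^{d-1})\to L^p(\I_L^{d-1})}\leq 1$ for every $p\in[1,\infty]$ (for $\theta\in[0,1]$ the interpolation exponent $p=(1-\theta)^{-1}$ exhausts $[1,\infty]$ and the interpolated operator norm is at most $1^{1-\theta}1^{\theta}=1$). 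Finally, applying this with $f=u(\cdot,0)\upharpoonright_{\I_L^{d-1}}$ for the given $u\in\mathbbm{H}_{d,L,\geq 0}$ and using $\Phi_Nf=u(\cdot,N)\upharpoonright_{\I_L^{d-1}}$ from the first step, I obtain $\|u\|_{L^p(\I_L^{d-1}\times\{N\})}\leq\|u\|_{L^p(\I_L^{d-1}\times\{0\})}$, which is the assertion (the case $N=0$ being trivial).

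No step is genuinely difficult. The main thing to get right is the very first step: phrasing the estimate as a bound for a fixed, $p$-independent linear operator so that Riesz–Thorin legitimately applies, which forces one to produce, for an arbitrary boundary datum $f\in V$, a bounded $2L$-periodic discrete harmonic extension (the function $w$ above) — everything else is then read off from \cref{z01,x02}. The only mildly technical ingredient is the periodicity bookkeeping, namely that translating the fundamental domain $\I_L^{d-1}$ by an integer vector leaves the $\ell^1$- and $\ell^\infty$-norms of the periodic function $\tilde f$ unchanged.
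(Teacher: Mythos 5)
Your proposal is correct and follows essentially the same route as the paper: the $L^1$ and $L^\infty$ endpoint bounds of \cref{x02} (via the random-walk representation of \cref{z01}) combined with Riesz--Thorin interpolation over the complex scalars. Your explicit construction of the $p$-independent Poisson-type operator $\Phi_N$ and the verification that an arbitrary boundary datum admits a bounded periodic harmonic extension simply make precise the operator to which the paper implicitly applies the interpolation theorem.
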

\begin{lemma}[$L^\infty$-estimate]\label{x14} Assume \cref{p01b} and let $d\in [2,\infty)\cap \N$, $L\in\N$, $u\in\mathbbm{H}_{d,L,\geq 0} $, $p\in [1,\infty)$.
Then it holds  that
\begin{align}
\max_{z\in\N_0}\left[
\sum_{y\in\mathbbm{I}^{d-2}_L} \left|u(0,y,z)-\frac{1}{|\mathbbm{I}_L|}\sum_{x\in\mathbbm{I}_L}u(x,y,z)\right|^p\right]^{\!\!\nicefrac{1}{p}}
&\leq 2\max_{x\in \mathbbm{I}_L}\left[ \sum_{y\in \mathbbm{I}_N^{d-2}}
|u(x,y,0)|^ p\right]^{\!\!\nicefrac{1}{p}}.
\end{align}
\end{lemma}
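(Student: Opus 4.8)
The plan is to reduce the statement to the random walk representation of \cref{z01}, after which it follows from the $2L$-periodicity of $u$ together with two applications of Minkowski's inequality. Throughout I would write a point of $\Z^{d-1}\times\N_0$ as $(a,b,c)$ with $a\in\Z$, $b\in\Z^{d-2}$, $c\in\N_0$, and, on the full-probability event $\{T<\infty\}$ provided by \cref{z01}, decompose the hitting point as $S_T=(S_T^{(1)},S_T^{(2)},0)$ with $S_T^{(1)}\in\Z$, $S_T^{(2)}\in\Z^{d-2}$. The first move is to introduce the \emph{centred boundary datum}
\[
w(a,b)=u(a,b,0)-\tfrac{1}{|\I_L|}\sum_{x\in\I_L}u(x,b,0),\qquad (a,b)\in\Z\times\Z^{d-2},
\]
which, like $u$, is $2L$-periodic in each of its coordinates; note also that in the statement one should read $\I_L^{d-2}$ in place of $\I_N^{d-2}$ on the right-hand side.

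Next I would prove the identity
\[
u(0,y,z)-\tfrac{1}{|\I_L|}\sum_{x\in\I_L}u(x,y,z)=\E\!\left[w\big(S_T^{(1)},S_T^{(2)}+y\big)\,\middle|\,S_0=(0,0,z)\right],\qquad y\in\Z^{d-2},\ z\in\N_0.
\]
For the first summand on the left this is exactly \cref{z01a}, since $S_T+(0,y,0)=(S_T^{(1)},S_T^{(2)}+y,0)$. For the averaged summand I would first note that, because $\I_L$ is a full period of the first coordinate, $\sum_{x\in\I_L}u(S_T^{(1)}+x,b,0)=\sum_{x\in\I_L}u(x,b,0)$ for every $b$, and then apply \cref{z01a} and linearity of the conditional expectation. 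Subtracting the two expressions and recognising $w$ gives the identity.

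Finally I would take the norm $\|\cdot\|_{L^p(\I_L^{d-2})}$ in the variable $y$ on both sides and move it inside the conditional expectation by Minkowski's inequality, obtaining for each $z$ the bound $\E[\,\|w(S_T^{(1)},S_T^{(2)}+\cdot)\|_{L^p(\I_L^{d-2})}\mid S_0=(0,0,z)\,]$. Here $2L$-periodicity of $w$ in the middle coordinates makes the translate by $S_T^{(2)}$ irrelevant when summing over the full period $\I_L^{d-2}$, while $2L$-periodicity in the first coordinate gives $\|w(S_T^{(1)},\cdot)\|_{L^p(\I_L^{d-2})}\le\max_{a\in\I_L}\|w(a,\cdot)\|_{L^p(\I_L^{d-2})}$; this majorant is deterministic and free of $z$, so it bounds the left-hand side uniformly in $z\in\N_0$. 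It then remains to estimate $\max_{a\in\I_L}\|w(a,\cdot)\|_{L^p(\I_L^{d-2})}$: by the triangle inequality in $L^p(\I_L^{d-2})$ it is at most $\max_{x\in\I_L}\|u(x,\cdot,0)\|_{L^p(\I_L^{d-2})}+\|\tfrac{1}{|\I_L|}\sum_{x\in\I_L}u(x,\cdot,0)\|_{L^p(\I_L^{d-2})}$, and a further application of Minkowski shows the second term is itself at most $\max_{x\in\I_L}\|u(x,\cdot,0)\|_{L^p(\I_L^{d-2})}$; adding the two contributions produces exactly the factor $2$. I do not expect a genuine obstacle anywhere: the lemma is essentially a soft corollary of \cref{z01}, and the only points demanding care are the bookkeeping of periodic versus unbounded coordinates (so that ranges of summation may be translated freely) and keeping in mind that both Minkowski steps are applied to the $L^p$-norm over $\I_L^{d-2}$ rather than to any norm in the variable $z$.
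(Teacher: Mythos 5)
Your proposal is correct, and it reaches the factor $2$ by a route that differs from the paper's in one substantive respect. The paper splits the left-hand side by the triangle inequality at the outset and treats the two pieces separately: the term $u(0,y,z)$ is bounded via \cref{z01} together with Jensen's inequality, while the layer average $\tfrac{1}{|\I_L|}\sum_{x\in\I_L}u(x,y,z)$ is bounded by Jensen's inequality followed by \cref{x13}, i.e.\ the $L^p$-contraction from level $0$ to level $z$, which itself rests on \cref{x02} and a Riesz--Thorin interpolation. You instead never invoke \cref{x13}: you form the centred boundary datum $w$, represent the \emph{entire} difference as a single conditional expectation $\E[w(S_T^{(1)},S_T^{(2)}+y)\mid S_0=(0,0,z)]$ (the layer average being handled inside the representation by summing the first coordinate over a full period, a partial-sum variant of the argument behind \cref{z01b}), and then conclude with Minkowski's inequality and periodicity, ending with the deterministic majorant $\max_{a\in\I_L}\|w(a,\cdot)\|_{L^p(\I_L^{d-2})}\le 2\max_{x\in\I_L}\|u(x,\cdot,0)\|_{L^p(\I_L^{d-2})}$. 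What each approach buys: the paper's argument is shorter because \cref{x13} is already on the shelf; yours is self-contained modulo \cref{z01}, avoids the interpolation machinery, and produces the uniform-in-$z$ bound directly from one probabilistic identity. Your reading of $\I_N^{d-2}$ as $\I_L^{d-2}$ on the right-hand side is indeed the intended statement (the same slip appears in the paper's own proof), and your use of the integral form of Minkowski for the conditional expectation is legitimate for $p\in[1,\infty)$; the paper's Jensen step is the $p$-th-power analogue and is interchangeable here since the inner bound is deterministic.
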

\begin{proof}[Proof of \cref{x14}]Recall that we use the notation in \cref{x01c}. First, observe that
\cref{z01},
Jensen's inequality, and linearity of $\E$ show
 that
\begin{align}
\begin{aligned} 
&\sup_{z\in \N_0}\left[\sum_{y\in \mathbbm{I}_L^{d-2}} |u(0,y,z)|^p\right]^{ \!\!\nicefrac{1}{p}}
=\sup_{z\in \N_0}\left[\sum_{y\in \mathbbm{I}_L^{d-2}} \left|\mathbbm{E}\left[\Big.u(S_T+(0,y,0)) \middle|S_0=(0,0,z)\right]\bigg.\!\!\right|^p\right]^{ \!\!\nicefrac{1}{p}}\\
&\leq\sup_{z\in \N_0} \left[
 \mathbbm{E}\left[\sum_{y\in \mathbbm{I}_L^{d-2}}\!\left|\big.u(S_T+(0,y,0))\right|^p \middle|S_0=(0,0,z)\right]\rule{0cm}{1.0cm}\!\right]^{ \!\!\nicefrac{1}{p}}
\leq  \max_{x\in \mathbbm{I}_L} \left[ \sum_{y\in \mathbbm{I}_N^{d-2}}
\left|\big.u(x,y,0)\right|^ p\right]^{ \!\!\nicefrac{1}{p}}.
\end{aligned}\label{d125b}
\end{align}
Next, Jensen's inequality and \cref{x13} ensure  that
\begin{align} 
\begin{aligned}
&\sup_{z\in \N_0}\left[\sum_{y\in\mathbbm{I}^{d-2}_L} \left|\frac{1}{|\mathbbm{I}_L|}\sum_{x\in\mathbbm{I}_L}u(x,y,z)\right|^p\right]^{ \!\!\nicefrac{1}{p}}
\leq\sup_{z\in \N_0}
\left[
\frac{1}{|\mathbbm{I}_L|}\sum_{x\in\mathbbm{I}_L}\sum_{y\in\mathbbm{I}^{d-2}_L} |u(x,y,z)|^p\right]^{ \!\!\nicefrac{1}{p}}
\\
& \leq\left[
\frac{1}{|\mathbbm{I}_L|}
\sum_{x\in\mathbbm{I}_L}\sum_{y\in\mathbbm{I}^{d-2}_L} |u(x,y,0)|^p\right]^{ \!\!\nicefrac{1}{p}}
\leq 
\max_{x\in \mathbbm{I}_N} \left[\sum_{y\in \mathbbm{I}_N^{d-2}}
|u(x,y,0)|^ p\right]^{ \!\!\nicefrac{1}{p}}.
\end{aligned}
\label{d125a}
\end{align}
Combining this, \eqref{d125b}, and the triangle inequality completes the proof of \cref{x14}.
\end{proof}
\begin{lemma}\label{z04}
Assume \cref{p01b} and let $d\in [2,\infty)\cap \N$, $L\in\N$, $u\in\mathbbm{H}_{d,L,\geq 0} $, $\rup \in (0,\infty)$, $p\in [1,\infty)$, $N\in \N$ satisfy that $N/L\leq \overline{r}$. Then it holds that
\begin{align}\begin{split}
\left[\sum_{y\in \mathbbm{I}_L^{d-2}}\sum_{z=1}^N \left|\frac{1}{|\mathbbm{I}_L|}\sum_{x\in\mathbbm{I}_L}u(x,y,z)\right|^ p\right]^{\!\!\nicefrac{1}{p}}
 \leq(\rup/2)^{\nicefrac{1}{p}}\left[\sum_{x\in \mathbbm{I}_L}\sum_{y\in \mathbbm{I}_L^{d-2}}|u(x,y,0)|^p\right]^{\!\!\nicefrac{1}{p}}.\end{split}
\end{align}
\end{lemma}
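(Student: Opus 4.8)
The plan is to average out the first coordinate and then feed the result into the contraction estimate \cref{x13}; in particular, no harmonicity of the averaged function will be needed. Fix $d\in[2,\infty)\cap\N$, $L\in\N$, $u\in\mathbbm{H}_{d,L,\geq 0}$, $\rup\in(0,\infty)$, $p\in[1,\infty)$ and $N\in\N$ with $N/L\leq\rup$, and abbreviate $\bar u(y,z)=\tfrac{1}{|\mathbbm{I}_L|}\sum_{x\in\mathbbm{I}_L}u(x,y,z)$ for $(y,z)\in\mathbbm{I}_L^{d-2}\times\N_0$. Since $t\mapsto|t|^p$ is convex on $\R$ and $|\mathbbm{I}_L|=2L$, Jensen's inequality applied to the uniform average over $\mathbbm{I}_L$ gives, for every fixed $z\in\N_0$,
\[
\sum_{y\in\mathbbm{I}_L^{d-2}}|\bar u(y,z)|^p\leq\frac{1}{|\mathbbm{I}_L|}\sum_{x\in\mathbbm{I}_L}\sum_{y\in\mathbbm{I}_L^{d-2}}|u(x,y,z)|^p=\frac{1}{2L}\,\|u\|_{L^p(\mathbbm{I}_L^{d-1}\times\{z\})}^p ,
\]
where the last equality just records the identification $\mathbbm{I}_L^{d-1}=\mathbbm{I}_L\times\mathbbm{I}_L^{d-2}$ with running coordinates $(x,y)$.

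Next I would apply \cref{x13} with $N\defeq z$ (keeping $d,L,u,p$ as they are), which yields $\|u\|_{L^p(\mathbbm{I}_L^{d-1}\times\{z\})}\leq\|u\|_{L^p(\mathbbm{I}_L^{d-1}\times\{0\})}$ for every $z\in\N_0$. Summing the previous display over $z\in\disint{1}{N}$ then gives
\[
\sum_{y\in\mathbbm{I}_L^{d-2}}\sum_{z=1}^N|\bar u(y,z)|^p\leq\frac{N}{2L}\,\|u\|_{L^p(\mathbbm{I}_L^{d-1}\times\{0\})}^p=\frac{N}{2L}\sum_{x\in\mathbbm{I}_L}\sum_{y\in\mathbbm{I}_L^{d-2}}|u(x,y,0)|^p .
\]
Finally, $N/L\leq\rup$ gives $N/(2L)\leq\rup/2$, and, since $t\mapsto t^{1/p}$ is nondecreasing, taking $p$-th roots of both sides produces exactly the claimed bound with constant $(\rup/2)^{1/p}$.

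I do not expect a genuine obstacle here: the lemma is essentially a repackaging of \cref{x13} via Jensen's inequality plus the hypothesis $N/L\leq\rup$. The only points requiring care are the index bookkeeping $\mathbbm{I}_L^{d-1}=\mathbbm{I}_L\times\mathbbm{I}_L^{d-2}$ (so that the right-hand side produced by \cref{x13} matches the right-hand side of \cref{z04}) and checking that the argument is uniform in $d\geq 2$, including the degenerate case $d=2$ in which $\mathbbm{I}_L^{d-2}$ is a one-point index set. Note in particular that, because \cref{x13} is applied to $u$ itself rather than to its horizontal average $\bar u$, one never has to show that $\bar u$ is again discrete harmonic, so no ``$\mathbbm{H}_{d,L,\geq 0}$ is stable under partial averaging'' type statement needs to be established.
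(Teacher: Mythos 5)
Your proof is correct and follows essentially the same route as the paper: Jensen's inequality to move the $x$-average outside the $p$-th power, the layer-wise contraction $\|u\|_{L^p(\mathbbm{I}_L^{d-1}\times\{z\})}\leq\|u\|_{L^p(\mathbbm{I}_L^{d-1}\times\{0\})}$ from \cref{x13} applied at each height $z$, and then the factor $N/(2L)\leq\rup/2$. The only difference is presentational: the paper compresses these steps into a single chain of inequalities without explicitly citing \cref{x13}, while you spell out that invocation, which is fine.
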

\begin{proof}[Proof of \cref{z04}]
Jensen's inequality and the assumption
$N/L\leq \overline{r}$ ensure that 
\begin{align}\begin{split}
\sum_{y\in \mathbbm{I}_L^{d-2}}\sum_{z=1}^N \left|\frac{1}{|\mathbbm{I}_L|}\sum_{x\in\mathbbm{I}_L}u(x,y,z)\right|^ p&\leq
\frac{1}{|\mathbbm{I}_L|}\sum_{x\in \mathbbm{I}_L}\sum_{y\in \mathbbm{I}_L^{d-2}}\sum_{z=1}^N |u(x,y,z)|^p\\
& \leq\frac{N}{2L}\sum_{x\in \mathbbm{I}_L}\sum_{y\in \mathbbm{I}_L^{d-2}}|u(x,y,0)|^p
 \leq\frac{\overline{r}}{2}\sum_{x\in \mathbbm{I}_L}\sum_{y\in \mathbbm{I}_L^{d-2}}|u(x,y,0)|^p.\end{split}
\end{align}
This completes the proof of \cref{z04}.
\end{proof}

\subsection{The Poisson kernel revisited}
\begin{setting}\label{x30}
Assume \cref{x01c},
let $P\colon \mathbbm{N}_0\times \mathbbm{Z}\times \mathbbm{Z}^{d-2}\to\mathbbm{R}$ be the function which satisfies for all $(z, x,y)\in \mathbbm{N}_0\times \mathbbm{Z}\times \mathbbm{Z}^{d-2}$ that
$P_z(x,y)= \mathbbm{P} \left[ S_T= (x,y,0)\middle| S_0=(0,0,z) \right]$, and let $\pconst\in [0,\infty]$ be the real extended number given by
\begin{align}
\pconst=\sup_{z\in \N}
\left[z\sum_{({x},{y})\in\mathbbm{Z}\times\mathbbm{Z}^{d-2}} \left|\big. P_z({x},{y})-P_z({x}-1,{y})\right|\right].\label{x30b}
\end{align}
\end{setting}

\begin{lemma}\label{z02}Assume \cref{x30}.
 Then $\pconst<\infty$.
\end{lemma}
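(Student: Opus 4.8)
The statement to prove is $\pconst<\infty$, i.e.\ that the discrete Poisson kernel $P_z$ for the simple random walk on $\Z^d$ killed on the hyperplane $\Z^{d-1}\times\{0\}$, started at height $z$, satisfies $z\sum_{(x,y)}|P_z(x,y)-P_z(x-1,y)|\le M$ uniformly in $z\in\N$.

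\medskip

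\textbf{The plan.} My strategy is to identify $P_z$ with (a discretization of) the classical half-space Poisson kernel and then exploit that the classical kernel's horizontal derivative is integrable with the correct power of $z$ by scaling, controlling the error via a quantitative local CLT. Concretely, I would proceed as follows. First I would observe that $P_z(\cdot,\cdot)$ is, up to a dimensional constant, the hitting distribution on the hyperplane, which is exactly the object computed in Lawler--Limic~\cite{LL10}: one has an asymptotic formula
\begin{align}
P_z(x,y) = c_d\,\frac{z}{(|(x,y)|^2+z^2)^{d/2}} + \text{error},
\end{align}
valid uniformly once $|(x,y)|^2+z^2$ is large, with an error term that decays one order faster in $|(x,y,z)|$. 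This is precisely the ``approximate the discrete Poisson kernels by the continuum Poisson kernels'' step advertised in the introduction. Second, for the leading term $h_z(x,y):=c_d z(|(x,y)|^2+z^2)^{-d/2}$ I would estimate the discrete horizontal difference $h_z(x,y)-h_z(x-1,y)$ by the mean value theorem against $\sup_{|t|\le 1}|\partial_{x}h_z(x-t,y)|$, and then bound $\sum_{(x,y)}\sup_{|t|\le1}|\partial_x h_z|(x-t,y)$ by the integral $\int_{\R^{d-1}}|\partial_x h_z(\xi)|\,d\xi$ (plus a harmless correction near the origin, where one bounds the difference by $2\sup|h_z|\lesssim z/z^d = z^{1-d}$ times $O(1)$ lattice points). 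A direct scaling computation, substituting $\xi = z\eta$, gives $\int_{\R^{d-1}}|\partial_x h_z(\xi)|\,d\xi = z^{-1}\int_{\R^{d-1}}|\partial_{\eta_1}h_1(\eta)|\,d\eta = C/z$, which yields exactly the factor $z^{-1}$ needed to make $z\cdot(\text{sum})$ bounded. Third I would handle the error term: since it decays like $|(x,y,z)|^{-(d+1)}$ (one better than the leading term), its horizontal difference, summed over $\Z^{d-1}$, is $O(z^{-1})$ as well — here I would split the sum into the near regime $|(x,y)|\lesssim z$, where each of the $O(z^{d-1})$ terms contributes $O(z^{-d})$, and the far regime $|(x,y)|\gtrsim z$, where summability of $|(x,y)|^{-(d+1)}$ against $|\Z^{d-1}|$ gives the decay. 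For small $z$ (finitely many, or a bounded range where the asymptotics are not yet sharp) one checks boundedness by hand, e.g.\ crudely $\sum_{(x,y)}|P_z(x,y)-P_z(x-1,y)|\le 2\sum_{(x,y)}P_z(x,y)\cdot\text{(telescoping)} \le 2$, so $z\cdot(\cdot)$ is bounded on any finite set of $z$.

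\medskip

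\textbf{Main obstacle.} The delicate point is getting a version of the Lawler--Limic asymptotics that is \emph{uniform in $z$} with an \emph{explicit error order} good enough that summing over the entire hyperplane does not lose the decisive power of $z$; in particular one must be careful that the difference $P_z(x,y)-P_z(x-1,y)$ is estimated by the gradient of the \emph{continuum} kernel rather than term-by-term by the kernel itself (which would only give a bound of order $z^{1-d}\cdot z^{d-1}$ in the near regime but would fail in the far regime unless one also uses that consecutive values are genuinely close). A clean way to organize this is to write $P_z(x,y)-P_z(x-1,y) = [h_z(x,y)-h_z(x-1,y)] + [R_z(x,y)-R_z(x-1,y)]$ and to note that for the remainder one may simply use $|R_z(x,y)|+|R_z(x-1,y)|$ when $(x,y)$ is close to the origin and the difference estimate for $R_z$ (again via its gradient bound, which follows from interior-gradient estimates for discrete harmonic functions / the Lawler--Limic gradient asymptotics) in the far regime. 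A secondary technical nuisance is the dependence on $d$: the constant $M$ will depend on $d$, which is permitted since $\pconst$ in \cref{x30} is a single real number for the fixed $d$ from \cref{x01c}. Assembling these three pieces — leading term by exact scaling, error term by a two-regime split, small $z$ by a trivial bound — completes the proof that $\pconst<\infty$.
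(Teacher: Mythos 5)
Your proposal is correct and follows essentially the same route as the paper: apply Theorem 8.1.2 of Lawler--Limic to write $P_z$ as the continuum half-space Poisson kernel $\tfrac{2z}{\omega_d(|x|^2+|y|^2+|z|^2)^{d/2}}$ plus lower-order errors, control the horizontal difference of the leading term by the mean value theorem (gaining the crucial extra factor so that the sum is $O(1/z)$), and control the error contributions by their pointwise decay. The only real difference is that the paper needs neither your near/far split nor any gradient estimate on the remainder: it bounds both error pieces pointwise by $C\,(\xi^2+|y|^2+|z|^2)^{-d/2}$ at the two endpoints $\xi\in\{x-1,x\}$ and concludes everything at once from $\sup_{z\in\N}\big(z\sum_{x\in\Z^{d-1}}(|x|^2+z^2)^{-d/2}\big)<\infty$, so the extra machinery you invoke for the far regime is unnecessary (your own term-by-term bound already suffices there).
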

\begin{proof}[Proof of \cref{z02}]Throughout this proof let $\omega_d\in (0,\infty)$ be the surface area of the $(d-1)$-dimensional unit sphere and denote by $|\cdot|\colon \cup_{n\in\N}\R^n\to [0,\infty)$ the Euclidean norm.
The definition of $P$
 and Theorem~8.1.2 in Lawler and Limic~\cite{LL10} (applied
with
$z=(x,y)\defeq (-x,-y,z)$ for $(z, x,y)\in \mathbbm{N}_0\times \mathbbm{Z}\times \mathbbm{Z}^{d-2}$ and
combined with
the definition of the Poisson kernel at the beginning of Section~8.1.1 in~\cite{LL10})
shows that there exist  $s_1,s_2 \colon \Z\times \Z^{d-2}\times \N_0\to \R$,  $c_1,c_2\in (0,\infty)$
which satisfy for all $(x,y,z)\in \Z\times \Z^{d-2}\times \N$  that
\begin{align}\begin{split}
P_z(x,y)&=
\P\!\left[\big.S_T=(x,y,0)\middle|S_0=(0,0,z)\right]=
\P\!\left[\big.S_T=(0,0,0)\middle|S_0=(-x,-y,z)\right]\\
&=\frac{2z(1+s_1(x,y,z))}{\omega_d\left(|x|^2+|y|^2+|z|^2\right)^{d/2}}+ s_2(x,y,z),
\end{split}\label{x19}
\end{align}
\begin{align}
|s_1(x,y,z)|\leq \frac{c_1z}{(|x|^2+|y|^2+|z|^2)} \quad\text{and}\quad |s_2(x,y,z)|\leq \frac{c_2}{\left(|x|^2+|y|^2+|z|^2\right)^{(d+1)/2}}.\label{x20}
\end{align}
The triangle inequality then implies for all $(x,y,z)\in \Z\times \Z^{d-2}\times \N$  that
\begin{align}\begin{split}
|P_z(x,y)-P_z(x-1,y)|&\leq \frac{2z}{\omega_d}\left|\frac{1}{\left(|x|^2+|y|^2+|z|^2\right)^{d/2}}-
\frac{1}{\left(|x-1|^2+|y|^2+|z|^2\right)^{d/2}}
\right|\\
&\qquad+\sum_{\xi\in \{x,x-1\}}\left[\frac{2z|s_1(\xi,y,z)|}{\omega_d\left(|\xi|^2+|y|^2+|z|^2\right)^{d/2}}
+|s_2(\xi,y,z)|\right].
\end{split}\label{x21}
\end{align}
Next, \eqref{x20} implies that for all $(\xi,y,z)\in \Z\times \Z^{d-2}\times \N$ it holds that
\begin{align}\label{x22}
\begin{split}
&\frac{2z|s_1(\xi,y,z)|}{\omega_d\left(|\xi|^2+|y|^2+|z|^2\right)^{d/2}}+
|s_2(\xi,y,z)|\\
&\leq \frac{2c_1z^2}{\omega_d\left(|\xi|^2+|y|^2+|z|^2\right)^{\frac{d}{2}+1}}+
\frac{c_2}{\left(|\xi|^2+|y|^2+|z|^2\right)^{(d+1)/2}}
\leq \frac{2c_1+\omega_dc_2}{\omega_d\left(|\xi|^2+|y|^2+|z|^2\right)^{d/2}}.
\end{split}
\end{align}
Furthermore, 
the mean value theorem and
the fact that for all $a\in (0,\infty)$
the function $\R\ni \xi \mapsto 1/(\xi^2+a)\in\R$ never attains local maxima on $\R\setminus \Z$
imply for all $(x,y,z)\in \Z\times \Z^{d-2}\times \N$  that
\begin{align}\begin{split}
&2z\left|\frac{1}{\left(x^2+|y|^2+|z|^2\right)^{d/2}}-
\frac{1}{\left((x-1)^2+|y|^2+|z|^2\right)^{d/2}}
\right|\\
&\leq 
2z\sup_{\xi\in [x-1,x]}\left|\frac{d}{d\xi}\left[\left(\xi ^2+|y|^2+|z|^2\right)^{-d/2}\right]\right|\leq 2z\sup_{\xi\in [x-1,x]}\left|-\tfrac{d}{2}\left(\xi ^2+|y|^2+|z|^2\right)^{-\frac{d}{2}-1}\cdot 2\xi\right|\\
&\leq 
\sup_{\xi\in [x-1,x]}
\frac{2d}{(\xi^2+|y|^2+|z|^2)^{\frac{d}{2}}}\leq 
\sum_{\xi\in \{x-1,x\}}
\frac{2d}{(\xi^2+|y|^2+|z|^2)^{\frac{d}{2}}}.
\end{split}
\end{align}
Combining this, \eqref{x21}, and \eqref{x22}
 we obtain 
\begin{align}\begin{split}
\sum_{({x},{y})\in\mathbbm{Z}\times\mathbbm{Z}^{d-2}}
|P_z(x,y)-P_z(x-1,y)|&\leq \sum_{({x},{y})\in\mathbbm{Z}\times\mathbbm{Z}^{d-2}}
\sum_{\xi\in \{x-1,x\}}
\frac{2d+2c_1+\omega_dc_2}{\omega_d\left(\xi^2+|y|^2+|z|^2\right)^{\frac{d}{2}}}<\infty.
\end{split}
\end{align}
The fact that 
$\sup_{z\in\N}\left(z\sum_{x\in \Z^{d-1}}(|x|^2+z^2)^{-d/2}\right) <\infty$ and \eqref{x30b} then show that 
$M<\infty$ and
complete the proof of \cref{z02}.
\end{proof}
\begin{lemma}[weak $L^1$-estimate]\label{x15}Assume \cref{p01b,x30} and let $L\in\N$, $u\in\mathbbm{H}_{d,L,\geq 0} $, $p\in [1,\infty)$. Then it holds for all $t\in(0,\infty)$ that
\begin{align}
\begin{split}
\left|
\left\{z\in \N\colon 
\sum_{y\in \mathbbm{I}_L^{d-2}} \left|u(0,y,z)-\frac{1}{|\mathbbm{I}_L|}\sum_{x\in\mathbbm{I}_L}u(x,y,z)\right|^p>t^p
\right\}\right|\leq 
\frac{ \pconst}{t}
\sum_{x\in\mathbbm{I}_L} \left[ \sum_{y\in\mathbbm{I}_L^{d-2}} |u(x,y,0)|^p\right]^{\!\!\nicefrac{1}{p}}.
\end{split}
\end{align}
\end{lemma}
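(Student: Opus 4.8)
The plan is to reduce \cref{x15} to the pointwise-in-$z$ estimate
\[
  \left(\sum_{y\in\I_L^{d-2}}\left|u(0,y,z)-\frac{1}{|\I_L|}\sum_{x\in\I_L}u(x,y,z)\right|^p\right)^{\!\nicefrac{1}{p}}
  \;\le\;\frac{\pconst}{z}\sum_{x\in\I_L}\left(\sum_{y\in\I_L^{d-2}}|u(x,y,0)|^p\right)^{\!\nicefrac{1}{p}},\qquad z\in\N,
\]
which is strictly stronger than the claim: if $z\in\N$ belongs to the set on the left-hand side of the asserted inequality, then this estimate forces $z<(\pconst/t)\sum_{x\in\I_L}(\sum_{y\in\I_L^{d-2}}|u(x,y,0)|^p)^{\nicefrac{1}{p}}$, and the number of positive integers strictly below a nonnegative real number is at most that number.

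For the pointwise estimate I would first record a periodized Poisson representation. Writing points of $\Z^{d-1}\times\N_0$ in the form $(x,y,z)$ with $x\in\Z$, $y\in\Z^{d-2}$, $z\in\N_0$ as in \cref{x30}, putting $\overline u(y,z):=\frac{1}{|\I_L|}\sum_{x\in\I_L}u(x,y,z)$, and periodizing the Poisson kernel $P$ of \cref{x30} along the first coordinate, $Q_z(a,b):=\sum_{k\in\Z}P_z(a+2Lk,b)$ (finite, being a sum of probabilities of pairwise disjoint events, and $2L$-periodic in $a$), the representation in \cref{z01} together with the $2L$-periodicity of $u$ gives, for all $x\in\Z$, $y\in\Z^{d-2}$, $z\in\N_0$, that $u(x,y,z)=\sum_{a\in\I_L}\sum_{b\in\Z^{d-2}}Q_z(a,b)\,u(a+x,b+y,0)$. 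Averaging this over $x\in\I_L$ and invoking periodicity of $u$ once more gives $\overline u(y,z)=\sum_{b\in\Z^{d-2}}\bigl(\sum_{a\in\I_L}Q_z(a,b)\bigr)\overline u(b+y,0)$, so that, after relabelling the averaging variable,
\[
  u(0,y,z)-\overline u(y,z)=\sum_{a\in\I_L}\sum_{b\in\Z^{d-2}}\widetilde Q_z(a,b)\,u(a,b+y,0),\qquad
  \widetilde Q_z(a,b):=Q_z(a,b)-\frac{1}{|\I_L|}\sum_{a'\in\I_L}Q_z(a',b),
\]
and in particular $\sum_{a\in\I_L}\widetilde Q_z(a,b)=0$ for every $b$; the series converge absolutely because $u$ is bounded and $\sum_{a\in\I_L}\sum_{b\in\Z^{d-2}}|\widetilde Q_z(a,b)|<\infty$ by the next step.

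The crucial point is the kernel bound $\sum_{b\in\Z^{d-2}}|\widetilde Q_z(a,b)|\le\pconst/z$ for all $a\in\I_L$, $z\in\N$. Since $a,a'\in\I_L$, the telescoping $Q_z(a,b)-Q_z(a',b)=\pm\sum_m\bigl(Q_z(m,b)-Q_z(m-1,b)\bigr)$ runs over indices $m$ contained in $\I_L$, whence $|\widetilde Q_z(a,b)|\le\frac{1}{|\I_L|}\sum_{a'\in\I_L}|Q_z(a,b)-Q_z(a',b)|\le\sum_{m\in\I_L}|Q_z(m,b)-Q_z(m-1,b)|$. Unfolding the periodization, and using that $(m,k)\mapsto m+2Lk$ maps $\I_L\times\Z$ bijectively onto $\Z$,
\[
  \sum_{b\in\Z^{d-2}}\sum_{m\in\I_L}|Q_z(m,b)-Q_z(m-1,b)|
  \le\sum_{b\in\Z^{d-2}}\sum_{m\in\I_L}\sum_{k\in\Z}|P_z(m+2Lk,b)-P_z(m+2Lk-1,b)|
  =\!\!\sum_{(n,b)\in\Z\times\Z^{d-2}}\!\!|P_z(n,b)-P_z(n-1,b)|,
\]
which is $\le\pconst/z$ by the definition \eqref{x30b} of $\pconst$ (finite by \cref{z02}). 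Then the triangle inequality for $\|\cdot\|_{L^p(\I_L^{d-2})}$ applied to the representation of $u(0,\cdot,z)-\overline u(\cdot,z)$, combined with $\|u(a,\cdot+b,0)\|_{L^p(\I_L^{d-2})}=\|u(a,\cdot,0)\|_{L^p(\I_L^{d-2})}$ (periodicity of $u$) and the kernel bound, yields the pointwise estimate.

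The main obstacle is the kernel bound: one must be careful with the periodization and telescoping at the boundary of $\I_L$ (for instance $m=-L+1$, where $m-1=-L$ is identified with $L$) and verify the bijection $(m,k)\mapsto m+2Lk$ from $\I_L\times\Z$ onto $\Z$, so that the final constant is exactly $\pconst$. The reason $\pconst$ (and not $2\pconst$) is attainable is that one recenters the \emph{kernel} $Q_z$ rather than the boundary data $u$: the telescoping of $Q_z$ then stays inside a single period $\I_L$ and unfolds cleanly to the full-lattice first difference of $P$ defining $\pconst$, whereas recentering $u$ would introduce a spurious factor $2$.
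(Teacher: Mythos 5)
Your proposal is correct and takes essentially the same route as the paper: both reduce the claim to the pointwise bound $\bigl(\sum_{y\in\I_L^{d-2}}|u(0,y,z)-\tfrac{1}{|\I_L|}\sum_{x\in\I_L}u(x,y,z)|^p\bigr)^{\nicefrac{1}{p}}\le\tfrac{\pconst}{z}\sum_{x\in\I_L}\bigl(\sum_{y\in\I_L^{d-2}}|u(x,y,0)|^p\bigr)^{\nicefrac{1}{p}}$, obtained from telescoping, periodicity, Minkowski's inequality, and the definition \eqref{x30b} of $\pconst$ (finite by \cref{z02}), followed by the same counting of the integers $z$ below $\pconst/t$ times the boundary norm. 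The only difference is bookkeeping: the paper telescopes $u$ itself at height $z$ and then convolves the resulting tangential differences with $\disDiff{-}{x}P_z$, whereas you periodize and recenter the kernel and telescope it within one period; both yield exactly the constant $\pconst$, so your closing heuristic that the alternative would lose a factor $2$ is inaccurate as stated, but it is only commentary and does not affect the correctness of your argument.
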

\begin{proof}[Proof of \cref{x15}]Throughout the proof  let 
\begin{align}
A\in\R,\quad 
f\colon \N\to\ \R,\quad 
\disDiff{+}{x} u\colon \Z\times \Z^{d-2}\times \N_0\to \K,\quad 
\disDiff{-}{x}P_z\colon  \mathbbm{Z}\times \mathbbm{Z}^{d-2}\to\mathbbm{R}\;(z\in \N_0)
\end{align}
which satisfy for all $x\in \Z$, $y\in\Z^{d-2}$, $z\in \N_0$ that
\begin{align}\begin{split}
A= \sum_{x\in\mathbbm{I}_L} \left[ \sum_{y\in\mathbbm{I}_L^{d-2}} |u(x,y,0)|^p\right]^{\!\!\nicefrac{1}{p}},\quad 
f(z)= \left[\sum_{y\in \mathbbm{I}_L^{d-2}} \left|u(0,y,z)-\frac{1}{|\mathbbm{I}_L|}\sum_{x\in\mathbbm{I}_L}u(x,y,z)\right|^p\right]^{\!\!\nicefrac{1}{p}},\label{x17b}
\end{split}\end{align}
\begin{align}\label{x17}
(\disDiff{+}{x}u)(x,y,z)=u(x+1,y,z)-u(x,y,z)
\end{align}
and 
\begin{align}\label{x18}
(\disDiff{-}{x}P_z)(x,y)=P_z(x-1,y)-P_z(x,y).
\end{align}
The triangle inequality and a telescope sum argument then show for all $y\in \mathbbm{I}_L^{d-2}$, $z\in \N$  that
\begin{align}\begin{split}    
&\left|u(0,y,z)- \frac{1}{|\mathbbm{I}_L|}\sum_{x\in \mathbbm{I}_L} u(x,y,z)\right|\leq \frac{1}{|\mathbbm{I}_L|}\sum_{x\in \mathbbm{I}_L}|u(0,y,z)-u(x,y,z)|\\
&
\leq \frac{1}{|\mathbbm{I}_L|}\sum_{x\in \mathbbm{I}_L}\sum_{x'\in\mathbbm{I}_L}\left|u(x'+1,y,z)-u(x',y,z)\right| 
= \sum_{x\in\mathbbm{I}_L}\left|(\disDiff{+}{x}u)(x,y,z)\right|.\end{split}
\end{align}
This, \eqref{x17b}, and the triangle inequality imply
for all $z\in \N$ that
\begin{align} \begin{split}   
f(z) \leq 
\left[\sum_{y\in\mathbbm{I}^{d-2}_L} \left[\sum_{x\in\mathbbm{I}_L}\left|(\disDiff{+}{x}u)(x,y,z)\right|\right]^p\right]^{\!\!\nicefrac{1}{p}}
&\leq 
\sum_{x\in \mathbbm{I}_L} \left[
\sum_{y\in\mathbbm{I}^{d-2}_L}
\left|(\disDiff{+}{x}u)(x,y,z)\right|^p\right]^{\!\!\nicefrac{1}{p}}\!\!.\end{split}\label{x16b}
\end{align}
Furthermore, \cref{z01} shows for all $(x,y,z)\in\mathbbm{Z}\times\mathbbm{Z}^{d-2}\times (\disint{1}{N}) $ that
\begin{align}\begin{split} 
u(x,y,z)&= \mathbbm{E} \!\left[u(S_T+(x,y,0))\middle| S_0=(0,z)\Big.\right]
\\
&=
\sum_{(\tilde{x},\tilde{y})\in \Z\times \Z^{d-2}}
\P\!\left[ \Big.S_T=(\tilde{x},\tilde{y},0)\middle| S_0=(0,z)\right]u(x+\tilde{x},y+\tilde{y},0)
\\
&= \sum_{(\tilde{x},\tilde{y})\in \Z\times \Z^{d-2}}
P_z(\tilde{x},\tilde{y})
u(x+\tilde{x},y+\tilde{y},0).\end{split}
\label{x16}
\end{align}
The substitution $\Z\ni\tilde{x}\mapsto \tilde{x}-1\in \Z$ then
proves for all $(x,y,z)\in\mathbbm{Z}\times\mathbbm{Z}^{d-2}\times (\disint{1}{N}) $ that
\begin{align}\begin{split} 
u(x+1,y,z)&= \sum_{(\tilde{x},\tilde{y})\in \Z\times \Z^{d-2}}
P_z(\tilde{x},\tilde{y})
u(x+1+\tilde{x},y+\tilde{y},0)\\
&=
\sum_{(\tilde{x},\tilde{y})\in \Z\times \Z^{d-2}}
P_z(\tilde{x}-1,\tilde{y})
u(x+\tilde{x},y+\tilde{y},0).\end{split}
\end{align}
This, \eqref{x16}, \eqref{x17}, \eqref{x18} yield for all $(x,y,z)\in\mathbbm{Z}\times\mathbbm{Z}^{d-2}\times (\disint{1}{N}) $  that
\begin{align}
(\disDiff{+}{x}u)(x,y,z)
 =\sum_{(\tilde{x},\tilde{y})\in \mathbbm{Z}^{d-1}}(\disDiff{-}{x}P_z)(\tilde{x},\tilde{y}) u(\tilde{x}+x,\tilde{y}+y,0).
\end{align}
This,  the triangle inequality, and the fact that
\begin{align}
\forall\, (\tilde{x}, \tilde{y})\in \Z\times  \I_L^{d-2}\colon \quad 
\sum_{y\in\mathbbm{I}_L^{d-2}} \left|u(x,y,0)\right|^p=\sum_{y\in\mathbbm{I}_L^{d-2}} \left|u(x+\tilde{x},y+\tilde{y},0)\right|^p,
\end{align}
which is
a consequence of the periodicity,   \eqref{x18},
\eqref{x17}, and \eqref{x30b}
 imply for all $z\in\N$ that
\begin{align}\begin{split} 
&f(z)\\&\leq 
\sum_{x\in \mathbbm{I}_L} \left(
\sum_{y\in\mathbbm{I}^{d-2}_L}
\left|(\disDiff{-}{x}u)(x,y,z)\right|^p\right)^{\!\!\nicefrac{1}{p}}=
\sum_{x\in\mathbbm{I}_L} \left(\sum_{y\in\mathbbm{I}_L^{d-2}} \left|\sum_{(\tilde{x},\tilde{y})\in\mathbbm{Z}\times \mathbbm{Z}^{d-2}}(\disDiff{-}{x} P)(z,\tilde{x},\tilde{y}) u(x+\tilde{x},y+\tilde{y})\right|^ p\right)^{\!\!\nicefrac{1}{p}}
\\
&\leq
\sum_{x\in\mathbbm{I}_L}
\sum_{\tilde{x}\in\mathbbm{Z}}
\sum_{\tilde{y}\in  \mathbbm{Z}^{d-2}}  \left(\sum_{y\in\mathbbm{I}_L^{d-2}}\big|(\disDiff{-}{x} P)(z,\tilde{x},\tilde{y})u(x+\tilde{x},y+\tilde{y},0)\big|^p\right)^{\!\!\nicefrac{1}{p}}
\\
&=
\sum_{\tilde{x}\in\mathbbm{Z}}\left[
\sum_{\tilde{y}\in  \mathbbm{Z}^{d-2}}\left(\left|(\disDiff{-}{x} P)(z,\tilde{x},\tilde{y})\right|
\sum_{x\in\mathbbm{I}_L}  \left[\sum_{y\in\mathbbm{I}_L^{d-2}} \left|u(x+\tilde{x},y+\tilde{y},0)\right|^p\right]^{\!\!\nicefrac{1}{p}}\right)\right]\\
&=\left[\sum_{(\tilde{x},\tilde{y})\in\mathbbm{Z}\times\mathbbm{Z}^{d-2}} \left|\big.P(z,\tilde{x},\tilde{y})-P(z,\tilde{x}-1,\tilde{y})\right|\right]
\sum_{x\in\mathbbm{I}_L} \left[ \sum_{y\in\mathbbm{I}_L^{d-2}} |u(x,y,0)|^p\right]^{\!\!\nicefrac{1}{p}}\leq \frac{\pconst A}{z}
.
\end{split}
\end{align}
This shows for all $t\in (0,\infty)$ that
\begin{align}
t\left|\left\{z\in\N\colon |f(z)|>t\right\}\right|\leq 
t\left|\left\{z\in\N\colon |\tfrac{\pconst A}{z} |>t\right\}\right| 
=t
\left|\left\{z\in\N\colon |\tfrac{\pconst A}{t} |>z\right\}\right| 
\leq \pconst A.
\end{align}
This and \eqref{x17b} complete the proof of \cref{x15}.
\end{proof}
\begin{corollary}\label{s32}Assume \cref{p01b,x30}, let $L\in\N$, $u\in\mathbbm{H}_{d,L,\geq 0} $,
 $\rup \in (0,\infty)$, $p\in (1,\infty)$, $N\in \N$, and assume that $N/L\leq \overline{r}$.
Then it holds that
\begin{align}\begin{split}
\left[
\sum_{z\in \N}^{}
\sum_{y\in \mathbbm{I}_L^{d-2}} \left|u(0,y,z)-\frac{1}{|\mathbbm{I}_L|}\sum_{x\in\mathbbm{I}_L}u(x,y,z)\right|^p\right]^{\!\!\nicefrac{1}{p}}
\leq 2\left(\tfrac{Mp}{p-1}\right)^{\!\!\nicefrac{1}{p}}
2^{1-\frac{1}{p}}
\left[ 
\sum_{x\in\mathbbm{I}_L} \sum_{y\in\mathbbm{I}_L^{d-2}} |u(x,y,0)|^p\right]^{\!\!\nicefrac{1}{p}}
\end{split}\label{s33}
\end{align}
and 
\begin{align}\begin{split}
\left[
\sum_{z\in \N}^{}
\sum_{y\in \mathbbm{I}_L^{d-2}} \left|u(0,y,z)\right|^p\right]^{\!\!\nicefrac{1}{p}}
\leq \left(2\left(\tfrac{Mp}{p-1}\right)^{\!\!\nicefrac{1}{p}}
2^{1-\frac{1}{p}}+(\rup/2)^{\nicefrac{1}{p}}\right)
\left[ 
\sum_{x\in\mathbbm{I}_L} \sum_{y\in\mathbbm{I}_L^{d-2}} |u(x,y,0)|^p\right]^{\!\!\nicefrac{1}{p}}.
\end{split}\label{s34}\end{align}
\end{corollary}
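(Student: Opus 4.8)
The plan is to read \eqref{s33} as a Marcinkiewicz-type interpolation estimate between the endpoint bounds of \cref{x14} and \cref{x15}, and then to obtain \eqref{s34} from \eqref{s33} and \cref{z04} by the triangle inequality. Throughout, for $u\in\mathbbm{H}_{d,L,\geq 0}$ write $f(z)=[\sum_{y\in\I_L^{d-2}}|u(0,y,z)-\tfrac1{|\I_L|}\sum_{x\in\I_L}u(x,y,z)|^p]^{1/p}$ for $z\in\N$, and for the boundary datum $b=u(\cdot,\cdot,0)$ (extended $2L$-periodically) set $g(x)=[\sum_{y\in\I_L^{d-2}}|b(x,y)|^p]^{1/p}$ for $x\in\I_L$. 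By the Poisson representation of \cref{z01}, $u(\cdot,\cdot,z)$ depends linearly on $b$, so $b\mapsto f$ is sublinear; \cref{x14} says it maps into $L^\infty(\N_0)$ with norm at most $2\|g\|_{\ell^\infty(\I_L)}$, and \cref{x15} says it maps into weak-$L^1(\N)$ with constant at most $\pconst\|g\|_{\ell^1(\I_L)}$. Interpolating at the exponent for which the relevant mixed norm on $b$ becomes $\ell^p(\I_L\times\I_L^{d-2})$ should produce exactly the right-hand side of \eqref{s33}.

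I would run the distribution-function argument explicitly. Fix $t>0$ and split $b=b_1+b_2$, where $b_1$ keeps the slices $b(x,\cdot)$ with $g(x)>t/4$ and $b_2$ keeps the remaining ones, and let $U_1,U_2$ be the discrete harmonic extensions of $b_1,b_2$ obtained by convolving with the kernels $P_z$ of \cref{x30}; these are bounded, $2L$-periodic and discrete harmonic, hence belong to $\mathbbm{H}_{d,L,\geq 0}$, so \cref{x14,x15} apply to each of them. Since $u=U_1+U_2$ and the map ``subtract the $x$-average and evaluate at $x=0$'' is linear, Minkowski's inequality in $\ell^p_y$ gives $f\le f_1+f_2$ with $f_i$ the analogue of $f$ for $U_i$. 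By \cref{x14} applied to $U_2$, $\|f_2\|_{L^\infty(\N_0)}\le 2\max_{x\in\I_L}[\sum_{y\in\I_L^{d-2}}|b_2(x,y)|^p]^{1/p}\le 2\cdot(t/4)=t/2$; hence $\{z\in\N:f(z)>t\}\subseteq\{z\in\N:f_1(z)>t/2\}$, and \cref{x15} applied to $U_1$ bounds the cardinality of the latter set by $\tfrac{2\pconst}{t}\sum_{x\in\I_L:\,g(x)>t/4}g(x)$.

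Feeding this into the layer-cake identity $\sum_{z\in\N}f(z)^p=p\int_0^\infty t^{p-1}|\{z\in\N:f(z)>t\}|\,dt$, interchanging the sum over $x$ with the $t$-integral, and using $\int_0^{4g(x)}t^{p-2}\,dt=\tfrac{(4g(x))^{p-1}}{p-1}$ (valid since $p>1$), one gets $\sum_{z\in\N}f(z)^p\le\tfrac{2\pconst p\,4^{p-1}}{p-1}\sum_{x\in\I_L}g(x)^p$. Taking $p$-th roots, noting $(4^{p-1})^{1/p}=4^{1-1/p}$ and $2^{1/p}4^{1-1/p}=2\cdot 2^{1-1/p}$, and recalling $\sum_{x\in\I_L}g(x)^p=\sum_{x\in\I_L}\sum_{y\in\I_L^{d-2}}|u(x,y,0)|^p$, gives precisely \eqref{s33}. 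For \eqref{s34} I would apply the triangle inequality in $\ell^p$ over the pairs $(z,y)$ to $u(0,y,z)=(u(0,y,z)-\tfrac1{|\I_L|}\sum_{x\in\I_L}u(x,y,z))+\tfrac1{|\I_L|}\sum_{x\in\I_L}u(x,y,z)$: the oscillation term is estimated by \eqref{s33}, while the average term is estimated by $(\rup/2)^{1/p}[\sum_{x\in\I_L}\sum_{y\in\I_L^{d-2}}|u(x,y,0)|^p]^{1/p}$ via \cref{z04} (which uses $N/L\le\rup$ and effectively confines $z$ to $\{1,\dots,N\}$); summing the two constants yields \eqref{s34}.

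The main obstacle is bookkeeping rather than a new idea: the splitting must be performed on the boundary datum (so that the two pieces remain bounded, discrete harmonic and periodic, which is what lets \cref{x14,x15} be applied to them), and the threshold must be calibrated — here $g(x)>t/4$, so that the $L^\infty$-piece is absorbed into $t/2$ — so that the surviving constants multiply out to exactly $2(\pconst p/(p-1))^{1/p}2^{1-1/p}$. Everything else is the standard layer-cake computation.
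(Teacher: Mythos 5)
Your proposal is correct and takes essentially the same route as the paper: \eqref{s33} is obtained by Marcinkiewicz-type interpolation between the $L^\infty$-bound of \cref{x14} and the weak $L^1$-bound of \cref{x15}, and \eqref{s34} follows from \eqref{s33}, \cref{z04}, and the triangle inequality. The only difference is presentational: the paper invokes the interpolation lemma \cref{r21b}, whereas you run the splitting of the boundary datum and the layer-cake computation explicitly, which reproduces exactly the stated constant $2\left(\tfrac{\pconst p}{p-1}\right)^{\nicefrac{1}{p}}2^{1-\nicefrac{1}{p}}$.
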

\begin{proof}[Proof of \cref{s32}]
An interpolation argument (see, e.g., \cref{r21b})) and \cref{x14,x15} imply \eqref{s33}. Next, the triangle inequality, 
\eqref{s33}, and \cref{z04} prove \eqref{s34}. This completes the proof of \cref{s32}.
\end{proof}
\section{Fourier analysis for harmonic functions on the haft space}\label{v03}
\subsection{Main result}

In this section we continue considering harmonic functions on the discrete haft space with periodic boundary conditions,
however, from the viewpoint of Fourier analysis.
The main results are summarized in
\cref{t01} below, whose main part is illustrated by \cref{d16}. 
As Bella, Fehrman, and Otto~\cite{BFO18} we call
the first inequality in \eqref{t01b} the Dirichlet case and
the second inequality in \eqref{t01b} the Neumann case.
In order to show \cref{t01} we combine \cref{x12e} and, in particular, \cref{s28} (the Neumann case) and \cref{s29} (the Dirichlet case).

As in the proof in the continuum case~\cite{BFO18} our proof is 
based on 
Marcinkiewicz-type
 multiplier theorems 
and
the observation that the tangential derivatives and the normal derivatives of harmonic functions on the  haft space are related by mean of Fourier multipliers.
After having finished his dissertation~\cite{Ngu17}, 
the author realized that for the argument with telescope sequences (see the paragraph below inequality (88) in \cite{BFO18})
it suffices to consider haft spaces 
instead of   strips. The calculations here are therefore much simpler than that in \cite{Ngu17}.  However, we still have to overcome some tedious calculations with the discreteness when estimating the higher derivatives of the multipliers. Another issue is to adapt carefully the paragraph between (83) and (84) in \cite{BFO18} into the discrete case for which we have to work with the dyadic sets, see \cref{d133c}.

\begin{figure}\center
\includegraphics[]{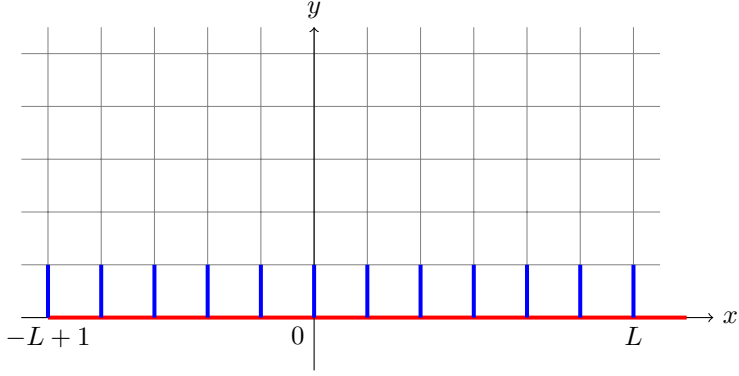}\caption{Estimate \eqref{t01b} in  \cref{t01} bounds by means of $L^p$-norms, $p\in(1,\infty)$, the derivatives with respect to the blue edges by that with respect to the red edges and vice verse.}
\label{d16}
\end{figure}

\begin{corollary}\label{t01}
For every $L\in\N $, $d\in [2,\infty)\cap \N$
let $\I_L$  be the set given by $\I_L=\disint{-L+1}{L} $ and let
$\mathbbm{H}_{d,L,\geq 0}$ be the set of all bounded functions
$u\colon \Z^{d-1}\times \N_0\to \mathbbm{R}$ with the properties that
\begin{enumerate}[a)]
\item 
it holds
for all $x\in \Z^{d-1}\times \N_0$, $  i\in \disint{1}{d-1}$ that
$u(x)=u(x+2L\unit{d}{i})$ and 
\item 
it holds for all $x\in \Z^{d-1}\times \N$
that $(\Laplace u)(x)=0$.
\end{enumerate}
For every $L\in\N $, $d\in [2,\infty)\cap \N$, $u\in \mathbbm{H}_{d,L,\geq 0}$ let
$ \disDiff{+}{i}u\colon 
\Z^{d-1}\times\N_0
\to  \R$, $i\in \disint{1}{d}$,
be the functions which satisfy for all
$i\in \disint{1}{d}$, $x\in \Z^{d-1}\times\N_0$ 
that $
(\disDiff{+}{i}u)(x)= u\!\left(x+\unit{d}{i}\right)-u(x)$.
Then there exist  functions
$C_1\colon ([2,\infty)\cap\N)\times(1,\infty)\times (0,\infty)\to (0,1)$,
$C_2\colon ([2,\infty)\cap\N)\times(1,\infty)\to (0,\infty)$ such that
\begin{enumerate}[i)]
\item \label{t01a}
it holds
for all $d\in [2,\infty)\cap\N$, $p\in (1,\infty)$, $N,L\in\N$,
$u\in \mathbbm{H}_{d,L,\geq 0}$ with $N/L\geq \rdown$ and $\sum_{x\in \I_L^{d-1}}u(x)=0$ that
$\left\|u\right\|_{L^p(\I_L^{d-1}\times\{N\})}\leq C_1(d,p,\rdown)\left\|u\right\|_{L^p(\I_L^{d-1}\times\{0\})}$ and
\item \label{t01c}it holds
for all $d\in [2,\infty)\cap\N$, $p\in (1,\infty)$, $L\in\N$,
$u\in \mathbbm{H}_{d,L,\geq 0}$
that
\begin{align}\label{t01b}
\frac{1}{C_2(d,p)}\left\|\disDiff{+}{d}u\right\|_{L^p(\I_L^{d-1}\times\{0\}) }\leq \left[\sum_{i=1}^{d-1}
\left\|\disDiff{+}{i} u\right\|_{L^p(\I_L^{d-1}\times\{0\}) }\right]\leq C_2(d,p)
\left\|\disDiff{+}{d} u\right\|_{L^p(\I_L^{d-1}\times\{0\}) }.
\end{align}
\end{enumerate}
\end{corollary}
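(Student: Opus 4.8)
The plan is to realize both assertions via Fourier analysis on the discrete torus $(\mathbbm{Z}/2L\mathbbm{Z})^{d-1}$ in the tangential variables. For $u\in\mathbbm{H}_{d,L,\geq 0}$ the periodicity in the first $d-1$ coordinates lets us expand $u(\cdot,\cdot,z)$ in a discrete Fourier series $u(x,z)=\sum_{\xi}\hat{u}(\xi,z)\,e^{\ima\pi x\cdot\xi/L}$ with $\xi$ ranging over $\I_L^{d-1}$ (or an equivalent fundamental domain). The discrete harmonic equation $\Laplace u=0$ on $\mathbbm{Z}^{d-1}\times\mathbbm{N}$ becomes, for each frequency $\xi$, a second-order linear recurrence in $z$ whose bounded solution is $\hat{u}(\xi,z)=\lambda(\xi)^{z}\hat{u}(\xi,0)$, where $\lambda(\xi)\in(0,1]$ is the smaller root of $\lambda+\lambda^{-1}=2d-2\sum_{j=1}^{d-1}\cos(\pi\xi_j/L)$; note $\lambda(0)=1$ and $\lambda(\xi)<1$ for $\xi\neq0$. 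Thus ``advance by one layer'' and ``apply $\disDiff{+}{d}$ vs.\ $\disDiff{+}{i}$'' are all Fourier multiplier operators on the torus, with symbols $\lambda(\xi)$, $\lambda(\xi)-1$, and $e^{\ima\pi\xi_i/L}-1$ respectively. The whole game is then to bound these multiplier operators on $L^p$ of the torus uniformly in $L$, which is exactly what a discrete Marcinkiewicz-type multiplier theorem delivers, provided we verify the requisite derivative (or, in the discrete setting, finite-difference) bounds on the symbols, uniformly in $L$.

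For part~\ref{t01c}, the two inequalities in \eqref{t01b} amount to showing that the operators with symbols $\dfrac{\lambda(\xi)-1}{e^{\ima\pi\xi_i/L}-1}$ and $\dfrac{e^{\ima\pi\xi_i/L}-1}{\lambda(\xi)-1}$ (suitably summed over $i$, and with the $\xi=0$ mode handled separately since both numerator and denominator vanish there — on that mode $\disDiff{+}{i}u$ and $\disDiff{+}{d}u$ all integrate to a controlled quantity, cf.\ \cref{z01}\ref{z01b}) are bounded Fourier multipliers on $L^p(\I_L^{d-1})$ with constants depending only on $d,p$. The key elementary fact is that $1-\lambda(\xi)$ is comparable, up to constants depending only on $d$, to $\sum_{j=1}^{d-1}|e^{\ima\pi\xi_j/L}-1|^2\big/\big(\sum_{j}|e^{\ima\pi\xi_j/L}-1|^2\big)^{1/2}=\big(\sum_j|e^{\ima\pi\xi_j/L}-1|^2\big)^{1/2}$, i.e.\ to the Euclidean norm of the discrete tangential gradient symbol; this is the discrete analogue of $1-e^{-|\xi|}\sim|\xi|$ near $0$ and $1-\lambda\sim 1$ away from $0$, and it makes both quotient symbols bounded and, after differentiating, Marcinkiewicz-admissible. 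To feed the multiplier theorem one must estimate mixed finite differences of these symbols in each coordinate direction; rather than grinding through these by hand, I would follow the excerpt's stated strategy of extending the symbols holomorphically in the variables $\zeta_j=e^{\ima\pi\xi_j/L}$ and using Cauchy's integral formula to convert derivative bounds into sup bounds on slightly enlarged polydiscs (see \cref{d133c} and the surrounding discussion), which also automatically produces the $L$-uniformity.

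For part~\ref{t01a}, given $N/L\geq\rdown$ and $\sum_{x}u(x,0)=0$, the zero frequency is absent, so $\|u(\cdot,N)\|_{L^p}$ is the multiplier operator with symbol $\lambda(\xi)^N$ (restricted to $\xi\neq0$) applied to $u(\cdot,0)$. Since $\lambda(\xi)\leq\lambda_{\max}<1$ uniformly once $\xi\neq0$? — no: $\lambda(\xi)$ can be arbitrarily close to $1$ as the lowest nonzero frequency $\sim 1/L$ is taken, so this needs more care. The correct route, matching \cref{s32} and \cref{x13}, is: from \cref{x13} one already has the $L^p$ contraction $\|u(\cdot,z)\|_{L^p}\le\|u(\cdot,0)\|_{L^p}$ for every $z$; to get a strict contraction $C_1<1$ when $N/L\geq\rdown$ one combines this with the quantitative decay coming from \cref{s32} (which controls $\sum_{z\geq1}\|u(0,\cdot,z)-\text{average}\|^p$, equivalently the ``vertical'' $\ell^p$ mass of the non-constant part) together with the mean-zero hypothesis, via a telescoping/iteration argument over $\lfloor N/\rdown\rfloor$-many layers: each block of roughly $\rdown L$ layers loses a definite fraction of the $L^p$ norm, so after $N$ layers the norm is multiplied by something $<1$ depending only on $d,p,\rdown$.

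The main obstacle I expect is part~\ref{t01c}'s verification that the quotient symbols satisfy the hypotheses of the discrete Marcinkiewicz multiplier theorem uniformly in $L$ — in particular controlling the mixed finite differences of $1/(1-\lambda(\xi))$ near the origin, where $1-\lambda(\xi)$ degenerates like $|\xi|$ and naive differentiation produces spurious blow-up; the Cauchy-formula device of \cref{d133c} is precisely what is needed to tame this, but setting it up correctly (choosing the polydisc radii so that $\lambda$ stays holomorphic and bounded away from its branch points, again uniformly in $L$) is the delicate point. Assembling the pieces, \cref{x12e} packages the multiplier-theorem input, \cref{s28} gives the Neumann direction (bounding tangential by normal) and \cref{s29} the Dirichlet direction (bounding normal by tangential), and combining these with the mean-zero decay estimate yields both \ref{t01a} and \ref{t01c}, completing the proof.
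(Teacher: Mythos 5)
Your framework for part ii) (Fourier series in the tangential variables, propagation factor $\lambda(\xi)^z$, quotient symbols, discrete Marcinkiewicz theorem, Cauchy's formula for derivative bounds) is the same as the paper's, but your treatment of the Dirichlet direction has a genuine gap. The claim that ``both quotient symbols'' are bounded is false for a fixed $i$: with your normalization $|\lambda(\xi)-1|\asymp |t|_\infty$ (here $t=\pi\xi/L$) while $|e^{\ima \pi\xi_i/L}-1|\asymp|t_i|$, so the symbol relating $\disDiff{+}{d}u$ to $\disDiff{+}{i}u$ is of size $|t|_\infty/|t_i|$ and blows up on the region $|t_i|\ll|t|_\infty$; ``suitably summed over $i$'' is precisely where the work lies and is left unexplained. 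The paper's mechanism is a blockwise selection: on each dyadic rectangle indexed by $k$ one uses only the coordinate $J(k)$ of maximal $|k_j|$, for which $|t_{J(k)}|\geq\tfrac12|t|_\infty$ on that rectangle (\cref{z16}, \cref{s26}), and this requires the extension of the Marcinkiewicz theorem to a family of multipliers, one per dyadic block, via the local-variation calculus (\cref{d133}, \cref{y31}, \cref{z21}). Note also that you misattribute the content of \cref{d133c}: that subsection is this several-multiplier/dyadic-block device, not a Cauchy-formula device; Cauchy's formula enters separately (\cref{y11}, \cref{y18}), only to bound derivatives of $h\circ\lambda$ in the single complex variable $z=\lambda(t)$ on the circles $\mathcal{C}(\lambda(t))$, not via polydiscs in $e^{\ima\pi\xi_j/L}$. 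Without either the $J(k)$ selection plus \cref{d133}, or an explicit bounded decomposition such as $m_i(t)=f(\lambda(t))\,\overline{(e^{-\ima t_i}-1)}/\sum_j|e^{-\ima t_j}-1|^2$ together with its mixed-difference estimates, your argument for the second inequality in \eqref{t01b} does not close. (Your Neumann-direction symbol $(e^{\ima t_i}-1)/(\lambda-1)$ is indeed bounded and matches \cref{y23}/\cref{s28}.)

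Part i) as you propose it also does not work. \cref{s32} controls only $\sum_{z\geq 1}\sum_y |u(0,y,z)-\tfrac{1}{|\I_L|}\sum_x u(x,y,z)|^p$, i.e.\ a deviation from the \emph{first-coordinate average}, summed over all layers; combined with the non-expansiveness of \cref{x13} this does not give that ``each block of roughly $\rdown L$ layers loses a definite fraction of the $L^p$ norm'' -- at best it shows that some layer in each block is close to its $x_1$-average, which is not geometric decay of $\|u(\cdot,z)\|_{L^p}$ and does not use the mean-zero hypothesis in any quantitative way. The paper's route is different and you should adopt it: for mean-zero boundary data the zero frequency is absent, Plancherel gives the strict $L^2$ contraction $\|u(\cdot,N)\|_{L^2}\leq (1+\sqrt{c}\,\rdown)^{-1}\|u(\cdot,0)\|_{L^2}$ because $Q^{-N}(\lambda(kh))\leq (1+\sqrt{c}N|kh|_\infty)^{-1}\leq(1+\sqrt{c}\,\rdown)^{-1}$ for $k\neq 0$ (\cref{x11}), and Riesz--Thorin interpolation against the $L^1$ and $L^\infty$ bounds of \cref{x02}/\cref{x13} then yields a strict $L^p$ contraction constant $C_1(d,p,\rdown)<1$ (\cref{x12e}); \cref{s32} plays no role in part i).
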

\subsection{Notation and settings}
Instead of $\Z^d$, $d\in \N$,
we will work with $h\Z^d$, $h\in\pi/\N$, $d\in \N$. 
In fact, our notation in \cref{d134} is  inspired by Jovanovi{\'c} and S{\"u}li~\cite[Section 2.5]{JS14} so that we can easily use the Marcinkiewicz multiplier therein. To make the notation consistent we introduce \cref{x01}. 
\begin{setting}[Periodic functions and $L^p_h$-norms]\label{d134}
For every $N\in \N$ 
let $\I_N$ be the set given by $\I_N=\disint{-N+1}{N} $.
For every $h\in \pi/\N$
let $\omega_h$ be the set given by 
\begin{align}
\omega_h=h\I_{\pi /h} =h\left( \disint{-(\tfrac{\pi}{h}-1)}{\tfrac{\pi}{h}}\right)
=\left\{-h(\tfrac{\pi}{h}-1),-h(\tfrac{\pi}{h}-2),\ldots,h(\tfrac{\pi}{h}-1),h\tfrac{\pi}{h}\right\}.
\end{align}
For every $N\in \N$, $d\in\N$
let $\perfct{2N}{\Z^d} $ be the set of all \emph{$(2N)$-periodic  functions}  on $ \Z^d$, i.e., 
\begin{align}
\perfct{2N}{\Z^d}=\left\{a\colon \Z^d\to\C\colon \forall\, k\in \Z^d,\, i\in \disint{1}{d}\colon a(k)=a(k+2N \unit{d}{i})\right\}.
\end{align}
For every $d\in\N$, $h\in \pi/\N$
 let 
$h\Z^d$ be the lattice given by $h\Z^d=\{hx\colon x\in\Z^d\}$ and
let $\perfct{2\pi}{h\Z^d}$ be the set of all \emph{$2\pi$-periodic  functions} defined on  $h\Z^d$, i.e.,
\begin{align}
\perfct{2\pi}{h\Z^d}=\left\{v\colon h\Z^d\to\C\colon
\forall\, x\in h\Z^d,\, i\in \disint{1}{d}\colon v(x)=v(x+2\pi \unit{d}{i})\right \}.
\end{align}
For every $d\in \N$, $p\in [1,\infty)$, $h\in \pi/\N$, $f\in\perfct{2\pi}{h\Z^d}  $ let $\|f\|_{L^p_h(\omega_h^d)}\in\R$
be the real number which satisfies that
\begin{align}
\|f\|_{L^p_h(\omega_h^d)}=\left[h^{d}\sum_{x\in\omega_h^d}|f(x)|^p\right]^{\nicefrac{1}{p}},\label{x02c}
\end{align}
which is distinguished from $\|\cdot\|_{L^p(A)}$ in \cref{x01d}
by a normalized factor $h^d$.
Denote by $\mathcal{F}$ the so-called \emph{discrete Fourier transform}, i.e.,
\begin{align}
\mathcal{F}\colon\left[ \bigcup_{d\in\N,h\in \pi/\N}\perfct{2\pi}{h\Z^d}\right]\to\left[\bigcup_{d,N\in\N} \perfct{2N}{\Z^d} \right]
\end{align}
is the operator which satisfies 
for every $d\in\N$, $h\in \pi/\N$, $v\in \perfct{2\pi}{h\Z^d} $, $k\in \Z^d$ that
\begin{align}\label{y02b}
\fourier{v}\in \perfct{2\pi /h}{\Z^d} \quad\text{and}\quad
(\mathcal{F}(v))(k)=  h^{d}\sum_{x\in \omega_h^{d}} v(x)e^{-\ima k\cdot x}.
\end{align}
\end{setting}


\begin{setting}
[Discrete Laplacian, finite differences, and harmonic functions]
\label{x01}
For every $ d\in [2,\infty)\cap\N$, $h\in\pi/\N$, 
$u\colon (h\Z^{d-1})\times (h\N_0)\to\K$ 
let $\Laplace ^h\colon  (h\Z^{d-1})\times(h\N)\to \K$ (recall: $\K\in\{\R,\C\}$)
be the \emph{discrete Laplacian} with mesh $h$, i.e., the function which satisfies for all $(x,y)\in (h\Z^{d-1})\times(h\N) $ that
\begin{align}\begin{split}
&(\Laplace ^h u)(x,y)\\&=\left[\sum_{i=1}^{d-1} (u(x+ h\unit{d-1}{i},y)+u(x- h\unit{d-1}{i},y)\right] +u(x,y+h)+u(x,y-h)-2du(x,y),
\end{split}\label{x01b}\end{align}
let $ \disDiff{h}{y}u,\disDiff{h}{x,i}u\colon (h\Z^{d-1})\times(h\N_0)\to \K$, $i\in \disint{1}{d-1}$,
be the functions which satisfy for all $(x,y)\in (h\Z^{d-1})\times(h\N_0) $,
$i\in \disint{1}{d-1}$ that
\begin{align}
(\disDiff{h}{x,i}u)(x,y)= \frac{u(x+h\unit{d-1}{i},y)-u(x,y)}{h}\quad\text{and}\quad
(\disDiff{h}{y}u)(x,y)= \frac{u\!\left(x,y+1\right)-u(x,y)}{h},\label{z13}
\end{align}
and we write $\disDiff{h}{x}u=(\disDiff{h}{x,1}u,\ldots,
\disDiff{h}{x,d-1}u)\colon (h\Z^{d-1})\times(h\N_0)\to \K^{d-1}$.
For every $h\in\pi/\N $, $d\in [2,\infty)\cap\N$ let
$\mathbbm{H}_{d,h,\geq 0}$ be the set of all bounded functions
$u:(h\Z^{d-1})\times (h\N_0)\to \K$ which satisfy that
\begin{enumerate}[i)]
\item 
it holds  
for all $(x,y)\in (h\Z^{d-1})\times (h\N_0)$, $  i\in \disint{1}{d-1}$ that
$u(x,y)=u(x+2\pi\unit{d-1}{i},y)$ and
\item 
it holds for all $(x,y)\in (h\Z^{d-1})\times (h\N)$ that
$(\Laplace^h u)(x,y)=0$.
\end{enumerate}
\end{setting}
In order to obtain \cref{x12e} using a Riesz-Thorin interpolation argument we choose $\K=\C$ in \cref{x01} above. For other results we only need $\K=\R$.

\subsection{Some simple calculations}\label{x01e}
The main results of this subsection,
\cref{z17,z16}, prove that the discrete normal and tangential derivatives are related by means of Fourier multipliers. We start with 
\cref{y15} below that defines the functions which are used to represent the Fourier transform of  harmonic functions and their discrete derivatives. It is useful to consider $Q$ and $f$ in \eqref{y13} as functions of a complex variable. 
The names $Q$ and $\lambda$ are inspired by Guadie~\cite{Gua03} who considers 
harmonic functions on infinite strips with $L^2(\Z^{d-1})$ boundary conditions. 
\begin{setting}\label{y15}
Let $d\in [2,\infty)\cap\N$ be fixed, let $R\in C( \C\setminus(\infty,0),\C)$ be the complex square root, i.e., the function that is
holomorphic on $\C\setminus( -\infty,0]\to\C$ and satisfies   
for all
$z\in \C\setminus (-\infty,0)$ that $R(z)^2={z}
$ (cf. \cref{y07}),
let
$Q,f\colon \C\setminus( -\infty,1)\to\C$ be the functions which satisfy for all $z\in\C\setminus( -\infty,1)$ that 
\begin{align}
Q(z)=z+R(z+1)R(z-1),\quad Q(z)\neq 0,\quad\text{and}\quad
f(z)= \frac{1}{Q(z)}-1,\label{y13}
\end{align}
let 
$\lambda\colon [-\pi,\pi]^{d-1}\to\mathbbm{R}$,
$\DM_i,\NM_i\colon [-\pi,\pi]^{d-1}\to\C$,
$i\in \disint{1}{d-1}$,
 be the functions which satisfy for all
 $t\in [-\pi,\pi]^{d-1}$, 
$i\in \disint{1}{d-1}$ that
\begin{align}
\begin{split}
 \lambda(t)= d-\sum_{i=1}^{d-1}\cos(t_i),\quad 
\DM_i(t)
=
\begin{cases} \displaystyle
\frac{f(\lambda(t))}{e^ {-\ima t_i}-1}\!\!\! &\colon t_i\neq0\\
0\!\!\! &\colon t_i=0,
\end{cases}\quad\text{and}\quad
\NM_i(t)
=
 \begin{cases}\displaystyle
\frac{e^{-\ima t_i}-1}{f(\lambda(t))}\!\!\!&\colon t\neq 0\\
0\!\!\!&\colon t=0,
\end{cases}
\end{split}
\label{y14}
\end{align} 
let 
$\DM_i^h,\NM_i^h\colon [-\pi/h,\pi/h]^{d-1}\to\C$, $i\in \disint{1}{d-1}$, $h\in (0,\infty)$, be the functions which satisfy for all $i\in \disint{1}{d-1}$, $h\in (0,\infty)$, $\xi\in[-\pi/h,\pi/h]^{d-1} $ that
\begin{align}
\DM_i^h(\xi)= \DM_i(h\xi)\quad\text{and}\quad\NM_i^h(\xi)= \NM_i(h\xi),\label{y14b}
\end{align}
and
let $c\in (0,\infty)$ be the real number (cf. \cref{z05}) which satisfies that 
$ c=\inf_{s\in [-\pi,\pi]\setminus \{0\}}\frac{1-\cos (s)}{s^2}$.
\end{setting}

\begin{lemma}\label{x32}
Assume \cref{y15}. Then it holds for all 
$t\in [-\pi,\pi]^{d-1}$  that
\begin{align}
c|t|_\infty^2\leq 
\sum_{i=1}^{d-1}c|t_i|^2\leq 
|\lambda(t)-1|\leq \frac12\sum_{i=1}^{d-1}|t_i|^2
\leq \frac12(d-1)|t|_\infty^2.\label{x32b}
\end{align}
\end{lemma}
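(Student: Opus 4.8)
The plan is to reduce everything to the one–dimensional elementary inequality
\begin{align}
c\,s^2\leq 1-\cos(s)\leq \tfrac12 s^2\qquad\text{for all }s\in[-\pi,\pi],\label{plan-elem}
\end{align}
applied coordinatewise. First I would rewrite the middle quantity using the definition of $\lambda$ in \eqref{y14}: since $\lambda(t)=d-\sum_{i=1}^{d-1}\cos(t_i)$ we have
\begin{align}
\lambda(t)-1=(d-1)-\sum_{i=1}^{d-1}\cos(t_i)=\sum_{i=1}^{d-1}\bigl(1-\cos(t_i)\bigr),\notag
\end{align}
and because each summand is nonnegative this equals $|\lambda(t)-1|$. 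Thus the whole statement becomes a sum, over $i\in\disint{1}{d-1}$, of the two-sided bound on $1-\cos(t_i)$.

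Next I would establish \eqref{plan-elem}. The upper bound $1-\cos(s)\leq \tfrac12 s^2$ holds for every real $s$ (e.g. by Taylor's theorem with Lagrange remainder, or by integrating $\sin$ twice), so in particular on $[-\pi,\pi]$. For the lower bound, by the very definition of $c$ in \cref{y15}, namely $c=\inf_{s\in[-\pi,\pi]\setminus\{0\}}\frac{1-\cos(s)}{s^2}$, we get $1-\cos(s)\geq c\,s^2$ for all $s\in[-\pi,\pi]$ (the case $s=0$ being trivial); positivity of $c$ is recorded separately (cf. \cref{z05}), so $c>0$ is available and the chain of inequalities is genuinely nondegenerate.

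Then I would simply sum \eqref{plan-elem} over $i=1,\dots,d-1$ with $s=t_i\in[-\pi,\pi]$, using the identity above, to obtain
\begin{align}
\sum_{i=1}^{d-1}c|t_i|^2\;\leq\;|\lambda(t)-1|\;\leq\;\tfrac12\sum_{i=1}^{d-1}|t_i|^2.\notag
\end{align}
Finally, the outermost estimates $c|t|_\infty^2\leq\sum_{i=1}^{d-1}c|t_i|^2$ and $\tfrac12\sum_{i=1}^{d-1}|t_i|^2\leq\tfrac12(d-1)|t|_\infty^2$ follow from $|t|_\infty^2\leq\sum_{i=1}^{d-1}|t_i|^2\leq(d-1)|t|_\infty^2$, i.e. the comparison of the $\ell^\infty$ and $\ell^2$ norms on $\R^{d-1}$. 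Concatenating the three displays yields \eqref{x32b}. There is essentially no hard step here; the only point requiring a (routine) argument is the elementary bound \eqref{plan-elem}, and the only external input is the positivity $c>0$ from \cref{z05}.
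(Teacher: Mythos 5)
Your proposal is correct and follows essentially the same route as the paper, which likewise combines the elementary one-dimensional bound $c\,s^2\leq 1-\cos(s)\leq s^2/2$ on $[-\pi,\pi]$ with the definition of $\lambda$ in \eqref{y14}; your write-up merely makes explicit the intermediate steps (the identity $\lambda(t)-1=\sum_{i=1}^{d-1}(1-\cos(t_i))$ and the comparison of $|t|_\infty^2$ with $\sum_i|t_i|^2$) that the paper leaves implicit. No gaps.
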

\begin{proof}[Proof of \cref{x32}]
The fact that $\forall\, s\in [-\pi,\pi]\colon  cs^2\leq 1-\cos( s)\leq s^2/2$ and the definition of $\lambda$ in \eqref{y14} complete the proof of \cref{x32}.
\end{proof}
\begin{lemma}
\label{y16}
Assume \cref{y15} and let 
$z\in\C\setminus( -\infty,1) $. Then it holds
that 
\begin{align}
Q(z)+\frac{1}{Q(z)}=2z\quad\text{and}\quad
f(z)^2=\frac{2(z-1)}{Q(z)}.
\end{align}
\end{lemma}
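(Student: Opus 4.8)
The plan is to derive both identities by pure algebra from the definitions in \cref{y15}, namely $Q(z)=z+R(z+1)R(z-1)$, $Q(z)\neq0$ and $f(z)=\tfrac1{Q(z)}-1$, using only the defining property of the square root $R$, i.e.\ that $R(w)^2=w$ for every $w\in\C\setminus(-\infty,0)$.

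First I would record the one point that is not purely formal: since $z\in\C\setminus(-\infty,1)$, we have $z-1\in\C\setminus(-\infty,0)$ and $z+1\in\C\setminus(-\infty,2)\subseteq\C\setminus(-\infty,0)$, so the relation $R(\cdot)^2=(\cdot)$ applies to both arguments, whence $R(z+1)^2R(z-1)^2=(z+1)(z-1)=z^2-1$.

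For the first identity I would use the difference-of-squares factorisation together with the previous step,
\[
Q(z)\bigl(2z-Q(z)\bigr)=\bigl(z+R(z+1)R(z-1)\bigr)\bigl(z-R(z+1)R(z-1)\bigr)=z^2-\bigl(z^2-1\bigr)=1,
\]
and then, since $Q(z)\neq0$, divide by $Q(z)$ to obtain $2z-Q(z)=\tfrac1{Q(z)}$, i.e.\ $Q(z)+\tfrac1{Q(z)}=2z$. For the second identity I would multiply this relation by $Q(z)$ to get $Q(z)^2=2zQ(z)-1$, and then compute, using $f(z)=\tfrac{1-Q(z)}{Q(z)}$,
\[
f(z)^2=\frac{\bigl(1-Q(z)\bigr)^2}{Q(z)^2}=\frac{1-2Q(z)+Q(z)^2}{Q(z)^2}=\frac{1-2Q(z)+2zQ(z)-1}{Q(z)^2}=\frac{2(z-1)Q(z)}{Q(z)^2}=\frac{2(z-1)}{Q(z)}.
\]

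There is essentially no obstacle here; the only step that deserves a line of justification is the claim $R(z\pm1)^2=z\pm1$, i.e.\ that $z\pm1$ stay off the branch cut of $R$, which is immediate from $z\notin(-\infty,1)$. Everything else is just the two displayed computations above, and no results beyond \cref{y15} are needed.
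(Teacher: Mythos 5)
Your proposal is correct and follows essentially the same route as the paper's proof: the difference-of-squares identity $Q(z)\bigl(z-R(z+1)R(z-1)\bigr)=1$ giving $Q(z)+\tfrac{1}{Q(z)}=2z$, followed by using $Q(z)^2-2zQ(z)+1=0$ to simplify $f(z)^2=\bigl(\tfrac{1-Q(z)}{Q(z)}\bigr)^2$ to $\tfrac{2(z-1)}{Q(z)}$. The explicit remark that $z\pm1$ avoid the branch cut of $R$ is the same justification the paper invokes implicitly via $z\in\C\setminus(-\infty,1)$.
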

\begin{proof}[Proof of \cref{y16}]
First, \eqref{y13}, the assumption that
$\forall\,\zeta\in \C\setminus (-\infty,0)\colon R(\zeta)^2={\zeta}$, and the assumption that $z\in\C\setminus( -\infty,1) $ prove that
\begin{align}\begin{split}
Q(z)\Big(z-R(z+1)R(z-1)\Big)&= \Big(z+R(z+1)R(z-1)\Big)\Big(z-R(z+1)R(z-1)\Big)\\&= z^2- R(z+1)^2R(z-1)^2=z^2-(z+1)(z-1)=1.
\end{split}\end{align}
Therefore, 
$
1/Q(z)= z-R(z+1)R(z-1).
$ This and \eqref{y13} show that $Q(z)+\frac{1}{Q(z)}= 2z .$
Multiplying with $Q(z)$ yields that
$Q(z)^2-2zQ(z)+1=0.$ This and \cref{y13} show that
\begin{align}\begin{split}
f(z)^2&=\left(\frac{1-Q(z)}{Q(z)}\right)^2=\frac{\big(Q(z)^2-2zQ(z)+1\big)+2Q(z)(z-1)}{Q(z)^2} =
\frac{2Q(z)(z-1)}{Q(z)^2}=
\frac{2(z-1)}{Q(z)}.
\end{split}\end{align}
The proof of \cref{y16} is thus completed.
\end{proof}
\cref{x10} below is a classical result and is included for convenience of the reader.
\begin{lemma}[Plancherel's identity]\label{x10}
Assume \cref{d134}.
Let $d\in\N$, $h\in\pi/\N$. Then \begin{align}
\sum_{k\in \I_{\pi/h}^{d}}\left|(\mathcal{F}(v))(k)\right|^2=
 (2\pi h)^{d}\sum_{x,y\in \omega_h^{d}} |v(x)|^2.
\end{align}
\end{lemma}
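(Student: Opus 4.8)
The plan is to expand the left-hand side directly from the definition of the discrete Fourier transform in \eqref{y02b}, interchange the (finite) sums, and reduce everything to the orthogonality relation for the discrete exponentials; the identity (with the right-hand side understood as a single sum over $x\in\omega_h^{d}$) then drops out.

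First I would fix $v\in\perfct{2\pi}{h\Z^{d}}$ and compute, using that all sums involved are finite,
\begin{align*}
\sum_{k\in\I_{\pi/h}^{d}}\big|(\mathcal{F}(v))(k)\big|^{2}
&=h^{2d}\sum_{k\in\I_{\pi/h}^{d}}\left(\sum_{x\in\omega_h^{d}}v(x)e^{-\ima k\cdot x}\right)\left(\sum_{y\in\omega_h^{d}}\overline{v(y)}\,e^{\ima k\cdot y}\right)\\
&=h^{2d}\sum_{x,y\in\omega_h^{d}}v(x)\overline{v(y)}\left(\sum_{k\in\I_{\pi/h}^{d}}e^{-\ima k\cdot(x-y)}\right).
\end{align*}

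The key step is the character-orthogonality identity $\sum_{k\in\I_{\pi/h}^{d}}e^{-\ima k\cdot(x-y)}=(2\pi/h)^{d}\,\1_{\{x=y\}}$ for $x,y\in\omega_h^{d}$. I would prove it by factoring the sum over the $d$ coordinates, thereby reducing to the one-dimensional statement: for $x_{j},y_{j}\in h\I_{\pi/h}$, writing $x_{j}-y_{j}=hm$ with $m\in\Z$ and $|m|\le 2\pi/h-1$, the geometric sum $\sum_{k=-\pi/h+1}^{\pi/h}\big(e^{-\ima hm}\big)^{k}$ equals $2\pi/h$ if $e^{-\ima hm}=1$ and $0$ otherwise; since $h=\pi/N$ with $N=\pi/h\in\N$, one has $e^{-\ima hm}=1$ iff $2N\mid m$, and the constraint $|m|\le 2N-1$ forces this to happen \emph{only} for $m=0$, i.e.\ only for $x_{j}=y_{j}$. (In the excluded case $e^{-\ima hm}\neq1$ the numerator of the geometric sum is $\big(e^{-\ima hm}\big)^{2N}-1=e^{-2\pi\ima m}-1=0$.)

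Finally, inserting this collapses the double sum to its diagonal:
\begin{align*}
h^{2d}\sum_{x,y\in\omega_h^{d}}v(x)\overline{v(y)}(2\pi/h)^{d}\1_{\{x=y\}}
&=h^{2d}(2\pi/h)^{d}\sum_{x\in\omega_h^{d}}|v(x)|^{2}
=(2\pi h)^{d}\sum_{x\in\omega_h^{d}}|v(x)|^{2},
\end{align*}
which is the claimed identity. There is no genuine obstacle here --- this is the classical finite Plancherel identity --- and the only point requiring a little care is the bookkeeping of the index ranges, namely checking that differences of two points of $\omega_h$ realize the residue class $0$ modulo $2\pi/h$ and no other, so that the orthogonality relation contributes exactly the factor $(2\pi/h)^{d}$ and nothing spurious.
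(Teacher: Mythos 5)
Your proof is correct and follows essentially the same route as the paper: expand $|\mathcal{F}(v)(k)|^2$ from \eqref{y02b}, interchange the finite sums, and collapse the double sum via the orthogonality relation $\sum_{k\in \I_{\pi/h}^{d}}e^{-\ima k\cdot (x-y)}=(2\pi/h)^d\1_{\{x=y\}}$, which the paper simply asserts while you additionally verify it by the one-dimensional geometric-sum computation. Your reading of the right-hand side as a single sum over $x\in\omega_h^{d}$ is also the intended one.
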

\begin{proof}[Proof of \cref{x10}]The fact that
$
\forall\, x,y\in\omega_h^{d}\colon  \sum_{k\in \I_{\pi/h}^{d}}e^{-\ima k\cdot (x-y)}=\delta_{xy}(2\pi/h)^d
$
implies that
\begin{align}\begin{split}    
&\sum_{k\in \I_{\pi/h}^{d}}\!\!\left|(\mathcal{F}(v))(k)\right|^2=
\sum_{k\in \I_{\pi/h}^{d}}\!\!
(\mathcal{F}(v))(k)
\overline{(\mathcal{F}(v))(k)}\\&=  \sum_{k\in \I_{\pi/h}^{d}}\!\!\left(h^{d}\sum_{x\in \omega_h^{d}} v(x)e^{-\ima k\cdot x}\right)\!\!\!
\left(h^{d}\sum_{y\in \omega_h^{d}}\overline{v(y)}e^{\ima k\cdot y}\right)=
h^{2d}\left[\sum_{x,y\in \omega_h^{d}} v(x)\overline{v(y)}\sum_{k\in \I_{\pi/h}^{d}}e^{-\ima k\cdot (x-y)}\right]\\&=h^{2d}\left[\sum_{x,y\in \omega_h^{d}} v(x)\overline{v(y)}(2{\pi/h})^d\delta_{xy}\right]=(2\pi)^d
\left[ h^{d}\sum_{x,y\in \omega_h^{d}} |v(x)|^2\right].\end{split}
\end{align}
This completes the proof of \cref{x10}.
\end{proof}
\cref{z15} is straightforward and its proof is therefore omitted.
\begin{lemma}\label{z15}Assume \cref{d134}. Let $d\in \N$, $h\in\pi/\N$, $f\in \perfct{2\pi}{h\Z^d}$,
$k\in\I_{\pi/h}^{d}$, $i\in \disint{1}{d}$. Then 
\begin{enumerate}[i)]
\item \label{z15a}it holds that
$[\mathcal{F}(f(\cdot+ h\unit{d}{i}))](k)=
[\mathcal{F}(f)](k)e^{-\ima hk_i }$,
\item \label{z15b}it holds that
$[\mathcal{F}(f(\cdot- h\unit{d}{i}))](k)=
[\mathcal{F}(f)](k)e^{+\ima hk_i }$, and
\item \label{z15c}it holds that
$[\mathcal{F}
(f(\cdot+ h\unit{d}{i}))+
(f(\cdot- h\unit{d}{i}))](k)=2
[\mathcal{F}(f)](k)\cos(hk_i ).$
\end{enumerate}
\end{lemma}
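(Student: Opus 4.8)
The plan is to read off all three identities directly from the definition of the discrete Fourier transform in \eqref{y02b}: each one is merely the effect of translating the summation index, and the only ingredient beyond elementary algebra is the $2\pi$-periodicity built into \cref{d134}. This is presumably why the text omits the proof; I spell it out here.

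For \cref{z15a} I would start from
\[
[\mathcal{F}(f(\cdot+h\unit{d}{i}))](k)=h^{d}\sum_{x\in\omega_h^{d}}f(x+h\unit{d}{i})\,e^{-\ima k\cdot x}
\]
and introduce the new summation variable $y=x+h\unit{d}{i}$, so that the summand becomes $f(y)\,e^{-\ima k\cdot(y-h\unit{d}{i})}$. The single point needing a word of justification is that, although $y$ now ranges over the translated fundamental domain $\omega_h^{d}+h\unit{d}{i}$ rather than over $\omega_h^{d}$, the value of the sum is unchanged: the map $y\mapsto f(y)\,e^{-\ima k\cdot(y-h\unit{d}{i})}$ is $2\pi$-periodic in each coordinate — the factor $f$ because $f\in\perfct{2\pi}{h\Z^{d}}$, and the exponential because $k\in\I_{\pi/h}^{d}\subseteq\Z^{d}$ makes $x\mapsto e^{-\ima k\cdot x}$ periodic of period $2\pi$ in each variable — and both $\omega_h^{d}$ and $\omega_h^{d}+h\unit{d}{i}$ are complete sets of representatives of $h\Z^{d}$ modulo $2\pi\Z^{d}$, so the sum over either set is the same number. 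Pulling the resulting constant exponential out of the sum then leaves $[\mathcal{F}(f)](k)$ times the phase asserted in \cref{z15a}. The proof of \cref{z15b} is word for word the same, with $+h\unit{d}{i}$ and $-h\unit{d}{i}$ interchanged.

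Finally, \cref{z15c} is obtained by adding the identities of \cref{z15a} and \cref{z15b} and applying $e^{\ima\theta}+e^{-\ima\theta}=2\cos\theta$ with $\theta=hk_i$. The ``main obstacle'' here is in truth a non-obstacle: the only delicate step is the legitimacy of shifting the summation index on the fundamental domain $\omega_h^{d}$, which rests on the joint $2\pi$-periodicity of $f$ and of the Fourier kernel $x\mapsto e^{-\ima k\cdot x}$ — the latter being exactly the reason $k$ is taken in $\I_{\pi/h}^{d}$.
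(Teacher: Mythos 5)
Your method is exactly the routine argument the paper has in mind when it omits this proof: shift the summation index, observe that $f$ and the kernel $x\mapsto e^{-\ima k\cdot x}$ (with $k\in\Z^d$) are both $2\pi$-periodic in each coordinate, so that summing over the translated fundamental domain $\omega_h^{d}+h\unit{d}{i}$ gives the same value as summing over $\omega_h^{d}$, and then obtain \cref{z15c} by adding the two shift identities. That part of the write-up is complete and correctly justified. The problem is the phase bookkeeping in the last step. From your own substitution $y=x+h\unit{d}{i}$ one has $e^{-\ima k\cdot(y-h\unit{d}{i})}=e^{-\ima k\cdot y}\,e^{+\ima hk_i}$, so with the definition \eqref{y02b} the constant that comes out of the sum is $e^{+\ima hk_i}$, not $e^{-\ima hk_i}$: the computation delivers $[\mathcal{F}(f(\cdot+h\unit{d}{i}))](k)=e^{+\ima hk_i}[\mathcal{F}(f)](k)$ and, by the same token, $[\mathcal{F}(f(\cdot-h\unit{d}{i}))](k)=e^{-\ima hk_i}[\mathcal{F}(f)](k)$. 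A one-dimensional sanity check confirms this: for $f(x)=e^{\ima x}$ one has $f(\cdot+h)=e^{+\ima h}f$, so its transform at $k=1$ is multiplied by $e^{+\ima h}$, the opposite of what \cref{z15a} displays. Hence your claim that the substitution ``leaves $[\mathcal{F}(f)](k)$ times the phase asserted in \cref{z15a}'' is not what your computation shows; as written, the proof silently asserts the conjugate of the phase it derives.

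To be clear, this is a sign inconsistency between the statement of the lemma and the convention \eqref{y02b} (the exponents in \cref{z15a} and \cref{z15b} should be interchanged), not a defect of your approach: \cref{z15c} is unaffected because the sum of the two phases is $2\cos(hk_i)$ either way, and downstream (e.g.\ \cref{z18} and the multipliers in \eqref{y14}) only quantities like $e^{-\ima hk_i}-1$ enter through their modulus, so replacing them by their conjugates changes nothing in the estimates. But a careful proof should either derive the identities with the signs that \eqref{y02b} actually produces or explicitly note that the displayed phases in items (i) and (ii) must be swapped; asserting agreement with the stated signs, as your proposal does, papers over a genuine (if harmless) discrepancy.
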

\begin{lemma}
[Fourier transform of the solution]\label{x12}
Assume \cref{d134,y15}. Let
$h\in \pi/\N$,
$u\in \mathbbm{H}_{d,h,\geq 0}$. Then
it holds
for all
$k\in \Z^{d-1}$, $n\in\N_0$ that
\begin{align}
[\mathcal{F}(u(\cdot,nh))](k)= ((Q\circ\lambda)(hk))^{-n} [\mathcal{F}(u(\cdot,0))](k).
\end{align}
\end{lemma}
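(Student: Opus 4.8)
The plan is to pass to the discrete Fourier transform in the tangential variable $x\in h\Z^{d-1}$, which turns the harmonicity equation into a scalar three-term recurrence in the normal index $n$, and then to single out the decaying solution of that recurrence using boundedness of $u$. Concretely, I would fix $k\in\Z^{d-1}$ and write $\hat u_n:=[\mathcal F(u(\cdot,nh))](k)$ for $n\in\N_0$; since $x\mapsto u(x,nh)$ lies in $\perfct{2\pi}{h\Z^{d-1}}$ this is meaningful. Applying the linear map $v\mapsto[\mathcal F(v)](k)$ to the identity $(\Laplace^h u)(\cdot,nh)=0$, which holds for every $n\in\N$, and invoking the translation rule $[\mathcal F(w(\cdot+h\unit{d-1}{i})+w(\cdot-h\unit{d-1}{i}))](k)=2\cos(hk_i)[\mathcal F(w)](k)$ from \cref{z15} (with $d\defeq d-1$) in the $d-1$ tangential directions, together with the definition of $\lambda$ in \eqref{y14}, one arrives at
\begin{align*}
\hat u_{n+1}+\hat u_{n-1}=2\lambda(hk)\,\hat u_n,\qquad n\in\N.
\end{align*}
Because $\lambda(hk)=d-\sum_{i=1}^{d-1}\cos(hk_i)\in[1,2d-1]\subseteq\C\setminus(-\infty,1)$, the number $Q:=(Q\circ\lambda)(hk)$ is well-defined, nonzero, real, with $Q\geq1$, and $Q=1$ precisely when $\lambda(hk)=1$, i.e.\ when $hk\in2\pi\Z^{d-1}$.

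Next I would factor the recurrence with the help of \cref{y16}, which gives $Q+Q^{-1}=2\lambda(hk)$. Putting $r_n:=\hat u_{n+1}-Q^{-1}\hat u_n$, the recurrence yields $r_n=(2\lambda(hk)-Q^{-1})\hat u_n-\hat u_{n-1}=Q\hat u_n-\hat u_{n-1}=Q\,r_{n-1}$ for $n\in\N$, hence $r_n=Q^nr_0$ for all $n\in\N_0$; symmetrically $\tilde r_n:=\hat u_{n+1}-Q\hat u_n$ satisfies $\tilde r_n=Q^{-n}\tilde r_0$, and subtracting gives $(Q-Q^{-1})\hat u_n=Q^nr_0-Q^{-n}\tilde r_0$. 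Boundedness of $u$ makes $(\hat u_n)_{n\in\N_0}$ bounded uniformly in $n$ (by the crude estimate $|\hat u_n|\leq h^{d-1}|\omega_h^{d-1}|\sup|u|=(2\pi)^{d-1}\sup|u|$). If $hk\notin2\pi\Z^{d-1}$, then $Q>1$, so $Q\neq Q^{-1}$, and boundedness of the left-hand side above forces $r_0=0$; thus $\hat u_{n+1}=Q^{-1}\hat u_n$ for every $n$, and induction gives $\hat u_n=Q^{-n}\hat u_0$. If $hk\in2\pi\Z^{d-1}$, then $Q=Q(1)=1$ by \cref{y16}, the recurrence collapses to $\hat u_{n+1}-\hat u_n=\hat u_n-\hat u_{n-1}$, so $\hat u_n=\hat u_0+n(\hat u_1-\hat u_0)$, and uniform boundedness forces $\hat u_1=\hat u_0$, whence $\hat u_n=\hat u_0=Q^{-n}\hat u_0$. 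In either case $[\mathcal F(u(\cdot,nh))](k)=((Q\circ\lambda)(hk))^{-n}[\mathcal F(u(\cdot,0))](k)$, which is the assertion.

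The only genuinely delicate point I anticipate is the degenerate frequency $hk\in2\pi\Z^{d-1}$: there the two characteristic roots of the recurrence collide at $1$, so the generic ``one mode grows, one mode decays'' dichotomy is unavailable, and one has to exclude the linearly growing solution of the recurrence directly from boundedness rather than from decay rates. Two bookkeeping facts each need a line: that $\lambda(hk)$ stays inside the domain $\C\setminus(-\infty,1)$ of $Q$ (immediate from $\lambda(hk)\in[1,2d-1]$ via $\cos\leq1$) and that $Q(1)=1$ (immediate from \cref{y16} evaluated at $z=1$).
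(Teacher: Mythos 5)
Your proof is correct, but it takes a genuinely different route from the paper. You Fourier-transform $u$ itself, obtain the scalar three-term recurrence $\hat u_{n+1}+\hat u_{n-1}=2\lambda(hk)\hat u_n$, factor it via $Q+Q^{-1}=2\lambda(hk)$ from \cref{y16}, and then use only the uniform bound $|\hat u_n|\leq(2\pi)^{d-1}\sup|u|$ to kill the growing characteristic mode, treating the double root $Q=1$ (i.e.\ $hk\in2\pi\Z^{d-1}$, where the solution of the recurrence is affine in $n$) as a separate case — all of which checks out, including the identities $r_n=Qr_{n-1}$ and $(Q-Q^{-1})\hat u_n=Q^nr_0-Q^{-n}\tilde r_0$. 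The paper instead argues by construction plus uniqueness: it \emph{defines} an auxiliary function $v$ through the claimed Fourier coefficients $((Q\circ\lambda)(hk))^{-n}[\mathcal F(u(\cdot,0))](k)$, verifies via \cref{z15,y16} that $v$ is discrete harmonic and periodic, proves boundedness of $v$ by Plancherel (\cref{x10}) together with $Q(\lambda(t))\geq1$, rescales to the unit lattice, and then identifies $u=v$ by the uniqueness of bounded harmonic functions on the half space with given boundary data, supplied by the random-walk representation \cref{z01}. Your argument is more elementary and self-contained at this point — no Plancherel, no scaling, no appeal to \cref{z01} — at the cost of the case split at the degenerate frequencies; the paper's route avoids any case analysis and reuses machinery (\cref{z01}) it has already established and needs elsewhere, with the construction-plus-uniqueness pattern also matching how existence of half-space solutions is invoked later in the paper.
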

\begin{proof}[Proof of \cref{x12}]
Throughout the proof let $v:(h\Z^{d-1})\times (h\N_0)\to \mathbbm{C}$ be the function which satisfies
for all 
$k\in\Z^{d-1}$,
$(x,y)\in (h\Z^{d-1})\times (h\N_0)$, $  i\in \disint{1}{d-1}$, $n\in\N$ that
\begin{align}\label{z11b}
v(x,y)=v(x+2\pi\unit{d-1}{i},y)\quad\text{and}\quad
[\mathcal{F}(v(\cdot,nh))](k)= ((Q\circ\lambda)(hk))^{-n} [\mathcal{F}(u(\cdot,0))](k),
\end{align}
 defined through its Fourier transform, and let 
$\tilde{v},\tilde{u}:\Z^{d-1}\times \N_0\to \C$ be the function given by
\begin{align}\label{z11}
\forall\, (x,y)\in \Z^{d-1}\times \N_0\colon \quad 
\tilde{v}(x,y)= v(hx,hy)
\quad\text{and}\quad
\tilde{u}(x,y)= u(hx,hy).
\end{align}
\cref{y16} then proves for all
 $k\in\I_{2\pi/h}^{d-1}$, $n\in\N$ that
\begin{align}\begin{split}
&[\mathcal{F}(v(\cdot ,(n+1)h))](k)
+[\mathcal{F}(v(\cdot ,(n-1)h))](k)\\
&=
 ((Q\circ\lambda)(hk))^{-(n+1)} [\mathcal{F}(u(\cdot,0))](k)
+ ((Q\circ\lambda)(hk))^{-(n-1)} [\mathcal{F}(u(\cdot,0))](k)\\
&=
2\lambda(hk)
((Q\circ\lambda)(hk))^{-n} \fourier{u(\cdot,0)}=
2\lambda(hk)[\mathcal{F}(v(\cdot ,nh)](k).
\end{split}\label{z10}
\end{align}
\cref{z15} (with $d\defeq d-1$, $f\defeq v(\cdot ,nh)$ for $n\in\N$,
$k\in\I_{\pi/h}^{d-1}$, $i\in \disint{1}{d-1}$) then shows for all 
$k\in\I_{\pi/h}^{d-1}$, $n\in\N$
that
\begin{align}\begin{split}
&[\mathcal{F}((\Laplace^h v)(\cdot,nh))](k)\\
&=\left[\sum_{i=1}^{d-1} 2[\mathcal{F}(v(\cdot ,nh))](k)\cos (hk_i)\right]+[\mathcal{F}(v(\cdot ,(n+1)h))](k)\\&\qquad\qquad
+[\mathcal{F}(v(\cdot ,(n-1)h))](k)
-2d[\mathcal{F}(v(\cdot ,nh))](k)\\
&=[\mathcal{F}(v(\cdot ,(n+1)h))](k)
+[\mathcal{F}(v(\cdot ,(n-1)h))](k)-2\lambda(hk)[\mathcal{F}(v(\cdot ,nh))](k)=0
.\end{split}
\end{align}
This proves for all
$(x,y)\in (h\Z^{d-1})\times (h\N)$ that $(\Laplace^hv)(x,y)=0$. A scaling argument and \eqref{z11} then yield for all
$(x,y)\in \Z^{d-1}\times \N$ that $\Laplace \tilde{v}(x,y)=0$.
Furthermore, \cref{x10} and the fact that
$\forall\, t\in [-\pi,\pi]^{d-1}\colon Q(\lambda(t))\geq \lambda(t)\geq 1$ imply for all $N\in \N_0$ that
\begin{align}\begin{split}    
& ( 2\pi h)^{d-1}\sup_{x\in \omega_h^{d-1}} |v(x,Nh)|^2
\leq 
 ( 2\pi h)^{d-1}\sum_{x\in \omega_h^{d-1}} |v(x,Nh)|^2=
\sum_{k\in \I_{\pi/h}^{d-1}}\left|[\mathcal{F}\left(v(\cdot,Nh)\right)](k)\right|^2 \\&=\sum_{k\in \I_{\pi/h}^{d-1}}\left|Q^{-N}(\lambda(kh))[\mathcal{F}\left(v(\cdot,0)\right)](k)\right|^2\leq
\sum_{k\in \I_{\pi/h}^{d-1}}\left|[\mathcal{F}\left(v(\cdot,0)\right)](k)\right|^2=(2\pi h)^{d-1}\sum_{x\in \omega_h^{d-1}} |v(x,0)|^2.\end{split}
\end{align}
Hence, $v$ is a bounded functions. This and \eqref{z11} imply that $\tilde{v}$ is bounded. Moreover, 
the fact that
$\forall\, (x,y)\in (h\Z^{d-1})\times (h\N)\colon (\Laplace^hu)(x,y)=0$,
 \eqref{x01b},
\eqref{z11}, and a scaling argument prove that 
$\forall\, (x,y)\in \Z^{d-1}\times \N\colon \Laplace \tilde{u}(x,y)=0$. This, the assumption that $u$ is bounded, the fact that
$\forall\, (x,y)\in \Z^{d-1}\times \N\colon (\Laplace \tilde{v})(x,y)=(\Laplace \tilde{u})(x,y)=0$, the fact that $\tilde{v}$ is bounded,
\cref{z01} (with $u\defeq \tilde{v}$ and $u\defeq \tilde{u}$), and the fact that
$\forall\, x\in (h\Z^{d-1})\times\{0\}\colon  \tilde{v}(x)=\tilde{u}(x)$ (see \eqref{z11b} and \eqref{z11})
 ensure that $\tilde{u}=\tilde{v}$. This and \eqref{z11} imply that $u=v$. Combining this with \eqref{z11b} we complete the proof. 
\end{proof}
\begin{lemma}\label{z18}
Assume \cref{d134,x01,y15}
and let $k\in \I_{\pi/h}^{d-1}$, $i\in \disint{1}{d-1}$,
$h\in \pi/\N$,
$u\in \mathbbm{H}_{d,h,\geq 0}$.
Then\begin{enumerate}[i)]
\item it holds that
$
[\mathcal{F}(\disDiff{h}{x,i}u)(\cdot,0)](k)=
h^{-1}
[\mathcal{F}\left( u(\cdot,0)\right)](k)(e^{-\ima hk_i}-1)$ and \item it holds that $
[\mathcal{F}((\disDiff{h}{y}u)(\cdot,0))](k)=
h^{-1}\left(Q(\lambda(hk))^{-1}-1\right) [\mathcal{F}\left( u(\cdot,0)\right)](k)$.
\end{enumerate}
\end{lemma}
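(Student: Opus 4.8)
The plan is to deduce both identities directly from the preparatory results \cref{z15,x12} together with the linearity of the discrete Fourier transform $\mathcal{F}$ (clear from \eqref{y02b}), so that essentially no new computation is needed; the only care required is bookkeeping of domains and periodicities. Since $u\in\mathbbm{H}_{d,h,\geq 0}$, property i) in \cref{x01} shows that $u(\cdot,0)\in\perfct{2\pi}{h\Z^{d-1}}$ and, applied at height $h$, that $u(\cdot,h)\in\perfct{2\pi}{h\Z^{d-1}}$, so $\mathcal{F}$ applies to both of these slices.

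For part i), the definition of $\disDiff{h}{x,i}u$ in \eqref{z13} evaluated at $y=0$ reads $(\disDiff{h}{x,i}u)(\cdot,0)=h^{-1}\bigl(u(\cdot+h\unit{d-1}{i},0)-u(\cdot,0)\bigr)$ as a function on $h\Z^{d-1}$. Applying $\mathcal{F}$, using linearity, and invoking \cref{z15a} in \cref{z15} (applied with $d\defeq d-1$ and $f\defeq u(\cdot,0)$) to replace $[\mathcal{F}(u(\cdot+h\unit{d-1}{i},0))](k)$ by $[\mathcal{F}(u(\cdot,0))](k)\,e^{-\ima hk_i}$ then gives $[\mathcal{F}((\disDiff{h}{x,i}u)(\cdot,0))](k)=h^{-1}[\mathcal{F}(u(\cdot,0))](k)(e^{-\ima hk_i}-1)$, which is the first claim. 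For part ii), I would apply \cref{x12} with $n\defeq1$, obtaining $[\mathcal{F}(u(\cdot,h))](k)=((Q\circ\lambda)(hk))^{-1}[\mathcal{F}(u(\cdot,0))](k)$; combining this with the definition of $\disDiff{h}{y}u$ in \eqref{z13} at $y=0$, namely $(\disDiff{h}{y}u)(\cdot,0)=h^{-1}\bigl(u(\cdot,h)-u(\cdot,0)\bigr)$, and with linearity of $\mathcal{F}$ once more, I get $[\mathcal{F}((\disDiff{h}{y}u)(\cdot,0))](k)=h^{-1}\bigl(Q(\lambda(hk))^{-1}-1\bigr)[\mathcal{F}(u(\cdot,0))](k)$, as asserted.

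I do not expect any genuine obstacle here: both parts are immediate once \cref{z15,x12} are in hand. The only points worth flagging in the writeup are that $u(\cdot,h)$ has to be checked to be $2\pi$-periodic before \cref{x12} is applied, and that every Fourier-transform identity is invoked within its stated range of validity in $k$ (for \cref{z15}, with $d$ replaced by $d-1$, this needs $k\in\I_{\pi/h}^{d-1}$, which holds by hypothesis); both are routine.
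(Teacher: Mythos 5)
Your argument is correct and is essentially the paper's own proof: part i) via \cref{z15} (with $d\defeq d-1$, $f\defeq u(\cdot,0)$) and linearity of $\mathcal{F}$, and part ii) via \cref{x12} with $n\defeq 1$ applied to $h^{-1}[\mathcal{F}(u(\cdot,h)-u(\cdot,0))](k)$. The periodicity and range-of-$k$ checks you flag are exactly the (routine) hypotheses the paper also invokes, so nothing further is needed.
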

\begin{proof}[Proof of \cref{z18}]
Observe that \eqref{z13}, \cref{z15}
(with $d\defeq d-1$, $f\defeq u(\cdot ,0)$), and the fact that
$u(\cdot ,0)\in\perfct{2\pi}{h\Z^{d-1}}$
imply that
\begin{align*}
[(\mathcal{F}(\disDiff{h}{x,i}u)(\cdot,0)](k)=
h^{-1}
[\mathcal{F}( u(\cdot+h\unit{d-1}{i},0))-\mathcal{F}(u(\cdot,0))](k)=
h^{-1}
[\mathcal{F}\left( u(\cdot,0)\right)](k)(e^{-\ima hk_i}-1)
\end{align*} and
\begin{align*}
[\mathcal{F}((\disDiff{h}{y}u)(\cdot,0))](k)=
h^{-1}\left[\mathcal{F}\left(u(\cdot,h)-u(\cdot,0)\right)\right](k)= h^{-1}\left({Q(\lambda(hk))}^{-1}-1\right) [\mathcal{F}\left( u(\cdot,0)]\right)(k).
\end{align*}
The proof of \cref{z18} is thus completed.
\end{proof}
\begin{corollary}[Multipliers in the Neumann case]\label{z17}Assume \cref{d134,x01,y15}
and let 
$h\in \pi/\N$,
$u\in \mathbbm{H}_{d,h,\geq 0}$,
$k\in \I_{\pi/h}^{d-1}$, $i\in \disint{1}{d-1}$. Then
it holds
 that
$
[\mathcal{F}((\disDiff{h}{x,i}u)(\cdot,0))](k)=\NM_i^h(k)[\mathcal{F}(
(\disDiff{h}{y}u)(\cdot,0))](k).
$
\end{corollary}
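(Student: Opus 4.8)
The plan is to read the identity off \cref{z18} together with the definitions in \cref{y15}. First I would rewrite the second conclusion of \cref{z18}: since $f(z)=Q(z)^{-1}-1$ by \eqref{y13}, that conclusion says $[\mathcal{F}((\disDiff{h}{y}u)(\cdot,0))](k)=h^{-1}f(\lambda(hk))[\mathcal{F}(u(\cdot,0))](k)$, whereas the first conclusion reads $[\mathcal{F}((\disDiff{h}{x,i}u)(\cdot,0))](k)=h^{-1}(e^{-\ima hk_i}-1)[\mathcal{F}(u(\cdot,0))](k)$. Hence it suffices to prove the scalar identity $e^{-\ima hk_i}-1=\NM_i^h(k)f(\lambda(hk))$ for all $k\in\I_{\pi/h}^{d-1}$, $i\in\disint{1}{d-1}$, and then multiply through by $h^{-1}[\mathcal{F}(u(\cdot,0))](k)$; note that no division by $[\mathcal{F}(u(\cdot,0))](k)$ is ever performed, so its possible vanishing is harmless.

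To establish the scalar identity I would split along the two branches in the definition of $\NM_i$ in \eqref{y14} (recall $\NM_i^h(k)=\NM_i(hk)$ and $k\in\I_{\pi/h}^{d-1}\subseteq[-\pi/h,\pi/h]^{d-1}$, so $hk\in[-\pi,\pi]^{d-1}$). If $hk=0$, i.e. $k=0$, then $e^{-\ima hk_i}-1=0$ and $\NM_i^h(k)=\NM_i(0)=0$ by definition, so both sides vanish. If $hk\neq0$, then \cref{x32} and the definition of $\lambda$ give $\lambda(hk)>1$; combining this with the identity $f(\lambda(hk))^2=2(\lambda(hk)-1)/Q(\lambda(hk))$ from \cref{y16} and the fact that $Q$ is nowhere zero (\eqref{y13}) yields $f(\lambda(hk))\neq0$, so that $\NM_i^h(k)=\NM_i(hk)=(e^{-\ima hk_i}-1)/f(\lambda(hk))$ is well defined, and multiplying by $f(\lambda(hk))$ gives exactly $e^{-\ima hk_i}-1$. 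Substituting the scalar identity back into the two displays above finishes the proof.

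I do not expect any genuine obstacle: \cref{z17} is a bookkeeping consequence of \cref{z18} and the definitions of $f$ and $\NM_i^h$. The one point deserving a line of care is the degenerate frequency $hk=0$, where $e^{-\ima hk_i}-1$ and $f(\lambda(hk))=f(1)$ both vanish and literal division is not allowed; this is precisely why the definition of $\NM_i$ in \eqref{y14} carries the separate value $0$ at the origin, and the case distinction above disposes of it at once.
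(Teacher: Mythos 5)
Your proof is correct and follows essentially the same route as the paper: both reduce the claim to \cref{z18}, treat $k=0$ separately via the definition of $\NM_i$ in \eqref{y14}, and for $k\neq 0$ use that $\lambda(hk)\neq 1$ (equivalently $Q(\lambda(hk))\neq 1$, i.e.\ $f(\lambda(hk))\neq 0$) to justify the quotient defining $\NM_i^h$. Your reformulation as the scalar identity $e^{-\ima hk_i}-1=\NM_i^h(k)f(\lambda(hk))$ is just a tidy repackaging of the paper's chain of equalities.
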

\begin{proof}[Proof of \cref{z17}]
\cref{z18} and \eqref{y14} prove that in the case $k=0$ it holds that
\begin{align}
[\mathcal{F}((\disDiff{h}{x,i}u)(\cdot,0))](k)=
[\mathcal{F}((\disDiff{h}{y}u)(\cdot,0))](k)=
\NM_i(hk)=0
\end{align} and in the case $k\neq 0$ it holds that $Q(\lambda(hk))\neq 1$ and
\begin{align}\begin{split}
[\mathcal{F}((\disDiff{h}{x,i}u)(\cdot,0))](k)&= h^{-1}
[\mathcal{F}\left( u(\cdot,0)\right)](k)(e^{-\ima hk_i}-1)\\&=
\frac{e^{-\ima hk_i}-1}{{Q(\lambda(hk))}^{-1}-1}h^{-1}\left(Q(\lambda(hk))^{-1}-1\right) [\mathcal{F}\left( u(\cdot,0)\right)](k)
\\
&=
\frac{e^{-\ima hk_i}-1}{{Q(\lambda(hk))}^{-1}-1}[\mathcal{F}((\disDiff{h}{y}u)(\cdot,0))](k)
=
\NM_i^h(k)
[\mathcal{F}((\disDiff{h}{x,i}u)(\cdot,0))](k).
\end{split}\end{align}
This completes the proof of \cref{z17}.
\end{proof}
In \cref{z16} below we see that in the Dirichlet case there are $(d-1)$ multipliers,
 which are the quotients
$\mathcal{F}((\disDiff{h}{y}u)(\cdot,0)) /\mathcal{F}((\disDiff{h}{x,i}u)(\cdot,0)) $,
$i\in\disint{1}{d-1}$,
and therefore not everywhere defined. Fortunately, we can still show that
for each dyadic rectangle
 there is a multiplier well-defined on it. In \cref{d133c} we will develop a Marcinkiewicz-type multiplier theorem to deal with this situation.
\begin{corollary}[Multipliers in the Dirichlet case]\label{z16}Assume \cref{d134,x01,y15}, let
$h\in \pi/\N$, $u\in \mathbbm{H}_{d,h,\geq 0}$,
$k\in\Z^{d-1}\setminus\{0\}$,
$i\in \disint{1}{d-1}$, $\nu\in\prod_{j=1}^{d}(D(k_j)\cap\I_{\pi/h}) $, assume that $k_i\neq 0$, and
let $D(\ell)\subseteq \R$, $\ell\in \mathbbm{Z}$, be the intervals given by
\begin{align}
\forall\, \ell \in \N\colon 
D(\ell)=[2^{\ell-1},2^{\ell}),\quad 
D(0)=(-1,1),\quad 
\forall\, \ell \in (-\N)\colon D(\ell )=
(-2^{|\ell|}, -2^{|\ell|-1}].\label{z19}
\end{align} 
Then 
\begin{enumerate}[i)]
\item\label{z16a} it holds that
$
[\mathcal{F}((\disDiff{h}{y}u)(\cdot,0))](0)=
[\mathcal{F}((\disDiff{h}{x,1}u)(\cdot,0))](0)=\ldots=
[\mathcal{F}((\disDiff{h}{x,d-1}u)(\cdot,0))](0)
=0
$
and
\item \label{z16b}
it holds
 that
$
[\mathcal{F}((\disDiff{h}{y}u)(\cdot,0))](\nu)=
\DM_i^h(\nu)[\mathcal{F}((\disDiff{h}{x,i}u)(\cdot,0))](\nu).
$
\end{enumerate}

\end{corollary}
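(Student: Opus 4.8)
The plan is to read off both statements from the Fourier-side formulas already established in \cref{z18}. Writing $f=1/Q-1$ as in \eqref{y13}, \cref{z18} says that for every $k\in\I_{\pi/h}^{d-1}$ and every $j\in\disint{1}{d-1}$
\begin{align*}
[\mathcal{F}((\disDiff{h}{x,j}u)(\cdot,0))](k)&=h^{-1}\big(e^{-\ima hk_j}-1\big)\,[\mathcal{F}(u(\cdot,0))](k),\\
[\mathcal{F}((\disDiff{h}{y}u)(\cdot,0))](k)&=h^{-1}f(\lambda(hk))\,[\mathcal{F}(u(\cdot,0))](k).
\end{align*}
Here $\lambda(hk)\geq 1$ always, so $\lambda(hk)$ lies in the domain $\C\setminus(-\infty,1)$ of $f$ and $Q$ (a fact already used in \cref{x12}) and $Q(\lambda(hk))\neq 0$ by \eqref{y13}, so both expressions are meaningful.

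For \cref{z16a} I would simply take $k=0$. Then $e^{-\ima\cdot 0}-1=0$ kills the right-hand side of the first identity for every $j$, giving $[\mathcal{F}((\disDiff{h}{x,j}u)(\cdot,0))](0)=0$. Moreover $\lambda(0)=d-(d-1)=1$, and \cref{y16} gives $f(1)^2=2(1-1)/Q(1)=0$, hence $f(\lambda(0))=0$; the second identity then yields $[\mathcal{F}((\disDiff{h}{y}u)(\cdot,0))](0)=0$ as well.

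For \cref{z16b}, fix $\nu\in\prod_j(D(k_j)\cap\I_{\pi/h})$ with $k_i\neq 0$. Among the dyadic intervals in \eqref{z19} only $D(0)=(-1,1)$ contains $0$; since $k_i\neq 0$ this forces $0\notin D(k_i)$, hence $\nu_i\neq 0$. Because also $\nu_i\in\I_{\pi/h}$, we have $h\nu_i\in(-\pi,\pi]\setminus\{0\}$, so $h\nu_i\notin 2\pi\Z$ and therefore $e^{-\ima h\nu_i}-1\neq 0$. Thus the ``$t_i\neq 0$'' case of \eqref{y14} applies at $t=h\nu$, i.e.\ $\DM_i^h(\nu)=\DM_i(h\nu)=f(\lambda(h\nu))/(e^{-\ima h\nu_i}-1)$. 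Multiplying the first displayed identity (taken at $k=\nu$, $j=i$) by $\DM_i^h(\nu)$, the factor $e^{-\ima h\nu_i}-1$ cancels and we are left with $h^{-1}f(\lambda(h\nu))\,[\mathcal{F}(u(\cdot,0))](\nu)$, which by the second displayed identity equals $[\mathcal{F}((\disDiff{h}{y}u)(\cdot,0))](\nu)$. This is precisely the asserted relation $[\mathcal{F}((\disDiff{h}{y}u)(\cdot,0))](\nu)=\DM_i^h(\nu)\,[\mathcal{F}((\disDiff{h}{x,i}u)(\cdot,0))](\nu)$.

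I do not expect a genuine obstacle here: the corollary is essentially bookkeeping on top of \cref{z18}. The two points that deserve a line of justification are (i) that $e^{-\ima h\nu_i}\neq 1$, so that $\DM_i$ is evaluated on its non-degenerate branch at $h\nu$ — this is exactly where the dyadic localisation $\nu_i\in D(k_i)$ together with $k_i\neq 0$ is used — and (ii) that $f(1)=0$, for which the identity $f^2=2(z-1)/Q$ from \cref{y16} is the cleanest route, since it sidesteps any discussion of the value of the square root $R$ at $0$.
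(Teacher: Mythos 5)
Your proposal is correct and follows essentially the same route as the paper: both parts are read off from \cref{z18}, with the dyadic localisation $\nu_i\in D(k_i)$ together with $k_i\neq 0$ used exactly as in the paper to guarantee $e^{-\ima h\nu_i}-1\neq 0$ so that the non-degenerate branch of $\DM_i$ in \eqref{y14} applies and the factor cancels. Your extra line verifying $f(\lambda(0))=f(1)=0$ via \cref{y16} (the paper simply cites \cref{z18} for part i)) is a harmless, slightly more explicit bookkeeping step, and your consistent use of $\lambda(h\nu)$ is in fact cleaner than the paper's display, which writes $Q(\lambda(hk))$ where $Q(\lambda(h\nu))$ is meant.
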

\begin{proof}[Proof of \cref{z16}]
First, note that \cref{z18}  implies \cref{z16a}.
Next, observe that \eqref{z19}, the assumption that
$k_i\neq 0$, and the assumption that $\nu_i\in D(k_i)\cap\I_{\pi/h}$  prove that
$ e^{-\ima h\nu_i}-1\neq0 $.
\cref{z18} and \eqref{y14}  therefore show that
\begin{align}\begin{split}
&[\mathcal{F}((\disDiff{h}{y}u)(\cdot,0))](\nu)
=
h^{-1}\left(Q(\lambda(hk))^{-1}-1\right) [\mathcal{F}\left( u(\cdot,0)\right)](k)\\
&=\frac{\left(Q(\lambda(hk))^{-1}-1\right)h^{-1}
[\mathcal{F}\left( u(\cdot,0)\right)](k)(e^{-\ima hk_i}-1)}{(e^{-\ima h\nu_i}-1)}
=\frac{\left(Q(\lambda(hk))^{-1}-1\right)[\mathcal{F}(\disDiff{h}{x,i}u)(\cdot,0)](\nu)}{(e^{-\ima h\nu_i}-1)}\\&=
\DM_i^h(\nu)[\mathcal{F}((\disDiff{h}{x,i}u)(\cdot,0))](\nu).\end{split}
\end{align}
This completes the proof of \cref{z16}.
\end{proof}
\begin{lemma}\label{x11}Assume \cref{d134,x01,y15},
let
$h\in \pi/\N$,
$u\in \mathbbm{H}_{d,h,\geq 0}$,
let $\rdown\in [0,\infty)$,
$N\in \N$ satisfy that $Nh\geq \rdown $, and assume that  $\sum_{x\in \omega_h^{d-1}}u(x,0)=0$.
Then 
\begin{enumerate}[i)]
\item \label{x11b} 
it holds for all $k\in \I_{\pi/h}^{d-1}\setminus\{0\}$ that $Q^{-N}(\lambda(kh))\leq (1+\sqrt{c}\rdown)^{-1}$ and
\item \label{x11c}
it holds that  
$\left\|u(\cdot,Nh)\right\|_{L^2_h(\omega^{d-1}_h)}\leq (1+\sqrt{c}\rdown)^{-1}\left\|u(\cdot,0)\right\|_{L^2_h(\omega^{d-1}_h)}$.
\end{enumerate}
\end{lemma}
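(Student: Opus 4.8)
The plan is to prove \cref{x11b} by an elementary pointwise lower bound on $Q$ along the real half‑line $[1,\infty)$, and then to deduce \cref{x11c} from \cref{x11b} by moving to the Fourier side via \cref{x12} and applying Plancherel's identity (\cref{x10}).

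For \cref{x11b} I would argue as follows. Fix $k\in\I_{\pi/h}^{d-1}\setminus\{0\}$. Since $\pi/h\in\N$, the vector $k$ has integer entries, so $|k|_\infty\geq 1$ and hence $|hk|_\infty=h|k|_\infty\geq h$; by \cref{x32} this gives $\lambda(hk)-1\geq c|hk|_\infty^2\geq ch^2>0$, so in particular $\lambda(hk)\in(1,\infty)$ and $Q(\lambda(hk))$ is well defined. For real $z\geq 1$ the definition \eqref{y13}, together with the fact that $R$ restricts to the positive square root on $(0,\infty)$, gives $Q(z)=z+\sqrt{z^2-1}$ (equivalently this can be read off the quadratic relation $Q(z)^2-2zQ(z)+1=0$ of \cref{y16} and continuity at $z=1$), so that $Q(z)-1=(z-1)+\sqrt{(z-1)(z+1)}\geq\sqrt{2(z-1)}$. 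Taking $z=\lambda(hk)$ and inserting the quadratic lower bound yields $Q(\lambda(hk))-1\geq\sqrt{2c}\,|hk|_\infty\geq\sqrt{2c}\,h$. Since $Q(\lambda(hk))\geq 1$, Bernoulli's inequality then gives $Q^N(\lambda(hk))\geq 1+N\bigl(Q(\lambda(hk))-1\bigr)\geq 1+\sqrt{2c}\,Nh\geq 1+\sqrt{2c}\,\rdown\geq 1+\sqrt{c}\,\rdown$, where I used $Nh\geq\rdown$ and $\sqrt2\geq 1$; inverting proves \cref{x11b}.

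For \cref{x11c} I would first note that the hypothesis $\sum_{x\in\omega_h^{d-1}}u(x,0)=0$ is exactly the statement $[\mathcal F(u(\cdot,0))](0)=0$, by the definition \eqref{y02b} of $\mathcal F$ at the origin. \cref{x12} with $n\defeq N$ gives $[\mathcal F(u(\cdot,Nh))](k)=\bigl((Q\circ\lambda)(hk)\bigr)^{-N}[\mathcal F(u(\cdot,0))](k)$ for every $k\in\I_{\pi/h}^{d-1}$. The $k=0$ term vanishes on both sides, while for $k\neq0$ the factor $(Q\circ\lambda)(hk)$ is a real number $\geq 1$, bounded below as in \cref{x11b}, so $\bigl|\bigl((Q\circ\lambda)(hk)\bigr)^{-N}\bigr|\leq(1+\sqrt c\rdown)^{-1}$. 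Summing the squares over $k$ yields $\sum_{k\in\I_{\pi/h}^{d-1}}\bigl|[\mathcal F(u(\cdot,Nh))](k)\bigr|^2\leq(1+\sqrt c\rdown)^{-2}\sum_{k\in\I_{\pi/h}^{d-1}}\bigl|[\mathcal F(u(\cdot,0))](k)\bigr|^2$, and \cref{x10} (applied with $d\defeq d-1$) converts both sides into the corresponding $\|\cdot\|_{L^2_h(\omega_h^{d-1})}^2$, up to one and the same constant; taking square roots gives \cref{x11c}.

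I do not expect a real obstacle here: the argument is short and self‑contained given \cref{x32,x12,x10}. The only points that need a little care are (a) justifying that $Q$ restricted to $[1,\infty)$ equals $z\mapsto z+\sqrt{z^2-1}$, so that the elementary estimate $Q(z)-1\geq\sqrt{2(z-1)}$ is legitimate; (b) keeping track of the $k=0$ Fourier mode, which is precisely neutralized by the mean‑zero assumption on $u(\cdot,0)$; and (c) remembering that $|k|_\infty\geq 1$ for a nonzero integer vector $k$, so that $|hk|_\infty\geq h$ — this is where the hypothesis $Nh\geq\rdown$ converts a bound in $h$ into the claimed bound in $\rdown$.
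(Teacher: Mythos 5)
Your proposal is correct and follows essentially the same route as the paper: a pointwise lower bound $Q(\lambda(t))-1\gtrsim\sqrt{c}\,|t|_\infty$ via \cref{x32} combined with Bernoulli's inequality and $|k|_\infty\geq 1$ for \cref{x11b}, then \cref{x12}, the vanishing of the zeroth Fourier mode, and Plancherel (\cref{x10}) for \cref{x11c}. The only cosmetic difference is that you apply Bernoulli directly to $Q^N\geq 1+N(Q-1)$ rather than to $(1+\sqrt{c}|kh|_\infty)^N$ as the paper does, which changes nothing of substance.
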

\begin{proof}[Proof of \cref{x11}]
Observe that \eqref{y14} and \cref{x32} show
for all $t\in[-\pi,\pi]^{d-1}$ that
\begin{align}
Q(\lambda(t))-1= \lambda(t)-1+\sqrt{\lambda(t)^2-1}\geq
\sqrt{\lambda(t)-1}
\geq \sqrt{c}|t|_\infty.
\end{align}
This,   Bernoulli's inequality, and the assumption that 
$Nh\geq \rdown$
show for all $
k\in\mathbbm{I}_L^{d-1}\setminus\{0\}$ that
\begin{align}\label{x11bb}
\frac{1}{Q^N(\lambda(kh))}\leq \frac{1}{\left(1+\sqrt{c}|kh|_\infty\right)^N}\leq
\frac{1}{1+\sqrt{c}N|kh|_\infty}\leq \frac{1}{ 
1+\sqrt{c}|k|_\infty\rdown} \leq \frac{1}{ 1+\sqrt{c}\rdown}.
\end{align}
This proves \cref{x11b}.  Observe that \eqref{y02b} and the assumption that $\sum_{x\in \omega_h^{d-1}}u(x,0)=0$ imply that
$
[\mathcal{F}(u(\cdot,0))](0)=0
$. The Plancherel identity (for details see \cref{x10}), \cref{x12}, and \eqref{x11bb} hence demonstrate that
\begin{align}\begin{split}    
& ( 2\pi h)^{d-1}\sum_{x\in \omega_h^{d-1}} |u(x,Nh)|^2=
\sum_{k\in \I_{\pi/h}^{d-1}}|[\mathcal{F}(u(\cdot,Nh))](k)|^2
=\sum_{k\in \I_{\pi/h}^{d-1}}\left|Q^{-N}(\lambda(kh))[\mathcal{F}(u(\cdot,0))](k)\right|^2\\
&
=\sum_{k\in \I_{\pi/h}^{d-1}\setminus\{0\}}\left|Q^{-N}(\lambda(kh))[\mathcal{F}(u(\cdot,0))](k)\right|^2\leq ( 1+\sqrt{c}\rdown)^{-2}
\sum_{k\in \I_{\pi/h}^{d-1}\setminus\{0\}}|[\mathcal{F}(u(\cdot,0))](k)|^2\\
&
= ( 1+\sqrt{c}\rdown)^{-2}
\sum_{k\in \I_{\pi/h}^{d-1}}|[\mathcal{F}(u(\cdot,0))](k)|^2=( 1+\sqrt{c}\rdown)^{-2}(2\pi h)^{d-1}\sum_{x\in \omega_h^{d-1}} |u(x,0)|^2.
\end{split}
\end{align}
This and  \eqref{x02c} imply \cref{x11c}. 
The proof of \cref{x11} is thus completed.
\end{proof}
Combining 
\cref{x11}, \cref{x02}, a scaling argument, and a Riesz-Thorin-type interpolation argument we obtain the following result, \cref{x12e}. For later use we only need the fact that the multiplicative constants do not depend on $N$. 
\begin{corollary}\label{x12e}Assume \cref{d134,x01}. Let
$h\in \pi/\N$,
$u\in \mathbbm{H}_{d,h,\geq 0}$,
let $\rdown\in [0,\infty)$,
$N\in \N$ satisfy that $Nh\geq \rdown $, and assume that  $\sum_{x\in \omega_h^{d-1}}u(x,0)=0$.
Then 
\begin{enumerate}[i)]
\item\label{x13b} it holds for all $p\in [1,2]$ that
$\left\|u(\cdot,N)\right\|_{L^p_h(\omega_h^{d-1})}\leq(1+\sqrt{c}\rdown)^{-\left(2-\frac{2}{p}\right)} \left\|u(\cdot,0)\right\|_{L^p_h(\omega_h^{d-1})}$ and
\item \label{x13c} it holds for all $p\in [2,\infty]$ that
$\left\|u(\cdot,N)\right\|_{L^p_h(\omega_h^{d-1})}\leq(1+\sqrt{c}\rdown)^{-\frac{2}{p}} \left\|u(\cdot,0)\right\|_{L^p_h(\omega_h^{d-1})}$.
\end{enumerate}
\end{corollary}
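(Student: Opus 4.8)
The plan is to regard the assignment of boundary data to the height-$N$ slice of its bounded harmonic extension as a single linear operator and to obtain both inequalities by Riesz--Thorin interpolation between three endpoint bounds already in hand: $L^1$- and $L^\infty$-contractivity from \cref{x02}, and the $L^2$-gain by a factor $(1+\sqrt c\,\rdown)^{-1}$ from the $L^2$-estimate in \cref{x11}.

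Concretely, I would first pass to the integer lattice by the rescaling $\tilde u(x,y)=u(hx,hy)$ for $(x,y)\in\Z^{d-1}\times\N_0$, under which $u\in\mathbbm{H}_{d,h,\geq 0}$ becomes $\tilde u\in\mathbbm{H}_{d,\pi/h,\geq 0}$ and $\|u(\cdot,Nh)\|_{L^p_h(\omega_h^{d-1})}=h^{(d-1)/p}\,\|\tilde u(\cdot,N)\|_{L^p(\I_{\pi/h}^{d-1})}$; the power of $h$ is the same on the slices $N$ and $0$, so it cancels in every ratio of norms and it suffices to argue for $\tilde u$ with the unnormalised norms $\|\cdot\|_{L^p(\I_{\pi/h}^{d-1})}$. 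On the finite-dimensional space $W$ of functions on $\I_{\pi/h}^{d-1}$ (equivalently, $2(\pi/h)$-periodic functions on $\Z^{d-1}$) the map $T_N\colon f\mapsto(\text{height-}N\text{ slice of the bounded harmonic extension of }f)$ is well defined and linear, existence and uniqueness of the extension coming from the random-walk representation and optional stopping as in \cref{z01} (or from \cref{x12}); moreover, by \cref{z01} again, $T_N$ preserves the mean-zero subspace $V=\{f\in W:\sum_x f(x)=0\}$.

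Then I would read off the endpoints on $V$. \cref{x02} (applied to $\tilde u$) gives $\|T_Nf\|_{L^1}\le\|f\|_{L^1}$ and $\|T_Nf\|_{L^\infty}\le\|f\|_{L^\infty}$ with no further hypothesis, while the $L^2$-estimate in \cref{x11} (applied to $u$ and transported through the scaling identity above), under $Nh\ge\rdown$ and $\sum_x f=0$, gives $\|T_Nf\|_{L^2}\le(1+\sqrt c\,\rdown)^{-1}\|f\|_{L^2}$. Since $W$ is finite-dimensional all its $L^p$-norms are equivalent, so $V$ is a common subspace of the spaces $L^p(\I_{\pi/h}^{d-1})$ and the three bounds may legitimately be restricted to it at once. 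Interpolating $T_N|_V$: for $p\in[1,2]$ write $\tfrac1p=(1-\theta)+\tfrac\theta2$, so $\theta=2-\tfrac2p$, and Riesz--Thorin between the $L^1$- and $L^2$-endpoints gives operator norm $\le 1^{1-\theta}(1+\sqrt c\,\rdown)^{-\theta}=(1+\sqrt c\,\rdown)^{-(2-2/p)}$, which is the bound asserted for $p\in[1,2]$; for $p\in[2,\infty]$ write $\tfrac1p=\tfrac{1-\theta}{2}$, so $\theta=1-\tfrac2p$, and interpolation between the $L^2$- and $L^\infty$-endpoints gives operator norm $\le(1+\sqrt c\,\rdown)^{-(1-\theta)}=(1+\sqrt c\,\rdown)^{-2/p}$, the bound asserted for $p\in[2,\infty]$.

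The analytic substance is already packed into \cref{x02} and \cref{x11}, so this is light work; the one point needing a little care — and the one I would flag as the main, if modest, obstacle — is the asymmetry that the $L^2$-gain holds only on the mean-zero subspace whereas the $L^1$/$L^\infty$-contractions are stated on the whole space. Restricting all three to $V$ before interpolating (legitimate by the finite-dimensionality remark and the $T_N$-invariance of $V$) removes this, after which the exponent bookkeeping above is routine.
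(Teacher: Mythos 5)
Your route is the same one the paper takes: \cref{x12e} is stated there with only the one-line justification ``combine \cref{x02}, \cref{x11}, a scaling argument, and a Riesz--Thorin-type interpolation,'' and your scaling reduction, your identification of the three endpoint bounds, and your exponent bookkeeping ($\theta=2-\tfrac2p$ between $L^1$ and $L^2$, $\theta=1-\tfrac2p$ between $L^2$ and $L^\infty$) all match what the paper intends; the preservation of the mean-zero class by the extension map is indeed supplied by \cref{z01}.

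The one step you flag is, however, not discharged by the reason you give, and this is a genuine gap in the write-up. Riesz--Thorin does not automatically restrict to an invariant subspace: the three-lines proof evaluates the operator on the deformed functions $f_z=|f|^{\alpha(z)}\operatorname{sgn} f$, and these leave the mean-zero subspace $V$ even when $f\in V$, so on the $L^2$-line you may only invoke the bound valid for all of $W$, which is $1$ (the zero Fourier mode of $T_N$ has multiplier $Q(\lambda(0))^{-N}=1$), not $(1+\sqrt c\,\rdown)^{-1}$. Finite-dimensionality is irrelevant here: it guarantees that all norms are finite, not that the subcouple $(V\cap L^1, V\cap L^2)$ interpolates with constant $1$, and interpolation of subcouples is in general not exact. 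The natural repairs also do not reproduce the stated constant: replacing $T_N$ by $T_N-A$ with $A$ the slice-average (so that the improved $L^2$ bound holds on all of $W$) costs $\|T_N-A\|_{L^1\to L^1}\le 2$, giving $2^{1-\theta}(1+\sqrt c\,\rdown)^{-\theta}$, which exceeds $1$ as $p\downarrow 1$ and would break the later use of \cref{x12e} (via \cref{t01} and \cref{q24,q25}, where a constant strictly below $1$ is needed for every fixed $p\in(1,\infty)$ with $\rdown$ fixed). So either one must prove that this particular mean-zero subcouple interpolates exactly (which neither you nor the paper does), or one needs a different argument for the prefactor-free bound; as it stands, your proposal reproduces the paper's sketch, including this unjustified step, rather than closing it.
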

\subsection{A Marcinkiewicz-type theorem for more than one multipliers}\label{d133c}
This subsection
slightly extends the classical Marcinkiewicz multiplier theorem  to the case of more than one multipliers (see \cref{d133}). 
It will be used to bound the normal component by $(d-1)$ tangential components.
In this case there are $(d-1)$ multipliers, however, each multiplier is \emph{not everywhere well-defined} as seen in \cref{z16}.
In the continuum setting this issue is overcome by considering a partition of unity
(see the paragraph between (83) and (84) in \cite{BFO18}).

The argument here also relies on local properties of the multipliers. Roughly speaking, the function $\lvar$ in \cref{y30} below, called the \emph{local variation}, measures the variation of a function on each dyadic rectangle. 
\cref{d133} proves that we still obtain
$L^p$-estimates, $p\in (1,\infty)$, if 
for each dyadic rectangle
 there is a nice multiplier defined on it. 
Moreover, in order to conveniently verify an assumption in \cref{d133} we use
\cref{z21}.

The notation in \cref{y30} below, e.g., \eqref{y04}--\eqref{y01b},
is again inspired by \cite[Section 2.5]{JS14}. 

\begin{setting}\label{y30}Let \cref{d134} be given.
Let $D(\ell)\subseteq \R$, $\ell\in \mathbbm{Z}$, be the intervals given by
\begin{align}
\forall\, \ell \in \N\colon 
D(\ell)=[2^{\ell-1},2^{\ell}),\quad 
D(0)=(-1,1),\quad 
\forall\, \ell \in (-\N)\colon D(\ell )=
(-2^{|\ell|}, -2^{|\ell|-1}].
\label{y03}
\end{align} 
For every $\beta\in \{0,1\}$ and every finite set $A\subseteq \Z $ we write
\begin{align}
\sum_{\nu\in A}^\beta =\begin{cases}\displaystyle
\sum_{\nu\in A}&\quad\colon \beta=1\\\displaystyle
\sup_{\nu\in A}&\quad \colon \beta=0.
\end{cases}\label{y04}
\end{align}
For every $d\in \N$, 
$\alpha\in \{0,1\}$,
$f\colon \mathbbm{Z}^d\to\C$
let 
$ \Delta_i^\alpha f\colon \Z^d\to \C$ be the functions given by
\begin{align}
\forall\, i\in\disint{1}{d},\, x\in\mathbbm{Z}^d\colon\quad  (\Delta_i^\alpha) f(x)=
\begin{cases}
f(x+\unit{d}{i})-f(x)&\colon \alpha=1 \\
f(x)&\colon \alpha=0.
\end{cases}\label{y04b}
\end{align}
Let
$
\var\colon 
\bigcup_{d,L\in\N} \perfct{2L}{\Z^d}\to\R
$
be the so-called \emph{total variation}, i.e., the function which satisfies for all $d,L\in\N$, $a\in\perfct{2L}{\Z^d}$ that
\begin{align}
\var(a)=\sup_{k\in\Z^d}
\max_{\alpha\in \{0,1\}^d}\left[
\sum_{\nu_{1}\in D(k_1)\cap\mathbbm{I}_L }^{\alpha_1}\ldots
\sum_{\nu_d\in D(k_d)\cap\mathbbm{I}_L }^{\alpha_d} \left|(\Delta_{1}^{\alpha_1}\ldots\Delta_{d}^{\alpha_d} a)(\nu)\right|\right].\label{y01b}
\end{align}
For every $\beta\in \{0,1\}$ and every \emph{finite} set $A\subseteq \Z $ we write
\begin{align}
\sum_{\nu\in A}^{\beta,1} =\begin{cases}\displaystyle
\sum_{\nu\in A\setminus\{\max A\}}&\quad\colon \beta=1\\
\displaystyle\sup_{\nu\in A}&\quad \colon \beta=0.
\end{cases}\label{y26}
\end{align}
Let
$
\lvar\colon 
\bigcup_{d,L\in\N} \left(\perfct{2L}{\Z^d}\times \Z^d\right)\to\R
$
be the function, called the \emph{local variation}, which satisfies
for all 
$d,L\in\N$, $a\in\perfct{2L}{\Z^d}$, $k\in\Z^d$ that
\begin{align}\begin{split}
\lvar(a,k)=
\max_{\alpha\in \{0,1\}^d}\left[
\sum_{\nu_{1}\in D(k_1)\cap\mathbbm{I}_L }^{\alpha_1,1}\ldots
\sum_{\nu_d\in D(k_d)\cap\mathbbm{I}_L }^{\alpha_d,1} \left|(\Delta_{1}^{\alpha_1}\ldots\Delta_{d}^{\alpha_d} a)(\nu_1,\ldots,\nu_d)\right|\right].
\label{y01}\end{split}
\end{align}
\end{setting}
In \cref{z22c} below we explain the purpose of introducing \eqref{y26} and \eqref{y01}.
\begin{lemma}\label{z22c}Assume \cref{y30}, let $d,L\in\N$,   $f,g\in \perfct{2L}{\Z^d}$, $k\in \Z^d $, and assume for all  $\nu\in \prod_{j=1}^{d} (D(k_j)\cap\I_L)$ that $ f(\nu)=g(\nu)$. Then it holds that $\lvar(f,k)=\lvar(g,k)$.
\end{lemma}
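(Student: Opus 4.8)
The plan is to prove that $\lvar(a,k)$ depends on the function $a$ only through its restriction to the box $\prod_{j=1}^{d}\bigl(D(k_j)\cap\I_L\bigr)$; once this localization is established, the identity $\lvar(f,k)=\lvar(g,k)$ is immediate from the hypothesis that $f$ and $g$ agree on that box. The whole reason the modified summation convention \eqref{y26} was introduced — summing over $D(k_j)\cap\I_L$ with its maximal element deleted when $\alpha_j=1$ — is precisely to make this localization go through, so the essential point is to recognize and exploit that design.

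First I would dispose of the degenerate case: if $D(k_j)\cap\I_L=\emptyset$ for some $j\in\disint{1}{d}$, then every nested sum/supremum in \eqref{y01} ranges over the empty set, so both sides agree trivially; hence I may assume all the sets $D(k_j)\cap\I_L$ are nonempty. Next, I would fix $\alpha\in\{0,1\}^d$ and an index point $\nu=(\nu_1,\ldots,\nu_d)$ lying in the range of the nested sums/suprema in \eqref{y01}, that is, $\nu_j\in D(k_j)\cap\I_L$ for every $j$, with the extra constraint $\nu_j\neq\max\bigl(D(k_j)\cap\I_L\bigr)$ whenever $\alpha_j=1$. Expanding the iterated finite difference using \eqref{y04b}, $(\Delta_1^{\alpha_1}\cdots\Delta_d^{\alpha_d}a)(\nu)$ becomes a signed sum of values $a(\nu+\epsilon)$ over $\epsilon\in\{0,1\}^d$ with $\epsilon_j\le\alpha_j$ for all $j$.

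The key step is then to check that each shifted point $\nu+\epsilon$ occurring in this expansion still lies in $\prod_{j=1}^{d}\bigl(D(k_j)\cap\I_L\bigr)$. Coordinatewise: if $\alpha_j=0$ then $\epsilon_j=0$ and there is nothing to verify; if $\alpha_j=1$ and $\epsilon_j=1$, I would observe from \eqref{y03} that each $D(\ell)$, $\ell\in\Z$, contains exactly an interval of consecutive integers, and that $\I_L=\disint{-L+1}{L}$ does too, so $D(k_j)\cap\I_L$ is itself a (nonempty) set of consecutive integers; since $\nu_j$ belongs to it and is not its maximum, $\nu_j+1$ belongs to it as well. With this in hand, the hypothesis $f=g$ on the box forces $(\Delta_1^{\alpha_1}\cdots\Delta_d^{\alpha_d}f)(\nu)=(\Delta_1^{\alpha_1}\cdots\Delta_d^{\alpha_d}g)(\nu)$ for all admissible $\alpha$ and $\nu$. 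Taking absolute values, applying the nested $\sum^{\alpha_j,1}$ operations (whose index sets depend only on $k$ and $L$, not on the function), and finally the maximum over $\alpha\in\{0,1\}^d$, I obtain $\lvar(f,k)=\lvar(g,k)$.

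I do not expect a genuine obstacle here: the lemma is essentially bookkeeping. The one point that requires care — and the sole reason \eqref{y26} and \eqref{y01} are phrased as they are — is the consecutive-integers observation guaranteeing that the forward shift $\nu_j\mapsto\nu_j+1$ produced by $\Delta_j^1$ never escapes the dyadic block $D(k_j)\cap\I_L$; everything else is routine.
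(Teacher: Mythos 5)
Your proposal is correct and follows essentially the same route as the paper's proof: the paper verifies, coordinate by coordinate (first in the one-dimensional case, then applied successively to each variable), that the deleted-maximum convention in \eqref{y26} keeps the forward shift inside $D(k_j)\cap\I_L$, so the iterated differences of $f$ and $g$ coincide on the relevant index set, which is exactly your key observation. Your direct expansion of the multi-dimensional difference into shifted values and your explicit treatment of the empty-intersection case are only cosmetic variations on the same bookkeeping.
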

\begin{proof}[Proof of \cref{z22c}]Let us shows that for all $\alpha\in\{0,1\}^d$ it holds that
\begin{align}
\sum_{\nu_1\in D(k_1)\cap\I_L}^{\alpha_1,1}\ldots
\sum_{\nu_d\in D(k_d)\cap\I_L}^{\alpha_d,1}
(\Delta_1^{\alpha_1}\ldots \Delta_d^{\alpha_d} f)(\nu)
=
\sum_{\nu_1\in D(k_1)\cap\I_L}^{\alpha_1,1}\ldots
\sum_{\nu_d\in D(k_d)\cap\I_L}^{\alpha_d,1}
(\Delta_1^{\alpha_1}\ldots \Delta_d^{\alpha_d} g)(\nu).\label{z22b}
\end{align}
First, we consider the case $d=1$. If $\alpha=0$, then \eqref{z22b} directly follows from \eqref{y26}. If $\alpha=1$, observe 
that for all 
$\nu\in (D(k)\cap\I_L)\setminus\max\{D(k)\cap\I_L\} $ it holds that
$f(\nu+1 )=g(\nu+1)$, $f(\nu)=g(\nu)$, and hence $(\Delta f)(\nu)=(\Delta g)(\nu)$ and 
\eqref{z22b} then follows from \eqref{y26}. 
Applying the result for $d=1$ successively we obtain  \cref{z22b} in the case  $d\geq 2$. Using \eqref{y01} then completes the proof of \cref{z22c}.
\end{proof}
\begin{lemma}\label{z22}Assume \cref{y30} and let 
$d,L\in\N$, $a\in\perfct{2L}{\Z^d}$. Then $\lvar(a,0)=|a(0)|$.
\end{lemma}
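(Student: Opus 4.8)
The plan is to unwind the definitions in \eqref{y26} and \eqref{y01} at the single point $k=0$. First I would note that by \eqref{y03} we have $D(0)=(-1,1)$, so $D(0)\cap\Z=\{0\}$ and hence $D(0)\cap\I_L=\{0\}$ for every $L\in\N$ (since $0\in\I_L=\disint{-L+1}{L}$). Thus for each coordinate $j$ the index set over which $\nu_j$ ranges is the singleton $\{0\}$.

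The key observation is what the modified sum $\sum^{\beta,1}$ does on a singleton set. If $\beta=0$, then $\sum_{\nu\in\{0\}}^{0,1}$ is $\sup_{\nu\in\{0\}}$, which just evaluates at $\nu=0$. If $\beta=1$, then $\sum_{\nu\in\{0\}}^{1,1}$ sums over $\{0\}\setminus\{\max\{0\}\}=\{0\}\setminus\{0\}=\varnothing$, giving $0$. So in the multi-index sum \eqref{y01} with $k=0$, any term with some $\alpha_j=1$ contributes $0$ (an empty sum in that coordinate), while the single term $\alpha=(0,\ldots,0)$ contributes $|(\Delta_1^0\cdots\Delta_d^0 a)(0,\ldots,0)|=|a(0)|$ by \eqref{y04b}. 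Taking the maximum over $\alpha\in\{0,1\}^d$ of a collection of numbers that are all $0$ except one which equals $|a(0)|\geq 0$ yields $|a(0)|$.

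I would write this out as follows: fix $d,L\in\N$ and $a\in\perfct{2L}{\Z^d}$. Since $D(0)\cap\I_L=\{0\}$, for each $\alpha\in\{0,1\}^d$ the iterated sum in \eqref{y01} over $\nu_1,\ldots,\nu_d\in D(0)\cap\I_L=\{0\}$ equals $0$ whenever some $\alpha_j=1$ (because then the $j$-th sum $\sum_{\nu_j\in\{0\}}^{\alpha_j,1}$ is over the empty set $\{0\}\setminus\{0\}$), and equals $|(\Delta_1^0\cdots\Delta_d^0 a)(0,\ldots,0)|=|a(0)|$ when $\alpha=(0,\ldots,0)$. Hence the maximum over $\alpha$ in \eqref{y01} is $\max(|a(0)|,0,\ldots,0)=|a(0)|$, which is exactly $\lvar(a,0)=|a(0)|$. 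I do not anticipate a genuine obstacle here; the only mild care needed is to handle the convention $\sum^{1,1}$ on a singleton correctly (it vanishes), and to make sure one is reading the $\alpha=0$ case of $\Delta_i^\alpha$ from \eqref{y04b} as the identity, not as a difference. This is a short bookkeeping argument.
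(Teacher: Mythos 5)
Your proof is correct and follows essentially the same route as the paper's: both arguments reduce to the observation that $D(0)\cap\I_L=\{0\}$, so $\sum^{1,1}$ over this singleton is an empty sum while $\sum^{0,1}$ evaluates at $0$, and the maximum over $\alpha$ in \eqref{y01} is attained at $\alpha=(0,\ldots,0)$, giving $|a(0)|$.
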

\begin{proof}[Proof of \cref{z22}]
Observe that
\eqref{y26} and the fact that $D(0)\cap\I_L=\{0\}$
show  for all
$f\in\perfct{2L}{\Z}$ that
$\sum_{\nu\in D(0)\cap\I_L}^{1,1}f(\nu)$  is an empty sum and
$\sum_{\nu\in D(0)\cap\I_L}^{0,1}f(\nu)=\sup_{\nu\in D(0)\cap\I_L}f(\nu)=f(0)$.
This, applied successively to each variable, and \eqref{y01} prove
that $\lvar(a,0)=|a(0)|$. 
\end{proof}
For the proof of
\cref{z21} below we use the mean value theorem. This is a routine idea (cf. the proof of Item (b) in Theorem~2.49 in \cite{JS14}).  
The proof is included only for convenience of the reader.
\begin{lemma}[A sufficient condition  to bound local variations]\label{z21}
Assume \cref{y30}, let $d,L\in\N$, $M\in (0,\infty)$,
$k\in\Z^d$,
let $J$ be the set given by $J=\prod_{j=1}^{d}(D(k_j)\cap[-L+1,L])$, assume that $J\neq \emptyset$,
let
$A\in C(J,\C) $,
$a\in \perfct{2L}{\Z^d}$
 satisfy for all $\nu\in J\cap\Z^d$ that
$  a(k)=A(k)$, and
assume for all $\alpha\in\{0,1\}^d$, 
$\xi\in J\setminus\I_L^d$
that 
$\partial_1 ^{\alpha_1}\ldots\partial_d ^{\alpha_d} A\in C(J\setminus\I_L^d,\C) $ and
$
|\xi_1 ^{\alpha_1}\ldots\xi_d ^{\alpha_d} (\partial_1 ^{\alpha_1}\ldots\partial_d ^{\alpha_d} A)(\xi)|\leq M$.
Then it holds that $\lvar(a,k)\leq M$.
\end{lemma}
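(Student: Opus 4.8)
The plan is to reduce the statement about the discrete local variation $\lvar(a,k)$ to a telescoping-sum estimate, and then to bound each telescoped increment by an integral of the appropriate mixed partial derivative of the continuous function $A$, using the fundamental theorem of calculus (equivalently, the mean value theorem applied coordinate by coordinate). First I would fix $\alpha\in\{0,1\}^d$ and, recalling \eqref{y04b} and the definition of $\lvar$ in \eqref{y01}, observe that $(\Delta_1^{\alpha_1}\cdots\Delta_d^{\alpha_d}a)(\nu)$ equals the $\alpha$-fold forward-difference of $a$ at $\nu$; since $a=A$ on $J\cap\Z^d$ and every point appearing in this finite difference lies in $J\cap\Z^d$ (here the truncation $\cap\,\I_L$ in the index sets, together with the restricted sums $\sum^{\alpha_i,1}$ in \eqref{y26} that drop the largest index when $\alpha_i=1$, is exactly what guarantees $\nu+\unit{d}{i}$ stays in $J$ when $\alpha_i=1$), we may replace $a$ by $A$ throughout.

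Next I would handle one coordinate at a time. For each index $i$ with $\alpha_i=1$, the fundamental theorem of calculus gives
\begin{align}
(\Delta_i^1 g)(\nu)=g(\nu+\unit{d}{i})-g(\nu)=\int_{\nu_i}^{\nu_i+1}(\partial_i g)(\ldots,\xi_i,\ldots)\,d\xi_i
\end{align}
for any $C^1$ function $g$; iterating this over all $i$ with $\alpha_i=1$ expresses $(\Delta_1^{\alpha_1}\cdots\Delta_d^{\alpha_d}A)(\nu)$ as an integral of $\partial_1^{\alpha_1}\cdots\partial_d^{\alpha_d}A$ over the unit cube $\prod_{i:\alpha_i=1}[\nu_i,\nu_i+1]$ (coordinates with $\alpha_i=0$ are simply evaluated at $\nu_i$), which is legitimate because the hypothesis asserts $\partial_1^{\alpha_1}\cdots\partial_d^{\alpha_d}A\in C(J\setminus\I_L^d,\C)$ and this cube lies in $J\setminus\I_L^d$ away from the axes. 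Taking absolute values and using $|\partial_1^{\alpha_1}\cdots\partial_d^{\alpha_d}A(\xi)|\leq M\,|\xi_1^{\alpha_1}\cdots\xi_d^{\alpha_d}|^{-1}$ on that cube, I would then sum (or sup) over $\nu$ in the dyadic index sets: because the dyadic block $D(k_i)$ has the property that $\sum_{\nu_i\in D(k_i)}\int_{\nu_i}^{\nu_i+1}|\xi_i|^{-1}\,d\xi_i\leq\int_{D(k_i)}|\xi_i|^{-1}\,d\xi_i=\log 2$ when $\alpha_i=1$ (the restricted sum $\sum^{1,1}$ dropping the last term keeps the integration domain inside $D(k_i)$), and trivially $\sup_{\nu_i\in D(k_i)}|\nu_i|^{-1}\leq 2\cdot 2^{-|k_i|}\cdot(\text{something})$ with a matching factor $2^{|k_i|}$ hidden, the product over all coordinates telescopes so that all the $2^{|k_i|}$ scaling factors cancel against the $|\xi_i|^{-1}$ weights, leaving a bound of the form $M$ times an absolute constant — and a more careful bookkeeping (as in the classical Marcinkiewicz argument, cf.\ the proof of Theorem~2.49 in \cite{JS14}) shows the constant is in fact $\le 1$, giving $\lvar(a,k)\le M$. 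The case $k=0$, where $D(0)\cap\I_L=\{0\}$, is degenerate and handled by \cref{z22}: there $\lvar(a,0)=|a(0)|=|A(0)|\le M$ directly from the $\alpha=0$ hypothesis.

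The main obstacle I anticipate is the bookkeeping of the dyadic scaling: one must check that for each coordinate the pairing of the "sum over a dyadic block of $|\xi_i|^{-1}$" (when $\alpha_i=1$) or "sup over a dyadic block" (when $\alpha_i=0$) against the weight $|\xi_i^{\alpha_i}|^{-1}$ coming from the hypothesis produces a coordinate-wise factor bounded by an absolute constant \emph{uniformly in $k_i$ and in $L$}, and that the restricted summation conventions $\sum^{\alpha_i,1}$ in \eqref{y26} are precisely what keep every evaluation point of the finite differences inside $J$ and every integration interval inside $D(k_i)$. Once this per-coordinate estimate is in place, taking the maximum over $\alpha\in\{0,1\}^d$ and recalling \eqref{y01} finishes the proof; since the paper only needs $M$-independence and dimension-dependent constants, even a crude constant suffices here, so I would not optimize it.
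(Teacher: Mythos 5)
Your proposal is correct and is essentially the paper's own argument: after replacing $a$ by $A$ on the dyadic rectangle (the restricted sums $\sum^{\alpha_i,1}$ keeping all evaluation points inside $J$), both proofs apply the coordinate-wise mean value theorem/fundamental theorem of calculus and use the dyadic structure to absorb the weight $|\xi_i|^{\alpha_i}$, the only cosmetic difference being that the paper bounds the number of lattice points in $D(k_i)\cap\I_L$ by $|\xi_i|$ and multiplies by the sup of $|\partial^\alpha A|$, whereas you integrate $|\xi_i|^{-1}$ over $D(k_i)$ to get a factor $\log 2\le 1$ per differentiated coordinate (and $1$ per sup coordinate), which indeed yields $\lvar(a,k)\le M$ without any extra constant. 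The aside about hidden $2^{|k_i|}$ factors cancelling is unnecessary (for $\alpha_i=0$ coordinates there is no weight at all), but it does not affect the validity of the argument.
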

\begin{proof}[Proof of \cref{z21}]First,
\eqref{y03} and \eqref{y26} imply that for all
$k\in\Z$, $\xi\in D(k)$, $\alpha\in\{0,1\}$ it holds that
 that
$
 \sum_{\nu\in D(k)\cap\mathbbm{I}_L }^{\alpha,1}1\leq |\xi|^\alpha
$
where  the sum is an empty sum for $\alpha=1$, $k=0$.
This, the assumption that $\forall\, \nu\in J\cap\Z^d\colon   a(k)=A(k)$, the mean value theorem (applied to all $x_j$ with $\alpha_j\neq 0$), and the assumption that $\forall\,\alpha\in\{0,1\}^d,\,\xi\in J\setminus\I_L^d\colon 
|\xi_1 ^{\alpha_1}\ldots\xi_d ^{\alpha_d} (\partial_1 ^{\alpha_1}\ldots\partial_d ^{\alpha_d} A)(\xi)|\leq M$
 show  for all $\alpha\in\{0,1\}^d$ that
\begin{align}\begin{split}
&\sum_{\nu_{1}\in D(k_1)\cap\mathbbm{I}_L }^{\alpha_1,1}\ldots
\sum_{\nu_d\in D(k_d)\cap\mathbbm{I}_L }^{\alpha_d,1} \left|(\Delta_{1}^{\alpha_1}\ldots\Delta_{d}^{\alpha_d} a)(\nu)\right|\\&\leq 
\sum_{\nu_{1}\in D(k_1)\cap\mathbbm{I}_L}^{\alpha_1,1}\ldots
\sum_{\nu_d\in D(k_d)\cap\mathbbm{I}_L }^{\alpha_d,1}\left[ \sup_{\xi\in J\setminus\I_L^d}\left|(\partial_1 ^{\alpha_1}\ldots\partial_d ^{\alpha_d} A)(\xi)\right|\right]\\
&=
\left[
\sum_{\nu_{1}\in D(k_1)\cap\mathbbm{I}_L }^{\alpha_1,1}1\right]\!\ldots\!\left[
\sum_{\nu_d\in D(k_d)\cap\mathbbm{I}_L }^{\alpha_d,1}1\right]\!\!\left[ \sup_{\xi\in J\setminus\I_L^d}\left|(\partial_1 ^{\alpha_1}\ldots\partial_d ^{\alpha_d} A)(\xi)\right|\right]\\&\leq
 \left[\sup_{\xi\in J\setminus\I_L^d}
|\xi_1 ^{\alpha_1}\ldots\xi_d ^{\alpha_d} (\partial_1 ^{\alpha_1}\ldots\partial_d ^{\alpha_d} A)(\xi)|\right]\leq M.\end{split}
\end{align}
Combining this with \eqref{y01} completes the proof of \cref{z21}.
\end{proof}
\begin{lemma}[From local variations to total variations]\label{y31}
Assume \cref{y30} and let $d,L\in\N$, 
$a\in \perfct{2L}{\Z^d}$,
 $M\in (0,\infty)$ satisfy for all $k\in\Z^d$ 
that $\lvar(a,k)\leq M$. Then $\var(a)\leq 4^dM$.
\end{lemma}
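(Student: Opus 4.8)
Proof plan for Lemma \ref{y31} (from local variations to total variations).

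The plan is to reconstruct the total variation $\var(a)$ from the local variations $\lvar(a,k)$ by a telescoping/summation-by-parts argument along each of the $d$ coordinate directions. Fix $k\in\Z^d$ and $\alpha\in\{0,1\}^d$; we must estimate
$
\sum_{\nu_1\in D(k_1)\cap\I_L}^{\alpha_1}\!\cdots\!\sum_{\nu_d\in D(k_d)\cap\I_L}^{\alpha_d}\bigl|(\Delta_1^{\alpha_1}\!\cdots\Delta_d^{\alpha_d}a)(\nu)\bigr|.
$
The difference between this expression and the corresponding one defining $\lvar(a,k)$ is that the full sums $\sum^{\alpha_j}$ run over all of $D(k_j)\cap\I_L$, whereas the $\sum^{\alpha_j,1}$ appearing in \eqref{y01} drop the maximal index $\max(D(k_j)\cap\I_L)$ when $\alpha_j=1$. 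So the idea is: in each direction $j$ with $\alpha_j=1$, split the sum over $D(k_j)\cap\I_L$ into the sum over $D(k_j)\cap\I_L$ with the top point removed (which is $\sum^{\alpha_j,1}$) plus the single boundary term at $\nu_j=\max(D(k_j)\cap\I_L)$. For that boundary term, use the identity $(\Delta_j^1 a)(\ldots,\max(D(k_j)\cap\I_L),\ldots)=a(\ldots,\max(D(k_j)\cap\I_L)+1,\ldots)-a(\ldots,\max(D(k_j)\cap\I_L),\ldots)$: both of these are \emph{values} of $a$, and $\max(D(k_j)\cap\I_L)+1$ lies in the adjacent dyadic block $D(k_j\pm1)$ (or possibly wraps by periodicity, but still lands in some single dyadic block intersected with $\I_L$). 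Thus each such boundary term is itself bounded by a sum of two quantities, each of which — after the remaining differences in the other directions are expanded the same way — is dominated by a local variation $\lvar(a,k')$ at a neighbouring multi-index $k'$, hence by $M$.

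Concretely, I would argue by induction on $m=|\{j:\alpha_j=1\}|$, or equivalently peel off one coordinate at a time. Peeling off coordinate $j$ (with $\alpha_j=1$) replaces a sum $\sum_{\nu_j\in D(k_j)\cap\I_L}$ of a difference $\Delta_j^1(\cdots)$ by: (i) the truncated sum $\sum^{1,1}$, which contributes to the local-variation structure at $k$, plus (ii) one endpoint term, which by the telescoping identity splits into two values of (the partially-differenced) $a$ at arguments whose $j$-th coordinate lies in $D(k_j)$ resp.\ in a \emph{neighbouring} dyadic block. A value $a(\cdots)$ at a point is, trivially, $\Delta_j^0$ applied to $a$, i.e.\ it fits the $\alpha_j=0$ slot of a local variation. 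Carrying this out across all $d$ coordinates, the single term $\sum^{\alpha}$ over the $k$-block gets written as a sum of at most $2^d$ terms (binary choice in each coordinate: stay truncated, or jump to a neighbour as a $0$-type term), each of which is an expression of exactly the form appearing inside some $\lvar(a,k')$ for a multi-index $k'$ with $|k'_j-k_j|\le 1$ for all $j$. Each such term is $\le M$ by hypothesis, so $\sum^{\alpha}\le 2^d M$.

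Finally, taking the maximum over $\alpha\in\{0,1\}^d$ and the supremum over $k\in\Z^d$ in \eqref{y01b} gives $\var(a)\le 2^d M$; since the statement only asks for the cruder bound $4^d M$, one has slack — in particular one need not track the $2^d$ count tightly, and may absorb the at-most-two-neighbours count ($3$ per direction, really, counting the block itself) into the stated constant $4^d$. The main obstacle I expect is bookkeeping: one must be careful that (a) the ``neighbouring dyadic block'' $\max(D(k_j)\cap\I_L)+1$ is correctly identified — for $k_j\ge1$ it is $D(k_j)$ or $D(k_j+1)$, for $k_j\le -1$ and $k_j=0$ similar case analysis, and when $\max(D(k_j)\cap\I_L)=L$ periodicity sends $L+1\equiv -L+1$ into $D(-k')$ for the appropriate $k'$ — and (b) that after a partial expansion the object $\Delta_{1}^{\alpha_1}\cdots\widehat{\Delta_j}\cdots\Delta_d^{\alpha_d}a$ evaluated at the relevant mixed argument is genuinely of the form controlled by $\lvar(a,k')$, which is exactly what \eqref{y01} (together with \cref{z22c}) is designed to encode. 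Once the endpoint-splitting identity is set up cleanly in one variable, iterating it is routine.
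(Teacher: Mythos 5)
Your proposal is correct and follows essentially the same route as the paper's proof: split each coordinate's full sum into the truncated sum plus the endpoint contribution, telescope the endpoint difference into two point values lying in the same or an adjacent (possibly periodically wrapped) dyadic block, dominate each resulting piece by a local variation $\lvar(a,k')$ for a suitable $k'$ (which is exactly the role of the paper's boundary-sum device $\partial A=\{\max A,(\max A+1)\bmod 2L\}$), and count the pieces. Your intermediate count of $2^d$ per block is really $3^d$ if tracked tightly (truncated, or one of two boundary values, in each coordinate with $\alpha_j=1$), but as you yourself note this slack is absorbed in the stated $4^d$, which the paper likewise obtains by the cruder count $2^d\cdot 2^d$.
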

\begin{proof}[Proof of \cref{y31}]
Throughout this proof 
for every $\alpha\in\{0,1\}$,
$f\in \perfct{2L}{\Z}$, $A\subseteq \I_L$ let $\partial A$ be the set given by 
$\partial A= \{\max A,(\max A+1)\mod (2L)\}$ and write
\begin{align}\label{y31b}
\sum_{\nu\in A}^{\alpha,0}f(\nu)= \sum_{\nu\in \partial A} f(\nu).
\end{align}
Then
 \eqref{y01} and the assumption that $\forall\,k\in\Z^d\colon \lvar(a,k)\leq M$ prove that for all $\ell\in\disint{0}{d}$, $s,\alpha\in\{0,1\}^d$ with $s_1=\ldots=s_\ell=0$ and $s_{\ell+1}=s_d=1$ it holds that
\begin{align}\begin{split}
&\sum_{\nu_1\in D(k_1)\cap\I_L}^{\alpha_1,s_1}\ldots
\sum_{\nu_d\in D(k_d)\cap\I_L}^{\alpha_d,s_d} \!\!\left|(\Delta_1^{\alpha_1s_1}\ldots \Delta_d^{\alpha_d s_d}a)(\nu)\right|\\
&= 
\sum_{\nu_1\in \partial (D(k_1)\cap\I_L)}\ldots
\sum_{\nu_\ell\in \partial (D(k_\ell)\cap\I_L)}\left(
\sum_{\nu_{\ell+1}\in  D(k_{\ell+1})\cap\I_L}^{\alpha_{\ell+1},1}\ldots
\sum_{\nu_d\in D(k_d)\cap\I_L}^{\alpha_d,1} \left|(\Delta_{\ell+1}^{\alpha_{\ell+1}}\ldots \Delta_d^{\alpha_d}a)(\nu)\right|\right)\\
&\leq 
\sum_{\nu_1\in \partial (D(k_1)\cap\I_L)}\ldots
\sum_{\nu_\ell\in \partial( D(k_\ell)\cap\I_L)}M\leq
2^dM.
\\
\end{split}\end{align}
A permutation of the coordinates  hence shows
for all $\ell\in\disint{0}{d}$, $s,\alpha\in\{0,1\}^d$ that
\begin{align}
\sum_{\nu_1\in J(k_1)}^{\alpha_1,s_1}\ldots
\sum_{\nu_d\in J(k_d)}^{\alpha_d,s_d} \!\!\left|(\Delta_1^{\alpha_1s_1}\ldots \Delta_d^{\alpha_d s_d}a)(\nu)\right|\leq 2^dM.\label{y36}
\end{align}
Next, the notation given in \eqref{y04}, \eqref{y04b}, and \eqref{y31b} and the fact that
$\forall\,a,b\in\R\colon |a-b|\leq |a|+|b|$
 demonstrate, in the one-dimensional case,  that
 for all $f\in \perfct{2L}{\Z}$, $k\in\Z$, $\alpha\in \{0,1\}$ it holds that
\begin{align}\begin{split}
\sum_{\nu\in D(k)\cap\I_L}^{\alpha} \left|(\Delta^\alpha f)(\nu)\right|
\leq 
\left[\sum_{\nu\in D(k)\cap\I_L}^{\alpha,1} \left|\Delta^\alpha f(\nu)\right|\right]+
\left[
\sum_{\nu\in D(k)\cap\I_L}^{\alpha,0}
|f(\nu)|\right]
=\sum_{s=0}^{1}\sum_{\nu\in D(k)\cap\I_L}^{\alpha,s} \left|\Delta^{\alpha s} f(\nu)\right|
.\end{split}
\end{align}
This (applied to each variable) and \eqref{y36} imply  for all 
$k\in \Z^d$, $\alpha\in\{0,1\}^d$  that
\begin{align}\begin{split}
&\sum_{\nu_1\in D(k_1)\cap\I_L}^{\alpha_1}\ldots
\sum_{\nu_d\in D(k_d)\cap\I_L}^{\alpha_d}\!\! \left|(\Delta_1^{\alpha_1}\ldots \Delta_d^{\alpha_d}a)(\nu)\right|
\\&\leq \sum_{s\in\{0,1\}^d}\!\left[
\sum_{\nu_1\in D(k_1)\cap\I_L}^{\alpha_1,s_1}\ldots
\sum_{\nu_d\in D(k_d)\cap\I_L}^{\alpha_d,s_d} \!\!\left|(\Delta_1^{\alpha_1s_1}\ldots \Delta_d^{\alpha_d s_d}a)(\nu)\right|\right]\leq \sum_{s\in\{0,1\}^d}(2^dM)=4^dM.
\\
\end{split}\end{align}
This and \eqref{y01b} complete the proof of \cref{y31}. 
\end{proof}
\begin{setting}\label{d110c}Let \cref{y30} be given.
Let
$\constMac\colon (1,\infty)\to[0,\infty]$ be the function
which satisfies
for all $p\in(1,\infty)$
that $\constMac(p)\in[0,\infty]$ is the smallest real extended number with the property that for all
$d\in\N$, $h\in\pi/\N$, $U,u\in\perfct{2\pi}{h\Z^d}$, $a\in \perfct{2\pi/h}{\Z^d}$, 
$M\in (0,\infty)$ with 
$\mathcal{F}(U)=a\mathcal{F}(u)$ and
$\var(a)\leq M$  it holds that 
\begin{align}
\left\|U \right\|_{L^p_h(\omega_h^{d})}\leq \constMac(p)M
\left\|u\right\|_{L^p_h(\omega_h^{d})}.\label{y28}
\end{align}
\end{setting}
\cref{d110b} below recalls the discrete Marcinkiewicz multiplier theorem (cf. \cite[Theorem~2.49]{JS14}):
\begin{lemma}[Marcinkiewicz' theorem]\label{d110b}
Assume \cref{d110c} and let $p\in(1,\infty)$. Then $\constMac(p)<\infty$.
\end{lemma}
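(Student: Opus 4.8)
The statement is the mesh-uniform discrete Marcinkiewicz multiplier theorem, and the plan is to reduce it to the classical one-dimensional building block and then iterate over coordinates. Fix $p\in(1,\infty)$. By \cref{d110c} it suffices to produce a finite constant $C(p)$ such that for every $d\in\N$, every $h\in\pi/\N$, every $M\in(0,\infty)$, every $u\in\perfct{2\pi}{h\Z^d}$ and every $a\in\perfct{2\pi/h}{\Z^d}$ with $\var(a)\le M$, the function $U\in\perfct{2\pi}{h\Z^d}$ determined by $\fourier{U}=a\fourier{u}$ satisfies $\|U\|_{L^p_h(\omega_h^d)}\le C(p)\,M\,\|u\|_{L^p_h(\omega_h^d)}$. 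After rescaling by $h$ one works on the fixed torus $(\R/2\pi\Z)^d$ sampled on the grid $\omega_h^d$, with frequencies in $\I_{\pi/h}^d$; the normalising factor $h^d$ in \eqref{x02c} is irrelevant since it occurs on both sides (below we abbreviate the relevant $L^p_h$-norms by $\|\cdot\|_{L^p}$).

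\emph{The one-dimensional case.} Here $d=1$; write $L=\pi/h$, so the frequencies run over $\I_L$. Decompose $\I_L$ into the finitely many nonempty dyadic blocks $D(\ell)\cap\I_L$, $\ell\in\Z$, of \eqref{y03}, and let $\Delta_\ell g=\inversefourier{\1_{D(\ell)\cap\I_L}\fourier{g}}$ be the corresponding Littlewood--Paley projections. Two classical facts, both with constants \emph{independent of $L$}, enter: (i) the discrete conjugate-function operator (periodic Hilbert transform) on the $2L$-point cyclic group is bounded on $L^p$ with a norm depending on $p$ alone (the M.\ Riesz theorem, via a transference argument from $\R/2\pi\Z$, or a direct proof), whence every partial-sum multiplier $\1_{[a,b]\cap\I_L}$ is bounded on $L^p$ with a norm controlled by $p$; and (ii) the square-function equivalence $\|g\|_{L^p}\approx\bigl\|(\sum_\ell|\Delta_\ell g|^2)^{1/2}\bigr\|_{L^p}$ with $p$-dependent constants, deduced from (i) through Khintchine's inequality and Fubini. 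On each block, summation by parts expresses the multiplier with symbol $a\,\1_{D(\ell)\cap\I_L}$ as a superposition of partial-sum multipliers weighted by the forward increments of $a$ on the block, plus a single endpoint term; the total weight is exactly the bracketed quantity in \eqref{y01b} restricted to that block, hence at most $\var(a)\le M$ --- here the summand $\alpha=0$ in \eqref{y01b} supplies the $\sup|a|$ on the block, and the truncation to $D(\ell)\cap\I_L$ is precisely why \eqref{y01b} uses $D(k)\cap\I_L$ rather than $D(k)$. Feeding this into the vector-valued version of (ii) (diagonal Fourier multipliers acting on $\ell^2$-valued functions, again via Khintchine) yields $\|U\|_{L^p}\le C(p)\,M\,\|u\|_{L^p}$.

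\emph{Higher dimensions.} One applies the one-dimensional estimate one variable at a time: split $a$ into its dyadic blocks in the last variable, apply the one-dimensional result in that variable to the $\ell^2$-valued function assembled from the blocks (legitimate because the one-dimensional square-function machinery has a vector-valued form with the same constants), and recurse on the remaining $d-1$ variables; restricting the hypothesis $\var(a)\le M$ furnishes exactly the one-dimensional variation bounds needed at each stage. This is the standard induction, carried out in detail by Jovanovi\'c and S\"uli~\cite[Theorem~2.49]{JS14}; once the total-variation quantity used there is matched with \eqref{y01b}, that theorem is precisely \cref{d110b}, and invoking it completes the argument.

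\emph{Main obstacle.} The one genuine difficulty is keeping every constant independent of the mesh $h$: on a finite cyclic group the $L^p$-operator norms of the conjugate-function operator, of the partial-sum multipliers, and of the Littlewood--Paley square function are \emph{a priori} $h$-dependent. This is resolved by the transference principle, which transplants these bounds from the continuum circle $\R/2\pi\Z$, where they are the classical M.\ Riesz and Littlewood--Paley inequalities, to the grid $\omega_h$ with no deterioration of the constants; propagating this through the $d$-fold iteration is routine bookkeeping, the only subtlety being the truncation of the top dyadic block inside $\I_L$, which is already built into the definition of $\var$ in \eqref{y01b}.
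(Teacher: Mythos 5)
Your proposal is correct and ultimately coincides with the paper's own argument: the paper establishes \cref{d110b} simply by invoking the discrete Marcinkiewicz multiplier theorem of Jovanovi\'c and S\"uli \cite[Theorem~2.49]{JS14}, which is exactly the citation your argument terminates in. The Littlewood--Paley/transference sketch you prepend is a sensible outline of how that cited theorem is proved, but beyond the citation it adds nothing that the paper itself requires.
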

\begin{corollary}\label{d133}Assume \cref{d110c}, let $d,m\in\N$, 
$h\in\pi/\N$, 
$U, u_1,\ldots,u_m\in \perfct{2\pi}{h\Z^d}$, 
$p\in(1,\infty)$,
$L\in\N$, assume that $h=\pi/L$,
let 
$(A_k)_{k\in\Z^d}\subseteq\perfct{2L}{\Z^d} $,
$J\colon \Z^d\to \disint{1}{m}$ satisfy for all $k\in\Z^d$, $\nu \in\prod_{j=1}^{d}(D(k_j)\cap\I_L)$ that
\begin{align}
\lvar(A_k,k)\leq M\quad\text{and}\quad(\mathcal{F}(U))(\nu)= A_k(\nu)(\mathcal{F}(u_{J(k)}))(\nu).\label{y24b}
\end{align}
Then 
it holds that $\constMac(p)<\infty$ and
\begin{align}
\left\|U\right\|_{L^p_h(\omega_h^{d})}\leq \constMac(p)4^dM\sum_{i=1}^m
\|u_i\|_{L^p_h(\omega_h^{d})}.
\end{align}
\end{corollary}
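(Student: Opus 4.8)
The plan is to split $U$ into $m$ summands according to the value of $J$ on the dyadic block of a given frequency, to exhibit each summand as an honest Fourier multiplier applied to the corresponding $u_i$ with total variation controlled by $4^dM$, and then to invoke the discrete Marcinkiewicz theorem from \cref{d110b,d110c}. Since $h=\pi/L$, the discrete Fourier transforms $\mathcal F(U),\mathcal F(u_1),\dots,\mathcal F(u_m)$ are $2L$-periodic functions on $\Z^d$, hence determined by their restrictions to $\I_L^d$; moreover, as the intervals $D(\ell)$, $\ell\in\Z$, partition $\R$, the blocks $J_k:=\prod_{j=1}^d\big(D(k_j)\cap\I_L\big)$, $k\in\Z^d$, partition $\I_L^d$ into finitely many nonempty pieces. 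For each $i\in\disint{1}{m}$ I would introduce $a_i\in\perfct{2L}{\Z^d}$ by prescribing it on $\I_L^d$: for $\nu\in\I_L^d$ let $k(\nu)\in\Z^d$ be the unique multi-index with $\nu\in J_{k(\nu)}$ and set $a_i(\nu)=A_{k(\nu)}(\nu)$ if $J(k(\nu))=i$ and $a_i(\nu)=0$ otherwise; then I would let $U_i\in\perfct{2\pi}{h\Z^d}$ be the function determined through its Fourier transform by $\mathcal F(U_i)=a_i\,\mathcal F(u_i)$.

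First I would check that $U=\sum_{i=1}^m U_i$. By injectivity of $\mathcal F$ (Plancherel, \cref{x10}) and periodicity it suffices to evaluate $\sum_{i=1}^m a_i\,\mathcal F(u_i)$ at an arbitrary $\nu\in\I_L^d$: writing $k=k(\nu)$, only the index $i=J(k)$ contributes, and the value equals $A_k(\nu)\,(\mathcal F(u_{J(k)}))(\nu)=(\mathcal F(U))(\nu)$ by the second identity in \eqref{y24b}. Next I would show $\var(a_i)\le 4^dM$ for each $i$ via \cref{y31}, i.e.\ I would check that $\lvar(a_i,k)\le M$ for every $k\in\Z^d$. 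By \cref{z22c} the number $\lvar(a_i,k)$ depends only on the restriction of $a_i$ to $J_k$. If $J(k)=i$ this restriction coincides with that of $A_k$, so $\lvar(a_i,k)=\lvar(A_k,k)\le M$ by \cref{z22c} and the first inequality in \eqref{y24b}; if $J(k)\ne i$ then $a_i$ vanishes on $J_k$, so \cref{z22c} gives $\lvar(a_i,k)=\lvar(\mathbf 0,k)=0\le M$, where $\mathbf 0$ denotes the zero function and the last equality is immediate from \eqref{y01}. (The degenerate blocks $J_k=\emptyset$ are covered by the same convention under which $\lvar(A_k,k)\le M$ is imposed in the hypotheses.) Hence $\lvar(a_i,k)\le M$ for all $k$, and \cref{y31} yields $\var(a_i)\le 4^dM$.

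Finally, \cref{d110b} gives $\constMac(p)<\infty$, and the defining inequality \eqref{y28} in \cref{d110c}, with $a\defeq a_i$ and $M\defeq 4^dM$, gives $\|U_i\|_{L^p_h(\omega_h^d)}\le\constMac(p)\,4^dM\,\|u_i\|_{L^p_h(\omega_h^d)}$ for each $i\in\disint{1}{m}$. Summing over $i$ and using the triangle inequality for $\|\cdot\|_{L^p_h(\omega_h^d)}$ together with $U=\sum_{i=1}^mU_i$ completes the argument. The main obstacle is the middle step: one has to be sure the cut-and-paste multiplier $a_i$ still satisfies the local-variation bound on every dyadic block — including the blocks on which it vanishes and the degenerate empty ones — but this is precisely what the locality lemma \cref{z22c} is built for, so the bookkeeping, while a little fussy, should go through without surprises.
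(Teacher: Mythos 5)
Your proof is correct, but it takes a different route from the paper's. The paper builds a \emph{single} multiplier $a(\nu)=A_{K(\nu)}(\nu)$ and a \emph{single} auxiliary input $w$ spliced together in frequency space by $[\mathcal F(w)](\nu)=[\mathcal F(u_{J(K(\nu))})](\nu)$, applies \eqref{y28} once to the pair $(a,w)$, and then bounds $\|w\|_{L^p_h}$ by $\sum_i\|u_i\|_{L^p_h}$ via a claimed pointwise inequality $|w(x)|\le\sum_i|u_i(x)|$. You instead splice the \emph{multiplier}: you split $U=\sum_i U_i$ with $\mathcal F(U_i)=a_i\mathcal F(u_i)$, where $a_i$ agrees with $A_k$ on the blocks with $J(k)=i$ and vanishes elsewhere, check $\lvar(a_i,k)\le M$ blockwise via \cref{z22c} (using $\lvar(\mathbf 0,k)=0$ on the off blocks, which is indeed immediate from \eqref{y01}), upgrade to $\var(a_i)\le 4^dM$ by \cref{y31}, apply \eqref{y28} $m$ times, and finish with the triangle inequality. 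Both arguments lean on the same three ingredients (\cref{z22c}, \cref{y31}, \eqref{y28}), and your handling of the degenerate empty blocks is no looser than the paper's own. What your version buys is that it entirely avoids the paper's pointwise domination of $w$: as written there, the grouped sum over $\{\nu: J(K(\nu))=i\}$ is a frequency-restricted partial sum of $u_i$, which is not pointwise dominated by $u_i$, so that step of the paper is the delicate one; your multiplier-side splitting absorbs the selection $\mathbf 1_{\{J(k)=i\}}$ into $a_i$, where it costs nothing in local variation. The price is $m$ invocations of the Marcinkiewicz bound instead of one, which is harmless since the target estimate already carries $\sum_{i=1}^m\|u_i\|_{L^p_h(\omega_h^d)}$.
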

\begin{proof}[Proof of \cref{d133}]
The discrete Marcinkiewicz multiplier theorem (cf. \cite[Theorem~2.49]{JS14}) proves that  $\constMac(p)<\infty$.
For the rest of this proof 
 let $K\colon \Z^d\to \Z^d$ be the mapping 
which is $2L$-periodic, i.e., $\forall\, \nu \in\Z^d,\,i\in\disint{1}{d}\colon K(\nu)= K(\nu+2L\unit{d}{i})$ and satisfies for all $\nu\in \I_L^d$ that
$\prod_{j=1}^{d}D(K_j(\nu)) $ is the unique dyadic rectangle in the family $\left\{\prod_{j=1}^{d}D(k_j)\colon k\in\Z^d\right\}$ that contains $\nu$, let $a,b\in \perfct{2L}{\Z^d}$
be the functions which satisfy for all $\nu \in\Z^d$ that 
$
 a(\nu)= A_{K(\nu)} (\nu) $
and let $w\colon \omega_h^d\to \C$ be the function which satisfies for all 
$x\in \omega_h^d$ that 
\begin{align}\label{y24c}
w(x)=(2\pi)^{-d}\sum_{\nu\in\mathbbm{I}_L^d}[\mathcal{F}(u_{J(K(\nu))})] (\nu)e^{\ima \nu x}
\end{align}
Then it holds
for all $k\in\Z^d$, $\nu\in \prod_{j=1}^{d}(D(k_j)\cap\I_L) $ that  $K(\nu)=k$, and $a(\nu)=A_k(\nu)$.
\cref{z22c} then
implies 
for all $k\in\Z^d$ with $\prod_{j=1}^{d}(D(k_j)\cap\I_L)\neq\emptyset$
that $\lvar (a,k)=\lvar(A_k,k)$. 
This, \eqref{y24b}, and \cref{y31} ensure that
$\var(a)\leq 4^dM$.
Moreover,  \eqref{y24c} and the Fourier inverse formula (combined with the assumption that
$L=\pi/h$) show 
for all $\nu \in \I_L^d$
that
\begin{align}
[\mathcal{F}(w)](\nu)=[\mathcal{F}(u_{J(K(\nu))})] (\nu)\quad\text{and}\quad|w(x)|\leq 
\sum_{i=1}^{m}\left|(2\pi)^{-d}\sum_{\nu\in\mathbbm{I}_L^d}
(\mathcal{F}\left(u_{i}\right))(\nu)e^{\ima \nu x}\right|=\sum_{i=1}^{m}|u_i(x)|.
\end{align}
Hence, \eqref{y24b}
(with $k\defeq K(\nu)$), and the fact that $\forall\,\nu \in\Z^d\colon  a(\nu)= A_{K(\nu)} (\nu) $ ensure
for all $\nu\in\I_L^d$ that
$
(\mathcal{F}(U))(\nu)= A_{K(\nu)}(\nu)(\mathcal{F}(u_{J(K(\nu))}))(\nu)= a(\nu)(\mathcal{F}(w))(\nu).
$ Hence,  $\mathcal{F}(U)= a\mathcal{F}(w)$.
This, \eqref{y28} (with $M\defeq 4^dM$, $u\defeq w$), the fact that
$\var(a)\leq 4^dM$,  and
the triangle inequality demonstrate that
\begin{align}
\begin{aligned}
&\|U\|_{L^p_h(\omega_h^d)}\leq  \constMac(p)4^dM\left\|w\right\|_{L^p_h(\omega_h^d)}\leq    \constMac(p)4^dM\sum_{i=1}^m\| u_i\|_{L^p_h(\omega_h^d)}.
\end{aligned}
\end{align}
This  completes the proof of \cref{d133}.
\end{proof}
\subsection{Cauchy's integral formula revisited}
\cref{z21} requires estimates on the higher derivatives of the multipliers. Returning to a classical result we can avoid many
tedious calculations.
\begin{setting}\label{y10b}
Assume \cref{y15},
let 
$C,m,M\in[0,\infty] $ be the real extended number given by
\begin{align}\begin{split}
m&= \left[\inf_{\xi\in [1,3d]\times[-2d,2d] }|Q(\xi)|\right],\quad 
M=\left[\sup_{\xi\in [1,3d]\times[-2d,2d] }|Q(\xi)|\right],
\\
C^2&=\max\left\{\frac{3(d-1)}{2m},
\frac{M}{c(d-1)}\right\},
\end{split}
\end{align}
and
let $\mathcal{C}(z)\subseteq\C$, $z\in (1,\infty)\times\{0\}$, be the sets which satisfy for all $z\in (1,\infty)\times\{0\}$ that
\begin{align}\label{y10c}
\mathcal{C}(z)=\left\{\zeta\in\C\colon |\zeta-z|=\tfrac{1}{2}|z-1|\right\}.
\end{align}
\end{setting}
\begin{lemma}\label{y18}
Assume \cref{y10b}.
Then it holds for all 
$t\in [-\pi,\pi]^{d-1}\setminus\{0\}$ 
that 
\begin{align}
C,m,M\in (0,\infty),\quad 
\left[
\sup_{\zeta\in\mathcal{C}(\lambda(t))}\left|f(\zeta)\right|\right]\leq C|t|_\infty,\quad\text{and}\quad
\left[
\sup_{\zeta\in\mathcal{C}(\lambda(t))}\frac{1}{\left|f(\zeta)\right|}\right]\leq \frac{C}{|t|_\infty}.
\end{align}
\end{lemma}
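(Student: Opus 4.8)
The plan is to reduce the statement to two facts already in hand: the identity $f(z)^2=2(z-1)/Q(z)$ from \cref{y16}, and the two-sided bound $c\,|t|_\infty^2\le\lambda(t)-1\le\tfrac12(d-1)|t|_\infty^2$ from \cref{x32}. Everything else is elementary planar geometry.

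First I would dispose of the claim $C,m,M\in(0,\infty)$. Writing $K:=[1,3d]\times[-2d,2d]$, regarded as a compact subset of $\C$, it suffices to show that $Q$ is continuous and nowhere vanishing on $K$. Continuity: for $\xi\in K$ one has $\Re(\xi+1)\ge2$ and $\Re(\xi-1)\ge0$, so neither $\xi+1$ nor $\xi-1$ lies on the ray $(-\infty,0)$ removed from the domain of the principal square root $R$; hence $R(\xi\pm1)$ is defined and $Q$ is continuous on $K$, which gives $M=\sup_K|Q|<\infty$. Nonvanishing: if $Q(\xi)=0$ for some $\xi\in K$, then $\xi=-R(\xi+1)R(\xi-1)$, and squaring (using $R(w)^2=w$ for $w\in\{\xi-1,\xi+1\}$) yields $\xi^2=(\xi+1)(\xi-1)=\xi^2-1$, which is absurd. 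By compactness $m=\inf_K|Q|>0$, and then $C^2=\max\{\tfrac{3(d-1)}{2m},\tfrac{M}{c(d-1)}\}\in(0,\infty)$, since $d\ge2$ makes $d-1\ge1$ and $c,m,M$ are all positive.

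Next I would fix $t\in[-\pi,\pi]^{d-1}\setminus\{0\}$ and set $r:=\lambda(t)$. By \cref{x32}, $0<c\,|t|_\infty^2\le r-1\le\tfrac12(d-1)|t|_\infty^2$, and from $r=d-\sum_{i=1}^{d-1}\cos t_i$ also $r\le2d-1$; in particular $r>1$, so $\mathcal{C}(r)$ is the circle with centre $r$ and radius $\tfrac12(r-1)$. For any $\zeta\in\mathcal{C}(r)$ the triangle inequality and its reverse form give
\begin{align*}
\tfrac12(r-1)\ \le\ |\zeta-1|\ \le\ \tfrac32(r-1),
\end{align*}
while $\Re\zeta\in[\tfrac{r+1}{2},\tfrac{3r-1}{2}]\subseteq[1,3d]$ and $|\Im\zeta|\le\tfrac12(r-1)\le d-1\le2d$ show $\zeta\in K$, hence $m\le|Q(\zeta)|\le M$. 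Since moreover $\Re\zeta>1$, the value $f(\zeta)$ is defined, and $f(\zeta)\ne0$ because $f(\zeta)^2=2(\zeta-1)/Q(\zeta)$ with $\zeta\ne1$ and $Q(\zeta)\ne0$.

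Finally, \cref{y16} gives $|f(\zeta)|^2=2|\zeta-1|/|Q(\zeta)|$ for $\zeta\in\mathcal{C}(r)$; inserting the bounds above and then \cref{x32} yields $|f(\zeta)|^2\le3(r-1)/m\le\tfrac{3(d-1)}{2m}|t|_\infty^2\le C^2|t|_\infty^2$ on one side, and $|f(\zeta)|^2\ge(r-1)/M\ge\tfrac{c}{M}|t|_\infty^2$, i.e.\ $|f(\zeta)|^{-2}\le\tfrac{M}{c}|t|_\infty^{-2}\le C^2|t|_\infty^{-2}$, on the other. Taking square roots and then the supremum over $\zeta\in\mathcal{C}(r)$ gives the two asserted inequalities. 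I expect the only genuinely delicate point to be the bookkeeping in the previous paragraph: checking that $\mathcal{C}(\lambda(t))$ indeed lies inside the rectangle $K$ on which the uniform bounds $m\le|Q|\le M$ are available, and that the constants produced by the circle ($\tfrac12,\tfrac32$) and by \cref{x32} ($c,\tfrac12(d-1)$) combine into exactly the two terms in the definition of $C^2$; the rest reduces to one application each of \cref{y16} and \cref{x32}.
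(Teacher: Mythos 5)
Your proposal follows the same route as the paper: compactness of $K=[1,3d]\times[-2d,2d]$ for $m,M$, the circle estimate $\tfrac12(\lambda(t)-1)\le|\zeta-1|\le\tfrac32(\lambda(t)-1)$, the containment $\mathcal{C}(\lambda(t))\subseteq K$, and then \cref{y16} combined with \cref{x32}; the verification that $Q$ has no zero on $K$ is a nice (if redundant, since \cref{y15} already stipulates $Q\neq0$) touch, and the upper bound $\sup_{\zeta}|f(\zeta)|\le C|t|_\infty$ is complete and matches the paper. The genuine gap is the very last inequality of your Neumann-side estimate: from $|f(\zeta)|^2\ge(\lambda(t)-1)/M\ge\tfrac{c}{M}|t|_\infty^2$ you pass to $|f(\zeta)|^{-2}\le\tfrac{M}{c}|t|_\infty^{-2}\le C^2|t|_\infty^{-2}$, and the final step needs $M/c\le C^2=\max\bigl\{\tfrac{3(d-1)}{2m},\tfrac{M}{c(d-1)}\bigr\}$. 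This is not part of \cref{y10b} and does not follow from anything you established: for $d\ge3$ the second entry of the maximum is $M/(c(d-1))<M/c$, and there is no reason the first entry dominates $M/c$ (e.g.\ $m\le|Q(1)|=1$ only gives $\tfrac{3(d-1)}{2m}\ge\tfrac{3(d-1)}{2}$, while $M\ge Q(3d)\ge3d$ makes $M/c$ of order $d/c$), so the chain does not close for general $d$.

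For comparison, at exactly this point the paper invokes the lower bound $|\zeta-1|\ge\tfrac{c}{2}(d-1)|t|_\infty^2$ (the first inequality in \eqref{y17}), which is what produces the matching term $M/(c(d-1))$; note, however, that this bound overstates \cref{x32}, whose lower estimate is $\lambda(t)-1\ge c|t|_\infty^2$ without the factor $(d-1)$ (indeed it fails for $t=(\pi,0,\ldots,0)$ and $d\ge3$, where $\lambda(t)-1=2$ but $c(d-1)\pi^2=2(d-1)$). Your bound $M/c$ is the one \cref{x32} honestly yields, so the discrepancy is a constant-bookkeeping mismatch with \cref{y10b} rather than a wrong idea: either restrict to $d=2$, where $M/(c(d-1))=M/c$ and your argument is complete, or enlarge the constant by replacing $M/(c(d-1))$ with $M/c$ in the definition of $C^2$. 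Since only finiteness and the $(d,p)$-dependence of the resulting constants are used downstream (\cref{y23}, \cref{s26}), this enlargement is harmless, but as literally written your step asserting $M/c\le C^2$ is unjustified and should be flagged and repaired.
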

\begin{proof}[Proof of \cref{y18}]The extreme value theorem and the fact that 
$\forall\, z\in \C \setminus(-\infty,0)\colon Q(z)\neq 0 $ ensure that $m,M\in (0,\infty)$ and hence $C\in (0,\infty)$.
Furthermore, 
the triangle inequality and \eqref{y10c} imply that
for all $z\in [1,\infty)$, $\zeta\in \mathcal{C}(z)$ it holds that
\begin{align}
|\zeta-1|= |(z-1)+(\zeta-z)|\geq |z-1|-|\zeta-z|=|z-1|-\tfrac12 |z-1|=\tfrac12 |z-1|
\end{align}
and
\begin{align}\begin{split}
|\zeta-1|&= \tfrac12\left|(\zeta-z)+(\zeta-1)+(z-1)\right|\leq \tfrac12|\zeta-z|+\tfrac12|\zeta-1|+\tfrac12|z-1|\\
&=\tfrac14|z-1|+\tfrac12|\zeta-1|+\tfrac12|z-1|=\tfrac34|z-1|+\tfrac12|\zeta-1|.
\end{split}
\end{align}
This shows for all $z\in [1,\infty)$, $\zeta\in \mathcal{C}(z)$ that $
\tfrac12|z-1|\leq |\zeta-1|\leq \tfrac32|z-1|$. This (with $z\defeq\lambda(t)$ for $t\in [-\pi,\pi]^{d-1}$)
 demonstrate that for all
$t\in[-\pi,\pi]^{d-1} $, $\zeta\in \mathcal{C}(\lambda(t))$ it holds that
\begin{align}
\frac{c}{2}(d-1)|t|_\infty^2\leq 
\frac12|\lambda(t)-1|\leq 
|\zeta-1|\leq \frac32|\lambda(t)-1|\leq 
 \frac{3}{4}(d-1)|t|_\infty^2.
\label{y17}
\end{align}
Furthermore, \eqref{y14} implies 
for all $t\in[-\pi,\pi]^{d-1} $ that $1\leq \lambda(t)\leq 2d$. 
Hence, \eqref{y10c} shows that
for all $t\in[-\pi,\pi]^{d-1} $ it holds that
$\mathcal{C}(\lambda(t))\subseteq [1,3d]\times[-2d,2d]
\subseteq
\C\setminus( -\infty,1) .
$
This, \cref{y16}, and \eqref{y17} imply
for all
$t\in[-\pi,\pi]^{d-1} \setminus\{0\}$, $\zeta\in\mathcal{C}(\lambda(t))$ 
that
\begin{align} 
\left|f(\zeta)\right|^2
= 
\frac{2|\zeta-1|}{|Q(\zeta)|}
\leq  \frac{\tfrac{3}{2}(d-1)|t|_\infty^2}{m}\leq C^2 |t|_\infty^2
\end{align} and
\begin{align} 
\frac{1}{\left|f(\zeta)\right|^2}
= 
\frac{|Q(\zeta)|}{2|\zeta-1|}
\leq  
\frac{M}{c(d-1)|t|_\infty^2}\leq\frac{ C^2}{ |t|_\infty^{2}}.
\end{align}
This completes the proof of \cref{y18}.
\end{proof}
\begin{figure}\center
\includegraphics[]{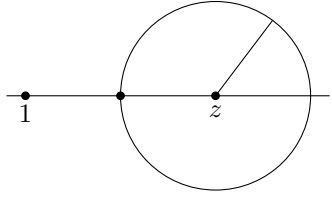}\caption{The circle $\mathcal{C}(z)=\{\zeta\in\C\colon |\zeta-z|=\frac{1}{2}|z-1|\}$ for some $z\in (1,\infty)\times\{0\}$}\label{d103}
\end{figure}
\begin{lemma}[Higher $t$-derivatives]\label{y11}
Assume \cref{y10b} and
let $h\colon \{\zeta\in\C\colon \Re(\zeta)>1\}\to\C$ be a holomorphic function.
Then it holds for all $t\in [-\pi,\pi] ^{d-1}\setminus\{0\}$, $\alpha\in \{0,1\}^{d-1}$ that 
\begin{align}\begin{split}
\left|
\left[\left(\tfrac{\partial }{\partial t_1}\right)^{\alpha_1}
\left(\tfrac{\partial }{\partial t_2}\right)^{\alpha_2}\ldots
\left(\tfrac{\partial }{\partial t_{d-1}}\right)^{\alpha_{d-1}}
 (h\circ \lambda)\right](t)\right|
\leq 
\frac{(2/c)^{|\alpha|}(|\alpha|)!}{|t|_\infty^{|\alpha|}}\!\left[\sup_{\zeta\in C(\lambda(t))}\left|h(\zeta)\right|\right].
\end{split}\label{y10}
\end{align}
\end{lemma}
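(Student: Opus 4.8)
The plan is to reduce the mixed partial derivative on the left-hand side of \eqref{y10} to a single complex derivative of $h$ and then estimate that derivative by Cauchy's integral formula on the circle $\mathcal{C}(\lambda(t))$ from \eqref{y10c}. First I would fix $t\in[-\pi,\pi]^{d-1}\setminus\{0\}$ and record that $h$ is holomorphic on a neighbourhood of the \emph{closed} disk bounded by $\mathcal{C}(\lambda(t))$: indeed \eqref{y14} gives $1\le\lambda(t)\le 2d$ and \cref{x32} gives $\lambda(t)-1\ge c|t|_\infty^2>0$, so $\lambda(t)\in(1,\infty)$, the circle $\mathcal{C}(\lambda(t))$ has centre $\lambda(t)$ and radius $\tfrac12(\lambda(t)-1)$, and every point $\zeta$ of the corresponding closed disk satisfies $\Re(\zeta)\ge\lambda(t)-\tfrac12(\lambda(t)-1)=\tfrac12(\lambda(t)+1)>1$, hence lies in the domain of $h$.

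Next I would prove, by induction on $|\alpha|$, the chain-rule identity
\begin{align*}
\left[\left(\tfrac{\partial}{\partial t_1}\right)^{\alpha_1}\!\!\cdots\left(\tfrac{\partial}{\partial t_{d-1}}\right)^{\alpha_{d-1}}\!\!(h\circ\lambda)\right](t)=h^{(|\alpha|)}(\lambda(t))\prod_{i\colon\alpha_i=1}\sin(t_i).
\end{align*}
The key structural fact is that $\lambda(t)=d-\sum_{i}\cos(t_i)$ is a sum of one-variable functions, so $\tfrac{\partial}{\partial t_i}\lambda(t)=\sin(t_i)$ and the mixed second partials $\tfrac{\partial}{\partial t_i}\tfrac{\partial}{\partial t_j}\lambda$, $i\neq j$, vanish; since each $\alpha_i\in\{0,1\}$ one never differentiates twice in the same variable, so in the inductive step the product rule keeps only the term in which the new derivative acts on $h^{(|\alpha|)}(\lambda(\cdot))$, producing an extra factor $\sin(t_j)$ and raising the order of $h$ by one.

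Finally I would apply Cauchy's integral formula for the $|\alpha|$-th derivative, $h^{(|\alpha|)}(\lambda(t))=\tfrac{|\alpha|!}{2\pi\ima}\oint_{\mathcal{C}(\lambda(t))}h(\zeta)(\zeta-\lambda(t))^{-(|\alpha|+1)}\,d\zeta$, together with the facts that $|\zeta-\lambda(t)|=\tfrac12(\lambda(t)-1)$ on $\mathcal{C}(\lambda(t))$ and that this circle has length $\pi(\lambda(t)-1)$; the standard length-times-supremum estimate then gives $|h^{(|\alpha|)}(\lambda(t))|\le 2^{|\alpha|}|\alpha|!\,(\lambda(t)-1)^{-|\alpha|}\sup_{\zeta\in\mathcal{C}(\lambda(t))}|h(\zeta)|$. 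Combining this with the identity above, with $|\sin(t_i)|\le|t_i|\le|t|_\infty$, and with the lower bound $\lambda(t)-1\ge c|t|_\infty^2$ from \cref{x32} yields exactly \eqref{y10}. I expect the only real obstacles to be the bookkeeping in the inductive chain-rule step and the verification that the closed disk stays inside $\{\Re(\zeta)>1\}$ so that Cauchy's formula applies; note that the negative power $|t|_\infty^{-|\alpha|}$ in \eqref{y10} is produced entirely by the lower bound $\lambda(t)-1\ge c|t|_\infty^2$, against which only $|t|_\infty^{|\alpha|}$ from the sine factors is gained back.
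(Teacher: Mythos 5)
Your proposal is correct and follows essentially the same route as the paper: the chain-rule identity exploiting that $\lambda$ is a sum of one-variable cosines, Cauchy's integral formula on $\mathcal{C}(\lambda(t))$ with the standard length-times-supremum estimate, and then $|\sin(t_i)|\leq|t_i|$ together with the lower bound $\lambda(t)-1\geq c|t|_\infty^2$ from \cref{x32}. The only additions are welcome bits of care the paper leaves implicit (the induction for the mixed partials and the check that the closed disk stays in $\{\Re(\zeta)>1\}$).
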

\begin{proof}[Proof of \cref{y11}]
Cauchy's integral formula together with the assumption that $h$ is holomorphic and \eqref{y10c} proves for all $z\in (1,\infty)\times\{0\}$, $n\in\N_0$ that
\begin{align}\begin{split}
|h^{(n)}(z)|&= \left|\frac{n!}{2\pi\ima }\int_{\mathcal{C}(z)}\frac{h(\zeta)\,d\zeta}{(\zeta-z)^{n+1}}\right|\leq 
\frac{n!}{2\pi\left(\frac{1}{2}|z-1|\right)^{n+1}}
\left[\sup_{\zeta\in \mathcal{C}(z)}\left|h(\zeta)\right|\right]
\left[\int_{\zeta\in \mathcal{C}(z)}d|\zeta|\right]\\
&=\frac{n!}{2\pi(\frac{1}{2}|z-1|)^{n+1}}
\left[\sup_{\zeta\in \mathcal{C}(z)}\left|h(\zeta)\right|\right]
\left[2\pi \frac{1}{2}|z-1|\right]=\frac{2^nn!}{|z-1|^n}\left[\sup_{\zeta\in \mathcal{C}(z)}\left|h(\zeta)\right|\right].
\end{split}
\end{align}
Next,
\eqref{y14} and some elementary facts imply for all
$t\in  [-\pi,\pi]^{d-1}\setminus\{0\}$, $i\in \disint{1}{d-1}$ that
\begin{align}
\lambda(t)\in (1,\infty),
\quad  \tfrac{\partial}{\partial t_i}\lambda (t)=\sin(t_i),\quad \text{and}\quad 
|\sin(t_i)|\leq |t_i|.
\end{align}
This,  \eqref{y10c} (with $z\defeq\lambda(t)$), and
\cref{x32},
 show 
for all $t\in  [-\pi,\pi]^{d-1}\setminus\{0\} $, $\alpha\in \{0,1\}^{d-1}$
that
\begin{align}\begin{split}
&\left|
\left[
\left(\tfrac{\partial }{\partial t_1}\right)^{\alpha_1}
\left(\tfrac{\partial }{\partial t_2}\right)^{\alpha_2}\ldots
\left(\tfrac{\partial }{\partial t_{d-1}}\right)^{\alpha_{d-1}}
 (h\circ\lambda)\right](t)\right|=\left|h^{(|\alpha|)}(\lambda(t))
\prod_{i=1}^{d-1}
(\sin t_i)^{\alpha_i}
\right|\\
&\leq \left[
\left(\frac{2}{|\lambda(t)-1|}\right)^{|\alpha|}(|\alpha|)!\right]\!\!\left[\sup_{\zeta\in \mathcal{C}(\lambda(t))}\left|h(\zeta)\right|\right] |t|_\infty^{|\alpha|}\leq\left[\left(\frac{2}{c|t|^2_\infty}\right)^{|\alpha|}(|\alpha|)!\right]\!\!\left[\sup_{\zeta\in \mathcal{C}(\lambda(t))}\left|h(\zeta)\right|\right]|t|_\infty^{|\alpha|}.\\
\end{split}
\end{align}
This shows \eqref{y10} and completes the proof of \cref{y11}.
\end{proof}
\subsection{Total variations of the multipliers}
\begin{lemma}\label{y23}Assume \cref{y10b,y30}, let $a,\hat{C}\in [0,\infty)$ be the real numbers (cf. \cref{z05,y18}) given by
\begin{align}\label{y23b}
a=
\sup_{s\in [-\pi,\pi ]\setminus\{0\}}\max\left\{\Bigg.\! \left|\frac{d}{ds}(e^{-\ima s}-1) \right|,\left|\frac{e^{-\ima s}-1}{s}\right|\right\}\quad\text{and}\quad
\hat{C}=
2aC((2/c)^d\vee1)d!,
\end{align} 
and let $h\in \pi/\N$.
Then it holds for all $i\in \disint{1}{d-1}$ that
$\var(\NM_i^h)
\leq4^d\hat{C}$.
\end{lemma}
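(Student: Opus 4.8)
The plan is to reduce everything, via \cref{y31}, to a bound on the local variation $\lvar(\NM_i^h,k)$ that is uniform in $k\in\Z^{d-1}$, and to obtain this local bound from \cref{z21} by exhibiting a smooth extension of $\NM_i^h$ whose weighted partial derivatives are controlled by $\hat C$. Throughout I would write $L=\pi/h\in\N$, so that $\NM_i^h$, viewed as an element of $\perfct{2L}{\Z^{d-1}}$, is indexed consistently with \cref{y30} and $\I_{\pi/h}=\I_L$.

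First I would dispose of the easy values of $k$: for $k=0$, \cref{z22} gives $\lvar(\NM_i^h,0)=|\NM_i^h(0)|=|\NM_i(0)|=0\le\hat C$; and for $k$ with $\prod_{j=1}^{d-1}(D(k_j)\cap\I_L)=\emptyset$ the quantity $\lvar(\cdot,k)$ does not depend on the function (the hypothesis of \cref{z22c} being vacuous) and hence equals $0\le\hat C$. It remains to treat $k\neq0$ with $J:=\prod_{j=1}^{d-1}(D(k_j)\cap[-L+1,L])\neq\emptyset$. Fixing $j_0$ with $k_{j_0}\neq0$ we have $0\notin D(k_{j_0})$, so every $\xi\in J$ satisfies $h\xi\neq0$, hence $\lambda(h\xi)>1$ and $f(\lambda(h\xi))\neq0$; therefore $A\colon J\to\C$, $A(\xi)=(e^{-\ima h\xi_i}-1)/f(\lambda(h\xi))$, is real-analytic on $J$ and, by \eqref{y14}--\eqref{y14b}, agrees with $\NM_i^h$ at every integer point of $J$. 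I would then invoke \cref{z21} with $M:=\hat C$: since $A$ is real-analytic on all of $J$ the two continuity requirements are automatic, so everything comes down to the weighted derivative estimate $|\xi_1^{\alpha_1}\cdots\xi_{d-1}^{\alpha_{d-1}}(\partial_1^{\alpha_1}\cdots\partial_{d-1}^{\alpha_{d-1}}A)(\xi)|\le\hat C$ for all $\alpha\in\{0,1\}^{d-1}$ and $\xi\in J$.

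For this estimate I would factor $A=g\cdot G$ with $g(\xi)=e^{-\ima h\xi_i}-1$, a function of the single coordinate $\xi_i$, and $G(\xi)=((1/f)\circ\lambda)(h\xi)$. Since $f(\zeta)=0$ forces $Q(\zeta)=1$, hence $\zeta=1$ by \cref{y16} (from $Q(\zeta)+1/Q(\zeta)=2\zeta$), the reciprocal $1/f$ is holomorphic on $\{\Re(\zeta)>1\}$; applying \cref{y11} with its holomorphic function chosen to be $1/f$ and then \cref{y18} gives $|\partial_t^\beta((1/f)\circ\lambda)(t)|\le(2/c)^{|\beta|}(|\beta|)!\,C\,|t|_\infty^{-|\beta|-1}$ for $t\in[-\pi,\pi]^{d-1}\setminus\{0\}$ and $\beta\in\{0,1\}^{d-1}$, and rescaling $t=h\xi$ yields $|\partial_\xi^\beta G(\xi)|\le(2/c)^{|\beta|}(|\beta|)!\,C\,(h\,|\xi|_\infty^{|\beta|+1})^{-1}$. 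For $g$ the definition of $a$ in \eqref{y23b} gives $|g(\xi)|\le a\,h\,|\xi_i|$ and $|\partial_i g(\xi)|\le a\,h$. Because $g$ depends on only one coordinate the Leibniz rule is short: $\partial_\xi^\alpha A=g\,\partial_\xi^\alpha G$ if $\alpha_i=0$, and $\partial_\xi^\alpha A=g\,\partial_\xi^\alpha G+(\partial_i g)\,\partial_\xi^{\alpha'}G$ if $\alpha_i=1$, where $\alpha'$ is $\alpha$ with its $i$-th entry set to $0$. Multiplying by $\prod_j\xi_j^{\alpha_j}$, bounding each $|\xi_j|\le|\xi|_\infty$ and using $|\alpha|\le d-1$, one sees that all powers of $|\xi|_\infty$ cancel and the factor $h^{-1}$ coming from $G$ is absorbed by the factor $h$ coming from $g$ or from $\partial_i g$; what survives is at most $2aC((2/c)^{|\alpha|}\vee1)(|\alpha|)!\le\hat C$. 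Hence \cref{z21} gives $\lvar(\NM_i^h,k)\le\hat C$, and \cref{y31} (applied in dimension $d-1$) completes the proof with $\var(\NM_i^h)\le4^{d-1}\hat C\le4^d\hat C$.

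The only real work is the weighted derivative estimate of the last paragraph; there is no conceptual difficulty, but it is a bookkeeping exercise in which one must track the powers of $h$ and of $|\xi|_\infty$ carefully so that they cancel exactly and the bound lands inside $\hat C$ as defined in \eqref{y23b}. The two points to watch are that $e^{-\ima h\xi_i}-1$ contributes the factor $h|\xi_i|$ rather than a uniformly small quantity, and that the Leibniz expansion collapses to at most two terms precisely because the numerator of $\NM_i^h$ involves only the one variable $\xi_i$.
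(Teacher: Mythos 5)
Your proposal is correct and follows essentially the same route as the paper: a two-term Leibniz expansion exploiting that the numerator $e^{-\ima h\xi_i}-1$ depends on the single variable $\xi_i$, the Cauchy-formula bounds of \cref{y11} applied to $1/f$ together with \cref{y18} to control the derivatives of $(1/f)\circ\lambda$, then \cref{z21} for the local variations, \cref{z22} for $k=0$, and \cref{y31} to pass to the total variation. The only differences are cosmetic: you work directly in the rescaled variable $\xi$ (the paper estimates in $t$ and substitutes $\xi=t/h$ at the end) and you explicitly handle the degenerate case of empty dyadic boxes, which the paper leaves implicit.
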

\begin{proof}[Proof of \cref{y23}]We first do some simple calculations on the derivatives. 
First, \eqref{y14} shows  for all 
$i\in \disint{1}{d-1}$, $\alpha\in \{0,1\}^{d-1}$ 
with $\alpha_i=0$   that
\begin{align}\begin{split}
\left(\partial_{1}^{\alpha_{1}} \ldots
\partial_{d-1}^{\alpha_{d-1}} \NM_i\right)(t)&=
\left(\frac{\partial }{\partial t_1}\right)^{\alpha_1}
\left(\frac{\partial }{\partial t_2}\right)^{\alpha_2}\ldots
\left(\frac{\partial }{\partial t_{d-1}}\right)^{\alpha_{d-1}} \left(\frac{e^{-\ima t_i}-1}{f(\lambda(t))}\right)\\
&= \left[
\left(\frac{\partial }{\partial t_1}\right)^{\alpha_1}
\left(\frac{\partial }{\partial t_2}\right)^{\alpha_2}\ldots
\left(\frac{\partial }{\partial t_{d-1}}\right)^{\alpha_{d-1}}
\left(
\frac{1}{f(\lambda(t))}\right)\right]\left(e^{-\ima t_i}-1\right).
\end{split}\label{y20}
\end{align}
Next, note that for all $i\in \disint{1}{d-1}$ it holds that
\begin{align}
\frac{\partial}{\partial t_i}\left(\frac{e^{-\ima t_i}-1}{f(\lambda(t))}\right)
= 
\frac{1}{f(\lambda(t))}\left[\frac{\partial}{\partial t_i}\left(e^{-\ima t_i}-1\right)\right]
+\left[\frac{\partial}{\partial t_i}
\left(\frac{1}{f(\lambda(t))} \right)\right]\left(e^{-\ima t_i}-1\right).
\end{align}
This and \eqref{y14}  ensure
for all $i\in \disint{1}{d-1}$, $\alpha\in \{0,1\}^{d-1}$ 
with $\alpha_i=1$ that
\begin{align}\begin{split}
&\left(\partial_{1}^{\alpha_{1}} \ldots
\partial_{d-1}^{\alpha_{d-1}} \NM_i\right)(t)=\left(\frac{\partial }{\partial t_1}\right)^{\alpha_1}
\left(\frac{\partial }{\partial t_2}\right)^{\alpha_2}\ldots
\left(\frac{\partial }{\partial t_{d-1}}\right)^{\alpha_{d-1}}\left(\frac{e^{-\ima t_i}-1}{f(\lambda(t))}\right)\\
&= \left[
\left(\frac{\partial }{\partial t_1}\right)^{\alpha_1}
\ldots
\left(\frac{\partial }{\partial t_{i-1}}\right)^{\alpha_{i-1}}
\left(\frac{\partial }{\partial t_{i+1}}\right)^{\alpha_{i+1}}
\ldots
\left(\frac{\partial }{\partial t_{d-1}}\right)^{\alpha_{d-1}}
\left(\frac{1}{f(\lambda(t))}\right)\right]\left[\frac{\partial}{\partial t_i}\left(e^{-\ima t_i}-1\right)\right]\\
&\qquad
+\left[\left(\frac{\partial }{\partial t_1}\right)^{\alpha_1}
\left(\frac{\partial }{\partial t_2}\right)^{\alpha_2}\ldots
\left(\frac{\partial }{\partial t_{d-1}}\right)^{\alpha_{d-1}}
\left(\frac{1}{f(\lambda(t))} \right)\right]\left(e^{-\ima t_i}-1\right).
\end{split}\label{y22}
\end{align}
Moreover, \cref{y11} (applied with $h\defeq1/f$), \cref{y18}, 
and the fact that $\forall\,\alpha\in \{0,1\}^{d-1}\colon |\alpha|\leq d$
ensure that 
for all $\alpha\in \{0,1\}^{d-1}$ it holds that
\begin{align}\begin{split}
\left|
\left(\tfrac{\partial }{\partial t_1}\right)^{\alpha_1}
\left(\tfrac{\partial }{\partial t_2}\right)^{\alpha_2}\ldots
\left(\tfrac{\partial }{\partial t_{d-1}}\right)^{\alpha_{d-1}}
\left(
\frac{1}{f(\lambda(t))}\right)\right|
\leq 
\frac{C((2/c)^d\vee1)d!}{|t|_\infty^{|\alpha|+1}}
\end{split}\label{y21}
\end{align}
This 
(with $\alpha\defeq (\alpha_1,\ldots,\alpha_{i-1},\alpha_{i+1},\ldots,\alpha_d)$
and with $\alpha\defeq\alpha$ for $\alpha\in \{0,1\}^{d-1}$, $i\in \disint{1}{d-1}$), \eqref{y20}, \eqref{y22}, the triangle inequality, and \eqref{y23b} prove
for all $i\in \disint{1}{d-1}$, $\alpha\in \{0,1\}^{d-1}$ that
\begin{align}
|t|_\infty^{|\alpha|}\left|\left(\partial_{1}^{\alpha_{1}} \ldots
\partial_{d-1}^{\alpha_{d-1}} \NM_i\right)(t)\right|
\leq|t|_\infty^{|\alpha|}
C((2/c)^d\vee1)d! \left[\frac{a}{|t|_\infty^{|\alpha|}}+\frac{a|t|_\infty}{|t|_\infty^{|\alpha|+1}}\right]=2aC((2/c)^d\vee1)d!=\hat{C}.
\end{align}
This, \eqref{y14}, and the substitution $\xi\defeq t/h$ show 
for all 
$i\in \disint{1}{d-1}$,
$\alpha\in\{0,1\}^{d-1}$,  $ \xi\in [-\tfrac{\pi}{h},\tfrac{\pi}{h}]^{d-1}\setminus\{0\}$
that
$
|\xi|^{|\alpha|}_\infty
\left|(\partial_{1}^{\alpha_{1}} \ldots
\partial_{d-1}^{\alpha_{d-1}} \NM_i^h)(\xi)\right|
\leq\hat{C}.$ Then \cref{z21}  shows for all $i\in \disint{1}{d-1}$,  $k\in\Z^d\setminus\{0\}$ that $\lvar(\NM_i^h,k)
\leq\hat{C}.$ This and the fact that
$ \lvar(\NM_i^h,0)=|\NM^h_i(0)|=0$ (see \eqref{y14}, \eqref{y14b}, and \cref{z22}) prove for all $i\in \disint{1}{d-1}$,  $k\in\Z^d$ that $\lvar(\NM_i^h,k)
\leq\hat{C}.$ Hence, 
\cref{y31} ensures for all $i\in \disint{1}{d-1}$ that
$\var(\NM_i^h)
\leq4^d\hat{C}$.
This completes the proof of \cref{y23}.
\end{proof}
Combining \cref{y23}, \cref{d110b}, and \cref{z17} we obtain \cref{s28} below.
\begin{corollary}[The Neumann case]\label{s28}
Assume \cref{d134,x01}. Then there exists a function  $C\colon ([2,\infty)\cap\N)\times(1,\infty)\to (0,\infty)$ such that for all $h\in \pi/\N$,
$p\in (1,\infty)$, $d\in [2,\infty)\cap\N$,
$u\in \mathbbm{H}_{d,h,\geq 0}$ it holds that
$
\|\disDiff{h}{x}u\|_{L^p_h(\omega^{d-1}_h)}
\leq C(d,p)
\|\disDiff{h}{y}u\|_{L^p_h(\omega^{d-1}_h)}.
$
\end{corollary}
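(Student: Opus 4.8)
The plan is to derive \cref{s28} by concatenating three results already established: the Fourier-multiplier identity of \cref{z17} linking the tangential and normal finite differences on the boundary slice, the total-variation bound of \cref{y23} for the corresponding multipliers, and the discrete Marcinkiewicz multiplier theorem \cref{d110b}. All of this is carried out on the $(d-1)$-dimensional boundary lattice $h\Z^{d-1}$.

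First I would fix $h\in\pi/\N$, set $L=\pi/h\in\N$, and fix $p\in(1,\infty)$, $d\in[2,\infty)\cap\N$, $u\in\mathbbm{H}_{d,h,\geq 0}$. Because $u$ is $2\pi$-periodic in each of its first $d-1$ variables, the boundary slices $(\disDiff{h}{x,i}u)(\cdot,0)$, $i\in\disint{1}{d-1}$, and $(\disDiff{h}{y}u)(\cdot,0)$ all lie in $\perfct{2\pi}{h\Z^{d-1}}$. By \cref{z17}, for each $i$ and each $k$ in the fundamental domain $\I_{\pi/h}^{d-1}=\I_L^{d-1}$ one has $\mathcal{F}((\disDiff{h}{x,i}u)(\cdot,0))(k)=\NM_i^h(k)\,\mathcal{F}((\disDiff{h}{y}u)(\cdot,0))(k)$; since all three discrete Fourier transforms are $(2\pi/h)$-periodic, this identity then holds on all of $\Z^{d-1}$ once $\NM_i^h$ is read as the associated $(2L)$-periodic function, that is, as an element of $\perfct{2\pi/h}{\Z^{d-1}}$.

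Next I would invoke \cref{y23}, giving $\var(\NM_i^h)\leq 4^d\hat C$ with $\hat C=2aC((2/c)^d\vee1)d!$ depending only on $d$, and then apply the defining property of $\constMac$ from \cref{d110c} --- with the dimension there taken to be $d-1$, with $U\defeq(\disDiff{h}{x,i}u)(\cdot,0)$, the lower-case function in \cref{d110c} taken to be $(\disDiff{h}{y}u)(\cdot,0)$, $a\defeq\NM_i^h$, and $M\defeq 4^d\hat C$ --- together with the finiteness $\constMac(p)<\infty$ furnished by \cref{d110b}, to obtain $\|(\disDiff{h}{x,i}u)(\cdot,0)\|_{L^p_h(\omega_h^{d-1})}\leq\constMac(p)\,4^d\hat C\,\|(\disDiff{h}{y}u)(\cdot,0)\|_{L^p_h(\omega_h^{d-1})}$ for every $i\in\disint{1}{d-1}$. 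Summing over $i$ and using that $\|\disDiff{h}{x}u\|_{L^p_h(\omega_h^{d-1})}\leq\sum_{i=1}^{d-1}\|\disDiff{h}{x,i}u\|_{L^p_h(\omega_h^{d-1})}$, which follows from the definition of the vector-valued norm and the triangle inequality in $L^p_h$, then yields the assertion with $C(d,p)=(d-1)\,\constMac(p)\,4^d\hat C$.

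The argument is a straight concatenation of earlier results, so the main obstacle is bookkeeping rather than a new idea: one must carry the dimension shift $d\mapsto d-1$ through consistently (the Marcinkiewicz theorem, the $\var$-estimate of \cref{y23}, and the $L^p_h$-norms all live on $h\Z^{d-1}$), make sure $\NM_i^h$ really is the $(2\pi/h)$-periodic multiplier required by \cref{d110c}, and track the $d$-dependence of $\hat C$ closely enough that the final $C(d,p)$ is independent of $h$ --- equivalently of $L$, equivalently of the box size $N$ in the eventual application in \cref{d06}. The only step that is a genuine inequality rather than an identity or a verbatim-quoted bound is this final sum over the $d-1$ tangential directions.
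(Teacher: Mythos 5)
Your proposal is correct and is essentially the paper's own proof: the paper obtains \cref{s28} precisely by combining the multiplier identity of \cref{z17}, the variation bound $\var(\NM_i^h)\leq 4^d\hat{C}$ of \cref{y23}, and the discrete Marcinkiewicz theorem \cref{d110b} applied in dimension $d-1$, then summing over the tangential directions. Your bookkeeping points (periodizing $\NM_i^h$ to an element of $\perfct{2\pi/h}{\Z^{d-1}}$ and extending the identity from $\I_{\pi/h}^{d-1}$ to $\Z^{d-1}$ by periodicity) are exactly what the paper leaves implicit.
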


\begin{lemma}\label{s26}Assume \cref{x01,y30,y10b}  
 let $h\in \pi/\N$,
$u\in \mathbbm{H}_{d,h,\geq 0}$,
let $a,\hat{C}\in[0,\infty)$ be the real numbers (cf. \cref{z05,y18}) given by
\begin{align}\label{y20e}
a=\sup_{s\in [-\pi,\pi ]\setminus\{0\}}\max \left\{\Bigg.\!
\left|\frac{s}{e^{-\ima s}-1}\right|, s^2\left|\frac{\partial}{\partial s}\left(\frac{1}{e^{-\ima s}-1}\right)\right|\right\}\quad\text{and}\quad
\hat{C}=
6aC((2/c)^d\vee 1)d!,
\end{align}
let $J \colon \Z^{d-1}\to(\disint{1}{d-1})$ be the function which satisfies 
for all $k\in \Z^{d-1}$ that 
\begin{align}
J(k)=\min\left\{j\in\disint{1}{d-1}\colon |k_j|=
\left[ \max_{i\in\disint{1}{d-1}} |k_i|\right]\right\}.
\label{y12b}
\end{align} 
Then it holds
for all 
$k\in\Z^{d-1}$,
 $\nu\in \prod_{j=1}^{d-1}D(k_j)$ 
 that
\begin{align}\label{y12d}
[\mathcal{F}((\disDiff{h}{y}u)(\cdot,0))](\nu)=
\DM_{J(k)}^h(\nu)[\mathcal{F}((\disDiff{h}{x,J(k)}u)(\cdot,0))](\nu)\quad\text{and}\quad\var(\DM_{J(\cdot)}^h(\cdot))\leq 4^d\hat{C}.
\end{align}
\end{lemma}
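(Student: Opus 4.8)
The plan is to prove the two assertions of \eqref{y12d} separately. The multiplier identity follows directly from \cref{z16}. For the total‑variation bound — the substantive part — I would follow the scheme of the proof of \cref{y23}: reduce total variations to local variations by \cref{y31}, reduce each local variation to a derivative estimate by \cref{z21}, and carry out that derivative estimate by Cauchy's formula via \cref{y18,y11}. The one genuinely new ingredient is a geometric fact about dyadic rectangles, needed because — in contrast to the Neumann multipliers $\NM_i^h$ — each Dirichlet multiplier $\DM_i^h$ is singular on the hyperplane $\{t_i=0\}$; the selector $J$ is present precisely to avoid this singularity on each rectangle.

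I would first dispose of the identity. If $k=0$, the only lattice point of $\prod_{j=1}^{d-1}D(k_j)$ is the origin, and both sides vanish by \cref{z16a} in \cref{z16} and by $\DM_{J(0)}(0)=0$ (see \eqref{y14}). If $k\neq 0$, set $i:=J(k)$; since $|k_i|=\max_{1\leq j\leq d-1}|k_j|\geq 1$ we have $k_i\neq 0$, so \cref{z16b} in \cref{z16} applied with this $i$ gives, for every $\nu\in\prod_{j=1}^{d-1}(D(k_j)\cap\I_{\pi/h})$, that $[\mathcal{F}((\disDiff{h}{y}u)(\cdot,0))](\nu)=\DM_i^h(\nu)[\mathcal{F}((\disDiff{h}{x,i}u)(\cdot,0))](\nu)$, which is the first claim in \eqref{y12d}.

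For the variation bound write $L:=\pi/h$, and let $\DM_{J(\cdot)}^h(\cdot)\in\perfct{2L}{\Z^{d-1}}$ denote the $2L$‑periodic function which, on the dyadic rectangle $\prod_{j=1}^{d-1}(D(k_j)\cap\I_L)$ indexed by $k\in\Z^{d-1}$, coincides with the restriction of $\DM_{J(k)}^h$. By \cref{y31} it suffices to show $\lvar(\DM_{J(\cdot)}^h(\cdot),k)\leq\hat C$ for every $k\in\Z^{d-1}$. For $k=0$ this follows from \cref{z22} and $\DM_{J(0)}(0)=0$. For $k\neq 0$, put $i:=J(k)$, so $k_i\neq 0$; on the rectangle indexed by $k$ the function $\DM_{J(\cdot)}^h(\cdot)$ coincides with $\DM_i^h$, so \cref{z22c} gives $\lvar(\DM_{J(\cdot)}^h(\cdot),k)=\lvar(\DM_i^h,k)$, where $\DM_i^h$ now stands for its $2L$‑periodic extension from $\I_L^{d-1}$. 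I would then apply \cref{z21} with $A:=\DM_i^h$ on $J:=\prod_{j=1}^{d-1}(D(k_j)\cap[-L+1,L])$ and $M:=\hat C$ (the degenerate case $J=\emptyset$ being trivial). For $\xi\in J$ one has $|\xi_i|\geq 2^{|k_i|-1}\geq 1>0$, hence $t_i:=h\xi_i\neq 0$ and $\lambda(h\xi)>1$, so the first branch of \eqref{y14} applies throughout $J$, $\DM_i^h$ is smooth on $J$, and only the derivative bound in \cref{z21} remains: $|\xi_1^{\alpha_1}\cdots\xi_{d-1}^{\alpha_{d-1}}(\partial_1^{\alpha_1}\cdots\partial_{d-1}^{\alpha_{d-1}}\DM_i^h)(\xi)|\leq\hat C$ for $\alpha\in\{0,1\}^{d-1}$, $\xi\in J\setminus\I_L^{d-1}$; with $t:=h\xi\in[-\pi,\pi]^{d-1}\setminus\{0\}$ and after cancelling the powers of $h$ this becomes $|t_1^{\alpha_1}\cdots t_{d-1}^{\alpha_{d-1}}(\partial_1^{\alpha_1}\cdots\partial_{d-1}^{\alpha_{d-1}}\DM_i)(t)|\leq\hat C$.

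To finish, write $\DM_i(t)=(f\circ\lambda)(t)\,g(t_i)$ with $g(s)=(e^{-\ima s}-1)^{-1}$, set $\partial^\alpha:=\partial_1^{\alpha_1}\cdots\partial_{d-1}^{\alpha_{d-1}}$, and expand by the Leibniz rule: $\partial^\alpha\DM_i$ equals $(\partial^\alpha(f\circ\lambda))\,g(t_i)$ if $\alpha_i=0$, and $(\partial^\alpha(f\circ\lambda))\,g(t_i)+(\partial^{\alpha-\unit{d-1}{i}}(f\circ\lambda))\,g'(t_i)$ if $\alpha_i=1$. Then \cref{y11}, applied with the holomorphic function $f$, together with \cref{y18}, gives $|\partial^\beta(f\circ\lambda)(t)|\leq(2/c)^{|\beta|}(|\beta|)!\,C\,|t|_\infty^{1-|\beta|}$ for $\beta\in\{0,1\}^{d-1}$; the definition \eqref{y20e} of $a$ gives $|g(t_i)|\leq a/|t_i|$ and $|g'(t_i)|\leq a/t_i^2$; and, crucially, since $i=J(k)$ we have, for every $j$ and every $\xi\in J$, that $|\xi_j|<2^{|k_j|}\leq 2^{|k_i|}\leq 2|\xi_i|$, hence $|t|_\infty\leq 2|t_i|$. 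Combining these with $|t_1^{\alpha_1}\cdots t_{d-1}^{\alpha_{d-1}}|\leq|t|_\infty^{|\alpha|}$, $(2/c)^{|\beta|}\leq(2/c)^d\vee1$, and $(|\beta|)!\leq d!$, the summand carrying $g(t_i)$ is bounded by $2Ca((2/c)^d\vee1)d!$ and the one carrying $g'(t_i)$ by $4Ca((2/c)^d\vee1)d!$, so the whole expression is at most $6Ca((2/c)^d\vee1)d!=\hat C$. Thus \cref{z21} yields $\lvar(\DM_i^h,k)\leq\hat C$, so $\lvar(\DM_{J(\cdot)}^h(\cdot),k)\leq\hat C$ for all $k$, and \cref{y31} upgrades this to $\var(\DM_{J(\cdot)}^h(\cdot))\leq 4^d\hat C$. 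I expect this derivative estimate to be the main obstacle: because $\DM_i^h$ blows up like $|t_i|^{-1}$ near $\{t_i=0\}$, there is no uniform bound on $\var(\DM_i^h)$ for a single fixed $i$, and the whole role of the selector $J$ — through the dyadic‑rectangle inequality $|t|_\infty\leq 2|t_i|$, valid only when $i=J(k)$ — is to confine the relevant evaluations to the region where $|t_i|$ stays comparable to $|t|_\infty$; balancing this against the derivative growth from Cauchy's formula is what fixes the constant $\hat C=6aC((2/c)^d\vee1)d!$.
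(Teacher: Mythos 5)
Your proposal is correct and follows essentially the same route as the paper's proof: \cref{z16} for the multiplier identity, the Leibniz splitting of $\DM_i$ combined with \cref{y11,y18} for the derivative bounds, the dyadic comparability $|t|_\infty\leq 2|t_{J(k)}|$ (the paper's \eqref{y12c}), and then \cref{z21}, \cref{z22} (for $k=0$), and \cref{y31} to pass from derivative estimates to local and total variations, arriving at the same constant $\hat C=6aC((2/c)^d\vee1)d!$. Your explicit construction of the periodic function $\DM^h_{J(\cdot)}(\cdot)$ and the use of \cref{z22c} merely make precise a step the paper leaves implicit.
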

\begin{proof}[Proof of \cref{s26}]
First, \eqref{y12b} shows for all
$k\in \Z^{d-1}\setminus\{0\}$ that $ k_{J(k)}\neq 0$.
This, \cref{z16b} in \cref{z16} (with $i\defeq J(k)$ for $k\in\Z^{d-1}\setminus\{0\}$), and \cref{z16a} in \cref{z16} prove for all 
$k\in\Z^{d-1}$,
 $\nu\in \prod_{j=1}^{d-1}D(k_j)$ 
 that
\begin{align}\label{y12e}
[\mathcal{F}((\disDiff{h}{y}u)(\cdot,0))](\nu)=
\DM_{J(k)}^h(\nu)[\mathcal{F}((\disDiff{h}{x,J(k)}u)(\cdot,0))](\nu).
\end{align}
For the rest of this proof 
let $\mu$ be the Lebesgue measure on the real line.
Note that  \eqref{y03} and a simple scaling argument show that 
it holds for all
$\ell\in\Z\setminus\{0\}$, $\xi\in (hD(\ell))$ that  $|\xi|\geq h\mu(D(\ell))$ and
it holds for all
$
\ell\in\Z$,  $\xi\in (hD(\ell))$ that $ |\xi|\leq 2h\mu(D(\ell))$.
Therefore,
 the fact that $\forall\,k\in \Z^{d-1}\setminus\{0\}\colon k_{J(k)}\neq 0$ and \eqref{y12b}  prove
for all
$k\in \Z^{d-1}\setminus\{0\}$, 
$t\in\prod_{j=1}^{d-1} (hD(k_j))$  that
\begin{align}
\left|t_{J(k)}\right|\geq h\mu(D(k_{J(k)}))= \left[\max_{i\in\disint{1}{d-1}}h\mu(D(k_{i}))\right]\geq\frac{1}{2}\left[\max_{i\in\disint{1}{d-1}}\left|t_{i}\right|\right]=\frac{1}{2}|t|_\infty.\label{y12c}
\end{align}
Next,  \eqref{y14} shows  for all 
$t\in [-\pi,\pi]^{d-1}$,
$i\in \disint{1}{d-1}$, $\alpha\in \{0,1\}^{d-1}$ 
with $\alpha_i=0$, $t_i\neq 0$  that
\begin{align}\begin{split}
\left(\partial_{1}^{\alpha_{1}} \ldots
\partial_{d-1}^{\alpha_{d-1}} \DM_i\right)(t)&=
\left(\frac{\partial }{\partial t_1}\right)^{\alpha_1}
\left(\frac{\partial }{\partial t_2}\right)^{\alpha_2}\ldots
\left(\frac{\partial }{\partial t_{d-1}}\right)^{\alpha_{d-1}} \left(\frac{f(\lambda(t))}{e^{-\ima t_i}-1}\right)\\
&= \left[
\left(\frac{\partial }{\partial t_1}\right)^{\alpha_1}
\left(\frac{\partial }{\partial t_2}\right)^{\alpha_2}\ldots
\left(\frac{\partial }{\partial t_{d-1}}\right)^{\alpha_{d-1}}
\Big(f(\lambda(t)\Big)\right]\left(\frac{1}{e^{-\ima t_i}-1}\right).
\end{split}\label{y20b}
\end{align}
Moreover, note that for all $i\in \disint{1}{d-1}$,
$t\in [-\pi,\pi]^{d-1}$ with $t_i\neq 0$
 it holds that
\begin{align}
\frac{\partial}{\partial t_i}\left(\frac{f(\lambda(t))}{e^{-\ima t_i}-1}\right)
= 
f(\lambda(t))\left[\frac{\partial}{\partial t_i}\left(\frac{1}{e^{-\ima t_i}-1}\right)\right]
+\left[\frac{\partial}{\partial t_i}
\Big(f(\lambda(t)) \Big)\right]\frac{1}{e^{-\ima t_i}-1}.
\end{align}
Then \eqref{y14}  ensures
for all $t\in [-\pi,\pi]^{d-1}$, $i\in \disint{1}{d-1}$, $\alpha\in \{0,1\}^{d-1}$ with $t_i\neq 0$, $\alpha_i=1$ that
\begin{align}\begin{split}
&\left(\partial_{1}^{\alpha_{1}} \ldots
\partial_{d-1}^{\alpha_{d-1}} \DM_i\right)(t)=\left(\frac{\partial }{\partial t_1}\right)^{\alpha_1}
\left(\frac{\partial }{\partial t_2}\right)^{\alpha_2}\ldots
\left(\frac{\partial }{\partial t_{d-1}}\right)^{\alpha_{d-1}}\left(\frac{f(\lambda(t))}{e^{-\ima t_i}-1}\right)\\
&= \left[
\left(\frac{\partial }{\partial t_1}\right)^{\alpha_1}
\ldots
\left(\frac{\partial }{\partial t_{i-1}}\right)^{\alpha_{i-1}}
\left(\frac{\partial }{\partial t_{i+1}}\right)^{\alpha_{i+1}}
\ldots
\left(\frac{\partial }{\partial t_{d-1}}\right)^{\alpha_{d-1}}
\Big(f(\lambda(t))\Big)\right]\left[\frac{\partial}{\partial t_i}\left(\frac{1}{e^{-\ima t_i}-1}\right)\right]\\
&\qquad
+\left[\left(\frac{\partial }{\partial t_1}\right)^{\alpha_1}
\left(\frac{\partial }{\partial t_2}\right)^{\alpha_2}\ldots
\left(\frac{\partial }{\partial t_{d-1}}\right)^{\alpha_{d-1}}
\Big(f(\lambda(t)) \Big)\right]\left(\frac{1}{e^{-\ima t_i}-1}\right).
\end{split}\label{y20c}
\end{align}
Furthermore,
\cref{y11} (with $h\defeq f$) and \cref{y18} ensure that  for all $t\in [-\pi,\pi]^{d-1}\setminus\{0\}$, $\alpha\in \{0,1\}^{d-1}$ it holds that
\begin{align}\begin{split}
\left|
\left(\tfrac{\partial }{\partial t_1}\right)^{\alpha_1}
\left(\tfrac{\partial }{\partial t_2}\right)^{\alpha_2}\ldots
\left(\tfrac{\partial }{\partial t_{d-1}}\right)^{\alpha_{d-1}}
\Big(f(\lambda(t))\Big)\right|
\leq \frac{C((2/c)^d\vee 1)d!}{|t|_\infty^{|\alpha|-1}}.
\end{split}\label{y21b}
\end{align}
Combining 
\eqref{y20b}, \eqref{y20c}, and the triangle inequality then 
shows that for all $t\in [-\pi,\pi]^{d-1}\setminus\{0\}$, $\alpha\in \{0,1\}^{d-1}$,
$i\in\disint{1}{d-1}$ it holds that
\begin{align}\begin{split}
\left|\partial_{1}^{\alpha_{1}} \ldots
\partial_{d-1}^{\alpha_{d-1}} \DM_{i}(t)\right|
&\leq C((2/c)^d\vee 1)d!
\left[
\frac{1}{|t|_\infty^{|\alpha|-2}}\frac{a}{|t_{i}|^2}+
\frac{1}{|t|_\infty^{|\alpha|-1}}\frac{a}{|t_{i}|}\right]
\end{split}\label{y20d}
\end{align}
This (with $i\defeq J(k)$) and \eqref{y12c} prove 
for all
$k\in \Z^{d-1}\setminus\{0\}$, 
$t\in \prod_{j=1}^{d-1}(hD(k_j))$,
$\alpha\in \{0,1\}^{d-1}$  that
\begin{align}\begin{split}
|t|^{|\alpha|}_\infty
\left|(\partial_{1}^{\alpha_{1}} \ldots
\partial_{d-1}^{\alpha_{d-1}} \DM_{J(k)})(t)\right|
\leq C((2/c)^d\vee 1)d!
\left[2^2a+2a\right]\leq 6aC((2/c)^d\vee 1)d!=\hat{C}.
\end{split}
\end{align}
This, \eqref{y14b}, and the chain rule  prove for all
$k\in \Z^{d-1}\setminus\{0\}$, 
$\xi\in \prod_{j=1}^{d-1}D(k_j)$,
$\alpha\in \{0,1\}^{d-1}$ that
$
|\xi|_\infty^{|\alpha|}
|(\partial_{1}^{\alpha_{1}} \ldots
\partial_{d-1}^{\alpha_{d-1}} \DM_{J(k)}^h)(\xi)|
\leq \hat{C}.
$
\cref{z21} hence ensures for all $k\in\Z^{d-1}\setminus\{0\}$ that
$
\lvar (\DM_{J(k)}^h,k)\leq \hat{C}.
$
This and the fact that
$ \lvar(\DM_{J(0)}^h,0)=|\DM^h_1(0)|=0$ (see \eqref{y12b}, \eqref{y14}, \eqref{y14b}, and \cref{z22}) prove for all  $k\in\Z^d$ that $\lvar(\DM_{J(k)}^h,k)
\leq\hat{C}.$ Hence, \cref{y31} ensures that $\var(\DM_{J(\cdot)}^h(\cdot))\leq 4^d\hat{C}$.
This and \eqref{y12d} complete the proof of \cref{s26}.
\end{proof}
Combining \cref{s26} and 
\cref{d133} we obtain \cref{s29} below.
\begin{corollary}[The Dirichlet case]\label{s29}
Assume \cref{d134,x01}. Then there exists   a function  $C\colon ([2,\infty)\cap\N)\times(1,\infty)\to (0,\infty)$ such that for all $h\in \pi/\N$,
$p\in (1,\infty)$, $d\in [2,\infty)\cap\N$,
$u\in \mathbbm{H}_{d,h,\geq 0}$ it holds that
\begin{align}\begin{split}
\big\|\disDiff{h}{y}u\big\|_{L^p_h(\omega^{d-1}_h)}\leq C(d,p)\left[
\sum_{i=1}^{d-1}\big\|\disDiff{h}{x}u\big\|_{L^p_h(\omega^{d-1}_h)}\right].\end{split}\end{align}
\end{corollary}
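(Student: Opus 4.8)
\textbf{Proof proposal for \cref{s29}.}
The plan is to obtain \cref{s29} as an immediate consequence of \cref{s26} and the multi-multiplier theorem \cref{d133}, applied in the $(d-1)$ tangential directions. Fix $h\in\pi/\N$, $p\in(1,\infty)$, $d\in[2,\infty)\cap\N$ and $u\in\mathbbm{H}_{d,h,\geq 0}$, and set $L=\pi/h\in\N$; then $h=\pi/L$, and since $u$ is $2\pi$-periodic in its first $d-1$ arguments the functions $(\disDiff{h}{y}u)(\cdot,0)$ and $(\disDiff{h}{x,i}u)(\cdot,0)$, $i\in\disint{1}{d-1}$, all lie in $\perfct{2\pi}{h\Z^{d-1}}$, so their discrete Fourier transforms lie in $\perfct{2L}{\Z^{d-1}}$. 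First I would invoke \cref{s26}: it furnishes the index map $J\colon\Z^{d-1}\to\disint{1}{d-1}$ of \eqref{y12b} and, for every $k\in\Z^{d-1}$ and every $\nu$ in the dyadic block $\prod_{j=1}^{d-1}(D(k_j)\cap\I_L)$, the identity
\begin{align*}
[\mathcal{F}((\disDiff{h}{y}u)(\cdot,0))](\nu)=\DM_{J(k)}^h(\nu)\,[\mathcal{F}((\disDiff{h}{x,J(k)}u)(\cdot,0))](\nu),
\end{align*}
together with a bound on the (now $2L$-periodically extended) multipliers; indeed the proof of \cref{s26} establishes the per-block estimate $\lvar(\DM_{J(k)}^h,k)\leq\hat C$ for every $k\in\Z^{d-1}$, with $\hat C=6aC((2/c)^d\vee1)d!$ as in \eqref{y20e} (the stated total-variation bound $\var(\DM_{J(\cdot)}^h(\cdot))\leq 4^d\hat C$ would equally do, with $\hat C$ replaced by $4^d\hat C$).

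Next I would apply \cref{d133} with its $d$ replaced by $d-1$, its $m$ by $d-1$, $A_k\defeq\DM_{J(k)}^h$, the map $J$ above, $U\defeq(\disDiff{h}{y}u)(\cdot,0)$, $u_i\defeq(\disDiff{h}{x,i}u)(\cdot,0)$ for $i\in\disint{1}{d-1}$, and $M\defeq\hat C$. The normalization hypothesis $h=\pi/L$ holds by construction; the local-variation hypothesis $\lvar(A_k,k)\leq M$ is exactly the bound just recalled (for $k=0$ one has $J(0)=1$ and $\lvar(\DM_1^h,0)=|\DM_1^h(0)|=0$ by \cref{z22} and \eqref{y14}); and the block-wise multiplier relation $(\mathcal F(U))(\nu)=A_k(\nu)(\mathcal F(u_{J(k)}))(\nu)$ is the displayed identity above (on the degenerate block $k=0$ it is the trivial $0=0$ coming from the vanishing of all these Fourier coefficients at the origin, \cref{z16a} in \cref{z16}). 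Since \cref{d133} also re-supplies $\constMac(p)<\infty$ (cf. \cref{d110b}), it yields
\begin{align*}
\big\|(\disDiff{h}{y}u)(\cdot,0)\big\|_{L^p_h(\omega_h^{d-1})}\leq\constMac(p)\,4^{d-1}\hat C\sum_{i=1}^{d-1}\big\|(\disDiff{h}{x,i}u)(\cdot,0)\big\|_{L^p_h(\omega_h^{d-1})},
\end{align*}
and setting $C(d,p)=\constMac(p)4^{d-1}\hat C$ — which depends only on $d$ and $p$, through $\constMac(p)$ and the dimensional constants $a,c,C$ of \cref{y15,y18}, and crucially not on $h$ — completes the argument, as $h$ and $u$ were arbitrary.

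I do not expect any genuinely hard step here: all the substance has already been absorbed into \cref{s26} (the construction, on each dyadic block, of a single well-defined tangential-to-normal multiplier $\DM_{J(k)}^h$ with controlled local variation) and into \cref{d133} (the Marcinkiewicz-type estimate that tolerates a block-dependent choice of multiplier and several input functions). The only points needing care are bookkeeping ones: running \cref{d133} in dimension $d-1$ rather than $d$, reconciling the $2\pi$-periodicity of the boundary data on $h\Z^{d-1}$ with the $2L$-periodicity on $\Z^{d-1}$ demanded by \cref{d133}, and separating off the degenerate frequency block $k=0$ on which every multiplier vanishes. The ``obstacle'', such as it is, amounts to writing these identifications down cleanly.
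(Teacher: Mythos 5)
Your proposal is correct and follows essentially the same route as the paper, which obtains \cref{s29} precisely by combining \cref{s26} (the block-wise multiplier identity with the local/total variation bound for $\DM_{J(k)}^h$) with the multi-multiplier Marcinkiewicz result \cref{d133} applied in dimension $d-1$. The bookkeeping points you flag (setting $L=\pi/h$, extending the multipliers $2L$-periodically, and handling the degenerate block $k=0$) are exactly the identifications implicit in the paper's argument.
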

\section{Proof of the main theorem}\label{se02}
This section combines the results in the previous sections to prove \cref{d06}.
Recall that we use the same terminology as in the continuum case~\cite{BFO18}: estimate~\eqref{a01} is called
\emph{the Dirichlet case} and estimate~\eqref{a03} is called \emph{the Neumann case}.
\cref{e01} adapts inequality (78) in \cite{BFO18} into 
\cref{s01}, which
is proven by the same idea as in \cite{BFO18}, i.e., 
by constructing a telescope series of harmonic functions on haft spaces by means of Dirichlet conditions, see \cref{p05,p06} below. 
In 
\cref{e02}, \cref{r20} proves the main result in the Dirichlet case.  
In the proof of \cref{r20} we use the idea of odd reflections as in Step~2 in the proof of Lemma 4 in \cite{BFO18} and \cref{r15} illustrates this idea in the discrete case (cf. Section IV.2.1 in \cite{Ngu17} for a simple illustration 
in the two-dimensional case).
\cref{e03} adapts inequality (79) in \cite{BFO18} into \cref{s12}. Here, we also construct a telescope series of harmonic functions on haft spaces, however, now by means of Neumann conditions. 
The reader will see that there are quite a lot of similarities between the Dirichlet and the Neumann case. However, the two cases are not identical and it is necessary to adapt rigorously every step of the proof due to the discreteness. In \cref{e04} we prove carefully the main theorem in the Neumann case, 
see \cref{r20a},
although the argument is quite straightforward in the continuum case, as said in the last sentence in the proof of Lemma 4 in~\cite{BFO18}.
The idea of even reflections is explained in \cref{r15a} where some minor arguments are used to deal with the discreteness (see Section IV.2.1 in \cite{Ngu17} for an illustration 
in the two-dimensional case).

Throughout this section we always use the notation given by \cref{v02} below.
\begin{setting}\label{v02}
For every $d\in\N$, $A\subset \Z^d$, 
$u\colon A\to\R$ let
$ \disDiff{+}{i}u\colon 
\{x\in A\colon x+\unit{d}{i}\in A\}
\to  \R$, $i\in \disint{1}{d}$,
be the functions which satisfy for all
$i\in \disint{1}{d}$, $x\in A$ with $x+\unit{d}{i}\in A$
that
$
(\disDiff{+}{i}u)(x)= u\!\left(x+\unit{d}{i}\right)-u(x),
$
and
$ \disDiff{-}{i}u\colon 
\{x\in A\colon x-\unit{d}{i}\in A\}
\to  \R$, $i\in \disint{1}{d}$,
be the functions which satisfy for all
$i\in \disint{1}{d}$, $x\in A$ with $x-\unit{d}{i}\in A$
that
$
(\disDiff{-}{i}u)(x)= u\!\left(x-\unit{d}{i}\right)-u(x).
$
For every finite set $A$ 
and every function $u\colon A\to\R$ 
let $\mean{u}{A}\in \R$ be given by $\mean{u}{A}=\frac{1}{|A|}\sum_{x\in A}u(x)$.
\end{setting}
\subsection{Construction of Dirichlet extensions}\label{e01}

\begin{setting}[Harmonic functions and boundary conditions]\label{p05}Let \cref{v02} be given.
For $L\in\N $ let $\I_L$  be the set given by $\I_L=\disint{-L+1}{L} $.
For every $N,L\in\N $, $d\in [2,\infty)\cap \N$
let
$\mathbbm{S}_{d,L,N}$ be the set of all  functions
$u\colon \Z^{d-1}\times (\disint{0}{N})\to \mathbbm{R}$ with the properties that
\begin{enumerate}[(i)]
\item 
it holds
for all $x\in \Z^{d-1}\times (\disint{0}{N})$, $  i\in \disint{1}{d-1}$ that
$u(x)=u(x+2L\unit{d}{i})$ and
\item 
it holds for all $x\in \Z^{d-1}\times (\disint{1}{N-1})$
that $(\Laplace u)(x)=0$,
\end{enumerate}
let
$\mathbbm{B}_{d,L,N}$ be the set of all boundary conditions
$u\colon \Z^{d-1}\times \{0,N\}\to \mathbbm{R}$  which satisfy
for all $x\in \Z^{d-1}\times \{0,N\}$, $  i\in \disint{1}{d-1}$ that
$u(x)=u(x+2L\unit{d}{i})$,
let $\efull{d,L,N}\subseteq E_d$ be the set of edges
given by
\begin{align}
\efull{d,L,N}=\left\{(x,y)\in E_d\colon 
\tfrac12(x+y)\in \left([0,L]^{d-1}\times[0,N]\right)\setminus
\left([1,L-1]^{d-1}\times[1,N-1]\right)\right\},
\end{align}
let
$\mathbbm{H}_{d,L,\geq 0}$ be the set of all bounded functions
$u\colon \Z^{d-1}\times \N_0\to \mathbbm{R}$ with the properties that
\begin{enumerate}[(i)]
\item 
it holds
for all $x\in \Z^{d-1}\times \N_0$, $  i\in \disint{1}{d-1}$ that
$u(x)=u(x+2L\unit{d}{i})$ and
\item 
it holds for all $x\in \Z^{d-1}\times \N$
that $(\Laplace u)(x)=0$,
\end{enumerate}
and let
$\mathbbm{H}_{d,L,\leq N}$ be the set of all bounded functions
$u\colon \Z^{d-1}\times ((-\infty,N]\cap\Z)\to \mathbbm{R}$ with the properties that
\begin{enumerate}[(i)]
\item 
it holds
for all $x\in \Z^{d-1}\times ((-\infty,N]\cap\Z)$, $  i\in \disint{1}{d-1}$ that
$u(x)=u(x+2L\unit{d}{i})$ and
\item 
it holds for all $x\in \Z^{d-1}\times ((-\infty,N-1]\cap\Z)$
that $(\Laplace u)(x)=0$.
\end{enumerate}
\end{setting}
\cref{p05b,q24} below prepare two important inequalities, which follow from the results in the last sections. We will bound the telescope series by a geometric series using the fact that 
$\forall\, d\in [2,\infty),\,p\in (1,\infty)\colon C_1(d,p)<1$.
\begin{setting}[Regularity constants]\label{p05b}
Assume \cref{p05} and let $C_1,C_2\colon([2,\infty)\cap\N)\times(1,\infty)\to[0,\infty]$
 be the functions which satisfy that
\begin{enumerate}[i)]
\item it holds
for all $d\in [2,\infty)\cap\N$, $p\in (1,\infty)$
that
$C_1(d,p)$ is the smallest 
real extended number such  that for all
$N,L\in\N$,
$u\in \mathbbm{H}_{d,L,\geq 0}$ with $1/4\leq N/L$ and $\mean{u}{ \I_L^{d-1}\times\{0\}}=0$
it holds 
 that
\begin{align}\label{p02}
\left\|u\right\|_{L^p(\I_L^{d-1}\times\{N\})}\leq C_1(d,p)\left\|u\right\|_{L^p(\I_L^{d-1}\times\{0\})}
\end{align} and
\item it holds
for all $d\in [2,\infty)\cap\N$, $p\in (1,\infty)$
that
$C_2(d,p)$ is the smallest 
real extended number such  that for all
 $N,L\in\N$,
$i\in \disint{1}{d-1}$, $u\in \mathbbm{H}_{d,L,\geq 0}$ with $ N/L\leq 4$
it holds 
 that
\begin{align}
\|\nabla u\|_{L^p(\efull{d,L,N})}\leq  C_2(d,p)
\left[\sum_{j=1}^{d-1}
\left\|\disDiff{+}{j} u\right\|_{L^p(\I_L^{d-1}\times\{0,N\}) }
\right].\label{p09}
\end{align}
\end{enumerate}
\end{setting}
\begin{lemma}\label{q24}Assume \cref{p05b} and let 
$d\in [2,\infty)\cap\N$, $p\in (1,\infty)$ be fixed. Then it holds that
$C_1(d,p)<1$ and $C_2(d,p)<\infty$.
\end{lemma}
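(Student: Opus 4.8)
\medskip\noindent\textbf{Proof plan.} The bound $C_1(d,p)<1$ is essentially free, and I would dispatch it first. Applying \cref{t01a} with $\rdown\defeq\tfrac14$ gives a constant $c=c(d,p)\in(0,1)$ such that every $u\in\mathbbm{H}_{d,L,\geq 0}$ with $\tfrac14\leq N/L$ and $\sum_{x\in\I_L^{d-1}}u(x)=0$ satisfies $\|u\|_{L^p(\I_L^{d-1}\times\{N\})}\leq c\,\|u\|_{L^p(\I_L^{d-1}\times\{0\})}$; since $\sum_{x\in\I_L^{d-1}}u(x)=0$ is precisely $\mean{u}{\I_L^{d-1}\times\{0\}}=0$ and $C_1(d,p)$ in \eqref{p02} is by definition the smallest admissible constant, this forces $C_1(d,p)\leq c<1$. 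The substance of the lemma is $C_2(d,p)<\infty$, for which it suffices to produce one finite $\kappa=\kappa(d,p)$ making \eqref{p09} true for all $L,N\in\N$ with $N/L\leq 4$, all $i\in\disint{1}{d-1}$ (the parameter $i$ is absent from \eqref{p09} and inert), and all $u\in\mathbbm{H}_{d,L,\geq 0}$; then $C_2(d,p)\leq\kappa<\infty$.

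Here is how I would construct $\kappa$. The single structural remark is that every finite difference $\disDiff{+}{j}u$, $j\in\disint{1}{d}$, again belongs to $\mathbbm{H}_{d,L,\geq 0}$: periodicity and boundedness are immediate, and discrete harmonicity persists because the discrete Laplacian commutes with coordinate shifts, so $\Laplace(\disDiff{+}{j}u)=\disDiff{+}{j}(\Laplace u)=0$ on $\Z^{d-1}\times\N$. Next I would sort $\efull{d,L,N}$ by the location of the edge midpoint $m=\tfrac12(x+y)\in\R^d$: call an edge a \emph{cap} edge if $m_d\in\{0,\tfrac12,N-\tfrac12,N\}$, and a \emph{side} edge if $m_d\in[1,N-1]$ while $m_\ell\in\{0,\tfrac12,L-\tfrac12,L\}$ for some $\ell\in\disint{1}{d-1}$. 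By the definition of $\efull{d,L,N}$ every edge belongs to at least one of these finitely many families — at most $\sim d^2$ of them, the exact count being immaterial — and on each family $\nabla u$ equals (up to sign and a harmless factor $2^{1/p}$ from the two orientations) the function $\disDiff{+}{j}u$ for a single direction $j$, evaluated on an explicit set of base vertices; moreover, since $m\in[0,L]^{d-1}\times[0,N]$, those base vertices range within $\disint{0}{L}\subseteq\I_L$ in the tangential coordinates and within $\disint{0}{N}$ in the last coordinate.

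On the cap families: the horizontal edges at $y\in\{0,N\}$ give $\disDiff{+}{j}u$ with $j\leq d-1$, on a subset of $\I_L^{d-1}\times\{0,N\}$, hence are directly controlled by $\sum_{j=1}^{d-1}\|\disDiff{+}{j}u\|_{L^p(\I_L^{d-1}\times\{0,N\})}$, the right-hand side of \eqref{p09}; the vertical cap edges give $\disDiff{+}{d}u$ on $\I_L^{d-1}\times\{0\}$ and on $\I_L^{d-1}\times\{N-1\}$, and I would bound these by $\sum_{j=1}^{d-1}\|\disDiff{+}{j}u\|_{L^p(\I_L^{d-1}\times\{0\})}$ via \cref{t01c} (normal versus tangential derivatives on the boundary layer), after using \cref{x13} applied to the harmonic function $\disDiff{+}{d}u$ to transport layer $N-1$ back to layer $0$. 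On a side family near a face $\{x_\ell=c\}$ with $c\in\{0,L-1,L\}$: the base vertices lie in the slab defined by $x_\ell=c$, with the other tangential coordinates running over a full period and the last coordinate in $\disint{1}{N}$; after a permutation of the first $d-1$ coordinates sending $\ell$ to $1$ and a translation sending $c$ to $0$ — operations fixing $\mathbbm{H}_{d,L,\geq 0}$ and, by periodicity, fixing $\|\disDiff{+}{j}u(\cdot,0)\|_{L^p(\I_L^{d-1})}$ — I would apply \cref{p01} with $\rup\defeq 4$ (legitimate because $N\in(0,4L]\cap\N$) to the harmonic function $\disDiff{+}{j}u$, obtaining $\|\disDiff{+}{j}u\|_{L^p(\mathrm{slab})}\leq c_1(d,p)\,\|\disDiff{+}{j}u(\cdot,0)\|_{L^p(\I_L^{d-1})}$, and this last quantity is $\leq$ the right-hand side of \eqref{p09} when $j\leq d-1$ and, for $j=d$, is again handled by \cref{t01c}. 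Summing the finitely many contributions via the triangle inequality for $\|\cdot\|_{L^p}$ then yields \eqref{p09} with $\kappa(d,p)$ equal to a dimensional combinatorial factor times the constants of \cref{p01} (at $\rup=4$) and of \cref{t01c} — all finite and, crucially, independent of $L$ and $N$ — so $C_2(d,p)<\infty$.

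The conceptually trivial yet technically fiddly part — and the one I would flag as the main obstacle — is the bookkeeping in this midpoint decomposition: one must verify that \emph{every} edge of $\efull{d,L,N}$ is captured, identify exactly which slab or cap each base vertex lies in and along which coordinate its gradient points, and confirm that the auxiliary functions $\disDiff{+}{j}u$ honestly satisfy the hypotheses of the half-space results (in particular that \cref{p01} requires no mean-zero assumption, while in the $C_1$ part the mean-zero hypothesis of \cref{t01a} matches that of \eqref{p02} verbatim). The $L$- and $N$-uniformity of every constant used — which is the entire reason for isolating this lemma — then follows at once from the corresponding uniformity in \cref{se01,v03}.
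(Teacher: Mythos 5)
Your proposal is correct and follows essentially the same route as the paper: $C_1(d,p)<1$ from \cref{t01}~\cref{t01a} with $\rdown\defeq\tfrac14$, and $C_2(d,p)<\infty$ by noting that the differences $\disDiff{\pm}{j}u$ remain in $\mathbbm{H}_{d,L,\geq 0}$ and then combining \cref{p01} (with $\rup\defeq 4$) for the side faces, \cref{x13} to move layer $N-1$ back to layer $0$, and \cref{t01}~\cref{t01c} to trade normal for tangential differences, together with coordinate permutations and the triangle inequality. Your midpoint decomposition of $\efull{d,L,N}$ just makes explicit the bookkeeping the paper leaves to its heuristic paragraph, so no further changes are needed.
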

\begin{proof}[Heuristic proof of \cref{q24}]
First, note that the discrete derivatives of a harmonic function are still harmonic.  Using 
\cref{p01} (with the function replaced by the derivatives)
we bound the differences with respect to the edges
with endpoints on the face $\{x_1=0\}$ of the box by the tangential differences on the bottom $\Z^{d-1}\times\{0\}$ (see \eqref{q11}). 
Next, using \cref{t01a} in \cref{t01} we  bound
the normal differences on the top $\Z^{d-1}\times\{N\}$
by the normal differences on the bottom $\Z^{d-1}\times\{0\}$ (see \cref{q23}).
Furthermore, using \cref{t01c} in \cref{t01} we bound the normal differences on the bottom $\Z^{d-1}\times\{0\}$ 
(and hence also that on the top)
by the tangential differences the edges on the bottom $\Z^{d-1}\times\{0\}$ (see \eqref{q22}).
Using a permutation of the coordinates  we hence bound the differences with respect to all  edges with one endpoints on the boundary of the box by the tangential differences on the bottom. 
\end{proof}
\begin{proof}[Rigorous proof of \cref{q24}]
\cref{t01} implies that there exists
$c_0\colon  (0,\infty)\to (0,1)$
 such that
for all  $N,L\in\N$,
$u\in \mathbbm{H}_{d,L,\geq 0}$ with $N/L\geq \rdown$ and $\mean{u}{\I_L^{d-1}\times\{0\}}=0$ it holds that
$\left\|u\right\|_{L^p(\I_L^{d-1}\times\{N\})}\leq c_0(\rdown)\left\|u\right\|_{L^p(\I_L^{d-1}\times\{0\})}$. This (with $\rdown \defeq 1/4$) proves  
that $C_1(d,p)<1$.
Next, recall that 
\cref{p01} shows that there exists $c_1\colon(0,\infty)\to (0,\infty)$ such that for all 
$\rup\in (0,\infty)$, $N\in (0,L\rup]\cap\N$,
$u\in \mathbbm{H}_{d,L,\geq 0}$
it holds that
$
\|u\|_{L^p(\{0\}\times \I_L^{d-2}\times (\disint{1}{N}))}\leq  c_1(\rup)
\|u\|_{L^p(\I_L^{d-1}\times\{0\})}.
$
This (with $\rup \defeq 4$ and 
$u\defeq \disDiff{\pm}{i}u $ for 
$i\in\disint{1}{d}$, $L,N\in \N$, $u\in \mathbbm{H}_{d,L,\geq 0}$
 with
$N/L\leq 4$)
implies that
for all 
$L,N\in \N$ with
$N/L\leq 4$,
$u\in \mathbbm{H}_{d,L,\geq 0}$
it holds that
\begin{align}\label{q11}
\|\disDiff{\pm}{i}u\|_{L^p(\{0\}\times \I_L^{d-2}\times (\disint{1}{N}))}\leq  c_1(4)
\|\disDiff{\pm}{i}u\|_{L^p(\I_L^{d-1}\times\{0\})}.
\end{align}
Next, \cref{x13} (with $u\defeq \disDiff{+}{d}u$ and $N\defeq N-1$) shows for all $L\in\N$,
$u\in \mathbbm{H}_{d,L,\geq 0}$ that
\begin{align}\label{q23}
\left\|\disDiff{-}{d}u\right\|_{L^p(\I_L^{d-1}\times\{N\}) }\leq
 \left\|\disDiff{+}{d}u\right\|_{L^p(\I_L^{d-1}\times\{0\}) }.
\end{align}
Hence, \cref{t01}
 shows that there exists
$c_2\in  (0,\infty)$ such that
for all $L\in\N$,
$u\in \mathbbm{H}_{d,L,\geq 0}$ it holds 
that
\begin{align}\label{q22}
\left\|\disDiff{-}{d}u\right\|_{L^p(\I_L^{d-1}\times\{N\}) }\leq
\left\|\disDiff{+}{d}u\right\|_{L^p(\I_L^{d-1}\times\{0\}) }\leq
c_2 \left[\sum_{i=1}^{d-1}
\left\|\disDiff{+}{i} u\right\|_{L^p(\I_L^{d-1}\times\{0\}) }\right].
\end{align}
 Combining \eqref{q11} and \eqref{q22} then yields that there exists 
$c_3\in  (0,\infty)$ such  that for all
 $N,L\in\N$,
$i\in \disint{1}{d-1}$, $u\in \mathbbm{H}_{d,L,\geq 0}$ with $ N/L\leq 4$
it holds 
 that
\begin{align}
\|\nabla u\|_{L^p(\efull{d,L,N})}\leq  c_3
\left[\sum_{j=1}^{d-1}
\left\|\disDiff{+}{j} u\right\|_{L^p(\I_L^{d-1}\times\{0,N\}) }
\right].
\end{align}
This shows  that $C_2(d,p)<\infty$.
The proof of \cref{q24} is thus completed.
\end{proof}
Existence and uniqueness of the solutions to the Dirichlet problems on haft spaces (shown, e.g., by means of Fourier transforms in \cref{v03}) ensure that the sequences
$(u_k)_{k\in \N}$ in \cref{p06} below are well-defined by
\eqref{p03a}--\eqref{p03b}.
\begin{setting}[Telescope sequence for the Dirichlet case]\label{p06}
Assume \cref{p05b}, let  $N,L\in\N $, $p\in(1,\infty)$, $d\in [2,\infty)\cap \N$ be fixed and satisfy that $1/4\leq L/N\leq 4$, let 
$v\in\mathbbm{B}_{d,L,N}$ be a boundary condition which satisfies that
\begin{align}
\mean{v}{\I_L^{d-1}\times\{0\}}=0\quad\text{and}\quad\forall\, x\in\Z^{d-1}\times\{N\}\colon\quad  v(x)=0,\label{p03d}
\end{align}
and
let
\begin{align}
(u_{2k+1})_{k\in \N_0} \subseteq \mathbbm{H}_{d,L,\geq 0}\quad\text{and}\quad
(u_{2k+2})_{k\in \N_0}\subseteq  \mathbbm{H}_{d,L,\leq N}\label{p03e}
\end{align}
 be the sequences given by
\begin{align}
&\forall\, x\in\Z^{d-1}\times\{0\}\colon \quad u_1(x)=v(x),\quad 
\label{p03a}
\\
& \forall\, k\in\N_0,\, x\in \Z^{d-1}\times \{N\}\colon \quad u_{2k+2}(x)= u_{2k+1}(x),\label{p03c}\\
&\forall\,
k\in\mathbbm{N},\, x\in \Z^{d-1}\times\{0\}\colon \quad
u_{2k+1}(x)= u_{2k}(x).\label{p03b}
\end{align}

\end{setting}
\begin{lemma}[Convergence of the telescope series]\label{p03}
Assume \cref{p06}.
Then 
\begin{enumerate}[i)]
\item \label{p03f}it holds for all $n \in\N$ that
$\max_{y\in \{0,N\}}
\|u_{n}\|_{L^p(\I_{L}^{d-1}\times\{y\})}
=C_1(d,p)^{n-1}\|u_{1}\|_{L^p(\I_{L}^{d-1}\times\{0\})}$ and
\item \label{p03g}it holds for all
 $ x\in \Z^{d-1}\times( \disint{0}{N})$ that $ 
\sum_{k=1}^{\infty}|u_k(x)|<\infty $.
\end{enumerate}
\end{lemma}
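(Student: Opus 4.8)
The plan is to run the telescope recursion \eqref{p03a}--\eqref{p03b} through the contraction estimate defining $C_1(d,p)$, using crucially that $C_1(d,p)<1$ by \cref{q24}. Throughout set $a_n=\|u_n\|_{L^p(\I_L^{d-1}\times\{0\})}$, $b_n=\|u_n\|_{L^p(\I_L^{d-1}\times\{N\})}$ and $A_0:=a_1$, call $\I_L^{d-1}\times\{0\}$ the \emph{active face} of $u_n$ for $n$ odd and $\I_L^{d-1}\times\{N\}$ the active face for $n$ even, and write $d_n$ for the $L^p$-norm of $u_n$ on its active face (so $d_n=a_n$ for $n$ odd and $d_n=b_n$ for $n$ even). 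The first step is an induction on $n$ showing that $u_n$ has vanishing mean on its active face: the base case is $\mean{u_1}{\I_L^{d-1}\times\{0\}}=\mean{v}{\I_L^{d-1}\times\{0\}}=0$ by \eqref{p03d}; for the step, \cref{z01b}, applied to $u_n$ directly when $n$ is odd and to the reflection $\tilde u_n(x,z):=u_n(x,N-z)$ when $n$ is even, propagates the vanishing mean from the active to the non-active face of $u_n$, and the matching identity \eqref{p03c} (for $n$ odd) or \eqref{p03b} (for $n$ even, hence $n\geq2$) then transports it to the active face of $u_{n+1}$.

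Next I would insert one contraction per level. The hypothesis $\tfrac14\leq L/N\leq4$ of \cref{p06} forces $N/L\geq\tfrac14$, so the defining property of $C_1(d,p)$ in \cref{p05b} applies: to $u_n$ directly when $n$ is odd (it has vanishing mean on $\I_L^{d-1}\times\{0\}$ by the first step), giving $b_n\leq C_1(d,p)\,a_n$; and to $\tilde u_n\in\mathbbm{H}_{d,L,\geq 0}$ when $n$ is even, which has vanishing mean on $\I_L^{d-1}\times\{0\}$ and satisfies $\|\tilde u_n\|_{L^p(\I_L^{d-1}\times\{N\})}=a_n$, $\|\tilde u_n\|_{L^p(\I_L^{d-1}\times\{0\})}=b_n$, giving $a_n\leq C_1(d,p)\,b_n$; in either case the $L^p$-norm of $u_n$ on its non-active face is at most $C_1(d,p)\,d_n$. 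The matching identities $b_{2k+2}=b_{2k+1}$ (from \eqref{p03c}) and $a_{2k+1}=a_{2k}$ for $k\geq1$ (from \eqref{p03b}) say precisely that the active-face norm of $u_{n+1}$ equals the non-active-face norm of $u_n$, whence $d_{n+1}\leq C_1(d,p)\,d_n$ for all $n\in\N$; with $d_1=A_0$ this gives $d_n\leq C_1(d,p)^{\,n-1}A_0$. Since the non-active-face norm of $u_n$ is at most $C_1(d,p)\,d_n\leq d_n$ (as $C_1(d,p)\leq1$), we conclude $\max_{y\in\{0,N\}}\|u_n\|_{L^p(\I_L^{d-1}\times\{y\})}=d_n\leq C_1(d,p)^{\,n-1}A_0$, which is \cref{p03f}.

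For the pointwise estimate \cref{p03g}, fix $x=(x',z)\in\Z^{d-1}\times(\disint{0}{N})$. When $n$ is odd, \cref{z01a} gives $|u_n(x',z)|\leq\E[\,|u_n(S_T+(x',0))|\mid S_0=(0,z)\,]$, and since $S_T\in\Z^{d-1}\times\{0\}$ almost surely, the $2L$-periodicity of $u_n$ in the first $d-1$ coordinates bounds the right-hand side by $\|u_n\|_{L^\infty(\I_L^{d-1}\times\{0\})}\leq\|u_n\|_{L^p(\I_L^{d-1}\times\{0\})}=a_n$; when $n$ is even the same argument applied to $\tilde u_n$ gives $|u_n(x',z)|=|\tilde u_n(x',N-z)|\leq b_n$. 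In either case $|u_n(x)|\leq\max(a_n,b_n)\leq C_1(d,p)^{\,n-1}A_0$ by the previous paragraph, so
\begin{align}
\sum_{k=1}^{\infty}|u_k(x)|\leq A_0\sum_{k=1}^{\infty}C_1(d,p)^{\,k-1}=\frac{A_0}{1-C_1(d,p)}<\infty,
\end{align}
the geometric series converging precisely because $C_1(d,p)<1$ by \cref{q24}.

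The one point requiring genuine care is the systematic use of the reflection $z\mapsto N-z$, which is what brings the even-indexed members $u_n\in\mathbbm{H}_{d,L,\leq N}$ into the form demanded by \cref{z01} and \cref{p05b}: boundedness and $2L$-periodicity of $\tilde u_n$ are immediate, and harmonicity holds because $(\Laplace\tilde u_n)(x,z)=(\Laplace u_n)(x,N-z)$ vanishes exactly on $\Z^{d-1}\times\N$, so indeed $\tilde u_n\in\mathbbm{H}_{d,L,\geq 0}$. Beyond that, the argument is a routine induction.
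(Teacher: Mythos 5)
Your proposal is correct and takes essentially the same route as the paper: the same induction propagating the vanishing mean via \cref{z01} together with the reflection $z\mapsto N-z$ for the even-indexed functions, the same per-level contraction from the defining property of $C_1(d,p)$ in \cref{p05b}, and the same geometric-series conclusion, your only cosmetic deviation being that you bound interior values directly through \cref{z01} and $\|\cdot\|_{L^\infty}\le\|\cdot\|_{L^p}$ instead of the paper's layerwise $L^p$ monotonicity. Note that, exactly like the paper's own proof, you establish \cref{p03f} with ``$\leq$'' rather than the ``$=$'' printed in the statement, which is evidently a typo and is all that is needed later.
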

\begin{proof}[Proof of \cref{p03}]
Observe that \eqref{p03a} and \eqref{p03d} imply
that $\mean{u_1}{\I_L^{d-1}\times\{0\}}=0$. This,
\eqref{p03c},
\eqref{p03b}, 
 and
\cref{z01} ensure for all $n\in\N$ that 
$\mean{u_n}{\I_L^{d-1}\times\{0\}}=
\mean {u_n}{ \I_L^{d-1}\times\{N\}}=0$. This,
\eqref{p03c}, \eqref{p03b}, \cref{x02},
 and \eqref{p02}  show for all $k\in\N_0$,
$y\in \disint{0}{N}$
 that
\begin{align}\begin{split}
\|u_{2k+2}\|_{L^p(\I_{L}^{d-1}\times\{y\})}&\leq \left\|u_{2k+2}\right\|_{L^p(\mathbbm{I}_L^{d-1}\times\{N\})} \\&
=\|u_{2k+1}\|_{L^p(\mathbbm{I}_L^{d-1}\times\{N\})} 
\leq 
C_1(d,p)\|u_{2k+1}\|_{L^p(\mathbbm{I}_L^{d-1}\times\{0\})}\end{split}\end{align}and
\begin{align}\begin{split}
\|u_{2k+3}\|_{L^p(\I_{L}^{d-1}\times\{y\})}
&\leq
\left\|u_{2k+3}\right\|_{L^p(\mathbbm{I}_L^{d-1}\times\{0\})}\\
&= 
\left\|u_{2k+2}\right\|_{L^p(\mathbbm{I}_L^{d-1}\times\{0\})} \leq C_1(d,p)
\left\|u_{2k+2}\right\|_{L^p(\mathbbm{I}_L^{d-1}\times\{N\})} 
 .\end{split}
\end{align}
This and an induction argument prove for all $n\in\N$  that 
\begin{align}\begin{split}
&\max_{y\in \disint{0}{N}}
\|u_{n+1}\|_{L^p(\I_{L}^{d-1}\times\{y\})}\leq 
C_1(d,p)\left[
 \max_{y\in \{0,N\}}\|u_{n}\|_{L^p(\I_{L}^{d-1}\times\{y\})}\right]\\
&\leq C_1(d,p)^n\left[\max_{y\in \{0,N\}}\|u_{1}\|_{L^p(\I_{L}^{d-1}\times\{y\})}\right]
=C_1(d,p)^n\|u_{1}\|_{L^p(\I_{L}^{d-1}\times\{0\})}.
\end{split}\label{p10}
\end{align}
This shows \cref{p03f}. Next, 
\eqref{p10}, the fact that $C_1\in (0,1)$, and the convergence of the geometric series assure for all $y\in \disint{0}{N}$ that
$\sum_{n=1}^{\infty}\|u_{n}\|_{L^p(\I_{L}^{d-1}\times\{y\})}<\infty $. 
This implies \cref{p03g}.  The proof of \cref{p03} is thus completed.
\end{proof}
\begin{lemma}[Upper bound for the telescope series]\label{p07}Assume \cref{p06} and let $w\colon \Z^{d-1}\times(\disint{0}{N})\to\R$ be the function given by
\begin{align}
\forall\, x\in \Z^{d-1}\times( \disint{0}{N})\colon \quad 
w(x)=  \left[\sum_{k=1}^\infty (-1)^{k+1} u_k(x)\right].\label{p08}
\end{align}
Then 
it holds for all $x\in \Z^{d-1}\times \{0,N\}$ that
$w(x)= v(x) $,
$w\in \mathbbm{S}_{d,L,N} $, and
\begin{align}
\begin{split}
\|\nabla w\|_{L^p(\efull{d,L,N})}
\leq \frac{C_2(d,p)}{1-C_1(d,p)}
\left[\sum_{i=1}^{d-1}
\left\|\disDiff{+}{i} v\right\|_{L^p(\I_L^{d-1}\times\{0,N\}) }
\right].\label{p11}
\end{split}\end{align}
\end{lemma}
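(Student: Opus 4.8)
The plan is to prove the three assertions in turn, using the absolute convergence already supplied by \cref{p03}. \emph{Boundary values.} Fix $x\in\Z^{d-1}\times\{0\}$ and consider the partial sums $S_n(x)=\sum_{k=1}^n(-1)^{k+1}u_k(x)$. By \eqref{p03b} we have $u_{2k+1}(x)=u_{2k}(x)$, so in $S_{2m+1}(x)$ all terms cancel except the first and $S_{2m+1}(x)=u_1(x)=v(x)$ by \eqref{p03a}; since $\sum_k|u_k(x)|<\infty$ forces $u_k(x)\to0$, also $S_{2m}(x)=S_{2m+1}(x)-u_{2m+1}(x)\to v(x)$, whence $w(x)=v(x)$. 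The symmetric computation on $\Z^{d-1}\times\{N\}$, using \eqref{p03c} and $v\equiv0$ there, gives $w(x)=0=v(x)$. Periodicity of $w$ in the first $d-1$ coordinates passes term by term. \emph{Harmonicity.} If $x\in\Z^{d-1}\times(\disint{1}{N-1})$ then $x$ together with its $2d$ lattice neighbours all lie in the common domain of every $u_k$ (the $u_{2k+1}$ are defined on $\Z^{d-1}\times\N_0$, the $u_{2k+2}$ on $\Z^{d-1}\times((-\infty,N]\cap\Z)$, and $x_d\pm1\in\disint{0}{N}$), so absolute convergence at these finitely many points lets me pull $\Laplace$ through the series and obtain $(\Laplace w)(x)=\sum_k(-1)^{k+1}(\Laplace u_k)(x)=0$, each $u_{2k+1}$ being harmonic on $\Z^{d-1}\times\N$ and each $u_{2k+2}$ on $\Z^{d-1}\times((-\infty,N-1]\cap\Z)\ni x$. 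Hence $w\in\mathbbm{S}_{d,L,N}$.

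\emph{The gradient estimate.} Every edge of $\efull{d,L,N}$ has both endpoints with last coordinate in $\disint{0}{N}$, so $\nabla_e u_k$ is defined for all $k$, and again by absolute convergence $\nabla_e w=\sum_k(-1)^{k+1}\nabla_e u_k$; thus $\|\nabla w\|_{L^p(\efull{d,L,N})}\le\sum_{k\ge1}\|\nabla u_k\|_{L^p(\efull{d,L,N})}$. I bound the $k$-th summand by the $C_2$-estimate of \cref{p05b} applied to $u_k$. Since that estimate is stated for $\mathbbm{H}_{d,L,\geq 0}$ while $u_{2k+2}\in\mathbbm{H}_{d,L,\leq N}$, I use the reflection $x_d\mapsto N-x_d$, which is a bijection $\mathbbm{H}_{d,L,\leq N}\to\mathbbm{H}_{d,L,\geq 0}$ fixing $\efull{d,L,N}$, the set $\I_L^{d-1}\times\{0,N\}$ and the tangential differences, so the same constants apply; the hypothesis $N/L\le4$ holds since $1/4\le L/N\le4$. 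It therefore remains to show that $\sum_{j=1}^{d-1}\|\disDiff{+}{j}u_k\|_{L^p(\I_L^{d-1}\times\{0,N\})}$ decays geometrically in $k$.

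\emph{The telescope recursion.} For each $j\in\disint{1}{d-1}$, $\disDiff{+}{j}u_k$ is again bounded, $2L$-periodic in the first $d-1$ coordinates and discrete harmonic in its half-space (finite differences of harmonic functions are harmonic), and its $\I_L^{d-1}$-mean on every level vanishes by $2L$-periodicity. Running the same two-step recursion as in the proof of \cref{p03}, now for $\disDiff{+}{j}u_k$ in place of $u_k$ — using the $C_1$-estimate of \cref{p05b} (with $\rdown\defeq1/4$, admissible since $1/4\le N/L$) and its reflected version for the $u_{2k+2}$, together with $u_{2k+1}=u_{2k}$ on level $0$ and $u_{2k+2}=u_{2k+1}$ on level $N$ — yields $\max_{y\in\{0,N\}}\|\disDiff{+}{j}u_k\|_{L^p(\I_L^{d-1}\times\{y\})}\le C_1(d,p)^{k-1}\|\disDiff{+}{j}v\|_{L^p(\I_L^{d-1}\times\{0,N\})}$, the $v$-side sitting entirely on level $0$ because $v\equiv0$ on level $N$. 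Feeding this into the $C_2$-bound, summing over $j$, and summing the geometric series $\sum_{k\ge1}C_1(d,p)^{k-1}=(1-C_1(d,p))^{-1}$ — legitimate since $C_1(d,p)<1$ by \cref{q24} — produces \eqref{p11}. (The passage from the two-sided boundary norm to the single free face, where a factor at most $2$ would otherwise enter, is harmless: the proof of \cref{q24} in fact delivers the $C_2$-bound with the tangential data on the one free face only.)

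\emph{Main obstacle.} None of the individual ingredients is hard; the real work is the bookkeeping of the alternating telescope — matching every $u_k$ to its anchoring face, verifying at each invocation of the $C_1$- and $C_2$-bounds that all their hypotheses hold (boundedness, $2L$-periodicity, vanishing $\I_L^{d-1}$-mean, and the correct range of $N/L$), and making the reflection $x_d\mapsto N-x_d$ precise enough that the constants for $\mathbbm{H}_{d,L,\leq N}$ coincide with those for $\mathbbm{H}_{d,L,\geq 0}$.
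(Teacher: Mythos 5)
Your proof is correct and follows essentially the same route as the paper: telescope cancellation for the boundary values, term-by-term periodicity and harmonicity, and then the triangle inequality combined with the $C_2$-bound applied to each $u_k$ and the geometric decay of $\sum_j\|\disDiff{+}{j}u_k\|$ obtained by running the $C_1$-recursion of \cref{p03} on the derivative sequences $(\disDiff{+}{j}u_k)_k$, summed as a geometric series. The extra details you spell out (the reflection $x_d\mapsto N-x_d$ to treat the terms in $\mathbbm{H}_{d,L,\leq N}$ and the harmless $2^{1/p}$-type bookkeeping between the one-face and two-face boundary norms) are points the paper leaves implicit rather than a genuinely different argument.
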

\begin{proof}[Proof of \cref{p07}]
Note that
\eqref{p08}, \eqref{p03a}--\eqref{p03b}, 
 and a telescope sum argument demonstrate that 
for all 
$x\in \I_L^{d-1}\times\{0\}$ it holds  that
$w(x)=\sum_{k=1}^\infty (-1)^{k+1} u_k(x)=u_1(x)=v(x)$ and 
for all $x\in \I_L^{d-1}\times\{N\}$ it holds  that
$w(x)=\sum_{k=1}^\infty (-1)^{k+1} u_k(x)=0=v(x)$. Next, 
\eqref{p03e}  proves that 
\begin{align}\begin{split}
&\forall\,n\in \N_0,  x\in \I_L^{d-1}\times(\disint{0}{N}) ,
i\in\disint{1}{d-1}\colon\quad u_n(x+2L\unit{d}{i})=u_n(x)\quad\text{and}
\\
&\forall\,n\in \N_0,  x\in \I_L^{d-1}\times(\disint{1}{N-1}) 
\colon \quad  (\Laplace u_n)(x)=0 .
\end{split}\end{align}
This and \eqref{p08} imply that
$w\in \mathbbm{S}_{d,L,N} $.
Next, observe that \eqref{p03e} and a simple calculation imply for all 
$ k\in \N_0$, $i\in\disint{1}{d-1}$ that
$
\disDiff{+}{i}u_{2k+1} \in  \mathbbm{H}_{d,L,\geq 0}$ and $
\disDiff{+}{i}u_{2k+2}\in  \mathbbm{H}_{d,L,\leq N}$. Roughly speaking,
 discrete derivatives of harmonic functions are also harmonic. This, \eqref{p08}, the triangle inequality, \eqref{p09} (applied  with $u\defeq u_k $ for $k\in\N $), \cref{p03f} in \cref{p03} (with 
$v\defeq \disDiff{+}{i}u_1$ and
$(u_k)_{k\in \N}\defeq (\disDiff{+}{i} u_k)_{k\in\N}$ for $i\in\disint{1}{d-1}$),   the fact that $\forall\, x\in (0,1)\colon \sum_{k=1}^{\infty}x^{k-1}=(1-x)^{-1}$, and \eqref{p03a} ensure that
\begin{align}
\begin{split}
&\|\nabla w\|_{L^p(\efull{d,L,N})}\\
&\leq 
\left\|\sum_{k=1}^\infty\nabla u_k\right\|_{L^p(\efull{d,L,N})}
\leq  
\sum_{k=1}^\infty\left\|\nabla u_k\right\|_{L^p(\efull{d,L,N})}
\leq 
\sum_{k=1}^\infty
\left[C_2(d,p)\left(\sum_{i=1}^{d-1}
\left\|\disDiff{+}{i} u_k\right\|_{L^p(\I_L^{d-1}\times\{0,N\}) }\right)
\right]\\
&\leq 
\sum_{k=1}^\infty
\left[C_2(d,p)\left(\sum_{i=1}^{d-1}
C_1(d,p)^{k-1}
\left\|\disDiff{+}{i} u_1\right\|_{L^p(\I_L^{d-1}\times\{0,N\}) }\right)
\right]\\
&= \frac{C_2(d,p)}{1-C_1(d,p)}
\left[\sum_{i=1}^{d-1}
\left\|\disDiff{+}{i} u_1\right\|_{L^p(\I_L^{d-1}\times\{0,N\}) }
\right]
\leq \frac{C_2(d,p)}{1-C_1(d,p)}
\left[\sum_{i=1}^{d-1}
\left\|\disDiff{+}{i} v\right\|_{L^p(\I_L^{d-1}\times\{0,N\}) }
\right].
\end{split}\end{align}
The proof of \cref{p07} is thus completed.
\end{proof}
\cref{p15} below considers general boundary conditions (i.e. without the restriction \eqref{p03d}).
\begin{lemma}[Upper bound for harmonic functions on strips]\label{p15}
Assume \cref{p05}, let  $N,L\in\N $, $d\in [2,\infty)\cap \N$,
$v\in\mathbbm{B}_{d,L,N}$, $p\in(1,\infty)$ satisfy that $1/4\leq L/N\leq 4$. Then there exists $w\in \mathbbm{S}_{d,L,N}$ such that
for all 
$x\in \Z^{d-1}\times\{0,N\}$  it holds 
 that 
$w(x)=v(x)$ and
\begin{align}
\|\nabla w\|_{L^p(\efull{d,L,N})}\leq  \frac{2C_2(d,p)}{1-C_1(d,p)}
\left[\sum_{i=1}^{d-1}
\left\|\disDiff{+}{i} v\right\|_{L^p(\I_L^{d-1}\times\{0,N\}) }\right]
+\frac{4}{N}\left\|v\right\|_{L^p(\I_L^{d-1}\times\{0,N\}) }
.
\end{align}
\end{lemma}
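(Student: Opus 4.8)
The plan is to reduce the general datum $v$ to the special situation covered by \cref{p07} by peeling off its two boundary means. Set $m_0=\mean{v}{\I_L^{d-1}\times\{0\}}$ and $m_N=\mean{v}{\I_L^{d-1}\times\{N\}}$ and introduce the three boundary conditions $v_1,v_2,v_3\in\mathbbm{B}_{d,L,N}$ determined on $\Z^{d-1}\times\{0,N\}$ by $v_1(\cdot,0)=v(\cdot,0)-m_0$, $v_1(\cdot,N)=0$; by $v_2(\cdot,0)=0$, $v_2(\cdot,N)=v(\cdot,N)-m_N$; and by $v_3(\cdot,0)=m_0$, $v_3(\cdot,N)=m_N$, so that $v=v_1+v_2+v_3$ on $\Z^{d-1}\times\{0,N\}$. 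Each $v_j$ is $2L$-periodic because $v$ is and $m_0,m_N$ are constants, and the bound $1/4\le L/N\le 4$ is inherited from the hypothesis.

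First I would treat $v_1$. By construction $\mean{v_1}{\I_L^{d-1}\times\{0\}}=0$ and $v_1$ vanishes identically on $\Z^{d-1}\times\{N\}$, i.e.\ $v_1$ is an admissible datum in the sense of \eqref{p03d}, so \cref{p07} produces $w_1\in\mathbbm{S}_{d,L,N}$ with $w_1=v_1$ on $\Z^{d-1}\times\{0,N\}$ and $\|\nabla w_1\|_{L^p(\efull{d,L,N})}\le\frac{C_2(d,p)}{1-C_1(d,p)}\sum_{i=1}^{d-1}\|\disDiff{+}{i}v_1\|_{L^p(\I_L^{d-1}\times\{0,N\})}$. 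Since $\disDiff{+}{i}v_1$ agrees with $\disDiff{+}{i}v$ on the bottom face (the constant $m_0$ cancels in the difference) and vanishes on the top face, this bound is $\le\frac{C_2(d,p)}{1-C_1(d,p)}\sum_{i=1}^{d-1}\|\disDiff{+}{i}v\|_{L^p(\I_L^{d-1}\times\{0,N\})}$. Next I would treat $v_2$ by the same argument applied after the reflection $z\mapsto N-z$: this involution maps $\mathbbm{S}_{d,L,N}$, $\mathbbm{B}_{d,L,N}$ and $\efull{d,L,N}$ onto themselves, exchanges the faces $\{z=0\}$ and $\{z=N\}$, preserves every $\|\cdot\|_{L^p}$ occurring here, and commutes with each $\disDiff{+}{i}$ for $i\in\disint{1}{d-1}$; since $\mean{v_2}{\I_L^{d-1}\times\{N\}}=0$ and $v_2$ vanishes on the bottom, \cref{p07} applied to the reflected datum yields $w_2\in\mathbbm{S}_{d,L,N}$ with $w_2=v_2$ on $\Z^{d-1}\times\{0,N\}$ and $\|\nabla w_2\|_{L^p(\efull{d,L,N})}\le\frac{C_2(d,p)}{1-C_1(d,p)}\sum_{i=1}^{d-1}\|\disDiff{+}{i}v\|_{L^p(\I_L^{d-1}\times\{0,N\})}$.

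For the piecewise-constant datum $v_3$ I would write down the harmonic extension explicitly: let $w_3(x,z)=m_0(1-\tfrac{z}{N})+m_N\tfrac{z}{N}$ for $(x,z)\in\Z^{d-1}\times(\disint{0}{N})$. It is constant in the first $d-1$ variables, hence $2L$-periodic, and affine in $z$, hence discrete harmonic on $\Z^{d-1}\times(\disint{1}{N-1})$, so $w_3\in\mathbbm{S}_{d,L,N}$, and clearly $w_3=v_3$ on $\Z^{d-1}\times\{0,N\}$. The finite difference $\nabla_e w_3$ vanishes on every edge $e=(x,y)$ with $y-x\in\{\unit{d}{i},-\unit{d}{i}\}$ for $i\le d-1$ and equals $\pm(m_N-m_0)/N$ on every edge with $y-x\in\{\unit{d}{d},-\unit{d}{d}\}$, so $\|\nabla w_3\|_{L^p(\efull{d,L,N})}=\tfrac{|m_N-m_0|}{N}\,(\,\#\{(x,y)\in\efull{d,L,N}:\,y-x\in\{\unit{d}{d},-\unit{d}{d}\}\}\,)^{1/p}$. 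An elementary count gives that this cardinality equals $2\bigl((L+1)^{d-1}N-(L-1)^{d-1}(N-2)\bigr)$, while Hölder's inequality gives $|m_N-m_0|\le|m_0|+|m_N|\le(2L)^{-(d-1)/p}\bigl(\|v(\cdot,0)\|_{L^p(\I_L^{d-1})}+\|v(\cdot,N)\|_{L^p(\I_L^{d-1})}\bigr)\le 2^{1-1/p}(2L)^{-(d-1)/p}\|v\|_{L^p(\I_L^{d-1}\times\{0,N\})}$; feeding these, together with $1/4\le L/N\le 4$, into the previous identity produces $\|\nabla w_3\|_{L^p(\efull{d,L,N})}\le\tfrac{4}{N}\|v\|_{L^p(\I_L^{d-1}\times\{0,N\})}$. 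Finally, setting $w=w_1+w_2+w_3$, which lies in the linear space $\mathbbm{S}_{d,L,N}$ and agrees with $v_1+v_2+v_3=v$ on $\Z^{d-1}\times\{0,N\}$, and summing the three estimates by the triangle inequality for $\|\nabla\cdot\|_{L^p(\efull{d,L,N})}$ yields the asserted bound.

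I expect the delicate point to be the bookkeeping around the reflection $z\mapsto N-z$ — one must check carefully that it really transports \cref{p06}--\cref{p07} to the datum $v_2$ with all norms and difference operators behaving as claimed — together with the elementary but slightly fiddly combinatorics of counting the $d$-th-direction edges of $\efull{d,L,N}$ and passing through Hölder's inequality with enough care to land the stated constant $4/N$. The remaining verifications (periodicity and harmonicity of $w_3$, membership of each $w_j$ in $\mathbbm{S}_{d,L,N}$, and the cancellations $\disDiff{+}{i}v_j=\disDiff{+}{i}v$ on the appropriate face) are routine.
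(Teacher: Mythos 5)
Your construction follows the paper's route exactly: the same three-way splitting $v=v_1+v_2+v_3$ into the two mean-free face data and the pair of face means, \cref{p07} applied to $v_1$ and (via the reflection $z\mapsto N-z$, which the paper leaves implicit) to $v_2$, the explicit affine extension $w_3$ of the means, and the triangle inequality at the end. The reflection bookkeeping, the cancellations $\disDiff{+}{i}v_j=\disDiff{+}{i}v$ on the relevant face, and your formula for $w_3$ (which fixes the swapped weights in the paper's own definition of $w_3$ in \eqref{p11a}) are all fine.

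The step that does not hold up is the final arithmetic for $w_3$. Your edge count is correct: the oriented vertical edges of $\efull{d,L,N}$ number $K=2\bigl(N(L+1)^{d-1}-(N-2)(L-1)^{d-1}\bigr)$, precisely because $\efull{d,L,N}$ also contains the vertical edges running along the lateral part of the shell, not only those touching the faces $\{z=0\}$ and $\{z=N\}$. But the constant your chain then yields is $2^{1-1/p}\bigl(K/(2L)^{d-1}\bigr)^{1/p}\cdot\tfrac1N$, and this is not $\le \tfrac4N$ under the stated hypothesis $1/4\le L/N\le 4$: for $d=2$ and $N=4L$ one has $K/(2L)^{d-1}=\tfrac{2N}{L}+2-\tfrac2L\le 10$, so the factor is $2^{1-1/p}10^{1/p}$, which equals $\sqrt{20}>4$ at $p=2$ and tends to $10$ as $p\downarrow1$; so the asserted bound $\tfrac4N\|v\|_{L^p(\I_L^{d-1}\times\{0,N\})}$ does not follow from the quantities you computed. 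To be fair, the paper's own proof has the same soft spot: the first ``equality'' in \eqref{p12} silently drops exactly those lateral vertical edges of $\efull{d,L,N}$, so its constant $4/N$ is not justified either. What both arguments honestly deliver is $\|\nabla w_3\|_{L^p(\efull{d,L,N})}\le \tfrac{C(d)}{N}\|v\|_{L^p(\I_L^{d-1}\times\{0,N\})}$ with a dimension-dependent constant (your count plus $N\le 4L$ gives $K/(2L)^{d-1}\le 8(d-1)+4$), and that is all that is needed downstream, since \cref{s01} and everything built on it only require some $C(d,p)$. So the gap concerns the literal constant $4$, not the method; either adjust the constant in your last line or restrict the accounting to a corrected edge set before claiming $4/N$.
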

\begin{proof}[Proof of \cref{p15}]
Throughout this proof 
let $v_1,v_2,v_3\in\mathbbm{B}_{d,L,N}$,
$w_3\colon \Z^{d-1}\times(\disint{0}{N})\to\R$ 
 be the functions given by
\begin{align}\begin{split}\label{p11a}
&\forall\, x\in\Z^{d-1}\times \{0\}\colon\quad  v_1(x)= v(x)- \mean{v}{\I_L^{d-1}\times \{0\} },
\quad v_2(x)=0, \quad v_3(x)=\mean{v}{\I_L^{d-1}\times \{0\} },
\\[3pt]
&\forall\, x\in\Z^{d-1}\times \{N\}\colon \quad v_1(x)=0,\quad   v_2(x)= v(x)- \mean{v}{\I_L^{d-1}\times \{N\} },\quad 
v_3(x)=\mean{v}{\I_L^{d-1}\times \{N\} },
\\
&\forall\, n\in \disint{0}{N},\, x\in \Z^ {d-1}\times\{n\}\colon \quad 
w_3(x)=\frac{n}{N}\mean{v}{\I_L^{d-1}\times\{0\}}+
\frac{N-n}{N}\mean{v}{\I_L^{d-1}\times\{N\}}.
\end{split}\end{align}
This implies that
$w_3\in \mathbbm{S}_{d,L,N}$. Next, \eqref{p11a}, the fact that
$\forall\,a,b\in \R\colon |a-b|^p\leq 2^{p-1}(|a|^p+|b|^p) $,
the fact that
$\left\|1\right\|^p_{L^p(\I_L^{d-1}\times\{0\})}=\left\|1\right\|^p_{L^p(\I_L^{d-1}\times\{N\})}$,
 and Jensen's inequality imply that
\begin{align} \begin{split}
&\|\nabla w_3\|_{L^p(\efull{d,L,N})}^p
=2 \left\|\disDiff{+}{d}w_3\right\|^p_{L^p(\I_L^{d-1}\times\{0\})}+2
\left\|\disDiff{-}{d}w_3\right\|^p_{L^p(\I_L^{d-1}\times\{N\})}\\
&=2 \left\|\frac1N\left(\mean{v}{\I_L^{d-1}\times \{0\}}-\mean{v}{\I_L^{d-1}\times \{N\}}\right)\right\|^p_{L^p(\I_L^{d-1}\times\{0\})}+2
\left\|\frac{1}{N}\left(\mean{v}{\I_L^{d-1}\times \{0\}}-\mean{v}{\I_L^{d-1}\times \{N\}}\right)\right\|^p_{L^p(\I_L^{d-1}\times\{N\})}\\
&=\frac{4}{N^p}\left|\mean{v}{\I_L^{d-1}\times \{0\}}-\mean{v}{\I_L^{d-1}\times \{N\}}\right|^p \left\|1\right\|^p_{L^p(\I_L^{d-1}\times\{0\})}\\
&
\leq 
\frac{4\cdot 2^{p-1}}{N^p}\left[\mean{|v|^p}{\I_L^{d-1}\times \{0\}}
+\mean{|v|^p}{\I_L^{d-1}\times \{N\}}\right]
\left\|1\right\|^p_{L^p(\I_L^{d-1}\times\{N\})}\leq  \frac{4^p}{N^p}\left\|v\right\|^p_{L^p(\I_L^{d-1}\times\{0,N\})}.
\end{split}\label{p12}
\end{align}
Furthermore, the fact that
\begin{align}\begin{split}
&\forall\, x\in\Z^{d-1}\times \{0\} \colon (\disDiff{+}{i}v_1)(x)=(\disDiff{+}{i}v)(x),\\
&\forall\, x\in\Z^{d-1}\times \{N\} \colon (\disDiff{+}{i}v_2)(x)=(\disDiff{+}{i}v)(x),\quad\text{and}\quad
\mean{v_1}{\I_L^{d-1}\times \{0\}}=\mean{v_2}{\I_L^{d-1}\times \{N\}}=0\end{split}
\end{align}
and
\cref{p07} imply that there exist $w_1,w_2\in \mathbbm{S}_{d,L,N}$ such that 
\begin{align}
\forall\, x\in \Z^{d-1}\times  \{0,N\}\colon\quad  w_1(x)=v_1(x)\quad\text{and}\quad w_2(x)=v_2(x)\label{p14}
\end{align}
and such that for all $j\in \{1,2\}$ it holds that
\begin{align}
\begin{split}
&\|\nabla w_j\|_{L^p(\efull{d,L,N})}\\
&\leq \frac{C_2(d,p)}{1-C_1(d,p)}
\left[\sum_{i=1}^{d-1}
\left\|\disDiff{+}{i} v_j\right\|_{L^p(\I_L^{d-1}\times\{0,N\}) }
\right]\leq 
\frac{C_2(d,p)}{1-C_1(d,p)}
\left[\sum_{i=1}^{d-1}
\left\|\disDiff{+}{i} v\right\|_{L^p(\I_L^{d-1}\times\{0,N\}) }
\right].
\end{split}\label{p13}\end{align}
Now, let $w\colon \Z^{d-1}\times(\disint{0}{N})\to\R$ be the function which satisfies that
\begin{align}
\forall\, x\in\Z^{d-1}\times(\disint{0}{N}) \colon \quad w(x)=w_1(x)+w_2(x)+w_3(x). \label{p16}
\end{align}
Then \eqref{p11a} and \eqref{p14} imply for all $x\in \Z^{d-1}\times\{0,N\}$  
 that 
$w(x)=v(x)$. Next, \eqref{p16}, the triangle inequality, \eqref{p12}, and \eqref{p13} imply that
\begin{align}
\begin{split}
&\|\nabla w\|_{L^p(\efull{d,L,N})}\\
&\leq \sum_{j=1}^{3}\|\nabla w_j\|_{L^p(\efull{d,L,N})}\leq 
\frac{2C_2(d,p)}{1-C_1(d,p)}
\left[\sum_{i=1}^{d-1}
\left\|\disDiff{+}{i} v\right\|_{L^p(\I_L^{d-1}\times\{0,N\}) }
\right]+\frac{4}{N}\left\|v\right\|^p_{L^p(\I_L^{d-1}\times\{0,N\})}.
\end{split}\end{align}
The proof of \cref{p15} is thus completed.
\end{proof}
\cref{p15} shows the existence of solutions to Dirichlet problems. Combining this with the uniqueness, which easily follows, e.g., from the maximum principle, we obtain 
\cref{s01b} below.
\begin{corollary}\label{s01b}
Let $d\in [2,\infty)\cap\N$, $p\in (1,\infty)$,
$v\colon \Z^{d-1}\times\{0,N\}\to\R$ satisfy 
for all $x\in \Z^{d-1}\times \{0,N\}$, $  i\in \disint{1}{d-1}$ that
$v(x)=v\left(x+2L\unit{d}{i}\right)$. Then  there exists uniquely $w \colon
\Z^{d-1}\times (\disint{0}{N}) \to\R$ such that
\begin{enumerate}[i)]
\item it holds for all $x\in \Z^{d-1}\times\{0,N\}$  that 
$w(x)=v(x)$ and
\item it holds for all $x\in \Z^{d-1}\times(\disint{1}{N-1})$  that 
$(\Laplace w)(x)=v(x)$.
\end{enumerate}
\end{corollary}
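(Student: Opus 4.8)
The plan is to derive existence from \cref{p15} and uniqueness from the discrete maximum principle, after observing that the problem is genuinely finite once the $2L$\nobreakdash-periodicity in the first $d-1$ coordinates is quotiented out (here and below I read the right-hand side of ii) as $0$, which is what the construction in \cref{p15} delivers).

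For uniqueness, I would suppose $w_1,w_2\colon\Z^{d-1}\times(\disint{0}{N})\to\R$ both satisfy i) and ii) and set $w=w_1-w_2$; then $w$ is $2L$\nobreakdash-periodic in each of the first $d-1$ variables, vanishes on $\Z^{d-1}\times\{0,N\}$, and is harmonic on $\Z^{d-1}\times(\disint{1}{N-1})$. By periodicity $w$ descends to a function on the finite connected set obtained from $\I_L^{d-1}\times(\disint{0}{N})$ by identifying the faces $\{x_i=-L+1\}$ and $\{x_i=L\}$ for $i\in\disint{1}{d-1}$. A standard discrete maximum principle then finishes it: if $w$ had a positive maximum at an interior point $x$, the mean-value identity $(\Laplace w)(x)=0$ forces $w$ to equal that maximum at all lattice neighbours of $x$, and iterating along lattice paths to the boundary contradicts $w\equiv0$ on $\Z^{d-1}\times\{0,N\}$; the same argument applies to the minimum. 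Hence $w_1=w_2$.

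For existence, in the regime $1/4\le L/N\le 4$ it is immediate: $v$ satisfies the periodicity hypothesis, hence $v\in\mathbbm{B}_{d,L,N}$, and \cref{p15} produces $w\in\mathbbm{S}_{d,L,N}$ with $w(x)=v(x)$ for all $x\in\Z^{d-1}\times\{0,N\}$; by the definition of $\mathbbm{S}_{d,L,N}$ in \cref{p05} such a $w$ is automatically $2L$\nobreakdash-periodic and harmonic on $\Z^{d-1}\times(\disint{1}{N-1})$, which is exactly i) and ii). For arbitrary $N,L\in\N$ I would instead argue by linear algebra: the restriction-to-the-boundary map from the finite-dimensional space of $2L$\nobreakdash-periodic functions harmonic on $\Z^{d-1}\times(\disint{1}{N-1})$ into $\mathbbm{B}_{d,L,N}$ is injective by the uniqueness just proved, and a dimension count---equivalently, the map $w\mapsto\bigl((\Laplace w)|_{\mathrm{int}},\,w|_{\mathrm{bdry}}\bigr)$ on $2L$\nobreakdash-periodic functions on one period of $\I_L^{d-1}\times(\disint{0}{N})$ is an injective linear map between spaces of equal dimension, hence bijective---shows this restriction map is onto, so some $w$ realizes $v$ as its boundary value.

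I expect no step to be genuinely hard. The only points needing care are, in the uniqueness half, checking that the quotient set is connected so the propagation argument reaches the boundary, and, in the existence half for unconstrained $L/N$, making the dimension count precise (the number of interior lattice points of one period plus the number of boundary lattice points equals the total). If one is content to state and use \cref{s01b} only in the regime $1/4\le L/N\le4$ in which it is subsequently applied, the existence half collapses to a citation of \cref{p15} and nothing more, and only the maximum-principle uniqueness argument is actually required.
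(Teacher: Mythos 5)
Your proposal is correct and takes essentially the same route as the paper, whose proof of \cref{s01b} is only the remark that existence follows from \cref{p15} and uniqueness from the discrete maximum principle on the periodic quotient. Your two additions --- reading the right-hand side of ii) as $0$ (clearly a typo in the statement, since $v$ is undefined on the interior) and the injectivity-plus-dimension-count argument to cover $L/N$ outside $[1/4,4]$, where \cref{p15} does not literally apply --- sensibly close gaps the paper leaves implicit.
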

The existence and uniqueness, stated in \cref{s01b}, and \cref{p15} imply \cref{s01} below.
\begin{corollary}\label{s01}
For $L\in\N $ let $\I_L$  be the discrete interval given by $\I_L=\disint{-L+1}{L} $ and
let $\efull{d,L,N}\subseteq E_d$ be the set of edges
given by
\begin{align}
\efull{d,L,N}=\left\{(x,y)\in E_d\colon 
\tfrac12(x+y)\in \left([0,L]^{d-1}\times[0,N]\right)\setminus
\left([1,L-1]^{d-1}\times[1,N-1]\right)\right\}.
\end{align}
Then there exists
$C\colon([2,\infty)\cap\N)\times(1,\infty)\to(0,\infty) $
such that
for all $d\in [2,\infty)\cap\N$, $p\in (1,\infty)$, $N,L\in\N$, and for all functions
$w\colon \Z^{d-1}\times (\disint{0}{N})\to\R$ with the property that
\begin{align}\begin{split}
\forall\, x\in \Z^{d-1}\times (\disint{0}{N}), \,
  i\in \disint{1}{d-1}\colon\quad  w(x)=w(x+2L\unit{d}{i}), \\
1/4\leq L/N\leq 4,\quad\text{and}\quad 
\forall\, x\in \Z^{d-1}\times(\disint{1}{N-1})\colon\quad  (\Laplace w)(x)=0 \end{split}
\end{align}
it holds that
\begin{align}
\|\nabla w\|_{L^p(\efull{d,L,N})}\leq  C(d,p)
\left[\sum_{i=1}^{d-1}
\left\|\disDiff{+}{i} w\right\|_{L^p(\I_L^{d-1}\times\{0,N\}) }
+\frac{1}{N}\left\|w\right\|_{L^p(\I_L^{d-1}\times\{0,N\}) }
\right].
\end{align}
\end{corollary}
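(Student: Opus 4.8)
The plan is to derive \cref{s01} directly from \cref{p15} together with the uniqueness of discrete harmonic extensions on the strip recorded in \cref{s01b}, so that essentially no new analysis is needed. Given $d$, $p$, $N$, $L$, and $w$ as in the statement, I would introduce the boundary datum $v\colon\Z^{d-1}\times\{0,N\}\to\R$ obtained by restricting $w$ to $\Z^{d-1}\times\{0,N\}$. Since $w$ is $2L$-periodic in the first $d-1$ coordinates, so is $v$, whence $v\in\mathbbm{B}_{d,L,N}$; and since $w$ is bounded (the domain $\Z^{d-1}\times(\disint{0}{N})$ modulo the $2L$-periodicity is finite) and satisfies $(\Laplace w)(x)=0$ for all $x\in\Z^{d-1}\times(\disint{1}{N-1})$, the function $w$ belongs to $\mathbbm{S}_{d,L,N}$ and is the unique function with boundary datum $v$ provided by \cref{s01b}.

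Next I would apply \cref{p15} to the very same $d$, $p$, $N$, $L$, $v$ (the hypothesis $1/4\le L/N\le 4$ is inherited verbatim): this produces a function $\widetilde w\in\mathbbm{S}_{d,L,N}$ with $\widetilde w(x)=v(x)$ for all $x\in\Z^{d-1}\times\{0,N\}$ and with
\[
\|\nabla\widetilde w\|_{L^p(\efull{d,L,N})}\le\frac{2C_2(d,p)}{1-C_1(d,p)}\left[\sum_{i=1}^{d-1}\|\disDiff{+}{i}v\|_{L^p(\I_L^{d-1}\times\{0,N\})}\right]+\frac{4}{N}\|v\|_{L^p(\I_L^{d-1}\times\{0,N\})}.
\]
Because $\widetilde w\in\mathbbm{S}_{d,L,N}$ and $\widetilde w$ agrees with $w$ on $\Z^{d-1}\times\{0,N\}$, the uniqueness in \cref{s01b} forces $\widetilde w=w$, so the displayed estimate holds with $\widetilde w$ replaced by $w$.

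It remains only to rewrite the right-hand side in terms of $w$. For each $i\in\disint{1}{d-1}$ the forward difference $\disDiff{+}{i}$ acts within a single level $\{y=0\}$ or $\{y=N\}$, so $(\disDiff{+}{i}v)(x)=(\disDiff{+}{i}w)(x)$ for every $x\in\I_L^{d-1}\times\{0,N\}$, and likewise $\|v\|_{L^p(\I_L^{d-1}\times\{0,N\})}=\|w\|_{L^p(\I_L^{d-1}\times\{0,N\})}$. Putting $C(d,p)=\max\bigl\{2C_2(d,p)/(1-C_1(d,p)),\,4\bigr\}$, which is finite and positive by \cref{q24} (which gives $C_1(d,p)<1$ and $C_2(d,p)<\infty$), yields exactly the claimed bound. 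The only step requiring a moment's care is the identification $w=\widetilde w$, which is the discrete maximum principle in disguise; everything genuinely substantial — the telescope construction of \cref{p06}, its convergence in \cref{p03}, the geometric summation in \cref{p07}, and the splitting off of the boundary means in \cref{p15} — has already been carried out.
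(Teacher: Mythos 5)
Your argument is correct and is essentially the paper's own route: the paper obtains \cref{s01} exactly by combining the existence estimate of \cref{p15} with the uniqueness of the strip Dirichlet problem recorded in \cref{s01b}, just as you do, and your bookkeeping (tangential differences of $v$ and $w$ agree on the two levels, constant $\max\{2C_2(d,p)/(1-C_1(d,p)),4\}$ finite by \cref{q24}) fills in the details the paper leaves implicit.
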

\subsection{Proof of the main result in the Dirichlet case}\label{e02}
\begin{lemma}\label{r15b}
Let $N\in \N$, $f\colon \Z\to \R $ satisfy  that $\forall\, x\in\Z\colon f(x)=f(x+2N)$ and $\forall\, x\in \disint{-N+1}{N}\colon f(x)=-f(-x)$. Then $f(0)=f(N)=0$.
\end{lemma}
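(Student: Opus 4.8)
The statement is an elementary consequence of evaluating the odd-symmetry relation at the two distinguished points $x=0$ and $x=N$ and, in the second case, combining it with $2N$-periodicity. No machinery from the earlier sections is needed.

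First I would treat $f(0)$. Since $N\in\N$, we have $0\in\disint{-N+1}{N}$, so the hypothesis $\forall\, x\in\disint{-N+1}{N}\colon f(x)=-f(-x)$ applies with $x=0$ and gives $f(0)=-f(-0)=-f(0)$. Hence $2f(0)=0$, so $f(0)=0$.

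Next I would treat $f(N)$. Again $N\in\disint{-N+1}{N}$, so the odd-symmetry hypothesis with $x=N$ gives $f(N)=-f(-N)$. On the other hand, the $2N$-periodicity hypothesis with $x=-N$ gives $f(-N)=f(-N+2N)=f(N)$. Combining the two identities yields $f(N)=-f(N)$, hence $2f(N)=0$, so $f(N)=0$. This completes the proof.

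\textbf{Main obstacle.} There is essentially none; the only point requiring a moment's care is to note that both $0$ and $N$ lie in the interval $\disint{-N+1}{N}$ (which holds precisely because $N\ge 1$), so that the odd-symmetry relation may legitimately be invoked at those arguments, and to remember to use periodicity to rewrite $f(-N)$ as $f(N)$ before concluding.
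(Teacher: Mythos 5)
Your proof is correct and follows essentially the same route as the paper: evaluate the odd-symmetry relation at $x=0$ to get $f(0)=-f(0)$, and at $x=N$ combined with $2N$-periodicity (via $f(-N)=f(-N+2N)=f(N)$) to get $f(N)=-f(N)$. Nothing further is needed.
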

\begin{proof}[Proof of \cref{r15b}]The fact $f(0)=-f(-0)=-f(0)$ proves that $f(0)=0$. Next, the fact that
$f(N)=f(-N+2N)=f(-N)=-f(N)$ proves that $f(N)=0$. This completes the proof of \cref{r15b}.
\end{proof}
\begin{lemma}[Odd reflections]\label{r15}Let $d\in [2,\infty)\cap\N$,
$w\colon \Z^{d-1}\times (\disint{0}{N})\to\R$, $j\in \disint{1}{d-1}$ satisfy that
\begin{align}
&\forall\, x\in \Z^{d-1}\times (\disint{0}{N}),\,  i\in \disint{1}{d-1}\colon\quad w(x)=w(x+2L\unit{d}{i}),\label{s06a}\\
&\forall\, x\in \Z^{d-1}\times(\disint{1}{N-1})\colon \quad (\Laplace w)(x)=0,\label{s06b}\\
&\forall\, x\in \Z^{j-1}\times (\disint{-N+1}{N})\times \Z^{d-j-1}\times\{0,N\} \colon\quad w(x)=-w(x-2x_j\unit{d}{j})\label{s06c}.
\end{align}
Then it holds that 
\begin{align}
\forall\, x\in \Z^{j-1}\times \{0,N\}\times \Z^{d-j-1}\times (\disint{0}{N})\colon \quad w(x)=0.
\end{align}
\end{lemma}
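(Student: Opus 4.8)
The plan is to use harmonicity to upgrade the odd symmetry \eqref{s06c}, which a priori is only imposed on the two faces $\Z^{d-1}\times\{0,N\}$, into an odd symmetry valid on the whole slab $\Z^{d-1}\times(\disint{0}{N})$, and then to read off the conclusion one horizontal line at a time via the one‑dimensional \cref{r15b}.

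First I would introduce the reflection $\sigma\colon\Z^{d-1}\times(\disint{0}{N})\to\Z^{d-1}\times(\disint{0}{N})$, $\sigma(x)=x-2x_j\unit{d}{j}$. Since $j\in\disint{1}{d-1}$, the map $\sigma$ leaves the last coordinate fixed, hence maps $\Z^{d-1}\times(\disint{1}{N-1})$ onto itself and satisfies $\Laplace(u\circ\sigma)=(\Laplace u)\circ\sigma$ for every $u$; moreover $\sigma$ commutes with the shift $x\mapsto x+2L\unit{d}{i}$ for $i\neq j$ and conjugates $x\mapsto x+2L\unit{d}{j}$ into $x\mapsto x-2L\unit{d}{j}$. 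Set $v=w+w\circ\sigma$. Using \eqref{s06a} together with these commutation properties one checks that $v(x)=v(x+2L\unit{d}{i})$ for all $x$ and all $i\in\disint{1}{d-1}$; by \eqref{s06b}, $(\Laplace v)(x)=0$ on $\Z^{d-1}\times(\disint{1}{N-1})$; and by \eqref{s06c} together with the periodicity of $w$ in the $j$‑th coordinate one gets $v(x)=w(x)-w(x)=0$ for every $x\in\Z^{d-1}\times\{0,N\}$. Finally $v$ is bounded, since $w$ is bounded (periodic in the first $d-1$ coordinates, last coordinate ranging over the finite set $\disint{0}{N}$).

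Next I would invoke uniqueness for the discrete Dirichlet problem on the slab: quotienting by $(2L\Z)^{d-1}$ turns $\Z^{d-1}\times(\disint{1}{N-1})$ into a finite connected set, so the discrete maximum principle forces any bounded function that is harmonic there and vanishes on $\Z^{d-1}\times\{0,N\}$ to vanish identically. Applied to $v$ this yields $w(x)=-w(x-2x_j\unit{d}{j})$ for every $x\in\Z^{d-1}\times(\disint{0}{N})$, i.e.\ $w$ is odd in its $j$‑th variable throughout the slab. To conclude, fix $x_d\in\disint{0}{N}$ and $x_i\in\Z$ for $i\in\disint{1}{d-1}\setminus\{j\}$, and apply \cref{r15b} to $f\colon\Z\to\R$, $f(t)=w(x_1,\ldots,x_{j-1},t,x_{j+1},\ldots,x_{d-1},x_d)$: this $f$ is $2N$‑periodic (the periodicity of $w$ in direction $j$, cf.\ \eqref{s06a}) and odd on $\disint{-N+1}{N}$ by the previous sentence, hence $f(0)=f(N)=0$. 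As $x_d$ and the remaining coordinates were arbitrary, $w$ vanishes on $\Z^{j-1}\times\{0,N\}\times\Z^{d-j-1}\times(\disint{0}{N})$, which is the claim.

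The only real content is the middle step — the maximum‑principle (equivalently, uniqueness of the harmonic extension on the slab) argument that annihilates the even part $v$ — together with the routine but slightly fiddly bookkeeping in the first step showing that passing from $w$ to $v=w+w\circ\sigma$ preserves periodicity and interior harmonicity while producing vanishing boundary data; once $w$ is known to be odd in $x_j$ on the whole slab, the claim is immediate from \cref{r15b}.
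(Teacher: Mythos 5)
Your proposal is correct and follows essentially the same route as the paper: you kill the even part $v=w+w\circ\sigma$ by uniqueness of the Dirichlet problem on the periodized slab (the paper phrases this as comparing $w$ with $\tilde w=-w\circ\sigma$ and invoking \cref{s01b}), and then both arguments conclude by applying \cref{r15b} coordinatewise. The only cosmetic difference is that you justify uniqueness directly by the maximum principle on the finite quotient rather than citing \cref{s01b}, and, like the paper, you implicitly use $L=N$ when checking the $2N$-periodicity needed for \cref{r15b}.
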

\begin{proof}[Proof of \cref{r15}]Let $\tilde{w}\colon \Z^{d-1}\times (\disint{0}{N})\to\R $ be the function which satisfies for all
$ x\in \Z^{d-1}\times(\disint{0}{N}) $ that $ \tilde{w}(x)=-w(x-2x_j\unit{d}{j}).$ 
Then \eqref{s06b} implies that 
$\forall\, x\in \Z^{d-1}\times(\disint{1}{N-1})\colon (\Laplace \tilde{w})(x)=0$
and \eqref{s06a} implies that $\forall\, x\in \Z^{d-1}\times (\disint{0}{N}),\,  i\in \disint{1}{d-1}\colon\tilde{w}(x)=\tilde{w}(x+2L\unit{d}{i})$.
This and \eqref{s06c} yield that
$\forall\, x\in \Z^{d-1}\times\{0,N\} \colon w(x)=\tilde{w}(x)$.
\cref{s01b} hence shows 
for all $x\in \Z^{d-1}\times (\disint{0}{N})$
that  $w(x)= \tilde{w}(x)$, i.e., 
$w(x)= -w(x-2x_j\unit{d}{j})$. This and \cref{r15b} (with $f\defeq (\Z\ni\xi\mapsto w(x_1,\ldots,x_{j-1},\xi,x_{j+1},\ldots,x_d)\in\R)$, i.e., applied 
to the $j$-th coordinate) complete the proof of \cref{r15}.
\end{proof}

The sets $\efull{d,N}$, $\etan{d,N}$, $\etanT{d,N}$ in 
\cref{r20c} below are illustrated in \cref{f01}:
$\efull{d,N}$ consists of all red and blue edges; $\etan{d,N}$ consists of all red edges; $\etanT{d,N}$ consists of all red points.
Furthermore, this setting provides two ingredients that we need for the next step:
\cref{s01} (see \eqref{s02}) and
the  Poincar\'e inequality  (see \eqref{s05}).

\begin{setting}\label{r20c}
Let \cref{v02} be given.
For every $d,N\in [2,\infty)\cap\N$ 
let $\efull{d,N}, \etan{d,N}\subseteq E_d$, $\I_N\subseteq \Z$, $\etanT{d,N}\subseteq \Z^d$ be the sets given by
\begin{align}
 \I_N&=\disint{-N+1}{N} ,\quad 
\etanT{d,N}=\Z^d \cap ([0,N]^{d})\setminus ((0,N)^d).
\\
\efull{d,N}&=\left\{(x,y)\in E_d\colon 
\tfrac12(x+y)\in ([0,N]^d)\setminus
([1,N-1]^{d})\right\},
\\
\etan{d,N}&=\left\{(x,y)\in E_d\colon x,y\in 
([0,N]^d)\setminus((0,N)^d)\right\}.
\end{align}
Let $c\colon([2,\infty)\cap\N)\times(1,\infty)\to(0,\infty) $
be a function whose existence is ensured by \cref{s01} and which satisfies that
for all $d,N\in [2,\infty)\cap\N$, $p\in (1,\infty)$,
$w\colon  \Z^{d-1}\times( \disint{0}{N})\to\R$ with
\begin{align}\begin{split}
&\forall\, i\in\disint{1}{d-1},\,x\in \Z^{d-1}\times( \disint{0}{N})
\colon \quad
  w(x+2N\unit{d}{i}) =w(x)\quad\text{and}\quad\\ 
&\forall\, i\in\disint{1}{d-1},\,x\in \Z^{d-1}\times (\disint{1}{N-1})
\colon   \quad (\Laplace w)(x)=0\end{split}
\end{align}
it holds
that
\begin{align}
\|\nabla w\|_{L^p(\efull{d,N})}\leq  c(d,p)
\left[\sum_{i=1}^{d-1}
\left\|\disDiff{+}{i} w\right\|_{L^p(\I_N^{d-1}\times\{0,N\}) }
+\frac{1}{N}\left\|w\right\|_{L^p(\I_N^{d-1}\times\{0,N\}) }
\right].\label{s02}
\end{align}
Let $\constPI\colon (1,\infty)\to[0,\infty)$ be a function which satisfies the Poincar\'e inequality, i.e., it holds for all $p\in (1,\infty)$,
$d,N\in \N\cap[2,\infty)$, $u\in \etanT{d,N}$ 
that
\begin{align}\label{s05}
\tfrac{1}{N}\inf_{a\in\R}\|u-a\|_{L ^p(\etanT{d,N})}\leq c_{\mathrm{PI}}(p)
\|\nabla u \|_{L^p(\etan{N})}.
\end{align}
Let $\mathbbm{Q}_{d,N}$ be the set of all functions
$u\colon (\disint{0}{N})^{d}\to\R$ which satisfy for all $x\in (\disint{1}{N-1})^d$ that $(\Laplace u)(x) =0$.
\end{setting}

\begin{lemma}\label{r20b}Assume \cref{r20c} and let
$d,N\in [2,\infty)\cap\N$, $p\in (1,\infty)$,  $u\in \mathbbm{Q}_{d,N}$ be fixed. Then \begin{align}
\|\nabla u \|_{L^p(\efull{d,N})}\leq 
d(d3^d+2)^{d+1}c(d,p)
\left[\|\nabla u\|_{L^p( \etan{N})}+\frac{1}{N}\|u\|_{L^p(Q_N)}\right].\label{s04}
\end{align}
\end{lemma}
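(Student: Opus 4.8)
The plan is to reduce the box estimate to the periodic–strip estimate of \cref{s01} — which is restated in the form \eqref{s02} in \cref{r20c} with $L$ and $N$ comparable — by odd reflections in the tangential directions, and to produce the explicit constant by iterating a face decomposition of the cube.

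The core mechanism is this: if a function $w$ on the box $(\disint{0}{N})^d$ is $\Laplace$-harmonic on $(\disint{1}{N-1})^d$ and vanishes on the $2(d-1)$ lateral faces $\{x_i\in\{0,N\}\}$, $i\in\disint{1}{d-1}$, then reflecting $w$ oddly in each of $x_1,\dots,x_{d-1}$ produces a $2N$-periodic function $\widetilde w$ on $\Z^{d-1}\times(\disint{0}{N})$ that coincides with $w$ on the box and, precisely because $w$ vanishes on the reflecting hyperplanes, is again $\Laplace$-harmonic on all of $\Z^{d-1}\times(\disint{1}{N-1})$ — the potential defects of the reflection cancel there; this is essentially the content of the elementary \cref{r15b} and \cref{r15}. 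Applying \eqref{s02} to $\widetilde w$ and restricting the conclusion to the box bounds $\|\nabla w\|_{L^p(\efull{d,N})}$ by the tangential differences $\sum_{i=1}^{d-1}\|\disDiff{+}{i}\widetilde w\|_{L^p(\I_N^{d-1}\times\{0,N\})}$ of the cap data plus $\tfrac1N\|\widetilde w\|_{L^p(\I_N^{d-1}\times\{0,N\})}$; since each cap of $\widetilde w$ is a union of $2^{d-1}$ reflected copies of a face of the box contained in $Q_N$, and reflection does not enlarge differences, these are dominated up to a factor $\le 2^d$ by $\|\nabla w\|_{L^p(\etan{N})}+\tfrac1N\|w\|_{L^p(Q_N)}$, using also $\etanT{d,N}\subseteq Q_N$.

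A general $u\in\mathbbm{Q}_{d,N}$ does not vanish on the lateral faces, so one must peel that data off. The plan is a hierarchical face decomposition: write $u = w + \sum_F u_F$, where $F$ ranges over the proper faces of the cube not contained in $\{x_d\in\{0,N\}\}$, $u_F$ is the $\Laplace$-harmonic function on the box carrying the data of $u$ on $F$ tapered to zero on the faces of the same dimension disjoint from $F$, and the leftover $w$ is arranged to vanish on all lateral faces so that the reflection step applies to it. Each lateral piece $u_F$ is then treated by the same scheme with $F$ in the role of a new transversal slab, and the pieces carried by lower-dimensional faces are handled by a recursion descending through the $d+1$ levels of the skeleton of the cube; the $\sum_{k=0}^d\binom dk 2^{d-k}=3^d$ faces of the cube are what make $3^d$ appear, the recursion depth $d+1$ becomes the exponent, and combining these with the $d$ choices of distinguished direction yields the factor $d(d3^d+2)^{d+1}$ in front of $c(d,p)$. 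Throughout, the zeroth-order contributions — the $\tfrac1N\|\cdot\|$ terms and the remainders produced by the taperings — are absorbed into $\tfrac1N\|u\|_{L^p(Q_N)}$ via the discrete maximum principle, the $L^p$-contractivity of harmonic extensions recorded in \cref{x13}, and the discrete Poincaré inequality \eqref{s05}; one may and should subtract the mean of $u$ first, which changes neither $\nabla u$ nor, after \eqref{s05}, the right-hand side.

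The genuinely hard point is the failure of discrete harmonicity under reflection: in the continuum one reflects $u$ directly and is finished, whereas here the lateral-face data must be removed first, and removing it consistently along the shared lower-dimensional faces — the edges and corners of the cube — is exactly what forces the hierarchical decomposition and the large combinatorial constant. The remaining work — tracking how the constant multiplies through the $d+1$ skeleton levels and checking that every tapering remainder is dominated by $\tfrac1N\|u\|_{L^p(Q_N)}$ — is routine but lengthy.
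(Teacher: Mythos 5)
Your core mechanism (a box-harmonic $w$ vanishing on all lateral faces extends by odd reflection to a periodic strip-harmonic function, to which \eqref{s02} applies) is sound and is indeed the same reflection idea the paper exploits through \cref{r15}. But the reduction of a general $u\in\mathbbm{Q}_{d,N}$ to that situation is where the proof actually lives, and your ``hierarchical face decomposition with tapering'' leaves it unproven. Concretely: to treat a lateral piece $u_F$ ``by the same scheme'' you must place it in the hypotheses of \eqref{s02}, i.e.\ produce a function on the full strip $\Z^{d-1}\times(\disint{0}{N})$ (with the direction normal to $F$ distinguished) that is periodic in the tangential directions and discretely harmonic on the whole interior of the strip. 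Your $u_F$ is a box-harmonic function whose boundary data does not vanish on the faces adjacent to $F$, so you can neither reflect it oddly (your own mechanism requires vanishing on the reflecting hyperplanes) nor periodize it while preserving harmonicity; and the pieces you attach to codimension-two and lower faces are never assigned a well-posed boundary problem at all. This consistency along shared edges and corners is exactly the crux, and the paper resolves it with no tapering whatsoever by reflecting \emph{data} rather than functions: at step $\ell+1$ the residual $u-\sum_{\nu\le\ell}w_\nu$ on the two faces $x_{\ell+1}\in\{0,N\}$ is extended by odd reflection in the already-handled directions and even reflection in the remaining ones, the strip Dirichlet problem (\cref{s01b}, estimate \eqref{s02}) is solved with that data, and \cref{r15} together with uniqueness shows the new piece vanishes on every previously handled face, so the matching already achieved is not disturbed; the induction hypotheses are precisely what make the odd reflection of the residual consistent on the intersections. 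Your proposal acknowledges this difficulty but defers it to the undefined recursion.

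Two further points. First, the tapering remainders are only asserted to be absorbable into $\frac1N\|u\|_{L^p}$; nothing is computed, and since the harmonic extensions of cutoff data must then be re-estimated on other faces, this is not a routine absorption but another instance of the very estimate being proved. Second, the constant: the lemma demands the specific bound $d(d3^d+2)^{d+1}c(d,p)$, and your accounting ($3^d$ as the number of faces of the cube, exponent $d+1$ as skeleton depth) is numerology rather than a derivation; in the paper $3^d$ is a crude counting factor from reflecting/periodizing the cap data (passing from $(\disint{0}{N})^{d-1}$ to $\I_N^{d-1}$) and the exponent arises from the geometric accumulation over the $d$ directional steps, cf.\ \eqref{r16} and \eqref{r18}. (Minor: invoking the Poincar\'e inequality \eqref{s05} here is out of place --- it is what \cref{r20} uses afterwards to remove the $\frac1N\|u\|$ term; subtracting a constant in this lemma is harmless but buys nothing.) As it stands, the decisive steps you label ``routine but lengthy'' are the ones that constitute the proof, so the argument has a genuine gap.
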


\begin{proof}[Proof of \cref{r20b}]
First, we will successively construct functions $w_i:\mathbbm{Z}^{i-1}\times(\disint{0}{N})\times\mathbbm{Z}^{d-i}\to\mathbbm{R}$,
$i\in\disint{1}{d}$, 
with the property $\mathcal{A}(w_1,\ldots,w_d)$ defined as follows:
For every
$\ell\in \disint{1}{d}$,
$w_i:\mathbbm{Z}^{i-1}\times(\disint{0}{N})\times\mathbbm{Z}^{d-i}\to\mathbbm{R}$,
$i\in \disint{1}{\ell}$ let
$\mathcal{A}(w_1,\ldots,w_\ell)$ be
the statement that
\begin{enumerate}[i)]
\item for all $i\in \disint{1}{\ell}$, $j\in(\disint{1}{d})\setminus \{i\}$,
$x\in \mathbbm{Z}^{i-1}\times(\disint{0}{N})\times\mathbbm{Z}^{d-i}$
it holds that
$
w_i(x)=w_i(x+2N\mathbf{e}_j)$,
\item for all
$i\in \disint{1}{\ell}$, $x\in \mathbbm{Z}^{i-1}\times(\disint{1}{N-1})\times\mathbbm{Z}^{d-i}$ it holds that $(\Laplace w_i)(x)=0$,
\item  for all
$i\in \disint{1}{\ell}$
$x\in (\disint{0}{N})^{i-1}\times\{0,N\}\times(\disint{0}{N})^{d-i}$ it holds that
\begin{align}
u(x)=\sum_{\nu=1}^{i}w_\nu(x) ,
\end{align}
\item  for all
$i\in \disint{1}{\ell}$, $j\in \disint{1}{i-1}$, $x\in \left(\disint{0}{N}\right)^{j-1}\times\{0,N\}\times(\disint{0}{N})^{d-j}$ it holds that
$w_i(x)=0$,
and \item 
for all
$i\in \disint{1}{\ell}$ it holds that
\begin{align}
\|\nabla w_i\|_{L^p(\efull{d,N})}\leq  c(d,p)(d3^d+2)^i
\left[\sum_{i=1}^{d-1}
\left\|\nabla u\right\|_{L^p(\etan{d,N}) }
+\frac{1}{N}\left\|u\right\|_{L^p(\etanT{d,N} )}
\right].\label{r18}
\end{align}
\end{enumerate}
As a first step, let $v_1\colon \{0,N\}\times\Z^{d-1}\to\R$ be the function given by
\begin{align}
&\forall\, x\in \{0,N\}\times(\disint{0}{N})^{d-1}\colon\quad v_1(x)=u(x),\label{r05}\\
&\forall\, j\in \disint{2}{d},\,x\in \{0,N\}\times(\disint{-(N-1)}{N})^{d-1}\colon\quad v_1(x)=v_1(x-2x_j\unit{d}{j}),\label{r06}\\
&\forall\, j\in \disint{2}{d},\, x\in \{0,N\}\times\Z^{d-1}\colon\quad v(x)=v(x+2N\unit{d}{j})\label{r07}
\end{align}
and let 
$w_1\colon (\disint{0}{N})\times\mathbbm{Z}^{d-1}\to\mathbbm{R}$ be the function 
(cf. \cref{s01b})
which satisfies that
\begin{align}\begin{split}\label{r08}
&\forall\, x\in \{0,N\}\times \Z^{d-1} \colon\quad w_1(x)=v_1(x),\\
&\forall\, x\in (\disint{1}{N-1})\times \Z^{d-1} \colon\quad (\Laplace w_1)(x)=0.
\end{split}
\end{align}
Then \eqref{s02}, \eqref{r05}, and \eqref{r06} imply that
\begin{align}\begin{split}
\|\nabla w_1\|_{L^p(\efull{N})}
&\leq  c(d,p)
\left[\sum_{i=1}^{d-1}
\left\|\disDiff{+}{i} v_1\right\|_{L^p(\{0,N\}\times\I_N^{d-1}) }
+\frac{1}{N}\left\|v_1\right\|_{L^p(\{0,N\}\times\I_N^{d-1}) }
\right]\\
&\leq 
  c(d,p)d3^d
\left[\sum_{i=1}^{d-1}
\left\|\nabla u\right\|_{L^p(\etan{d,N}) }
+\frac{1}{N}\left\|u\right\|_{L^p(\etanT{d,N})}
\right].\end{split}\label{r09}
\end{align}
Combining \eqref{r05}, \eqref{r07}, \eqref{r08}, and  \eqref{r09} yields that $\mathcal{A}(w_1)$ is true.
For the recursive step
let $\ell\in \disint{1}{d}$ and suppose 
that we have constructed $w_1,\ldots,w_\ell$ so that
$\mathcal{A}(w_1,\ldots,w_\ell)$
 holds. Now,
let 
$
v_{\ell+1}\colon \Z^{\ell}\times\{0,N\}\times\Z^{d-\ell-1}\to\R
$
be the function which satisfies that
\begin{enumerate}[i)] 
\item 
for all $x\in (\disint{0}{N})^{\ell}\times\{0,N\}\times(\disint{0}{N})^{d-\ell-1}$
it holds that
\begin{align}
v_{\ell+1}(x)=u(x)-\sum_{\nu=1}^{\ell}w_\nu(x),\label{r10} 
\end{align}
\item for all
$j\in \disint{1}{\ell},\,x\in (\disint{-N+1}{N})^{\ell}\times \{0,N\}\times(\disint{-N+1}{N})^{d-\ell-1}$ it holds that
\begin{align}
 v_{\ell+1}(x)=-v_{\ell+1}(x-2x_j\unit{d}{j}),\label{r11}
\end{align}
\item for all
$ j\in \disint{\ell+2}{d}$, $x\in (\disint{-N+1}{N})^{\ell}\times \{0,N\}\times(\disint{-N+1}{N})^{d-\ell-1}$ it holds that
\begin{align}
 v_{\ell+1}(x)=v_{\ell+1}(x-2x_j\unit{d}{j}),
\end{align}
and
\item for all 
$j\in(\disint{1}{d})\setminus \{\ell+1\}$,
$x\in \mathbbm{Z}^{\ell}\times\{0,N\}\times\mathbbm{Z}^{d-\ell-1}$
it holds that
\begin{align}
v_{\ell+1}(x)=v_{\ell+1}(x+2N\mathbf{e}_j),
\end{align}
\end{enumerate}
and let $w_{\ell+1}\colon
\Z^{\ell}\times  (\disint{0}{N})\times\mathbbm{Z}^{d-\ell-1}\to\mathbbm{R}$ be the function 
(cf. \cref{s01}) given by
\begin{align}
&\forall\, x\in \Z^{\ell}\times \{0,N\}\times \Z^{d-\ell-1} \colon \quad w_{\ell+1}(x)=v_{\ell+1}(x),\label{r13}
\\
\label{r12}
&\forall\, x\in \Z^{\ell}\times (\disint{1}{N-1})\times \Z^{d-\ell-1}\colon\quad 
(\Laplace w_{\ell+1})(x)=0,\\
&
\begin{aligned}
\forall\, j\in(\disint{1}{d})\!\setminus\! \{\ell+1\},\,x\in \Z^{\ell}\times (\disint{0}{N})\times \Z^{d-\ell-1}\colon\quad 
w_{\ell+1}(x+2N\unit{d}{j})=w_{\ell+1}(x).
\end{aligned}\label{r14}
\end{align}
Then \eqref{s02} implies that
\begin{align}
\|\nabla w_{\ell+1}\|_{L^p(\efull{N})}
&\leq  c(d,p)
\left[\sum_{\nu=1}^{d-1}\left\|\disDiff{+}{\nu} v_{\ell+1}\right\|_{L^p(\I_N^{\ell}\times\{0,N\}\times\I_N^{d-\ell-1}) }
+\frac{1}{N}\|v_{\ell+1}\|_{L^p(\I_N^{\ell}\times\{0,N\}\times\I_N^{d-\ell-1}) }
\right].
\end{align}
Note that \eqref{r10} and \eqref{r13} imply that
\begin{align}
\forall\, 
x\in (\disint{0}{N})^{\ell}\times\{0,N\}\times(\disint{0}{N})^{d-\ell-1}\colon \quad 
w_{\ell+1}(x)=u(x)-\sum_{\nu=1}^{\ell}w_\nu(x).\label{r10b}
\end{align}
To lighten the notation let
$U^{\ell+1}_{d,N}, V^{\ell+1}_{d,N}\subseteq \Z^d$ be the sets given by
\begin{align}
U^{\ell+1}_{d,N}=\I_N^{\ell}\times\{0,N\}\times\I_N^{d-\ell-1}\quad\text{and}\quad
V^{\ell+1}_{d,N} =(\disint{0}{N})^{\ell}\times\{0,N\}\times(\disint{0}{N})^{d-\ell-1}.
\end{align}
Then \eqref{r10b} and the triangle inequality show that
\begin{align}\begin{split}
&\left(\sum_{\nu=1}^{d-1}\left\|\disDiff{+}{\nu} v_{\ell+1}\right\|_{L^p(U^{\ell+1}_{d,N}) }\right)
+\frac{1}{N}\left\|v_{\ell+1}\right\|_{L^p(U^{\ell+1}_{d,N}) }\\
&\leq 
3^d\left[
\left(\sum_{\nu=1}^{d-1}\left\|\disDiff{+}{\nu} v_{\ell+1}\right\|_{L^p(V^{\ell+1}_{d,N}) }\right)
+\frac{1}{N}\|v_{\ell+1}\|_{L^p(V^{\ell+1}_{d,N}) }\right]
\\
&\leq 
3^d
\left[\sum_{\mu=1}^{\ell}\left(\sum_{\nu=1}^{d-1}\left\|\disDiff{+}{\nu} w_{\mu}\right\|_{L^p(V^{\ell+1}_{d,N}) }+\frac{1}{N}\left\|w_{\mu}\right\|_{L^p(V^{\ell+1}_{d,N}) }\right)
+\sum_{\nu=1}^{d-1}\left\|\disDiff{+}{\nu} u\right\|_{L^p(V^{\ell+1}_{d,N}) }
+\frac{1}{N}\left\|u\right\|_{L^p(V^{\ell+1}_{d,N}) }\right]\\
&\leq d3^d \left[\sum_{\mu=1}^{\ell}\left[\left\|\nabla w_\mu\right\|_{L^p( \etan{N})}+\tfrac{1}{N}\|w_\mu\|_{L^p(\etanT{d,N})}\right]
+\|\nabla u\|_{L^p( \etan{N})}+\tfrac{1}{N}\|u\|_{L^p(\etanT{d,N})}\right]\\
&\leq d3^d\left[\|\nabla u\|_{L^p( \etan{N})}+\tfrac{1}{N}\|u\|_{L^p(\etanT{d,N})}\right]
\left[
\sum_{\mu=0}^{\ell}  (d3^d+2)^i
\right]\\&\leq (d3^d+2)^{\ell+1}
\left[\|\nabla u\|_{L^p( \etan{N})}+\tfrac{1}{N}\|u\|_{L^p(\etanT{d,N})}\right].
\end{split}\label{r16}
\end{align}
Furthermore,
\eqref{r11}, \eqref{r12}, and uniqueness of the Dirichlet problem
 show that 
\begin{align}
\forall\, 
j\in \disint{1}{\ell},\,
x\in \Z^{\ell}\times (\disint{0}{N})\times \Z^{d-\ell-1} \colon\quad 
w_{\ell+1}(x)=-w_{\ell+1}(x-2x_j\unit{d}{j}).
\end{align}
This and \cref{r15} (applied to the $j$-th coordinate for $j\in\disint{1}{\ell}$) show that
\begin{align}
\forall\, j\in \disint{1}{\ell},\, x\in \left(\disint{0}{N}\right)^{j-1}\times\{0,N\}\times(\disint{0}{N})^{d-j}\colon \quad w_{\ell+1}(x)=0.
\end{align}
Combining this, \eqref{r14}, \eqref{r12}, \eqref{r10b}, \eqref{r16}, and the fact that 
$w_1,\ldots,w_\ell$ were constructed with the property
 $\mathcal{A}(w_1,\ldots,w_\ell)$ 
yields that  $\mathcal{A}(w_1,\ldots,w_{\ell+1})$ is true. Thus we have iteratively constructed $w_1,\ldots,w_d$ with $\mathcal{A}(w_1,\ldots,w_{d})$. This implies for all $x\in \etanT{d,N}$  that $u(x)=\sum_{i=1}^{d}w_i(x)$
and for all
$ x\in (\disint{1}{N})^{d-1}$, $i\in\disint{1}{d}$ that $ (\Laplace w_i)(x)=0$.
The fact that
$\forall\, x\in (\disint{1}{N})^{d-1}\colon (\Laplace u)(x)=0$ hence implies for all
$ x\in (\disint{1}{N})^{d-1}$ that $u(x)=\sum_{i=1}^dw_d(x)$. This, the triangle inequality, and \eqref{r18}
 demonstrate that
\begin{align}
\|\nabla u\|_{L^p(\efull{d,N})}
\leq \sum_{i=1}^{d}
\|\nabla w_i\|_{L^p(\efull{d,N})}\leq 
d(d3^d+2)^{d+1}c(d,p)\left[
\left\|\nabla u\right\|_{L^p(\etan{d,N}) }
+\frac{1}{N}\left\|u\right\|_{L^p(\etanT{d,N})}
\right].\label{r29}
\end{align}
This completes the proof of \cref{r20b}.
\end{proof}
\begin{corollary}\label{r20}
Assume \cref{r20c} and let
$d,N\in [2,\infty)\cap\N$, $p\in (1,\infty)$,  $u\in \mathbbm{Q}_{d,N}$ be fixed. Then it holds that 
$
\|\nabla u \|_{L^p(\efull{d,N})}\leq d(d3^d+2)^{d+1}c(d,p)(1+c_{\mathrm{PI}}(p))\|\nabla u\|_{L^p(\etan{d,N})}.
$
\end{corollary}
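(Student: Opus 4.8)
The plan is to reduce the claim to \cref{r20b} by exploiting the translation invariance of the discrete gradient, and then to absorb the remaining zeroth-order term via the Poincar\'e inequality \eqref{s05} built into \cref{r20c}. First I would note that for every $a\in\R$ the constant function has vanishing discrete Laplacian, so $u-a\in\mathbbm{Q}_{d,N}$, and that $\nabla(u-a)=\nabla u$ on all of $E_d$ directly from the definition $\nabla_{(x,y)}w=w(y)-w(x)$; in particular $\|\nabla(u-a)\|_{L^p(\efull{d,N})}=\|\nabla u\|_{L^p(\efull{d,N})}$ and $\|\nabla(u-a)\|_{L^p(\etan{d,N})}=\|\nabla u\|_{L^p(\etan{d,N})}$.

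Next I would apply \cref{r20b} with $u$ replaced by $u-a$, which gives, for every $a\in\R$,
\[
\|\nabla u\|_{L^p(\efull{d,N})}\leq d(d3^d+2)^{d+1}c(d,p)\left[\|\nabla u\|_{L^p(\etan{d,N})}+\tfrac1N\|u-a\|_{L^p(\etanT{d,N})}\right].
\]
Since the left-hand side does not depend on $a$, I would then take the infimum over $a\in\R$ on the right-hand side, replacing $\tfrac1N\|u-a\|_{L^p(\etanT{d,N})}$ by $\tfrac1N\inf_{a\in\R}\|u-a\|_{L^p(\etanT{d,N})}$.

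Finally I would invoke the Poincar\'e inequality \eqref{s05} from \cref{r20c}, namely $\tfrac1N\inf_{a\in\R}\|u-a\|_{L^p(\etanT{d,N})}\leq c_{\mathrm{PI}}(p)\|\nabla u\|_{L^p(\etan{d,N})}$, and substitute it into the previous display to obtain
\[
\|\nabla u\|_{L^p(\efull{d,N})}\leq d(d3^d+2)^{d+1}c(d,p)\bigl(1+c_{\mathrm{PI}}(p)\bigr)\|\nabla u\|_{L^p(\etan{d,N})},
\]
which is the assertion. There is no serious obstacle here: the only points demanding care are that the shift $a$ must be introduced \emph{after} invoking \cref{r20b}, so that the infimum can be taken cleanly on the nonnegative zeroth-order term, and that both the $\efull{d,N}$- and $\etan{d,N}$-norms of $\nabla(\cdot)$ are genuinely unchanged under adding a constant, which is immediate from the definition of $\nabla$.
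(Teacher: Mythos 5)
Your proposal is correct and follows essentially the same route as the paper: apply \cref{r20b} to $u-a$, use that $\nabla(u-a)=\nabla u$, take the infimum over $a\in\R$, and conclude with the Poincar\'e inequality \eqref{s05}. No gaps.
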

\begin{proof}[Proof of \cref{r20}]
\cref{r20b} (with $u\defeq u-a$ for $a\in\R$)
and \eqref{s05} show that
\begin{align}\begin{split}
&\|\nabla u\|_{L^p(\efull{d,N})}= \inf_{a\in\R}
\|\nabla (u-a)\|_{L^p(\efull{d,N})}\\&\leq 
d(d3^d+2)^{d+1}c(d,p)\left[\inf_{a\in\R}\left(
\left\|\nabla (u-a)\right\|_{L^p(\etan{N}) }
+\frac{1}{N}\left\|u-a\right\|_{L^p(\etanT{d,N} )}
\right)\right]\\
&= d(d3^d+2)^{d+1}c(d,p)\left[\left\|\nabla u\right\|_{L^p(\etan{N}) }
+\frac{1}{N}\inf_{a\in\R}\left\|u-a\right\|_{L^p(\etanT{d,N} )}\right]
\\
&\leq d(d3^d+2)^{d+1}c(d,p)(1+c_{\mathrm{PI}}(p))\left\|\nabla u\right\|_{L^p(\etan{N}) }.
\end{split}
\end{align}
This completes the proof of \cref{r20}.
\end{proof}
\subsection{Construction of the Neumann extensions}\label{e03}
In the Neumann case we also use a telescope sequence. First of all, instead of  \cref{p05b} we start with  
\cref{q06a} below with Neumann conditions on the right hand sides of
\eqref{q02} and \eqref{q09}.
\begin{setting}\label{q06a}
Assume \cref{p05} and let $C_1,C_2\colon([2,\infty)\cap\N)\times(1,\infty)\to[0,\infty]$
 be the functions which satisfy 
for all $d\in [2,\infty)\cap\N$, $p\in (1,\infty)$
that 
$C_1(d,p)$, $C_2(d,p)$ are the smallest real extended numbers such that for all
$N,L\in\N$, $u\in \mathbbm{H}_{d,L,\geq 0}$ with $1/4\leq N/L$ and 
$N\geq 2 $
it holds 
 that
\begin{align}\label{q02}
\left\|\disDiff{-}{d}u\right\|_{L^p(\I_L^{d-1}\times\{N\})}\leq C_1(d,p)\left\|\disDiff{+}{d}u\right\|_{L^p(\I_L^{d-1}\times\{0\})}
\end{align}
and
\begin{align}
\|\nabla u\|_{L^p(\efull{d,L,N})}\leq  C_2(d,p)
\left\|\disDiff{+}{d} u\right\|_{L^p(\I_L^{d-1}\times\{0\}) }
.\label{q09}
\end{align}
\end{setting}
\begin{lemma}\label{q25}
Assume \cref{q06a} and let $d\in [2,\infty)\cap\N$, $p\in (1,\infty)$ be fixed. Then it holds that $C_1(d,p)<1$ and $C_2(d,p)<\infty$.
\end{lemma}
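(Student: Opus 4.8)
The plan is to follow the proof of \cref{q24}, with one essential new difficulty: in \cref{q06a} the aspect ratio $N/L$ is bounded only from below (by $1/4$), so the slab $\Z^{d-1}\times(\disint{0}{N})$ may be arbitrarily long in the normal direction, and a naive application of \cref{p01} with $\rup\defeq N/L$ would produce a constant blowing up with $N/L$. I would therefore replace the direct use of \cref{p01} by a decay-and-chunking argument exploiting that finite differences of $u$ decay geometrically across the slab. Throughout I use that finite differences of discrete harmonic functions are again discrete harmonic, so that $\disDiff{+}{i}u\in\mathbbm{H}_{d,L,\geq 0}$ for every $i\in\disint{1}{d}$, and that the layer average of $\disDiff{+}{i}u$ vanishes: for $i<d$ by $2L$-periodicity in the $i$-th direction, and for $i=d$ because $\sum_{x\in\I_L^{d-1}}(\disDiff{+}{d}u)(x,0)=\sum_{x\in\I_L^{d-1}}(u(x,1)-u(x,0))=0$ by \cref{z01}.

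For $C_1(d,p)<1$: since $(\disDiff{-}{d}u)(x,N)=-(\disDiff{+}{d}u)(x,N-1)$, one has $\|\disDiff{-}{d}u\|_{L^p(\I_L^{d-1}\times\{N\})}=\|v\|_{L^p(\I_L^{d-1}\times\{N-1\})}$ where $v:=\disDiff{+}{d}u\in\mathbbm{H}_{d,L,\geq 0}$ has vanishing layer average. From $N\geq 2$ (hence $N-1\geq N/2$) and $N/L\geq 1/4$ we get $(N-1)/L\geq 1/8$, so \cref{t01a} in \cref{t01} (applied with $\rdown\defeq 1/8$, $u\defeq v$, $N\defeq N-1$) gives $\|v\|_{L^p(\I_L^{d-1}\times\{N-1\})}\leq C_1(d,p,1/8)\|v\|_{L^p(\I_L^{d-1}\times\{0\})}$ with $C_1(d,p,1/8)<1$, whence $C_1(d,p)\leq C_1(d,p,1/8)<1$.

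For $C_2(d,p)<\infty$ I would first record two estimates, uniform in $N,L$, valid for any $v\in\mathbbm{H}_{d,L,\geq 0}$ with $\sum_{x\in\I_L^{d-1}}v(x,0)=0$. \emph{Geometric decay}: applying \cref{t01a} in \cref{t01} with $\rdown\defeq 1$ iteratively to the shifts $v(\cdot,\cdot+kL)\in\mathbbm{H}_{d,L,\geq 0}$ — whose layer averages still vanish by \cref{z01} — yields $\|v\|_{L^p(\I_L^{d-1}\times\{kL\})}\leq(C_1(d,p,1))^{k}\|v\|_{L^p(\I_L^{d-1}\times\{0\})}$ for all $k\in\N_0$. \emph{Lateral trace}: applying \cref{p01} with $\rup\defeq 1$ to each shift $v(\cdot,\cdot+kL)$, splitting $\disint{1}{N}$ into the blocks $\disint{kL+1}{(k+1)L}$, $k\in\N_0$, and combining with the decay estimate and the triangle inequality gives
\[
\|v\|_{L^p(\{0\}\times\I_L^{d-2}\times\disint{0}{N})}\leq\Bigl(1+\tfrac{C(d,p,1)}{(1-(C_1(d,p,1))^{p})^{1/p}}\Bigr)\|v\|_{L^p(\I_L^{d-1}\times\{0\})},
\]
a bound independent of $N$ and $L$; this, rather than the naive use of \cref{p01}, is the crux.

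Finally, every edge of $\efull{d,L,N}$ lies near the bottom face $\{x_d=0\}$, the top face $\{x_d=N\}$, or a lateral face $\{x_i\in\{0,L\}\}$ with $i\in\disint{1}{d-1}$, and the corresponding increment of $u$ is some $(\disDiff{\pm}{k}u)$ evaluated there. For the bottom and top faces I would use \cref{x13} (monotonicity of the $L^p$-norms in the layer index, applied to $\disDiff{+}{i}u\in\mathbbm{H}_{d,L,\geq 0}$, $i\in\disint{1}{d}$) together with $(\disDiff{-}{d}u)(x,N)=-(\disDiff{+}{d}u)(x,N-1)$; for the lateral faces I would apply the lateral trace estimate to $v=\disDiff{+}{i}u$ ($i\in\disint{1}{d}$), after composing with the translations $x\mapsto x\mp\unit{d}{i}$ and $x\mapsto x-L\unit{d}{i}$ that move each lateral face onto $\{x_i=0\}$ (which preserves membership in $\mathbbm{H}_{d,L,\geq 0}$ and the layer-0 norm, by periodicity). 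This bounds $\|\nabla u\|_{L^p(\efull{d,L,N})}$ by a dimensional constant times $\|\disDiff{+}{d}u\|_{L^p(\I_L^{d-1}\times\{0\})}+\sum_{i=1}^{d-1}\|\disDiff{+}{i}u\|_{L^p(\I_L^{d-1}\times\{0\})}$, and \cref{t01c} in \cref{t01} (the Neumann case) then absorbs the tangential sum into $\|\disDiff{+}{d}u\|_{L^p(\I_L^{d-1}\times\{0\})}$, so $C_2(d,p)<\infty$. The main obstacle is exactly the uniform control of the lateral-face contribution, which the decay-plus-chunking step supplies.
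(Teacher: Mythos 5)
Your proposal is correct, and for $C_1(d,p)<1$ it coincides with the paper's argument (apply \cref{t01}\,\cref{t01a} with $\rdown\defeq 1/8$ to $\disDiff{+}{d}u$, whose bottom-layer average vanishes by \cref{z01} -- a point you verify explicitly and the paper leaves implicit). For $C_2(d,p)<\infty$ the skeleton is also the same as the paper's: harmonicity of the finite differences, \cref{x13} for the top face, \cref{p01} for the lateral faces, and \cref{t01}\,\cref{t01c} to absorb the tangential differences into $\|\disDiff{+}{d}u\|_{L^p(\I_L^{d-1}\times\{0\})}$. The one genuine difference is your treatment of the lateral faces: the paper simply invokes \cref{p01} with $\rup\defeq 4$, i.e.\ it silently works under the additional restriction $N/L\leq 4$ (as in the Dirichlet-case \cref{p05b} and in the later application \cref{q06}, so the missing upper bound in \cref{q06a} is almost certainly an editorial slip), whereas you take \cref{q06a} literally, where only $N/L\geq 1/4$ is imposed, and supply the extra step needed there: the layer averages of $\disDiff{+}{i}u$ vanish, so \cref{t01}\,\cref{t01a} applied to the vertical shifts $v(\cdot,\cdot+kL)$ gives geometric decay $\|v\|_{L^p(\I_L^{d-1}\times\{kL\})}\leq (C_1(d,p,1))^k\|v\|_{L^p(\I_L^{d-1}\times\{0\})}$, and chunking $\disint{1}{N}$ into blocks of height $L$ turns the \cref{p01} bound (with $\rup\defeq 1$) into a lateral trace estimate uniform in $N/L$; this is correct, and it is exactly what rules out the blow-up of $C(d,p,\rup)\sim\rup^{1/p}$ that a naive choice $\rup\defeq N/L$ would cause. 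So your argument proves the lemma under the hypotheses of \cref{q06a} as stated (a slightly stronger conclusion than the paper's own proof delivers), at the cost of the extra decay-and-chunking machinery, which becomes unnecessary if one reads \cref{q06a} with the intended constraint $1/4\leq N/L\leq 4$; the remaining bookkeeping (orientation signs, translations moving the faces $\{x_i\in\{0,L\}\}$ onto $\{x_i=0\}$, permutation of tangential coordinates) is routine and matches the paper.
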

\begin{proof}[Heuristic proof of \cref{q25}]
First, note that the discrete derivatives of a harmonic function are still harmonic. 
Using \cref{t01a} in \cref{t01} (with the function replaced by the derivative)
 we bound the normal differences on the top $\Z^{d-1}\times\{N\}$ by the normal differences on the bottom
$\Z^{d-1}\times\{0\}$ (see \eqref{r01}). Next, using \cref{x13} (with the function replaced by the derivative) and \cref{t01c} in \cref{t01} we bound the tangential differences on the top and bottom by the normal differences on the bottom (see \eqref{r04}).
Furthermore, using \cref{p01} we 
 bound the differences with respect to
all edges with one endpoint on the face $\{x_1=0\}$ by the tangential differences on the bottom
(see \eqref{r02}) and hence again by the normal differences on the bottom.
A permutation of the coordinates then shows that we can bound the differences with respect to all  edges with one endpoints on the boundary by the normal differences on the bottom. 
\end{proof}
\begin{proof}[Rigorous proof of \cref{q25}]
\cref{t01} implies that
 there exists
$c_1\colon (0,\infty)\to (0,1)$ such that
for all $N,L\in\N$,
$u\in \mathbbm{H}_{d,L,\geq 0}$ with $N/L\geq \rdown$ and $\mean{u}{ \I_L^{d-1}\times\{0\}}=0$ it holds that
\begin{align}
\left\|u\right\|_{L^p(\I_L^{d-1}\times\{N\})}\leq c_1(\rdown)\left\|u\right\|_{L^p(\I_L^{d-1}\times\{0\})}.
\end{align}
This (with 
$N\defeq N-1$, 
$\rdown \defeq 1/4$, and $u\defeq \disDiff{+}{d}u$  for $N\in [ 2,\infty)\cap\N$, 
$L\in\N$,
$u\in \mathbbm{H}_{d,L,\geq 0}$) 
and the fact that $\forall\, N\in\N\cap[2,\infty)\colon (N-1)\geq N/2$
show that for all 
$N,L\in\N$,
$u\in \mathbbm{H}_{d,L,\geq 0}$ with $N/L\geq 1/4$ and $\mean{u}{ \I_L^{d-1}\times\{0\}}=0$ it holds that $(N-1)/L\geq N/(2L)\geq 1/8$ and
\begin{align}
\left\|\disDiff{-}{d}u\right\|_{L^p(\I_L^{d-1}\times\{N\})}=
\left\|\disDiff{+}{d}u\right\|_{L^p(\I_L^{d-1}\times\{N-1\})}\leq c_1(1/8)\left\|\disDiff{+}{d}u\right\|_{L^p(\I_L^{d-1}\times\{0\})}.\label{r01}
\end{align}
Furthermore, \cref{p01} shows that there exists 
$c_2\colon (0,\infty)\to (0,\infty)$ such that 
for all 
$\rup\in (0,\infty)$, $L\in\N$, $N\in (0,L\rup]\cap\N$,
$u\in \mathbbm{H}_{d,L,\geq 0}$ it holds 
that
$\|u\|_{L^p(\{0\}\times \I_L^{d-2}\times (\disint{1}{N}))}\leq  c_2(\rup)
\|u\|_{L^p(\I_L^{d-1}\times\{0\})}.$
This (with 
 $\rup \defeq4$, 
$u\defeq \disDiff{\pm}{i}u $, and
$u\defeq \disDiff{+}{d}u $
 for $i\in\disint{1}{d-1}$,
$L,N\in\N$ with $N/L\leq 1/4$) shows 
for all 
$L,N\in\N$ with $N/L\leq 4$,
$u\in \mathbbm{H}_{d,L,\geq 0}$, $i\in\disint{1}{d-1}$ that
\begin{align}\label{r02}
\left\|\disDiff{\pm }{i}u\right\|_{L^p(\{0\}\times \I_L^{d-2}\times (\disint{1}{N}))}\leq  c_2(4)
\left\|\disDiff{\pm }{i}u\right\|_{L^p(\I_L^{d-1}\times\{0\})}
\end{align}
and 
\begin{align}\label{r03}
\left\|\disDiff{+}{d}u\right\|_{L^p(\{0\}\times \I_L^{d-2}\times (\disint{1}{N}))}\leq  c_2(4)
\left\|\disDiff{+ }{d}u\right\|_{L^p(\I_L^{d-1}\times\{0\})}.
\end{align}
\cref{x13} (with 
$u\defeq \disDiff{\pm}{i}u $,
for $i\in\disint{1}{d-1}$,
$u\in \mathbbm{H}_{d,L,\geq 0}$) and
\cref{t01} show that there exists
$c_3\in  (0,\infty)$ such that
for all 
$u\in \mathbbm{H}_{d,L,\geq 0}$, $i\in\disint{1}{d-1}$ it holds
that
\begin{align}
\left\|\disDiff{\pm}{i} u\right\|_{L^p(\I_L^{d-1}\times\{N\}) }\leq 
\left\|\disDiff{\pm}{i} u\right\|_{L^p(\I_L^{d-1}\times\{0\}) }\leq c_3
\left\|\disDiff{+}{d} u\right\|_{L^p(\I_L^{d-1}\times\{0\}) }.\label{r04}
\end{align}
Combining \eqref{r01}--\eqref{r04} we obtain that
 there exists $c_4\in (0,\infty)$ 
such that 
for all $d\in [2,\infty)\cap \N$, $p\in (1,\infty)$, 
$L,N\in\N$ with $N/L\leq 4$,
$u\in \mathbbm{H}_{d,L,\geq 0}$ it holds that
\begin{align}
\|\nabla u\|_{L^p(\efull{d,L,N})}\leq  c_4
\left\|\disDiff{+}{d} u\right\|_{L^p(\I_L^{d-1}\times\{0\}) }.
\end{align}
This shows  that
$C_2(d,p)<\infty$.
The proof of \cref{q25} is thus completed.
\end{proof}
 \cref{q06} below introduces a telescope sequence which is similar to that in the  Dirichlet case (cf. \cref{p06}).
In \eqref{q03a}
the means on each layer are set to be zero, since otherwise
 the Neumann problems on the haft spaces do not determine unique solutions.
\begin{setting}[Telescope sequence for the Neumann case]\label{q06}
Assume \cref{p05}, let  $N,L\in\N $, $d\in [2,\infty)\cap \N$, $p\in (1,\infty)$ be fixed and satisfy that $1/4\leq L/N\leq 4$,  let 
$v\in\mathbbm{B}_{d,L,N}$ satisfy that 
\begin{align}
\sum_{x\in \I_L^{d-1}\times\{0\}}v(x)=0\quad\text{and}\quad
\forall\, x\in\Z^{d-1}\times\{N\}\colon\quad  v(x)=0,\label{q03e}
\end{align}
and let
\begin{align}
(u_{2k+1})_{k\in \N_0} \subseteq \mathbbm{H}_{d,L,\geq 0}\quad\text{and}\quad
(u_{2k+2})_{k\in \N_0}\subseteq  \mathbbm{H}_{d,L,\leq N}\label{r30}
\end{align} 
be the sequences which satisfy that
\begin{align}\begin{split}
&\forall\, x\in\Z^{d-1}\times\{0\}\colon\quad  (\disDiff{+}{i}u_1)(x)=v(x),\quad 
\label{q03a}
\\
& \forall\, k\in\N_0,\, x\in \Z^{d-1}\times \{N\}\colon \quad (\disDiff{-}{d}u_{2k+2})(x)=
(\disDiff{-}{d} u_{2k+1})(x),\\
&\forall\,
k\in\mathbbm{N},\, x\in \Z^{d-1}\times\{0\}\colon \quad 
(\disDiff{+}{d}
u_{2k+1})(x)= (\disDiff{+}{d}u_{2k})(x),\quad\text{and}\\
&\forall\, n\in \N,\, y\in \disint{0}{N}\colon\quad  \mean{u_n}{\I_{L}^{d-1}\times\{y\}}=0.
\end{split}\end{align}
\end{setting}
\begin{lemma}\label{q03}
Assume \cref{q06}.
Then it holds for all $n \in\N$ that
\begin{align}
\max\left\{
\|\disDiff{+}{d}u_{n}\|_{L^p(\I_{L}^{d-1}\times\{0\})},
\|\disDiff{-}{d}u_{n}\|_{L^p(\I_{L}^{d-1}\times\{N\})}\right\}
\leq C_1(d,p)^{n-1}\|\disDiff{+}{d}u_{1}\|_{L^p(\I_{L}^{d-1}\times\{0\})}
\end{align}
\end{lemma}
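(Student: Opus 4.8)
The plan is to argue by induction on $n$, in the spirit of the proof of \cref{p03f} in \cref{p03}, but keeping track of the two boundary quantities
$a_n:=\|\disDiff{+}{d}u_n\|_{L^p(\I_L^{d-1}\times\{0\})}$ and
$b_n:=\|\disDiff{-}{d}u_n\|_{L^p(\I_L^{d-1}\times\{N\})}$
separately, since the telescope \eqref{q03a} alternates between the half space $\mathbbm{H}_{d,L,\geq 0}$ and the half space $\mathbbm{H}_{d,L,\leq N}$. Writing $A:=a_1$, the goal is to show $\max\{a_n,b_n\}\leq C_1(d,p)^{n-1}A$ for all $n\in\N$.

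I would use two ingredients. The first is the defining inequality \eqref{q02} of $C_1(d,p)$ from \cref{q06a}. Applied directly to $u_{2k+1}\in\mathbbm{H}_{d,L,\geq 0}$ (using $1/4\leq L/N\leq 4$, hence $1/4\leq N/L$) it gives $b_{2k+1}\leq C_1(d,p)\,a_{2k+1}$ for all $k\in\N_0$. Applied to the reflected function $\tilde u(x',z):=u_{2k+2}(x',N-z)$ on $\Z^{d-1}\times\N_0$ it gives $a_{2k+2}\leq C_1(d,p)\,b_{2k+2}$ for all $k\in\N_0$; here one checks that $\tilde u$ is bounded, $2L$-periodic in its first $d-1$ coordinates and discrete harmonic on $\Z^{d-1}\times\N$, hence lies in $\mathbbm{H}_{d,L,\geq 0}$, and that $(\disDiff{+}{d}\tilde u)(\cdot,0)=(\disDiff{-}{d}u_{2k+2})(\cdot,N)$ and $(\disDiff{-}{d}\tilde u)(\cdot,N)=(\disDiff{+}{d}u_{2k+2})(\cdot,0)$. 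The second ingredient is the telescope relations \eqref{q03a} themselves, which give $b_{2k+2}=b_{2k+1}$ (matching of the downward normal difference at level $N$) and $a_{2k+1}=a_{2k}$ for $k\geq1$ (matching of the upward normal difference at level $0$).

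Chaining these, $a_{2k+2}\leq C_1(d,p)\,b_{2k+2}=C_1(d,p)\,b_{2k+1}\leq C_1(d,p)^2 a_{2k+1}$, which for $k\geq1$ is the two-step recursion $a_{2k+2}\leq C_1(d,p)^2 a_{2k}$ and for $k=0$ reads $a_2\leq C_1(d,p)^2 A$. An induction on $k$ then gives $a_{2k}\leq C_1(d,p)^{2k}A$ for $k\geq1$, hence $a_{2k+1}=a_{2k}\leq C_1(d,p)^{2k}A$ for $k\geq1$, while $a_1=A$; using $C_1(d,p)<1$ from \cref{q25} this yields $a_n\leq C_1(d,p)^{n-1}A$ for every $n$. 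Feeding this back, $b_{2k+1}\leq C_1(d,p)\,a_{2k+1}\leq C_1(d,p)^{2k+1}A$ for all $k\in\N_0$ and $b_{2k+2}=b_{2k+1}\leq C_1(d,p)^{(2k+2)-1}A$, so $b_n\leq C_1(d,p)^{n-1}A$ for every $n$ as well. Taking the maximum completes the induction.

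The main obstacle will be the two parity/reflection manoeuvres. First, \eqref{q02} is available only for harmonic functions on the \emph{upper} half space, so it must be transported to the members $u_{2k+2}\in\mathbbm{H}_{d,L,\leq N}$ of the telescope by the reflection $z\mapsto N-z$, under which the normal difference at the bottom and the normal difference at the top interchange; one must confirm that the reflected function is admissible and match up the boundary differences correctly. Second, the contraction factor $C_1(d,p)$ is gained only on \emph{alternating} sides of the strip, so the naive single-quantity induction $\max\{a_n,b_n\}\leq C_1(d,p)^{n-1}A$ does not close by itself; one genuinely needs the sharper individual bounds on $a_n$ and $b_n$ above together with $C_1(d,p)<1$ to absorb the extra factor on the slack side. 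The remaining points — boundedness, periodicity and discrete harmonicity of $\tilde u$, and the size hypotheses on $N,L$ required by \eqref{q02} — are routine.
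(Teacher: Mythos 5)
Your argument is correct and follows essentially the same route as the paper: the telescope identities $b_{2k+2}=b_{2k+1}$ and $a_{2k+1}=a_{2k}$ combined with the defining inequality \eqref{q02}, applied once to $u_{2k+1}\in\mathbbm{H}_{d,L,\geq 0}$ directly and once to $u_{2k+2}\in\mathbbm{H}_{d,L,\leq N}$ after the reflection $z\mapsto N-z$ (the paper's ``simple change of coordinates''). The only cosmetic difference is that the paper additionally invokes \cref{x13} to get the constant-one bounds $\|\disDiff{-}{d}u_{2k+1}\|_{L^p(\I_L^{d-1}\times\{N\})}\leq\|\disDiff{+}{d}u_{2k+1}\|_{L^p(\I_L^{d-1}\times\{0\})}$ and $\|\disDiff{+}{d}u_{2k+2}\|_{L^p(\I_L^{d-1}\times\{0\})}\leq\|\disDiff{-}{d}u_{2k+2}\|_{L^p(\I_L^{d-1}\times\{N\})}$, which yield a one-step contraction of $\max\{a_n,b_n\}$ without ever needing $C_1(d,p)<1$, whereas your parity bookkeeping absorbs the spare factor of $C_1$ via $C_1(d,p)<1$ from \cref{q25}; both are legitimate.
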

\begin{proof}[Proof of \cref{p03}]
The assumption that $(u_{2k+1})_{k\in \N_0} \subseteq \mathbbm{H}_{d,L,\geq 0}$ in \eqref{r30} and 
\cref{x13} (with 
$u\defeq \disDiff{+}{d}u_{2k+3}$ for $k\in \N_0$) show $k\in\N_0$ that
\begin{align}
\left\|\disDiff{-}{d}u_{2k+1}\right\|_{L^p(\mathbbm{I}_L^{d-1}\times\{N\})}=
\left\|\disDiff{+}{d}u_{2k+1}\right\|_{L^p(\mathbbm{I}_L^{d-1}\times\{N-1\})}
\leq 
\left\|\disDiff{+}{d}u_{2k+1}\right\|_{L^p(\mathbbm{I}_L^{d-1}\times\{0\})}.\label{q05}
\end{align}
Similarly, the assumption that $(u_{2k+2})_{k\in \N_0}\subseteq  \mathbbm{H}_{d,L,\leq N}$ in \eqref{r30} and \cref{x13} (together with a simple change of coordinates) show for all $k\in\N_0$ that
\begin{align}
\left\|\disDiff{+}{d}u_{2k+2}\right\|_{L^p(\mathbbm{I}_L^{d-1}\times\{0\})} \leq 
\left\|\disDiff{-}{d}u_{2k+2}\right\|_{L^p(\mathbbm{I}_L^{d-1}\times\{N\})} .\label{q07}
\end{align}
Next, \eqref{r30}, \eqref{q02}, and possibly a simple change of coordinates show for all $k\in\N_0$ that
\begin{align}\begin{split}
\left\|\disDiff{-}{d}u_{2k+2}\right\|_{L^p(\mathbbm{I}_L^{d-1}\times\{N\})}
=\left\|\disDiff{-}{d}u_{2k+1}\right\|_{L^p(\mathbbm{I}_L^{d-1}\times\{N\})}
\leq  C_1(d,p)
\left\|\disDiff{+}{d}u_{2k+1}\right\|_{L^p(\mathbbm{I}_L^{d-1}\times\{0\})} 
 .\end{split}\label{q08}
\end{align}
and
\begin{align}
\left\|\disDiff{+}{d}u_{2k+3}\right\|_{L^p(\mathbbm{I}_L^{d-1}\times\{0\})} \leq 
\left\|\disDiff{+}{d}u_{2k+2}\right\|_{L^p(\mathbbm{I}_L^{d-1}\times\{0\})} 
\leq 
C_1(d,p)\left\|\disDiff{-}{d}u_{2k+2}\right\|_{L^p(\mathbbm{I}_L^{d-1}\times\{N\})}.\label{q10}
\end{align}
Combining \eqref{q05}--\eqref{q10}, an induction argument, and \eqref{q05} (with $k\defeq0$) proves that for all $n\in\N$ it holds that
\begin{align}\begin{split}
&\max\left\{
\|\disDiff{+}{d}u_{n+1}\|_{L^p(\I_{L}^{d-1}\times\{0\})},
\|\disDiff{-}{d}u_{n+1}\|_{L^p(\I_{L}^{d-1}\times\{N\})}\right\}\\
&\leq C_1(d,p)
\max\left\{
\|\disDiff{+}{d}u_{n}\|_{L^p(\I_{L}^{d-1}\times\{0\})},
\|\disDiff{-}{d}u_{n}\|_{L^p(\I_{L}^{d-1}\times\{N\})}\right\}\\
&\leq C_1(d,p)^{n-1}
\max\left\{
\|\disDiff{+}{d}u_{1}\|_{L^p(\I_{L}^{d-1}\times\{0\})},
\|\disDiff{-}{d}u_{1}\|_{L^p(\I_{L}^{d-1}\times\{N\})}\right\}\\
&\leq C_1(d,p)^{n-1}\|\disDiff{+}{d}u_{1}\|_{L^p(\I_{L}^{d-1}\times\{0\})}.
\end{split}\end{align}
  The proof of \cref{q03} is thus completed.
\end{proof}
\begin{lemma}\label{q13}
Assume \cref{q06}. Then it holds for all $x\in\Z^{d-1}\times(\disint{0}{N})$ that $\sum_{n=1}^\infty|u_n(x)|<\infty$.
\end{lemma}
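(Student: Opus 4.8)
The plan is to prove that $|u_n(x)|$ decays geometrically in $n$ uniformly over $x\in\Z^{d-1}\times\disint{0}{N}$, so that summability follows from the convergence of a geometric series. The two inputs already available are \cref{q03}, which gives $\max\{\|\disDiff{+}{d}u_n\|_{L^p(\I_L^{d-1}\times\{0\})},\|\disDiff{-}{d}u_n\|_{L^p(\I_L^{d-1}\times\{N\})}\}\leq C_1(d,p)^{n-1}\|\disDiff{+}{d}u_1\|_{L^p(\I_L^{d-1}\times\{0\})}$, and \cref{q25}, which gives $C_1(d,p)<1$. What is missing is a Neumann-to-Dirichlet type bound turning the decay of the boundary normal differences into a pointwise estimate on $u_n$ in the slab; here $d,p,L,N$ are all fixed by \cref{q06}, so constants depending on them are harmless.

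First I would handle the odd indices, where $u_n\in\mathbbm{H}_{d,L,\geq 0}$. For $x=(x',x_d)$ with $x_d\in\disint{0}{N}\subseteq\N_0$, periodicity together with the elementary inequality $\|\cdot\|_{L^\infty(A)}\leq\|\cdot\|_{L^p(A)}$ for the counting measure gives $|u_n(x)|\leq\|u_n(\cdot,x_d)\|_{L^p(\I_L^{d-1}\times\{x_d\})}$, and \cref{x13} bounds this by $\|u_n(\cdot,0)\|_{L^p(\I_L^{d-1}\times\{0\})}$. To estimate the latter I would use that $u_n(\cdot,0)$ has vanishing mean on $\I_L^{d-1}$ (the last line of \eqref{q03a}), a discrete Poincar\'e inequality on the periodic lattice box $\I_L^{d-1}$ to get $\|u_n(\cdot,0)\|_{L^p(\I_L^{d-1}\times\{0\})}\leq \tilde c\, L\sum_{i=1}^{d-1}\|\disDiff{+}{i}u_n(\cdot,0)\|_{L^p(\I_L^{d-1}\times\{0\})}$, and then the second inequality in \eqref{t01b} of \cref{t01} (item \ref{t01c}, which carries no mean-zero hypothesis) to get $\sum_{i=1}^{d-1}\|\disDiff{+}{i}u_n(\cdot,0)\|_{L^p(\I_L^{d-1}\times\{0\})}\leq C_2(d,p)\|\disDiff{+}{d}u_n(\cdot,0)\|_{L^p(\I_L^{d-1}\times\{0\})}$. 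Chaining these with \cref{q03} yields $|u_n(x)|\leq c\,C_1(d,p)^{n-1}\|\disDiff{+}{d}u_1(\cdot,0)\|_{L^p(\I_L^{d-1}\times\{0\})}$ with $c$ depending only on $d,p,L$.

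For the even indices $u_n\in\mathbbm{H}_{d,L,\leq N}$ I would reflect: put $\tilde u_n(x',y)=u_n(x',N-y)$. Then $\tilde u_n$ is bounded, $2L$-periodic in the first $d-1$ coordinates, and harmonic on $\Z^{d-1}\times\N$ (since $u_n$ is harmonic on $\Z^{d-1}\times((-\infty,N-1]\cap\Z)$), so $\tilde u_n\in\mathbbm{H}_{d,L,\geq 0}$; moreover $\tilde u_n(\cdot,0)=u_n(\cdot,N)$ has vanishing mean on $\I_L^{d-1}$ by \eqref{q03a}, and $(\disDiff{+}{d}\tilde u_n)(\cdot,0)=(\disDiff{-}{d}u_n)(\cdot,N)$. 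Applying the odd-index argument to $\tilde u_n$ and using $|u_n(x)|=|\tilde u_n(x',N-x_d)|$ with $N-x_d\in\disint{0}{N}$ (via \cref{x13} and \cref{q03} for $\disDiff{-}{d}u_n$ at layer $N$) gives the same bound $|u_n(x)|\leq c\,C_1(d,p)^{n-1}\|\disDiff{+}{d}u_1(\cdot,0)\|_{L^p(\I_L^{d-1}\times\{0\})}$. Summing over $n$ and invoking $C_1(d,p)<1$ from \cref{q25} yields $\sum_{n=1}^\infty|u_n(x)|\leq \frac{c}{1-C_1(d,p)}\|\disDiff{+}{d}u_1(\cdot,0)\|_{L^p(\I_L^{d-1}\times\{0\})}<\infty$, which is the claim.

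I expect the main difficulty to be bookkeeping rather than conceptual: setting up the reflection for the lower-half-space functions so that \cref{x13}, \cref{q03}, and \cref{t01} apply verbatim, and making sure each mean-zero normalization from \eqref{q03a} is used exactly where it is needed (both for Poincar\'e and for the harmonic extension). The one ingredient not proved in the paper is the discrete Poincar\'e inequality on the periodic lattice box $\I_L^{d-1}$; this is standard, and only the finiteness of its constant matters here.
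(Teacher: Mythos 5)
Your proposal is correct, and it shares the paper's overall skeleton: reduce everything to the geometric decay of the boundary normal differences from \cref{q03} and the fact $C_1(d,p)<1$ from \cref{q25}, then sum a geometric series. Where you diverge is in how the intermediate ``Neumann data controls the function pointwise'' step is obtained. The paper gets it in one stroke from the packaged regularity constant of \cref{q06a}: using the vanishing layer means it writes $|u_n(x)|\leq\frac{1}{|\I_L^{d-1}|}\sum_y|u_n(x)-u_n(y)|\leq\sum_{e\in\efull{d,L,N}}|\nabla_e u_n|\leq|\efull{d,L,N}|\,\|\nabla u_n\|_{L^p(\efull{d,L,N})}$ and then invokes \eqref{q09} (finite by \cref{q25}) to pass directly to $\max\{\|\disDiff{+}{d}u_n\|_{L^p(\I_L^{d-1}\times\{0\})},\|\disDiff{-}{d}u_n\|_{L^p(\I_L^{d-1}\times\{N\})}\}$, the even/odd half-space distinction being absorbed into ``a simple change of coordinates.'' You instead rebuild a (cruder, $L$-dependent) version of that bound from more primitive ingredients: $\ell^\infty\leq\ell^p$ on a layer, the contraction \cref{x13}, a discrete Poincar\'e inequality on the periodic layer $\I_L^{d-1}$ applied to the mean-zero trace, and the second inequality in \eqref{t01b}, with an explicit reflection $\tilde u_n(x',y)=u_n(x',N-y)$ handling the $\mathbbm{H}_{d,L,\leq N}$ terms. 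Both routes are sound; yours imports one ingredient the paper never states (the torus Poincar\'e inequality), but since $d,p,L,N$ are fixed in \cref{q06} only finiteness of its constant matters, and that follows, e.g., from equivalence of norms on the finite-dimensional space of mean-zero periodic functions. The trade-off is that the paper's route reuses \eqref{q09}, which is needed anyway for \cref{q14}, and keeps the constants $N$- and $L$-uniform, whereas your route is more self-contained at the level of \cref{x13,t01} but deliberately sacrifices uniformity in $L$, which is harmless for this purely qualitative summability claim; your explicit reflection also spells out a step the paper leaves implicit.
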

\begin{proof}[Proof of \cref{q13}]
The triangle inequality, a telescope sum argument, Jensen's inequality, and \eqref{q09} show for all
$a\in\{0,N\}$, $x\in\I_L^{d-1}\times\{a\}$, $u\in\mathbbm{H}_{d,L,\geq 0}\cup\mathbbm{H}_{d,L,\leq N}$ with 
$\mean{u}{\I_L^{d-1}\times \{a\}}=0$ that
\begin{align}\begin{split}
|u(x)|&=\frac{1}{| \I_L^{d-1}|}\left| \sum_{y\in \I_L^{d-1}\times \{a\}}u(x)-u(y)\right|
\leq 
\frac{1}{| \I_L^{d-1}|} \sum_{y\in \I_L^{d-1}\times \{a\}}\left|u(x)-u(y)\right|\\
&\leq 
\frac{1}{| \I_L^{d-1}|} \sum_{y\in \I_L^{d-1}\times \{a\}}
\sum_{e\in\efull{d,L,N}}|\nabla_e u|=
\sum_{e\in\efull{d,L,N}}|\nabla_e u|\leq 
\left|\efull{d,L,N}\right|\|\nabla u\|_{L^p(\efull{d,L,N})}\\
&
\leq \left|\efull{d,L,N}\right|C_2(d,p)
\max\left\{
\left\|\disDiff{+}{d} u\right\|_{L^p(\I_L^{d-1}\times\{0\})},
\left\|\disDiff{-}{d} u\right\|_{L^p(\I_L^{d-1}\times\{N\})}
\right\},
\end{split}\label{q12}
\end{align}
This (with $u\defeq u_n$ for $n\in\N$ and combined with
 \eqref{r30}),
and \cref{q03}
imply for all $x\in \I_L^{d-1}\times(\disint{0}{N})$ that
\begin{align}\begin{split}
|u_n(x)|
&\leq \left|\efull{d,L,N}\right|C_2(d,p)
\max\left\{
\left\|\disDiff{+}{d} u_n\right\|_{L^p(\I_L^{d-1}\times\{0\})},
\left\|\disDiff{-}{d} u_n\right\|_{L^p(\I_L^{d-1}\times\{N\})}
\right\}\\
&\leq \left|\efull{d,L,N}\right|C_2(d,p)C_1(d,p)^{n-1}\left\|\disDiff{+}{d} u_1\right\|_{L^p(\I_L^{d-1}\times\{0\})}.
\end{split}
\end{align}
The fact that $C_1(d,p)<1$ and the fact that $\forall\, x\in (0,1)\colon \sum_{k=1}^{\infty}x^{k-1}=(1-x)^{-1}$ then complete the proof of \cref{q13}.
\end{proof}
\begin{lemma}\label{q14}Assume \cref{q06a} and let $w\colon \Z^{d-1}\times(\disint{0}{N})\to\R$ satisfy that
\begin{align}
\forall\, x\in \Z^{d-1}\times( \disint{0}{N})\colon \quad 
w(x)=  \left[\sum_{k=1}^\infty (-1)^{k+1} u_k(x)\right].\label{q16}
\end{align}
Then it holds that
\begin{align}
w\in \mathbbm{S}_{d,L,N} ,\quad \forall\, x\in \Z^{d-1}\times \{0\}\colon 
\disDiff{+}{d}w(x)= v(x),\quad 
\forall\, x\in \Z^{d-1}\times \{N\}\colon 
\disDiff{-}{d}w(x)= v(x),\label{q16a}
\end{align}
and
\begin{align}
\begin{split}
\|\nabla w\|_{L^p(\efull{d,L,N})}
\leq \frac{C_2(d,p)}{1-C_1(d,p)}
\left\|v\right\|_{L^p(\I_L^{d-1}\times\{0\}) }.\label{q15}
\end{split}\end{align}
\end{lemma}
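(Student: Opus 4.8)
The plan is to follow the proof of \cref{p07} almost verbatim, since the telescope sequence in \cref{q06} is constructed exactly as the one in \cref{p06}, only with Dirichlet data replaced by Neumann data on the two faces $\Z^{d-1}\times\{0\}$ and $\Z^{d-1}\times\{N\}$.

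I would begin by recording that \cref{q13} makes the series in \eqref{q16} absolutely convergent at every point of $\Z^{d-1}\times(\disint{0}{N})$, so that $w$ is well-defined and, moreover, each of $\nabla w$ on $\efull{d,L,N}$, $\disDiff{+}{d}w$ on $\Z^{d-1}\times\{0\}$, $\disDiff{-}{d}w$ on $\Z^{d-1}\times\{N\}$, and $(\Laplace w)$ on $\Z^{d-1}\times(\disint{1}{N-1})$ is a finite linear combination of point evaluations of the $u_k$ at points of $\Z^{d-1}\times(\disint{0}{N})$, hence may be computed termwise. Tangential $2L$-periodicity of every $u_n$ (inherited from membership in $\mathbbm{H}_{d,L,\geq 0}$ or $\mathbbm{H}_{d,L,\leq N}$) and harmonicity of every $u_n$ on $\Z^{d-1}\times(\disint{1}{N-1})$ then pass to $w$, so $w\in\mathbbm{S}_{d,L,N}$.

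Next I would verify \eqref{q16a} by a grouping argument, which is legitimate thanks to the absolute convergence above. On $\Z^{d-1}\times\{0\}$ one writes $\disDiff{+}{d}w=\disDiff{+}{d}u_1+\sum_{j\geq 1}(\disDiff{+}{d}u_{2j+1}-\disDiff{+}{d}u_{2j})$ and uses the relation $\disDiff{+}{d}u_{2j+1}=\disDiff{+}{d}u_{2j}$ on that face from \cref{q06} to make every bracket vanish, leaving $\disDiff{+}{d}w=\disDiff{+}{d}u_1=v$ there. On $\Z^{d-1}\times\{N\}$ one writes $\disDiff{-}{d}w=\sum_{j\geq 0}(\disDiff{-}{d}u_{2j+1}-\disDiff{-}{d}u_{2j+2})$ and uses $\disDiff{-}{d}u_{2j+2}=\disDiff{-}{d}u_{2j+1}$ to obtain $\disDiff{-}{d}w=0$, which coincides with $v$ there by \eqref{q03e}.

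Finally, for \eqref{q15} I would use $\|\nabla w\|_{L^p(\efull{d,L,N})}\leq\sum_{k\geq 1}\|\nabla u_k\|_{L^p(\efull{d,L,N})}$ and estimate each summand with \eqref{q09}. For odd $k$ this applies to $u_k\in\mathbbm{H}_{d,L,\geq 0}$ directly, giving $\|\nabla u_k\|_{L^p(\efull{d,L,N})}\leq C_2(d,p)\|\disDiff{+}{d}u_k\|_{L^p(\I_L^{d-1}\times\{0\})}$; for even $k$ one first reflects through $y\mapsto N-y$, which sends $\mathbbm{H}_{d,L,\leq N}$ to $\mathbbm{H}_{d,L,\geq 0}$, fixes the edge set $\efull{d,L,N}$, and interchanges $\disDiff{+}{d}$ on $\Z^{d-1}\times\{0\}$ with $\disDiff{-}{d}$ on $\Z^{d-1}\times\{N\}$, so that \eqref{q09} yields $\|\nabla u_k\|_{L^p(\efull{d,L,N})}\leq C_2(d,p)\|\disDiff{-}{d}u_k\|_{L^p(\I_L^{d-1}\times\{N\})}$. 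In either case \cref{q03} bounds the right-hand side by $C_2(d,p)C_1(d,p)^{k-1}\|\disDiff{+}{d}u_1\|_{L^p(\I_L^{d-1}\times\{0\})}$; summing the geometric series, which converges since $C_1(d,p)<1$ by \cref{q25}, and using $\disDiff{+}{d}u_1=v$ on $\Z^{d-1}\times\{0\}$ gives the claimed bound $\frac{C_2(d,p)}{1-C_1(d,p)}\|v\|_{L^p(\I_L^{d-1}\times\{0\})}$. The only points requiring care — such as they are — are the sign and index bookkeeping in the telescoping and the reflection trick needed to apply \eqref{q09}, stated only for $\mathbbm{H}_{d,L,\geq 0}$, to the even-indexed terms; everything else is identical in spirit to the proof of \cref{p07}.
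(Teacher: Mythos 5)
Your proposal is correct and follows essentially the same route as the paper: termwise differentiation justified by the absolute convergence from \cref{q13}, telescoping of the Neumann data via the defining relations in \cref{q06} to get \eqref{q16a}, and a termwise application of \eqref{q09} combined with the geometric decay from \cref{q03} and $C_1(d,p)<1$ to sum the series. The only difference is presentational: you make explicit the reflection $y\mapsto N-y$ needed to apply \eqref{q09} to the even-indexed terms in $\mathbbm{H}_{d,L,\leq N}$, which the paper subsumes in its bound by $\max\{\|\disDiff{+}{d}u_k\|_{L^p(\I_L^{d-1}\times\{0\})},\|\disDiff{-}{d}u_k\|_{L^p(\I_L^{d-1}\times\{N\})}\}$ via an implicit change of coordinates.
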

\begin{proof}[Proof of \cref{q14}]
First, \eqref{r30}  proves that for all  $n\in \N_0$, $ x\in \I_L^{d-1}\times(\disint{1}{N-1}) $
it holds
that $ (\Laplace u_n)(x)=0 $
and for all  $n\in \N_0$, $ x\in \I_L^{d-1}\times(\disint{0}{N}) $,
$i\in\disint{1}{d-1}$ it holds that $u_n(x+2L\unit{d}{i})=u_n(x)$. 
This, \eqref{q16}, and
the definition of $\mathbbm{S}_{d,L,N}$ imply that
$w\in \mathbbm{S}_{d,L,N} $. Furthermore, 
\cref{q13}, \eqref{q16},
\eqref{q03a}, and \eqref{q03e} show   that
\begin{align}
\forall\, x\in\Z^{d-1}\times\{0\}&\colon \quad 
 (\disDiff{+}{d}w) (x)= \sum_{k=1}^\infty(-1)^{k+1}(\disDiff{+}{d} u_k)(x)
=
 (\disDiff{+}{d} u_1)(x)=v(x)\quad\text{and}\quad
\\
\forall\, x\in\Z^{d-1}\times\{N\}&\colon \quad 
 (\disDiff{-}{d}w) (x)= \sum_{k=1}^\infty(-1)^{k+1}(\disDiff{-}{d} u_k)(x)
=0 =v(x).
\end{align}
This completes \eqref{q16a}.
Next, observe that \eqref{q16}, the triangle inequality, \eqref{q09} (with $u\defeq u_k $ for $k\in\N $), \cref{q13}, the fact that $\forall\, x\in (0,1)\colon \sum_{k=1}^{\infty}x^{k-1}=(1-x)^{-1}$, and \eqref{q03a} ensure that
\begin{align}
\begin{split}
&\|\nabla w\|_{L^p(\efull{d,L,N})}\leq 
\left\|\sum_{k=1}^\infty\nabla u_k\right\|_{L^p(\efull{d,L,N})}
\leq  
\sum_{k=1}^\infty\left\|\nabla u_k\right\|_{L^p(\efull{d,L,N})}
\\&\leq 
\sum_{k=1}^\infty\left[
C_2(d,p)
\max\left\{
\left\|\disDiff{+}{d} u_k\right\|_{L^p(\I_L^{d-1}\times\{0\}) },
\left\|\disDiff{-}{d} u_k\right\|_{L^p(\I_L^{d-1}\times\{N\}) }\right\}\right]
\\
&\leq 
\sum_{k=1}^\infty\left[
C_2(d,p)
C_1(d,p)^{k-1}
\left\|\disDiff{+}{d} u_1\right\|_{L^p(\I_L^{d-1}\times\{0\}) }\right]
= \frac{C_2(d,p)}{1-C_1(d,p)}
\left[\sum_{i=1}^{d-1}
\left\| v\right\|_{L^p(\I_L^{d-1}\times\{0\}) }
\right].
\end{split}\end{align}
This implies  \eqref{q15}.
The proof of \cref{q14} is thus completed.
\end{proof}
\begin{lemma}\label{q21}
Assume \cref{q06a} and let  $N,L\in\N $, $d\in [2,\infty)\cap \N$, $p\in (1,\infty)$ $v\in\mathbbm{B}_{d,L,N}$ satisfy that $1/4\leq L/N\leq 4$ and
$\mean{v}{\I_L^{d-1}\times\{0,N\}}=0$. Then there exists
$w\in \mathbbm{S}_{d,L,N}$
such that
\begin{align}
 \forall\, x\in \Z^{d-1}\times \{0\}\colon \quad 
\disDiff{+}{d}w(x)= v(x),\quad 
\forall\, x\in \Z^{d-1}\times \{N\}\colon \quad 
\disDiff{-}{d}w(x)= v(x),\label{q19}
\end{align}
and
\begin{align}
\begin{split}
\|\nabla w\|_{L^p(\efull{d,L,N})}
\leq\left[\frac{4C_2(d,p)}{1-C_1(d,p)}+2d16^d\right]
\left\|v\right\|_{L^p(\I_L^{d-1}\times\{0,N\}) }.
\end{split}\end{align}
\end{lemma}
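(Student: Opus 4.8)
\emph{Strategy.} The plan is to adapt the Dirichlet-case argument of \cref{p15} to Neumann data, with \cref{q14} playing the role of \cref{p07}. Throughout we may assume $N\ge 2$ (the range in which the constants $C_1(d,p)<1$, $C_2(d,p)<\infty$ from \cref{q06a} and \cref{q25} are set up). Set $m_0=\mean{v}{\I_L^{d-1}\times\{0\}}$ and $m_N=\mean{v}{\I_L^{d-1}\times\{N\}}$; since the two faces have equal cardinality, the hypothesis $\mean{v}{\I_L^{d-1}\times\{0,N\}}=0$ forces $m_N=-m_0$. Decompose $v=v_1+v_2+v_3$ inside $\mathbbm{B}_{d,L,N}$, where $v_1$ equals $v-m_0$ on $\Z^{d-1}\times\{0\}$ and vanishes on $\Z^{d-1}\times\{N\}$, $v_2$ vanishes on $\Z^{d-1}\times\{0\}$ and equals $v-m_N$ on $\Z^{d-1}\times\{N\}$, and $v_3$ is the layer-constant function equal to $m_0$ on the bottom and $m_N$ on the top. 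Each of $v_1,v_2$ has vanishing mean on the face on which it is supported.

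\emph{The three building blocks.} Since $v_1$ satisfies \eqref{q03e}, \cref{q14} (applied with $v\defeq v_1$) produces $w_1\in\mathbbm{S}_{d,L,N}$ with $\disDiff{+}{d}w_1=v_1$ on $\Z^{d-1}\times\{0\}$, $\disDiff{-}{d}w_1=0$ on $\Z^{d-1}\times\{N\}$, and $\|\nabla w_1\|_{L^p(\efull{d,L,N})}\le\frac{C_2(d,p)}{1-C_1(d,p)}\|v_1\|_{L^p(\I_L^{d-1}\times\{0\})}$. For $v_2$ I would introduce the reflection $R(x_1,\dots,x_{d-1},x_d)=(x_1,\dots,x_{d-1},N-x_d)$ of the slab $\Z^{d-1}\times(\disint{0}{N})$; the face-data $v_2\circ R$ (equal to $v-m_N$ on the bottom after reflection, vanishing on the top) again satisfies \eqref{q03e}, so \cref{q14} yields $\tilde w_2\in\mathbbm{S}_{d,L,N}$ with the corresponding properties, and I would set $w_2:=\tilde w_2\circ R$. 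Since $\Laplace$ and tangential periodicity are $R$-invariant, $w_2\in\mathbbm{S}_{d,L,N}$; the identities $\disDiff{+}{d}(g\circ R)=(\disDiff{-}{d}g)\circ R$ and $\disDiff{-}{d}(g\circ R)=(\disDiff{+}{d}g)\circ R$ turn the bottom/top Neumann data of $\tilde w_2$ into the top/bottom Neumann data of $w_2$, giving $\disDiff{+}{d}w_2=0$ on the bottom and $\disDiff{-}{d}w_2=v_2$ on the top; and since $\efull{d,L,N}$ is $R$-invariant and $\nabla_{(x,y)}(g\circ R)=\nabla_{(Rx,Ry)}g$, one gets $\|\nabla w_2\|_{L^p(\efull{d,L,N})}=\|\nabla\tilde w_2\|_{L^p(\efull{d,L,N})}\le\frac{C_2(d,p)}{1-C_1(d,p)}\|v_2\|_{L^p(\I_L^{d-1}\times\{N\})}$. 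Finally, the only interior-harmonic function with layer-constant Neumann data $m_0$ on the bottom and $m_N$ on the top is the affine one $w_3(x)=m_0x_d$: here $\disDiff{+}{d}w_3\equiv m_0$ and $\disDiff{-}{d}w_3\equiv -m_0=m_N$, and $w_3\in\mathbbm{S}_{d,L,N}$. Only edges in the $\unit{d}{d}$-direction contribute to $\nabla w_3$, each by exactly $|m_0|$; a crude count (using $N\le 4L$) shows there are at most $\approx 4d\,|\I_L|^{d-1}$ such edges in $\efull{d,L,N}$, and Jensen gives $|m_0|^p\le|\I_L|^{-(d-1)}\|v\|_{L^p(\I_L^{d-1}\times\{0\})}^p$, whence $\|\nabla w_3\|_{L^p(\efull{d,L,N})}\le 2d16^d\,\|v\|_{L^p(\I_L^{d-1}\times\{0,N\})}$ with much room to spare.

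\emph{Assembly.} Set $w=w_1+w_2+w_3\in\mathbbm{S}_{d,L,N}$. Adding the three Neumann identities on each face gives $\disDiff{+}{d}w=v_1+0+m_0=v$ on $\Z^{d-1}\times\{0\}$ and $\disDiff{-}{d}w=0+v_2+m_N=v$ on $\Z^{d-1}\times\{N\}$, which is \eqref{q19}. The triangle inequality, the three bounds above, and the elementary estimates $\|v_1\|_{L^p(\I_L^{d-1}\times\{0\})}\le 2\|v\|_{L^p(\I_L^{d-1}\times\{0,N\})}$, $\|v_2\|_{L^p(\I_L^{d-1}\times\{N\})}\le 2\|v\|_{L^p(\I_L^{d-1}\times\{0,N\})}$ (again via Jensen, to absorb the subtracted means) then yield $\|\nabla w\|_{L^p(\efull{d,L,N})}\le\big(\tfrac{4C_2(d,p)}{1-C_1(d,p)}+2d16^d\big)\|v\|_{L^p(\I_L^{d-1}\times\{0,N\})}$, as claimed.

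\emph{Main obstacle.} The ideas are all present in \cref{p15} and \cref{q14}; the work is bookkeeping. The delicate point is the orientation dictionary under the reflection $R$ — the interchange of $\disDiff{+}{d}$ with $\disDiff{-}{d}$ and of the two faces — together with verifying carefully that $\efull{d,L,N}$, the edge set, and the edge-gradient norm really are $R$-invariant. A secondary point is the crude edge count for $w_3$: one must check that the $L$-dependence of the number of relevant edges exactly cancels the normalization hidden in $|m_0|$, which is precisely where the hypothesis $1/4\le L/N$ enters, so that $N$ is comparable to $L$.
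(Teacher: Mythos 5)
Your proposal is correct and follows essentially the same route as the paper: the identical three-part decomposition $v=v_1+v_2+v_3$, \cref{q14} for $v_1$ and (via the top--bottom reflection, which the paper calls ``a simple change of coordinates'') for $v_2$, the affine function $w_3(x)=m_0x_d$ for the mean part, the crude $2d\,16^d$ edge-count estimate using $1/4\le L/N\le 4$, and the final triangle-inequality assembly with $\|v_i\|\le 2\|v\|_{L^p(\I_L^{d-1}\times\{0,N\})}$.
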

\begin{proof}[Proof of \cref{q21}]
Let $v_1,v_2,v_3\in \mathbbm{B}_{d,L,N}$ be the functions which satisfy that
\begin{align}\begin{split}
\forall\, x\in\I_L^{d-1}\times\{0\}&\colon \quad  v_1(x)= v(x)-\mean{v}{\I_L^{d-1}\times\{0\}},\quad  v_2(x)=0,\quad 
v_3(x)=\mean{v}{\I_L^{d-1}\times\{0\}};
\\
\forall\, x\in\I_L^{d-1}\times\{N\}&\colon \quad 
v_1(x)=0,\quad 
v_2(x)= v(x)-\mean{v}{\I_L^{d-1}\times\{N\}},\quad v_3(x)=\mean{v}{\I_L^{d-1}\times\{N\}} ,\end{split}\label{q18}
\end{align}
and let $w_3\colon \Z^{d-1}\times(\disint{0}{N})\to\R$ be the function which satisfies that
\begin{align}
\forall\, n\in \disint{0}{N},\, 
x\in \I_L^{d-1}\times\{n\}\colon \quad 
w_3(x)= n \mean{v}{\I_L^{d-1}\times\{0\}}.
\end{align}
This construction and the fact that
$\mean{v}{\I_L^{d-1}\times\{N\}}+\mean{v}{\I_L^{d-1}\times\{0\}}=0$ show that
\begin{align}
\begin{split}
&\forall\, x\in \I_L^{d-1}\times(\disint{1}{N-1}) \colon \quad (\Laplace w_3) (x) =0,\\
&\forall\, x\in \I_L^{d-1}\times\{0\}
 \colon\quad (\disDiff{+}{d}w_3)(x)= \mean{v}{\I_L^{d-1}\times\{0\}}=v_3(x),\\
&\forall\, x\in \I_L^{d-1}\times\{N\}
 \colon\quad (\disDiff{-}{d}w_3)(x)= \mean{v}{\I_L^{d-1}\times\{N\}}=v_3(x).
\end{split}\label{q18b}
\end{align}
Next, the fact that
\begin{align}\begin{split}
\{e\in \efull{d,L,N}\colon \nabla_ew_3\neq 0\}\subseteq
\left\{(x,x\pm \unit{d}{d})\colon x\in \Z^d\cap ([0,L]^{d-1}\times [0,N])\setminus ((0,L)^{d-1}\times (0,N))\right\},
\end{split}\end{align}
the fact that $[0,L]^{d-1}\times[0,N]$
has $2d$ faces, the fact that $N/L\in [1/4,4]$, \eqref{q18b},
\eqref{q18}, and Jensen's inequality imply that
\begin{align}
\|\nabla w_3\|_{L^p(\efull{L,N})}^p\leq (2d)16^d\|v\|_{L^p(\I_L^{d-1}\times\{0,N\})}^p,\label{q18c}
\end{align}
which is a very rough estimate, however, gives a constant depending only on $d$.
Furthermore,
\cref{q14} (together with a simple change of coordinates),  shows that there exists 
$w_1\in \mathbbm{H}_{d,L,\geq 0}$,
$w_2\in \mathbbm{H}_{d,L,\leq N}$ such that it holds
that 
\begin{align}\begin{split}
\forall\, x\in \Z^{d-1}\times \{0\},\,i\in \{1,2\}\colon \quad\disDiff{+}{d}w_i(x)= v_i(x),\\
\forall\, x\in \Z^{d-1}\times \{N\},\, i\in \{1,2\}\colon \quad\disDiff{-}{d}w_i(x)= v_i(x),\end{split}\label{q17}
\end{align}
and such that for all $ i\in \{1,2\}$ it holds that
\begin{align}
\begin{split}
\|\nabla w_i\|_{L^p(\efull{d,L,N})}
\leq \frac{C_2(d,p)}{1-C_1(d,p)}
\left\|v_i\right\|_{L^p(\I_L^{d-1}\times\{0,N\}) }.\label{q20}
\end{split}\end{align}
Now let $w\in \mathbbm{B}_{d,L,N}$ be the function which satisfies for all $ x\in\I_L^{d-1}\times\{0,N\}$ that $w(x)=w_1(x)+w_2(x)+w_3(x)$. Then \eqref{q17}, \eqref{q18b}, 
and \eqref{q18}
imply \eqref{q19}.
Furthermore, the triangle inequality,
Jensen's inequality, and \eqref{q18} show  that
 $\forall\, i\in\{1,2\}\colon \left\|v_i\right\|_{L^p(\I_L^{d-1}\times\{0\}) }\leq 2\left\|v\right\|_{L^p(\I_L^{d-1}\times\{0,N\}) }$.
This, the triangle inequality, \eqref{q20}, \eqref{q18}, and
\eqref{q18c}  prove that
\begin{align}
\begin{split}
&\|\nabla w\|_{L^p(\efull{d,L,N})}
\leq \left[\frac{4C_2(d,p)}{1-C_1(d,p)}+2d16^d\right]
\left\|v\right\|_{L^p(\I_L^{d-1}\times\{0,N\}) }
.
\end{split}\end{align}
This completes the proof of \cref{q21}.
\end{proof}
Observe that \cref{q21} shows the existence of the Neumann problem on strips. Furthermore, the uniqueness is straightforward (e.g. by means of the maximum principle applied to the derivatives $ \disDiff{+}{d}u$ defined on $\Z^{d-1}\times(\disint{0}{N-1}) $ and harmonic on
$\Z^{d-1}\times(\disint{1}{N-2})$ and 
the derivatives $ \disDiff{-}{d}u$ defined on $\Z^{d-1}\times(\disint{1}{N}) $ and harmonic on
$\Z^{d-1}\times(\disint{2}{N-1})$). We therefore obtain \cref{s11} below. However, more important for us is \cref{s12} that follows from \cref{q21}, \cref{q25}, and the uniqueness in \cref{s11}.
\begin{corollary}\label{s11}Let \cref{v02} be given.
Let $d,N\in [2,\infty)\cap\N$, $p\in (1,\infty)$,
$L\in\N$,
let $\I_L$  be the set given by $\I_L=\disint{-L+1}{L} $, let
$v\colon \Z^{d-1}\times\{0,N\}\to\R$ satisfy 
for all $x\in \Z^{d-1}\times \{0,N\}$, $  i\in \disint{1}{d-1}$ that
$v(x)=v(x+2L\unit{d}{i})$ and 
$\mean{v}{\I_L^{d-1}\times\{0\}}=\mean{v}{\I_L^{d-1}\times\{N\}}=0$
. Then  there exists uniquely $w \colon
\Z^{d-1}\times (\disint{0}{N}) \to\R$ such that
\begin{enumerate}[i)]
\item it holds that $\mean{w}{\I_L^{d-1}\times\{0\}} =\mean{w}{\I_L^{d-1}\times\{N\}}=0$,
\item it holds for all $x\in \Z^{d-1}\times\{0\}$  that 
$\disDiff{+}{d}w(x)=v(x)$,
\item it holds for all $x\in \Z^{d-1}\times\{N\}$  that 
$\disDiff{-}{d}w(x)=v(x)$,
 and
\item it holds for all $x\in \Z^{d-1}\times(\disint{1}{N-1})$  that 
$(\Laplace w)(x)=v(x)$.
\end{enumerate}
\end{corollary}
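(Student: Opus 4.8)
The plan is to establish uniqueness first --- which is the only substantive point here --- and then to deduce existence from it by a finite-dimensional rank--nullity argument, so that no quantitative $L^p$-bound, and hence no constraint on the ratio $L/N$, is needed. (The alternative of simply quoting \cref{q21} only covers the range $1/4\le L/N\le4$, which is all that is required for the applications, and I will note this below.)

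For uniqueness, suppose $w_1,w_2$ both satisfy i)--iv) and put $w=w_1-w_2$; then $w$ is $2L$-periodic in the first $d-1$ coordinates, discrete harmonic on $\Z^{d-1}\times(\disint{1}{N-1})$, has zero mean on $\Z^{d-1}\times\{0\}$ and on $\Z^{d-1}\times\{N\}$, and has vanishing Neumann data $(\disDiff{+}{d}w)(\cdot,0)\equiv0$, $(\disDiff{-}{d}w)(\cdot,N)\equiv0$. The key step is to pass to the derivative $g:=\disDiff{+}{d}w$, which is defined on $\Z^{d-1}\times(\disint{0}{N-1})$: since discrete derivatives of harmonic functions are again harmonic, $g$ is harmonic on $\Z^{d-1}\times(\disint{1}{N-2})$; it vanishes on the layer $\{0\}$ by the first Neumann condition, and $g(x,N-1)=w(x,N)-w(x,N-1)=-(\disDiff{-}{d}w)(x,N)=0$ shows that it vanishes on the layer $\{N-1\}$ as well. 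By periodicity $g$ descends to a function on the finite connected graph $\I_L^{d-1}\times(\disint{0}{N-1})$ (with wrap-around in the first $d-1$ coordinates) that is harmonic on the interior layers and zero on the two extreme layers, so the discrete maximum principle forces $g\equiv0$ (the case $N=2$ is trivial, as then $g$ already vanishes on all layers). Consequently $w(x,y)=w(x,0)$ for every $y\in\disint{0}{N}$; substituting this into $(\Laplace w)(x,y)=0$ at an interior point, the contributions from $y\pm1$ combine with the diagonal term so that $x\mapsto w(x,0)$ satisfies the $(d-1)$-dimensional discrete Laplace equation on all of $\Z^{d-1}$. Being $2L$-periodic, $w(\cdot,0)$ is therefore constant (maximum principle on the finite torus $\I_L^{d-1}$), and $\mean{w}{\I_L^{d-1}\times\{0\}}=0$ pins this constant to $0$; hence $w\equiv0$ and $w_1=w_2$.

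For existence, let $\mathcal H$ be the (finite-dimensional, by periodicity) space of $2L$-periodic $w\colon\Z^{d-1}\times(\disint{0}{N})\to\R$ that are harmonic on $\Z^{d-1}\times(\disint{1}{N-1})$ and have zero mean on the layers $\{0\}$ and $\{N\}$, let $\mathcal V$ be the space of $2L$-periodic $v\colon\Z^{d-1}\times\{0,N\}\to\R$ with zero mean on each face, and define $\Phi\colon\mathcal H\to\mathcal V$ by $\Phi(w)=\bigl((\disDiff{+}{d}w)(\cdot,0),(\disDiff{-}{d}w)(\cdot,N)\bigr)$. Summing $(\Laplace w)(x,y)=0$ over $x\in\I_L^{d-1}$ annihilates the horizontal terms by periodicity, so $m(y):=\sum_{x\in\I_L^{d-1}}w(x,y)$ is affine on $\disint{0}{N}$; from $m(0)=m(N)=0$ one gets $m\equiv0$, which shows the face-sums of $\Phi(w)$ vanish, so $\Phi$ really maps into $\mathcal V$. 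Since a periodic harmonic function on the strip is uniquely determined by its values on the two layers $\{0\}$ and $\{1\}$ (the harmonic equation being a second-order recurrence in the last coordinate), a direct count gives $\dim\mathcal H=2(2L)^{d-1}-2=\dim\mathcal V$, while $\ker\Phi=\{0\}$ is precisely the uniqueness just proved; hence $\Phi$ is a linear isomorphism, producing for each admissible $v$ the required $w$. (In the range $1/4\le L/N\le4$ one may instead invoke \cref{q21} directly: under the stronger mean hypotheses assumed here the function $w_3$ appearing in its proof vanishes identically, so the solution it furnishes is a telescope sum of functions with zero mean on every layer, in particular on $\{0\}$ and $\{N\}$.)

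The hard part is the uniqueness, and within it the passage to the derivative $g=\disDiff{+}{d}w$: one must be careful that $g$ is harmonic on precisely $\Z^{d-1}\times(\disint{1}{N-2})$ and vanishes on precisely the layers $\{0\}$ and $\{N-1\}$ --- this is where the asymmetry between $\disDiff{+}{d}$ at the bottom and $\disDiff{-}{d}$ at the top is used, and where $N\ge2$ enters --- and that the $2L$-periodicity legitimizes the maximum principle on a finite graph. The subsequent reduction to a harmonic function on $\Z^{d-1}$ and the dimension count for existence are routine bookkeeping.
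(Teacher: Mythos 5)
Your argument is correct, and its uniqueness half is exactly the route the paper sketches: the maximum principle applied to the vertical difference (the paper mentions both $\disDiff{+}{d}u$ and $\disDiff{-}{d}u$; you use only $g=\disDiff{+}{d}w$ together with the observation $g(\cdot,N-1)=-(\disDiff{-}{d}w)(\cdot,N)=0$, which comes to the same thing), followed by the reduction to a periodic harmonic function of the horizontal variables and the mean-zero normalization. The existence half is genuinely different: the paper gets it by invoking the quantitative telescope construction of \cref{q21}, which is only formulated for $1/4\le L/N\le 4$ and carries the $L^p$-estimate that is actually needed later (in \cref{s12}), whereas your rank--nullity argument (periodic harmonic functions on the strip are parametrized by the layers $\{0\}$ and $\{1\}$ via the vertical three-term recursion; the two layer-mean constraints are independent because the layer sums of such a function are affine in the vertical variable; injectivity is precisely your uniqueness) is purely qualitative but covers every $L,N\in\N$, i.e.\ the full generality in which \cref{s11} is stated, so it even closes the small mismatch between the unrestricted statement and the restricted range of \cref{q21}. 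Your parenthetical about \cref{q21} is also right: with face-wise zero means the correction $w_3$ vanishes and the telescope terms have zero mean on every layer by \cref{q06}, so that route delivers property i) as well. Two tacit conventions you adopted correctly, and which the paper shares: condition iv) must be read as $(\Laplace w)(x)=0$ (the ``$=v(x)$'' is a typo, as in \cref{s01b}), and $w$ must be understood to be $2L$-periodic in the first $d-1$ coordinates --- without that implicit requirement uniqueness is false (e.g.\ $w(x,y)=x_1-\tfrac12$ solves i)--iv) with $v\equiv 0$), and the paper's own maximum-principle sketch needs the periodicity just as your finite-graph version does.
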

\begin{corollary}\label{s12}Let \cref{v02} be given.
For $L\in\N $ let $\I_L$  be the set given by $\I_L=\disint{-L+1}{L} $ and
let $\efull{d,L,N}\subseteq E_d$ be the set of edges
given by
\begin{align}
\efull{d,L,N}=\left\{(x,y)\in E_d\colon 
\tfrac12(x+y)\in \left([0,L]^{d-1}\times[0,N]\right)\setminus
\left([1,L-1]^{d-1}\times[1,N-1]\right)\right\},
\end{align}
Then there exists
$C\colon([2,\infty)\cap\N)\times(1,\infty)\to(0,\infty) $
such that
for all $d\in [2,\infty)\cap\N$, $p\in (1,\infty)$, $N,L\in\N$,
$w\colon \Z^{d-1}\times (\disint{0}{N})\to\R$ with $1/4\leq L/N\leq 4$,
$\forall\, x\in \Z^{d-1}\times (\disint{0}{N})$, $  i\in \disint{1}{d-1}\colon w(x)=w(x+2L\unit{d}{i})$, 
and
$\forall\, x\in \Z^{d-1}\times(\disint{1}{N-1})\colon (\Laplace w)(x)=0$ it holds that
\begin{align}
\|\nabla w\|_{L^p(\efull{d,L,N})}\leq  C(d,p)
\left[
\left\|\disDiff{+}{d} w\right\|_{L^p(\I_L^{d-1}\times\{0\}) }+
\left\|\disDiff{-}{d} w\right\|_{L^p(\I_L^{d-1}\times\{N\}) }
\right].
\end{align}
\end{corollary}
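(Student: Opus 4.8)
The plan is to reduce to the Neumann extension result \cref{q21} after peeling off the single mode of $w$ that its Neumann data does not determine, and then to transport the bound of \cref{q21} back to $w$ by means of the uniqueness in \cref{s11}. First I would establish a layer-sum identity: writing $S_n:=\sum_{x\in\I_L^{d-1}}w(x,n)$ for $n\in\disint{0}{N}$, summing $(\Laplace w)(x,n)=0$ over $x\in\I_L^{d-1}$ and using the tangential $2L$-periodicity of $w$ to cancel the tangential contributions gives $S_{n+1}-2S_n+S_{n-1}=0$ for $n\in\disint{1}{N-1}$, so $S_n=an+b$ for some $a,b\in\R$. Hence the Neumann datum $v\in\mathbbm{B}_{d,L,N}$ of $w$ --- equal to $\disDiff{+}{d}w$ on $\Z^{d-1}\times\{0\}$ and to $\disDiff{-}{d}w$ on $\Z^{d-1}\times\{N\}$ --- satisfies $\sum_{x\in\I_L^{d-1}}v(x,0)=S_1-S_0=a$ and $\sum_{x\in\I_L^{d-1}}v(x,N)=S_{N-1}-S_N=-a$, so its \emph{combined} mean over $\I_L^{d-1}\times\{0,N\}$ vanishes while the two individual layer means equal $\pm a/|\I_L^{d-1}|$.

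To kill $a$, set $\ell(n):=(an+b)/|\I_L^{d-1}|$ and $w_1:=w-\ell$, with $\ell$ regarded as a function of the last coordinate only. Subtracting an affine function of $x_d$ preserves both discrete harmonicity in the interior and tangential periodicity, so $w_1\in\mathbbm{S}_{d,L,N}$; moreover every layer mean of $w_1$ vanishes and the Neumann datum $v_1\in\mathbbm{B}_{d,L,N}$ of $w_1$ equals $v$ minus the constant $\pm a/|\I_L^{d-1}|$ on $\Z^{d-1}\times\{0\}$ and on $\Z^{d-1}\times\{N\}$, so that $\mean{v_1}{\I_L^{d-1}\times\{0\}}=\mean{v_1}{\I_L^{d-1}\times\{N\}}=0$. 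Then \cref{q21} applies to $v_1$ and produces $\tilde w\in\mathbbm{S}_{d,L,N}$ with Neumann datum $v_1$ and $\|\nabla\tilde w\|_{L^p(\efull{d,L,N})}\le\big[\tfrac{4C_2(d,p)}{1-C_1(d,p)}+2d16^d\big]\|v_1\|_{L^p(\I_L^{d-1}\times\{0,N\})}$. Applying the layer-sum identity to $\tilde w$ shows its layer sums are constant (their first difference is $\sum_{x\in\I_L^{d-1}}v_1(x,0)=0$); subtracting that constant divided by $|\I_L^{d-1}|$ gives $\tilde w_1\in\mathbbm{S}_{d,L,N}$ which, like $w_1$, has zero layer means at $0$ and $N$ and Neumann datum $v_1$, so the uniqueness in \cref{s11} forces $w_1=\tilde w_1$, and in particular $\nabla w_1=\nabla\tilde w$ on all edges. (Equivalently, two discrete harmonic strip functions with the same Neumann datum differ by a constant, as one sees by reflecting across $\{x_d=\tfrac12\}$ and $\{x_d=N-\tfrac12\}$ to obtain a fully periodic harmonic function on $\Z^d$.)

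It then remains to combine $\|\nabla w\|_{L^p(\efull{d,L,N})}\le\|\nabla w_1\|_{L^p(\efull{d,L,N})}+\|\nabla\ell\|_{L^p(\efull{d,L,N})}=\|\nabla\tilde w\|_{L^p(\efull{d,L,N})}+\|\nabla\ell\|_{L^p(\efull{d,L,N})}$ with the above. H\"older's inequality in the identity $a=\sum_{x\in\I_L^{d-1}}v(x,0)$ gives $|a|\le|\I_L^{d-1}|^{1-1/p}\|v\|_{L^p(\I_L^{d-1}\times\{0\})}$, which bounds $\|v_1\|_{L^p(\I_L^{d-1}\times\{0,N\})}$ by a fixed multiple of $\|v\|_{L^p(\I_L^{d-1}\times\{0,N\})}$; and since $\ell$ is affine in $x_d$, $\nabla\ell$ vanishes on tangential edges and has modulus $|a|/|\I_L^{d-1}|$ on the normal edges of $\efull{d,L,N}$, whose number is at most $C(d)|\I_L^{d-1}|$ because, under the hypothesis $1/4\le L/N\le4$, the boundary shell $([0,L]^{d-1}\times[0,N])\setminus([1,L-1]^{d-1}\times[1,N-1])$ defining $\efull{d,L,N}$ is thin relative to $\I_L^{d-1}$. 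After the powers of $|\I_L^{d-1}|$ cancel, $\|\nabla\ell\|_{L^p(\efull{d,L,N})}\le C(d)\|v\|_{L^p(\I_L^{d-1}\times\{0\})}$, and since $\|v\|_{L^p(\I_L^{d-1}\times\{0,N\})}\le\|\disDiff{+}{d}w\|_{L^p(\I_L^{d-1}\times\{0\})}+\|\disDiff{-}{d}w\|_{L^p(\I_L^{d-1}\times\{N\})}$, the asserted inequality follows with a constant $C(d,p)$ assembled from $C_1(d,p)$, $C_2(d,p)$ and $C(d)$; it is finite by \cref{q25} and depends only on $d$ and $p$.

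I expect the main obstacle to be the bookkeeping forced by the non-uniqueness of the Neumann problem: one cannot feed the Neumann datum of a general harmonic $w$ directly into \cref{q21}, since its two layer means need not vanish, so one must first isolate the affine-in-$x_d$ mode $\ell$; one must then re-normalize the output of \cref{q21} so that \cref{s11} becomes applicable; and one must check that the error term $\|\nabla\ell\|_{L^p(\efull{d,L,N})}$ is estimated by an $N,L$-independent constant --- this last point being exactly where the aspect-ratio restriction $1/4\le L/N\le4$ is used.
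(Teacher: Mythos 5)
Your proposal is correct and follows essentially the paper's (very terse) route: the paper likewise deduces \cref{s12} by feeding the Neumann datum of $w$ into the extension result \cref{q21} (constants finite by \cref{q25}) and identifying the output with $w$ through the uniqueness attached to \cref{s11}. Your explicit layer-sum identity, the peeling-off of the affine-in-$x_d$ mode $\ell$, and the counting bound for $\|\nabla\ell\|_{L^p(\efull{d,L,N})}$ are careful bookkeeping that reconciles the mean-zero hypotheses of \cref{q21} and \cref{s11}, which the paper leaves implicit, but they do not change the underlying argument.
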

\subsection{Proof of the main result in the Neumann case}\label{e04}
Observe that if $f\in C^1( \R ,\R)$ is an even $2N$-periodic function, then $f'$ is an odd function and in particular $f'(0)=f'(N)=0$. \cref{r15c} below adapts this simple observation into the discrete case. 
\begin{lemma}\label{r15c}
Let $N\in \N$, $c\in\R$, $f\colon \Z\to \R $ satisfy  that $\forall\, x\in\Z\colon f(x)=f(x+2(N-1))$ and $\forall\, x\in \disint{-(N-2)}{N-1}\colon f(x)=f(1-x)+c$. Then $f(1)-f(0)=f(N-1)-f(N)=0$.
\end{lemma}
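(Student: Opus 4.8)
The plan is to mimic the proof of \cref{r15b}: exploit the reflection identity at two carefully chosen base points and then combine it with periodicity. First I would evaluate the hypothesis $f(x)=f(1-x)+c$ at $x=0$ and at $x=1$ — both of which lie in $\disint{-(N-2)}{N-1}$ once $N\geq 2$ (the range relevant to every invocation of this lemma) — to obtain $f(0)=f(1)+c$ and $f(1)=f(0)+c$. Adding these two identities forces $2c=0$, hence $c=0$, and then either identity gives $f(0)=f(1)$, i.e.\ $f(1)-f(0)=0$.

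For the second equality I would use the $2(N-1)$-periodicity in the form $f(N)=f\big(N-2(N-1)\big)=f(2-N)$, together with the reflection identity (now with $c=0$) evaluated at $x=N-1\in\disint{-(N-2)}{N-1}$, which yields $f(N-1)=f\big(1-(N-1)\big)=f(2-N)$. Comparing the two displays gives $f(N-1)=f(N)$, i.e.\ $f(N-1)-f(N)=0$. Structurally, $N-\tfrac12$ is the fixed point of the affine involution $x\mapsto (2N-1)-x$ obtained by composing the reflection $x\mapsto 1-x$ with the translation by $2(N-1)$, which is the reason the one-sided difference of $f$ vanishes there just as it does at $0$; the argument above is simply the bookkeeping that makes this precise.

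There is no serious obstacle: the only point requiring care is keeping track of which integers lie in the window $\disint{-(N-2)}{N-1}$ on which the reflection identity is available, and in particular recording that $N\geq 2$ (automatic wherever this lemma is applied, since there $N\in[2,\infty)\cap\N$) so that $x=0$, $x=1$, and $x=N-1$ are all admissible. The rest is a two-line computation.
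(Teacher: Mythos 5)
Your argument is correct and essentially the paper's own: the first identity comes from evaluating the reflection relation at $x=0$ and $x=1$, and the second from combining the reflection at the boundary of the window $\disint{-(N-2)}{N-1}$ with the $2(N-1)$-periodicity (the paper reflects at both $x=N-1$ and $x=2-N$ and lets $c$ cancel, whereas you first deduce $c=0$ — a purely cosmetic difference). Your explicit remark that $N\geq 2$ is needed (so that $0$, $1$, and $N-1$ lie in the window) is a fair point that the paper's proof uses implicitly as well, and it holds in every application of the lemma.
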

\begin{proof}[Proof of \cref{r15c}]The fact $f(1)-f(0)=(f(0)+c)-(f(1)+c)$ proves that $f(1)-f(0)=0$. Next, the fact that
$f(N-1)-f(N)=(f(1-(N-1)+2(N-1))+c)-(f(1-N+2(N-1))+c)=f(N)-f(N-1)$ proves that $f(N-1)-f(N)=0$. This completes the proof  of \cref{r15c}.
\end{proof}
\cref{r15a} below gives the technical details in order to make the Neumann conditions vanish.
\begin{lemma}[Even reflections]\label{r15a}Let $d\in [2,\infty)\cap\N$,
$w\colon \Z^{d-1}\times (\disint{0}{N})\to\R$, $j\in \disint{1}{d-1}$ satisfy that
\begin{align}
&
\begin{aligned}
&\forall\, x\in \Z^{d-1}\times (\disint{0}{N}),\,  i\in \disint{1}{d-1}\colon\quad w(x)=w(x+2(N-1)\unit{d}{i}),\\
&\forall\, x\in \Z^{d-1}\times(\disint{1}{N-1})\colon\quad (\Laplace w)(x)=0,
\end{aligned} \label{s06ba}
\end{align}
and
\begin{align}
\begin{aligned}
\forall\, x\in \Z^{j-1}\times (\disint{-(N-2)}{N-1})\times \Z^{d-j-1}\times\{0\} &\colon\quad (\disDiff{+}{d}w)(x)=(\disDiff{+}{d}w)(x+\unit{d}{j}-2x_j\unit{d}{j}),\\
\forall\, x\in \Z^{j-1}\times (\disint{-(N-2)}{N-1})\times \Z^{d-j-1}\times\{N\} &\colon\quad (\disDiff{-}{d}w)(x)=(\disDiff{-}{d}w)(x+\unit{d}{j}-2x_j\unit{d}{j}).
\end{aligned}
\label{s06ca}
\end{align}
Then it holds that 
\begin{align}
\forall\, x\in \Z^{j-1}\times \{0\}\times \Z^{d-j-1}\times (\disint{0}{N})\colon \quad (\disDiff{+}{j}w)(x)=0,\\
\forall\, x\in \Z^{j-1}\times \{N\}\times \Z^{d-j-1}\times (\disint{0}{N})\colon \quad (\disDiff{-}{j}w)(x)=0.
\end{align}
\end{lemma}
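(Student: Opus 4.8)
The plan is to use a discrete \emph{even reflection} about the hyperplane $\{x_j=\tfrac12\}$ together with uniqueness for the discrete Neumann problem. Let $\sigma$ denote the map sending $x$ to $x+\unit{d}{j}-2x_j\unit{d}{j}$; it leaves all coordinates of $x$ fixed except the $j$-th, which goes from $x_j$ to $1-x_j$, so $\sigma$ is an involution of $\Z^{d-1}\times(\disint{0}{N})$ that does not touch the last coordinate (as $j\le d-1$). Set $\tilde w:=w\circ\sigma$. First I would check that $\tilde w$ solves the \emph{same} Neumann problem as $w$ on the strip. Since $\sigma$ commutes with translations by $2(N-1)\unit{d}{i}$ for $i\in(\disint{1}{d-1})\setminus\{j\}$ and turns a translation by $2(N-1)\unit{d}{j}$ into one by $-2(N-1)\unit{d}{j}$, the periodicity in \eqref{s06ba} passes to $\tilde w$; and since $\sigma$ maps the $2d$ lattice neighbours of $x$ bijectively to those of $\sigma x$ (it merely swaps $x\pm\unit{d}{j}$) while fixing the last coordinate, $(\Laplace\tilde w)(x)=(\Laplace w)(\sigma x)=0$ for $x\in\Z^{d-1}\times(\disint{1}{N-1})$.

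The crucial point is the boundary data. From $\sigma(x\pm\unit{d}{d})=\sigma x\pm\unit{d}{d}$ one gets $(\disDiff{+}{d}\tilde w)(x)=(\disDiff{+}{d}w)(\sigma x)$ on the bottom face $\{x_d=0\}$ and $(\disDiff{-}{d}\tilde w)(x)=(\disDiff{-}{d}w)(\sigma x)$ on the top face $\{x_d=N\}$. By hypothesis \eqref{s06ca} these right-hand sides equal $(\disDiff{+}{d}w)(x)$, respectively $(\disDiff{-}{d}w)(x)$, whenever the $j$-th coordinate of $x$ lies in $\disint{-(N-2)}{N-1}$; as this set is a complete period modulo $2(N-1)$ and all functions in sight are $2(N-1)$-periodic in the $j$-th variable, the equalities extend to all of $\{x_d=0\}$, respectively $\{x_d=N\}$. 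Hence $w$ and $\tilde w$ are two tangentially $2(N-1)$-periodic functions, harmonic on $\Z^{d-1}\times(\disint{1}{N-1})$, carrying identical Neumann data on $\{x_d=0\}$ and $\{x_d=N\}$.

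Next I would show $h:=w-\tilde w$ is constant. It is tangentially $2(N-1)$-periodic, harmonic in the interior, with $(\disDiff{+}{d}h)(\cdot,0)\equiv 0$ and $(\disDiff{-}{d}h)(\cdot,N)\equiv 0$. Summing the discrete harmonic equation over one tangential period (the tangential shifts being bijections of the period) shows that $y\mapsto\sum_x h(x,y)$ is affine on $\disint{0}{N}$ with slope $\sum_x(\disDiff{+}{d}h)(x,0)=0$, hence constant; in particular the layer-means of $h$ at $y=0$ and $y=N$ agree. Subtracting this common mean and invoking the uniqueness part of \cref{s11} with vanishing Neumann data and $L=N-1$ --- equivalently, observing directly that $\disDiff{+}{d}h$ is harmonic on $\Z^{d-1}\times(\disint{1}{N-2})$ and vanishes on the two end-layers $\{x_d=0\}$ and $\{x_d=N-1\}$, hence is identically zero by the maximum principle, so that $h$ is independent of $x_d$ and, being tangentially harmonic and periodic, constant --- gives $h\equiv c$ for some $c\in\R$; that is, $w(x)=w(\sigma x)+c$ for every $x$.

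Finally, the constant is removed by reflecting twice. Fix all coordinates of $x$ but the $j$-th, with the last coordinate in $\disint{0}{N}$, and put $f(\xi):=w(x_1,\dots,x_{j-1},\xi,x_{j+1},\dots,x_d)$. The relation just obtained reads $f(\xi)=f(1-\xi)+c$ for all $\xi$, while $f$ is $2(N-1)$-periodic, so \cref{r15c} yields $f(1)-f(0)=0$ and $f(N-1)-f(N)=0$; these are precisely $(\disDiff{+}{j}w)(x)=0$ on $\{x_j=0\}$ and $(\disDiff{-}{j}w)(x)=0$ on $\{x_j=N\}$, which is the claim. The main obstacle is the bookkeeping in the second paragraph: one must verify that $\sigma$ affects the normal differences $\disDiff{\pm}{d}$ only through the $j$-th slot --- so that $j\ne d$ genuinely shields the normal direction --- and that the single-period hypothesis \eqref{s06ca} upgrades to an identity on the whole bottom and top faces by periodicity; granting that, \cref{s11} and \cref{r15c} finish the argument.
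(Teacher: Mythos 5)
Your proposal is correct and follows essentially the same route as the paper: reflect via $\tilde w(x)=w(x+\unit{d}{j}-2x_j\unit{d}{j})$, check that $\tilde w$ is tangentially periodic, harmonic in the interior, and carries the same Neumann data as $w$ (using \eqref{s06ca}, $j\neq d$, and periodicity), invoke uniqueness of the Neumann problem up to a constant to get $w=\tilde w+c$, and finish with \cref{r15c} applied to the $j$-th variable. The only difference is that you spell out the ``uniqueness up to a constant'' step (layer-sum argument plus maximum principle applied to $\disDiff{+}{d}h$, or \cref{s11}), which the paper simply asserts, citing essentially the same maximum-principle argument in the remark preceding \cref{s11}.
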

\begin{proof}[Proof of \cref{r15a}]Let $\tilde{w}\colon \Z^{d-1}\times (\disint{0}{N})\to\R$ be the function given by 
\begin{align}
\forall\, x\in \Z^{d-1}\times (\disint{0}{N})\colon \quad 
 \tilde{w}(x)=w(x+\unit{d}{j}-2x_j\unit{d}{j})\label{s07}.
\end{align}
Then  \eqref{s06ba} implies that
\begin{align}\begin{split}
&\forall\, x\in \Z^{d-1}\times (\disint{0}{N}),\,  i\in \disint{1}{d-1}\colon\quad \tilde{w}(x)=\tilde{w}(x+2(N-1)\unit{d}{i}),\\
&\forall\, x\in \Z^{d-1}\times(\disint{1}{N-1})\colon\quad (\Laplace \tilde{w})(x)=0.\end{split}
\end{align}
Furthermore, \eqref{s07}, \eqref{s06ca}, the fact that
$j\neq d$,
and the periodicity in \eqref{s06ba} and \eqref{s07} imply that 
\begin{align}\begin{split}
\forall\, x\in \Z^{d-1}\times\{0\} \colon\quad (\disDiff{+}{d}w)(x)=(\disDiff{+}{d}\tilde{w})(x),\\
\forall\, x\in \Z^{d-1}\times\{N\} \colon \quad (\disDiff{-}{d}w)(x)=(\disDiff{-}{d}\tilde{w})(x).
\end{split}
\end{align}
This and
uniqueness ("up to a constant") of the Neumann problem show that there exists $c\in\R$ such that for all
$\Z^{d-1}\times (\disint{0}{N})$ it holds that
$w(x)=\tilde{w}(x)+c= w(x+\unit{d}{j}-2x_j\unit{d}{j})+c $.
This and \cref{r15c} (applied to the $j$-th variable) complete the proof of \cref{r15a}.
\end{proof}
\begin{theorem}\label{r20a}For every $d,N\in [2,\infty)\cap\N$ 
let $\efull{d,N}\subseteq E_d$ be the sets of  edges
given by
\begin{align}
\efull{d,N}=\left\{(x,y)\in E_d\colon 
\tfrac12(x+y)\in ([0,L]^{d-1}\times[0,N])\setminus
\left([1,L-1]^{d-1}\times[1,N-1]\right)\right\},
\end{align}
let $\enorT{d,N}\subseteq \Z^d$ be the set of vertices given by
\begin{align}
\enorT{d,N}=\left[\bigcup_{i=1}^d (\disint{1}{N-1})^{i-1}\times\{0,N\}\times (\disint{1}{N-1})^{d-i}\right],
\end{align}
let $\mathcal{N}_{d,N} $ be the set of all functions 
$v\colon\enorT{d,N}\to\R $ with $\mean{v}{\enorT{d,N}}=0$,
and
let $\mathbbm{Q}_{d,N}$ be the set of  functions $u\colon (\disint{0}{N})^{d}\to\R$ with the property that $\forall\, x\in (\disint{1}{N-1})^d\colon (\Laplace u)(x) =0$.
Then there exists 
a  function
$C\colon ([2,\infty)\cap\N)\times(1,\infty)\to\R$ such that for all 
$d,N\in [2,\infty)\cap\N$, $p\in (1,\infty)$,  $v\colon \mathcal{N}_{d,N}\to\R$ there
exists a function $u\in \mathbbm{Q}_{d,N}$ 
such that 
\begin{align}\begin{split}
&\forall\, i\in \disint{1}{d},\, x\in (\disint{1}{N-1})^{i-1}\times\{0\}\times (\disint{1}{N-1})^{d-i}\colon \disDiff{+}{i}u=v,\\
&\forall\, i\in \disint{1}{d},\, x\in (\disint{1}{N-1})^{i-1}\times\{N\}\times (\disint{1}{N-1})^{d-i}\colon \disDiff{-}{i}u=v,
\end{split}
\end{align}
(i.e., $v$ is the Neumann condition of $u$)
and such that
$\|\nabla u \|_{L^p(\efull{d,N})}\leq C(d,p)\|v\|_{L^p(\enorT{d,N})}$.
\end{theorem}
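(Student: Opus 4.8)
The plan is to transcribe, essentially line by line, the Dirichlet-case argument of \cref{r20b} and \cref{r20} into the Neumann picture: \cref{q21} takes the place of \cref{p15}, \cref{s12} the place of \cref{s01}, and the \emph{even} reflections of \cref{r15a} and \cref{r15c} the place of the \emph{odd} reflections of \cref{r15} and \cref{r15b}. Given $v\in\mathcal N_{d,N}$, I will build functions $w_i\colon\Z^{i-1}\times(\disint{0}{N})\times\Z^{d-i}\to\R$, $i\in\disint{1}{d}$, each $2(N-1)$-periodic in every coordinate $\ne i$ and discrete harmonic on $\Z^{i-1}\times(\disint{1}{N-1})\times\Z^{d-i}$, in such a way that the restriction of $u:=\sum_{i=1}^d w_i$ to $(\disint{0}{N})^d$ lies in $\mathbbm{Q}_{d,N}$ (immediate, since the box interior is contained in every slab interior) and carries Neumann datum exactly $v$.

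The recursion goes as follows. Having built $w_1,\dots,w_{i-1}$, I form the residual Neumann datum $r_i:=v-\sum_{\nu<i}(\text{normal finite difference of }w_\nu)$ on the two faces $\{x_i=0\}$ and $\{x_i=N\}$; I extend $r_i$ to a $2(N-1)$-periodic datum by the reflection $x_j\mapsto1-x_j$ (the symmetry appearing in \cref{r15a}) in each coordinate $j<i$ and by the ordinary reflection $x_j\mapsto-x_j$ in each coordinate $j>i$ (merely to periodize). Permuting coordinates so that $i$ becomes the last one and taking $L=N-1$ (so that $\tfrac14\le L/N\le4$), \cref{q21} produces $w_i$ with $\disDiff{+}{i}w_i=r_i$ on $\{x_i=0\}$, $\disDiff{-}{i}w_i=r_i$ on $\{x_i=N\}$, and $\|\nabla w_i\|_{L^p(\efull{d,N-1,N})}\le\big(\tfrac{4C_2(d,p)}{1-C_1(d,p)}+2d\,16^d\big)\|r_i\|_{L^p}$, where $C_1(d,p)<1$ and $C_2(d,p)<\infty$ by \cref{q25}. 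By \cref{r15a}, applied once for each coordinate $j<i$, the normal finite difference of $w_i$ vanishes on the faces $\{x_j\in\{0,N\}\}$ for all $j<i$; hence the functions built later do not perturb the Neumann datum already matched on faces $1,\dots,i-1$, and inductively $u=\sum_i w_i$ has Neumann datum $v$ on all $2d$ faces. (Only the \emph{normal} component is part of the Neumann datum, so it suffices that \cref{r15a} controls $\disDiff{\pm}{j}w_i$ on faces $j$, even though it says nothing about the tangential differences there.)

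There are two technical points. First, \cref{q21} is applicable at step $i$ only if $r_i$ has vanishing combined mean over the face-$i$ part of $\enorT{d,N}$ (the remaining per-face mean mismatch is then absorbed inside \cref{q21} itself by an explicit affine harmonic summand, exactly as in its proof). This is the genuinely delicate part: one verifies it by means of the discrete divergence identity --- for any function harmonic on $(\disint{1}{N-1})^d$, the sum over $\enorT{d,N}$ of its Neumann datum vanishes --- together with $\mean{v}{\enorT{d,N}}=0$ and the vanishing of $\disDiff{\pm}{j}w_\nu$ on faces $j<\nu$; whenever a combined mean still fails to vanish, it is removed beforehand by subtracting from $v$ a fixed multiple of the Neumann datum of an explicit discrete-harmonic quadratic (a combination of the functions $x_i^2-(d-1)^{-1}\sum_{j\ne i}x_j^2$, whose Neumann data span the space of admissible face-mean patterns), whose gradient over the box has $L^p(\efull{d,N})$-norm of order $N^{1+(d-1)/p}$ while the relevant multiple is of order $N^{-(d+1)}\|v\|_{L^1(\enorT{d,N})}$, so that the total contribution of all such corrections is $\le C(d,p)\|v\|_{L^p(\enorT{d,N})}$. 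Second, passing from $\|\nabla w_i\|_{L^p(\efull{d,N-1,N})}$ (a norm over the window $\I_{N-1}$) to $\|\nabla w_i\|_{L^p(\efull{d,N})}$ (over the window $\disint{0}{N}$) costs only a factor depending on $d$, since $\disint{0}{N}$ is covered by a bounded number of translates of a period --- exactly the role played by the factors $3^d$ in \cref{r20b}.

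Finally, $\|\nabla u\|_{L^p(\efull{d,N})}\le\sum_{i=1}^d\|\nabla w_i\|_{L^p(\efull{d,N})}$, each summand is $\le C(d,p)\|r_i\|_{L^p}$, and $\|r_i\|_{L^p}$ telescopes against $\|v\|_{L^p(\enorT{d,N})}$ and the $L^p$-norms of the Neumann data of $w_1,\dots,w_{i-1}$; bounding the latter crudely by $\|\nabla w_\nu\|_{L^p(\efull{d,N})}$ times an edge-counting constant and feeding in the inductive estimates, an induction parallel to the one in the proof of \cref{r20b} yields, with constants depending only on $d$ and $p$ (of the shape $d(d3^d+2)^{d+1}$ as there), the asserted bound $\|\nabla u\|_{L^p(\efull{d,N})}\le C(d,p)\|v\|_{L^p(\enorT{d,N})}$. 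I expect the mean-compatibility bookkeeping just described --- which has no counterpart in the Dirichlet case, where the strip problem is unconditionally solvable --- together with the careful handling of the half-integer reflection $x_j\mapsto1-x_j$ and of the period $2(N-1)$, to be the main obstacle; none of the individual estimates is hard once these are in place.
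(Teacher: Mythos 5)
Your overall architecture (face-by-face recursion, the strip Neumann solver of \cref{q21}/\cref{s12}, even reflections via \cref{r15a} to keep already-matched faces untouched, and the inductive norm bookkeeping as in \cref{r20b}) is the same as the paper's. The gap is exactly at the point you yourself flag as "the genuinely delicate part": the compatibility (mean-zero) condition needed to solve the strip problem at the \emph{intermediate} steps. The divergence identity together with $\mean{v}{\enorT{d,N}}=0$ and the vanishing of $\disDiff{\pm}{j}w_\nu$ on faces $j<\nu$ yields the compatibility only at the \emph{last} step $i=d$; at steps $i<d$ (indeed already at $i=1$) the combined mean of the residual over the two faces $\{x_i=0\}\cup\{x_i=N\}$ need not vanish, since only the grand total over all $2d$ faces is zero. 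The paper's device for this is precisely the \emph{sign-flipped} (odd) reflection in the not-yet-treated coordinates $j>i$ (see \eqref{r17} and \eqref{r06a}): it periodizes \emph{and} forces the window mean $\mean{v_{i}}{\I_{N-1}^{i-1}\times\{0,N\}\times\I_{N-1}^{d-i}}$ to vanish automatically, so the divergence identity is needed only when $i=d$ and no such coordinate remains. By choosing the sign-preserving reflection $x_j\mapsto-x_j$ "merely to periodize", you discard exactly the mechanism that makes the intermediate steps solvable. (Minor additional point: a reflection about the integer axis $x_j\mapsto -x_j$ is not compatible with the period $2(N-1)$ acting on data indexed by $x_j\in\disint{1}{N-1}$; the correct axis is the half-integer one $x_j\mapsto 1-x_j$, as in \eqref{r06a} and \eqref{r11a}.)

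The fallback you propose, subtracting Neumann data of discrete-harmonic quadratics such as $x_i^2-(d-1)^{-1}\sum_{j\neq i}x_j^2$, does not close this gap as described. The face-mean deficits at step $i$ depend on the means of $\disDiff{\pm}{i}w_\nu$, $\nu<i$, i.e. on functions produced \emph{during} the recursion, so the correction cannot be fixed "beforehand"; and if you insert such a quadratic at step $i$, its normal differences on the faces $j<i$ are nonzero constants, which destroys the induction hypothesis that those faces are already exactly matched (and it also alters the residuals on the faces $j>i$), so one runs into a circular bookkeeping problem that the proposal does not resolve. Everything else in your outline (the $L=N-1$ strip, the use of \cref{q25} for $C_1<1$, $C_2<\infty$, the passage from the periodic window to $\disint{0}{N}$ at the cost of a $d$-dependent factor, and the telescoping norm estimate) matches the paper and is fine; replacing the sign-preserving reflections in the later coordinates by the paper's odd reflections removes the need for any quadratic corrections and repairs the argument.
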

\cref{f01} 
illustrates the sets $\efull{d,N}$ and $\etanT{d,N}$ in \cref{r20a} above in the case $d=2$, $N=10$: the
elements of $\efull{d,N}$ are all red and blue edges and the elements of $\etanT{d,N}$ are all red vertices without the ones at four corners.
\begin{proof}[Proof of \cref{r20a}]
First, let
$d,N\in [2,\infty)\cap\N$, $p\in (1,\infty)$,  $v\in \mathcal{N}_{d,N} $ be arbitrary but fixed and we will successively construct functions $w_i:\mathbbm{Z}^{i-1}\times(\disint{0}{N})\times\mathbbm{Z}^{d-i}\to\mathbbm{R}$,
$i\in\disint{1}{d}$, 
with the property $\mathcal{A}(w_1,\ldots,w_d)$ defined as follows:
For every
$\ell\in \disint{1}{d}$ and every collection of functions
$w_i:\mathbbm{Z}^{i-1}\times(\disint{0}{N})\times\mathbbm{Z}^{d-i}\to\mathbbm{R}$,
$i\in \disint{1}{\ell}$ let
$\mathcal{A}(w_1,\ldots,w_\ell)$ be
the statement which is true if 
\begin{enumerate}[(i)]
\item it holds 
for all $i\in \disint{1}{\ell}$,
$x\in \mathbbm{Z}^{i-1}\times(\disint{1}{N-1})\times\mathbbm{Z}^{d-i}$ that
$
\Laplace w_i(x)=0,
$\item it holds 
for all $i\in(\disint{1}{\ell})$, $j\in(\disint{1}{d})\setminus \{i\}$,
$x\in \mathbbm{Z}^{i-1}\times(\disint{0}{N})\times\mathbbm{Z}^{d-i}$
that
\begin{align}
w_i(x)=w_i(x+2(N-1)\unit{d}{j}),
\end{align}
\item it holds 
for all $i\in \disint{1}{\ell}$,
$x\in (\disint{1}{N-1})^{i-1}\times\{0\}\times (\disint{1}{N-1})^{d-i}$ that
\begin{align}
v(x)=\sum_{\nu=1}^{i}(\disDiff{+}{i} w_\nu)(x),\label{s20}
\end{align}
\item it holds 
for all $i\in \disint{1}{\ell}$,
$x\in (\disint{1}{N-1})^{i-1}\times\{N\}\times (\disint{1}{N-1})^{d-i}$ that
\begin{align}\label{s21}
v(x)=\sum_{\nu=1}^{i}(\disDiff{-}{i} w_\nu)(x),
\end{align}
\item it holds
for all $i\in \disint{1}{\ell}$, $j\in \disint{1}{i-1}$,
$x\in (\disint{1}{N-1})^{j-1}\times\{0\}\times (\disint{1}{N-1})^{d-j}$ that 
$(\disDiff{+}{j} w_i)(x)=0,$ and if
\item it holds
for all $i\in \disint{1}{\ell}$, $j\in \disint{1}{i-1}$,
$x\in (\disint{1}{N-1})^{j-1}\times\{N\}\times (\disint{1}{N-1})^{d-j}$ that 
$
(\disDiff{-}{j} w_i)(x)=0,
$
\end{enumerate}
As a first step, let $v_1\colon \{0,N\}\times\Z^{d-1}\to\R$ be the function which satisfies that
\begin{align}
&\forall\, x\in \{0,N\}\times(\disint{1}{N-1})^{d-1}\colon v_1(x)=v(x),\label{r05a}\\
& \forall\, j\in \disint{2}{d},\,x\in \{0,N\}\times\left(\disint{-(N-2)}{N-1}\right)^{d-1}\colon v_1(x)=-v_1(x+\unit{d}{j}-2x_j\unit{d}{j}),\label{r06a}\\
&\forall\, j\in \disint{2}{d},\, x\in \{0,N\}\times\Z^{d-1}\colon v_1(x)=v_1(x+2(N-1)\unit{d}{j}).\label{r07a}
\end{align}
This implies that 
$\mean{v_1}{\{0,N\}\times\I_{N-1}^{d-1}}=0$.
Let
$w_1\colon (\disint{0}{N})\times\mathbbm{Z}^{d-1}\to\mathbbm{R}$  be a function
whose existence is ensured by \cref{s11} and
which satisfies that
\begin{align}\begin{split}\label{r08a}
&\forall\, j\in \disint{2}{d},\, x\in \{0,N\}\times\Z^{d-1}\colon\quad w_1(x)=w_1(x+2(N-1)\unit{d}{j})\\
&\forall\, x\in \{0\}\times \Z^{d-1} \colon\quad (\disDiff{+}{i}w_1)(x)=v_1(x),\\
&\forall\, x\in \{N\}\times \Z^{d-1} \colon\quad (\disDiff{-}{i}w_1)(x)=v_1(x),\\
&\forall\, x\in (\disint{1}{N-1})\times \Z^{d-1} \colon\quad (\Laplace w_1)(x)=0.
\end{split}
\end{align}
Observe that $\mathcal{A}(w_1)$ holds. Next, let $\ell\in \disint{1}{d-1}$ and suppose that we have constructed 
$w_1,\ldots,w_\ell$ such that $\mathcal{A}(w_1,\ldots,w_\ell)$ holds.
Let 
$
v_{\ell+1}\colon \Z^{\ell}\times\{0,N\}\times\Z^{d-\ell-1}\to\R
$
be the function which satisfies that
\begin{enumerate}[i)] 
\item 
for all $x\in (\disint{1}{N-1})^{\ell}\times\{0\}\times(\disint{1}{N-1})^{d-\ell-1}$
it holds that
\begin{align}
v_{\ell+1}(x)=v(x)-\sum_{\nu=1}^{\ell}(\disDiff{+}{\ell+1}w_\nu)(x)\label{r10a} 
\end{align}
\item 
for all $x\in (\disint{1}{N-1})^{\ell}\times\{N\}\times(\disint{1}{N-1})^{d-\ell-1}$
it holds that
\begin{align}
v_{\ell+1}(x)=v(x)-\sum_{\nu=1}^{\ell}(\disDiff{-}{\ell+1}w_\nu)(x),
\label{r10aa} 
\end{align}

\item for all 
$j\in(\disint{1}{d})\setminus \{\ell+1\}$,
$x\in \mathbbm{Z}^{\ell}\times\{0,N\}\times\mathbbm{Z}^{d-\ell-1}$
it holds that
\begin{align}
v_{\ell+1}(x)=v_{\ell+1}(x+2(N-1)\mathbf{e}_j)
\end{align}
\item for all
$j\in \disint{1}{\ell},\,x\in (\disint{-(N-2)}{N-1})^{\ell}\times \{0,N\}\times(\disint{-(N-2)}{N-1})^{d-\ell-1}$ it holds that
\begin{align}
 v_{\ell+1}(x)=v_{\ell+1}(x+\unit{d}{j}-2x_j\unit{d}{j})\label{r11a},
\end{align}
and
\item for all
$ j\in \disint{\ell+2}{d}$, $x\in (\disint{-(N-2)}{N-1})^{\ell}\times \{0,N\}\times(\disint{-(N-2)}{N-1})^{d-\ell-1}$ it holds that
\begin{align}\label{r17}
 v_{\ell+1}(x)=-v_{\ell+1}(x+\unit{d}{j}-2x_j\unit{d}{j}).
\end{align}
\end{enumerate}
Next, we show that 
\begin{align}\mean{v_{\ell+1}}{\I_{N-1}^{\ell}\times\{0,N\}\times\I_{N-1}^{d-\ell-1}}=0.\label{r19}
\end{align}
To this end we distinguish two cases: $\ell+1=d$ and $\ell+1<1$. First, when $\ell+1<d$, then the odd reflection in \eqref{r17} implies \eqref{r19}. Next, we consider the case $\ell+1=d$.
Note that in this case
$\disint{\ell+2}{d}=\emptyset$ and  we therefore cannot use \eqref{r17}.
In this step, to shorten the notation, 
for every $i\in\disint{1}{d}$, $j\in \{0,N\}$
let $F_i^{j}$ be the set given by
\begin{align}
F_i^{j}= (\disint{1}{N-1})^{i}\times\{j\}\times(\disint{1}{N-1})^{d-i-1}.
\end{align}
The fact that $v\in \mathcal{N}_{d,N}$ implies that $\mean{v}{\enorT{d,N}}=0$ and hence that
\begin{align}
\sum_{i=1}^d\left[\sum_{x\in F_i^0}v(x)+\sum_{x\in F_i^N}
v(x)
\right]=0.\label{s23}
\end{align}
The fact that
$\forall\, j\in\disint{1}{d-1},\, x\in (\disint{1}{N-1})^d\colon (\Laplace w_j)(x)=0$, following from the induction hypothesis and the case assumption $\ell+1=d$,
shows that the Neumann conditions of $w_j$, $j\in \disint{1}{d-1}$, on $(\disint{0}{N})^d$ have vanishing means, i.e., it holds for all $j\in\disint{1}{d-1}$ that
\begin{align}
\sum_{i=1}^d\left[\sum_{x\in F_i^0}(\disDiff{+}{i}w_j)+\sum_{x\in F_i^N}
(\disDiff{-}{i}w_j)(x)
\right]
=0.
\end{align}
This and \eqref{s23} prove that
\begin{align}
\sum_{i=1}^d\left[\sum_{x\in F_i^0}
\left(v(x)-\sum_{j=1}^{d-1}(\disDiff{+}{i}w_j)(x)\right)+
\sum_{x\in F_i^N}\left(v(x)-\sum_{j=1}^{d-1}
(\disDiff{-}{i}w_j)(x)\right)
\right]=
0.\label{s22}
\end{align}
Furthermore,  \eqref{s20} and \eqref{s21},
following from the induction hypothesis,
 imply that
\begin{align}
\sum_{i=1}^{d-1}\left[\sum_{x\in F_i^0}
\left(v(x)-\sum_{j=1}^{d-1}(\disDiff{+}{i}w_j)(x)\right)+
\sum_{x\in F_i^N}\left(v(x)-\sum_{j=1}^{d-1}
(\disDiff{-}{i}w_j)(x)\right)
\right]=
0.
\end{align}
This, \eqref{r10a}, \eqref{r10aa} and \eqref{s22} show that
\begin{align}
\sum_{x\in F_d^0\cup F_d^N}v_d(x)=
\sum_{x\in F_d^0}
\left(v(x)-\sum_{j=1}^{d-1}(\disDiff{+}{d}w_j)(x)\right)+
\sum_{x\in F_d^N}\left(v(x)-\sum_{j=1}^{d-1}
(\disDiff{-}{d}w_j)(x)\right)=0.
\end{align}
This and \eqref{r11a} complete the proof of \eqref{r19}. Now, \eqref{r19} and \cref{s11} imply that there exists 
$w_{\ell+1}\colon \Z^{\ell}\times (\disint{0}{N})\times  \Z^{d-\ell-1}\to\R $ such that
\begin{enumerate}[i)] 
\item 
for all $x\in (\disint{1}{N-1})^{\ell}\times\{0\}\times(\disint{1}{N-1})^{d-\ell-1}$
it holds that
\begin{align}\label{s18}
(\disDiff{+}{\ell+1}w_{\ell+1})(x)=v_{\ell+1}(x)
\end{align}
\item 
for all $x\in (\disint{1}{N-1})^{\ell}\times\{N\}\times(\disint{1}{N-1})^{d-\ell-1}$
it holds that
\begin{align}\label{s19}
(\disDiff{-}{\ell+1}w_{\ell+1})(x)=v_{\ell+1}(x)
\end{align}
\item for all 
$j\in(\disint{1}{d})\setminus \{\ell+1\}$,
$x\in \mathbbm{Z}^{\ell}\times\{0,N\}\times\mathbbm{Z}^{d-\ell-1}$
it holds that
\begin{align}
w_{\ell+1}(x)=w_{\ell+1}(x+2(N-1)\unit{d}{j}),\label{s30}
\end{align}
and 
\item 
for all $i\in \disint{1}{\ell}$,
$x\in \mathbbm{Z}^{i-1}\times(\disint{1}{N-1})\times\mathbbm{Z}^{d-i}$ that
\begin{align}\label{s31}
(\Laplace w_i)(x)=0.
\end{align}
\end{enumerate}
Observe that \eqref{s18}, \eqref{s19}, the even reflection in \eqref{r11a}, and \cref{r15a} show that
\begin{align}\begin{split}
&\forall\, j\in \disint{1}{\ell},\,
x\in (\disint{1}{N-1})^{j-1}\times\{0\}\times (\disint{1}{N-1})^{d-j}\colon\quad (\disDiff{+}{j} w_{\ell+1})(x)=0,\\
&\forall\, j\in \disint{1}{\ell},\,
x\in (\disint{1}{N-1})^{j-1}\times\{N\}\times (\disint{1}{N-1})^{d-j}\colon\quad (\disDiff{-}{j} w_{\ell+1})(x)=0.\end{split}
\end{align}
Combining \eqref{s18}, \eqref{s19}, \eqref{r10a}, and \eqref{r10aa}
yields that
\begin{align}\begin{split}
\forall\, x\in (\disint{1}{N-1})^{\ell}\times\{0\}\times (\disint{1}{N-1})^{d-\ell-1}\colon\quad 
v(x)=\sum_{\nu=1}^{\ell+1}(\disDiff{+}{i} w_\nu)(x),\\
\forall\, x\in (\disint{1}{N-1})^{\ell}\times\{N\}\times (\disint{1}{N-1})^{d-\ell-1}\colon\quad
v(x)=\sum_{\nu=1}^{\ell+1}(\disDiff{-}{i} w_\nu)(x).
\end{split}\end{align}
This, \eqref{s30},  \eqref{s31}, and the induction hypothesis $\mathcal{A}(w_1,\ldots,w_\ell)$ imply that 
$\mathcal{A}(w_1,\ldots,w_{\ell+1})$ holds. We have thus recursively constructed a sequence $w_1,\ldots,w_d$ with $\mathcal{A}(w_1,\ldots,w_d)$. 
Now, let $u\colon (\disint{0}{N})^d\to\R$ be the function which satisfies for all 
$x\in (\disint{0}{N})^d$ that $u(x)=\sum_{i=1}^{d}w_i(x)$. 
The property $ \mathcal{A}(w_1,\ldots,w_d)$ then implies that $u\in \mathbbm{Q}_{d,N}$, 
\begin{align}
&\forall\, i\in \disint{1}{d},\, x\in (\disint{1}{N-1})^{i-1}\times\{0\}\times (\disint{1}{N-1})^{d-i}\colon\quad (\disDiff{+}{i}u)(x)=v(x),
\end{align} and
\begin{align}
&\forall\, i\in \disint{1}{d},\, x\in (\disint{1}{N-1})^{i-1}\times\{N\}\times (\disint{1}{N-1})^{d-i}\colon \quad (\disDiff{-}{i}u)(x)=v(x).
\end{align}
The rest of the proof 
is now clear. We only give a sketch.
We write $C(d,p)$ to denote possibly different real numbers that only depend on $d,p$ and write
for $i\in\disint{1}{d}$ to lighten the notation
\begin{align}
F_i^{j}= (\disint{1}{N-1})^{i}\times\{j\}\times(\disint{1}{N-1})^{d-i-1},\quad 
\hat F_i^{j}= \I_{N-1}^{i}\times\{j\}\times\I_{N-1}^{d-i-1}.
\end{align}
Then \cref{s12} shows that
\begin{align}
\|\nabla u\|_{L^p(\efull{d,L,N})}
&\leq \sum_{i=1}^d
\|\nabla w_i\|_{L^p(\efull{d,L,N})}
\leq C(d,p)\sum_{i=1}^d
\left[
\left\|\disDiff{+}{d} w_i\right\|_{L^p(\hat{F}_i^0) }+
\left\|\disDiff{-}{d} w_i\right\|_{L^p(\hat{F}_i^N) }
\right]\\
&\leq C(d,p)\sum_{i=1}^d
\left[
\left\|v\right\|_{L^p({F}_i^0) }+
\left\|v\right\|_{L^p({F}_i^N) }
\right]\leq C(d,p)\|v\|_{L^p(\enorT{d,N})}.
\end{align}
The proof of \cref{r20a} is thus completed.
\end{proof}

\appendix
\section{Appendix}
For convenience we include here some simple results.
\subsection{Some basic results}
\begin{lemma}[Complex square root]\label{y07}
There exists a unique function $R\in C\left( \C\setminus (-\infty,0), \C\right)$ such that
 $R\upharpoonright_{\C\setminus(-\infty,0]}$ is holomorphic  and
 $\forall\,  z\in \C\setminus( -\infty,0)\colon R(z)^2={z}$.
\end{lemma}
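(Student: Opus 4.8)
The plan is to prove existence by writing down the principal branch of the square root explicitly, and uniqueness by a connectedness argument on the slit plane.

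\textbf{Existence.} Let $\operatorname{Log}\colon\C\setminus(-\infty,0]\to\C$ be the principal logarithm, i.e.\ the holomorphic function determined by $\operatorname{Log}(re^{\ima\theta})=\ln r+\ima\theta$ for $r\in(0,\infty)$ and $\theta\in(-\pi,\pi)$; its holomorphy and the identity $\exp(\operatorname{Log}z)=z$ on $\C\setminus(-\infty,0]$ are classical. I would then define $R$ by $R(z)=\exp\!\big(\tfrac12\operatorname{Log}z\big)$ for $z\in\C\setminus(-\infty,0]$ and $R(0)=0$. As a composition of holomorphic maps, $R$ is holomorphic on $\C\setminus(-\infty,0]$, and there $R(z)^2=\exp(\operatorname{Log}z)=z$; since also $R(0)^2=0$, the identity $R(z)^2=z$ holds on all of $\C\setminus(-\infty,0)$. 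The only remaining point is continuity of $R$ at the origin, and this follows at once from $|R(z)|=\exp\!\big(\tfrac12\Re\operatorname{Log}z\big)=|z|^{1/2}$, which gives $|R(z)-R(0)|=|z|^{1/2}\to0$ as $z\to0$. Hence $R$ has all the asserted properties.

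\textbf{Uniqueness.} Suppose $R_1,R_2\in C(\C\setminus(-\infty,0),\C)$ both satisfy the stated conditions. On the slit plane $D:=\C\setminus(-\infty,0]$, which is path-connected (it is the continuous image of the connected set $(0,\infty)\times(-\pi,\pi)$ under $(r,\theta)\mapsto re^{\ima\theta}$), both $R_1$ and $R_2$ are continuous and nowhere zero, since $R_j(z)^2=z\neq0$ there. Hence $h:=R_1/R_2$ is a continuous function on $D$ with $h(z)^2=1$ for all $z\in D$, so $h$ takes values in the discrete set $\{-1,1\}$ and is therefore constant. Evaluating at $z=1$ and using the normalization $R(1)=1$ — which is what the phrase ``\emph{the} complex square root'' encodes, and the convention used implicitly in \cref{y15} and thereafter — forces $h\equiv h(1)=1$, so $R_1=R_2$ on $D$; together with $R_1(0)=R_2(0)=0$ this yields $R_1=R_2$ on $\C\setminus(-\infty,0)$. (Without such a normalization the conditions determine $R$ only up to an overall sign, the two solutions being $\pm R$.)

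\textbf{Main obstacle.} There is no substantial difficulty here: the single point requiring care is continuity at the branch point $0$, which is settled by the elementary estimate $|R(z)|=|z|^{1/2}$ above. The lemma is recorded only so that the expressions $R(z+1)R(z-1)$ appearing from \cref{y15} onward are unambiguously defined.
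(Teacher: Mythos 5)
Your proof is correct and, for existence, follows the same route as the paper: both define $R(z)=\exp(\tfrac12\log z)$ via the principal logarithm with $R(0)=0$ and get holomorphy on the slit plane by composition. The differences are at the two ends. For continuity at $0$, you use the direct estimate $|R(z)|=|z|^{1/2}\to 0$, which is cleaner than the paper's argument (the paper proceeds by contradiction, extracting a convergent subsequence via Bolzano--Weierstrass from a hypothetical sequence with $|R(z_n)|>\epsilon$ and deriving $\lim|R(z_{n_k})|^2=\lim|z_{n_k}|=0$); your one-line bound buys the same conclusion with no compactness argument. For uniqueness, you supply an argument the paper simply omits: the paper's proof establishes existence only and never addresses the word ``unique'' in the statement. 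Your connectedness argument ($h=R_1/R_2$ continuous, $h^2\equiv 1$ on the connected slit plane, hence $h\equiv\pm1$) is the right tool, and you are also right to flag that, as literally stated, the conditions of the lemma determine $R$ only up to the global sign, so strict uniqueness needs a normalization such as $R(1)=1$ (equivalently, $R>0$ on $(0,\infty)$), which is indeed the branch used implicitly in \cref{y15}. In short: same existence construction, a simpler continuity step, and an added (and honest) treatment of the uniqueness claim that the paper leaves unproved.
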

\begin{proof}[Proof of \cref{y07}]
Let $\log\colon \C\setminus\{0\}\to\C$ be 
the principle branch of the logarithm, i.e., it holds 
for all $z\in\C\setminus\{0\}$ that $\exp(\log z)=z $ and $-\pi\leq \Im (\log z) \leq \pi $, where $\Im$ denotes the imaginary part
(see, e.g., Theorem I.2.11 in Freitag and Busam~\cite{FB05}), and let  $R\colon  \C\setminus(-\infty,0)\to\C$ be given by
\begin{align}
\forall\, z\in \C\setminus (-\infty,0]\colon\quad R(z)= \exp(\tfrac{1}{2}\log z)\quad\text{and}\quad
R(0)=0 .
\end{align}
An elementary property of the function $\exp$ then shows that
\begin{align}\label{y19}
\forall\, z\in \C\setminus(-\infty,0)\colon\quad  R(z)^2=z. 
\end{align}
Furthermore, the fact that $\exp$ and $\log \upharpoonright_{\C\setminus( -\infty,0]}$ are holomorphic (cf.  Theorem I.5.8 in \cite{FB05}) and the chain rule then show that $R\upharpoonright_{\C\setminus( -\infty,0]}$ is holomorphic. 
Finally,  we prove by contradiction that $R$ is continuous at $0$. Suppose there exist $(z_n)_{n\in\N}\subseteq \C\setminus (-\infty,0)$, $\epsilon\in (0,1)$ such that $\lim_{n\to\infty} z_n=0$ and 
$\forall\, n\in\N\colon |R(z_n)|>\epsilon $ and without lost of generality assume for all $n\in \N$ that
$|z_n| <1$.
Then \eqref{y19} implies for all
$ n\in\N$ that $ |R(z_n)|^2=|z_n|\leq 1$. The Bolzano theorem hence proves that there exists a sequence $(n_k)_{k\in\N}\subseteq\N$ such that
$(R(z_{n_k}))_{k\in\N}$ converges. This, \eqref{y19}, and
the fact that  $\lim_{n\to\infty} z_n=0$ then
 demonstrate that 
$\lim_{k\to\infty}|R(z_{n_k})|^2= \lim_{k\to\infty}|z_{n_k}|=0$.  This contradicts the assumption that $\forall\, n\in\N\colon |R(z_n)|>\epsilon $. 
Thus, $R$ is continuous at $0$. The proof of \cref{y07} is thus completed.
\end{proof}
\begin{lemma}\label{z05}
\begin{enumerate}[i)]
\item \label{z05a}It holds that
$
0<\inf_{s\in [-\pi,\pi]\setminus \{0\}}\frac{1-\cos (s)}{s^2}<
\sup_{s\in [-\pi,\pi]\setminus \{0\}}\frac{1-\cos (s)}{s^2}<\infty
$,
\item\label{z05b} it holds that
$0<
\inf_{s\in [-\pi,\pi]\setminus\{0\}}\left|\frac{e^{-\ima s}-1}{s}\right|\leq 
\sup_{s\in [-\pi,\pi]\setminus\{0\}}\left|\frac{e^{-\ima s}-1}{s}\right|<\infty$,
and
\item \label{z05d}
it holds that
$\sup_{s\in [-\pi,\pi]\setminus\{0\}}\left|s^2\frac{d}{ds}\left(\frac{1}{e^{-\ima s}-1}\right)\right|<\infty$.
\end{enumerate}
\end{lemma}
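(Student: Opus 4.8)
The plan is to reduce all three claims to a single fact about the auxiliary function $g\colon[-\pi,\pi]\to\R$ defined by $g(s)=(1-\cos s)/s^2$ for $s\neq0$ and $g(0)=\tfrac12$. First I would observe that $g$ is continuous on the compact interval $[-\pi,\pi]$: continuity away from the origin is clear, and continuity at $0$ follows from the Taylor expansion $1-\cos s=\tfrac{s^2}{2}-\tfrac{s^4}{24}+\dots$ (equivalently, from l'H\^opital's rule). Moreover $g$ is strictly positive: for $s\in[-\pi,\pi]\setminus\{0\}$ we have $\cos s<1$ and $s^2>0$, hence $g(s)>0$, and $g(0)=\tfrac12>0$. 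The extreme value theorem then provides $m_0,M_0\in(0,\infty)$ with $m_0=\min_{s\in[-\pi,\pi]}g(s)$ and $M_0=\max_{s\in[-\pi,\pi]}g(s)$, so that $m_0\leq g(s)\leq M_0$ for every $s\in[-\pi,\pi]\setminus\{0\}$; this already yields $0<\inf_{s\in[-\pi,\pi]\setminus\{0\}}g(s)$ and $\sup_{s\in[-\pi,\pi]\setminus\{0\}}g(s)<\infty$. For the strict middle inequality in \cref{z05a} I would simply note that $g(\pi/2)=4/\pi^2$ and $g(\pi)=2/\pi^2$ differ, so the infimum (being $\leq 2/\pi^2$) is strictly smaller than the supremum (being $\geq 4/\pi^2$).

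For \cref{z05b} I would compute, for $s\in[-\pi,\pi]\setminus\{0\}$, that $|e^{-\ima s}-1|^2=(\cos s-1)^2+\sin^2 s=2-2\cos s=2(1-\cos s)$, so that $\bigl|\tfrac{e^{-\ima s}-1}{s}\bigr|=\sqrt{2\,g(s)}$; the bounds $0<\sqrt{2m_0}\leq\bigl|\tfrac{e^{-\ima s}-1}{s}\bigr|\leq\sqrt{2M_0}<\infty$ then follow at once from \cref{z05a}.

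For \cref{z05d} I would differentiate, obtaining $\tfrac{d}{ds}\bigl((e^{-\ima s}-1)^{-1}\bigr)=\ima e^{-\ima s}(e^{-\ima s}-1)^{-2}$, and take moduli, using $|e^{-\ima s}|=1$ together with the identity above, to get $\bigl|s^2\tfrac{d}{ds}((e^{-\ima s}-1)^{-1})\bigr|=\tfrac{s^2}{|e^{-\ima s}-1|^2}=\tfrac{s^2}{2(1-\cos s)}=\tfrac{1}{2g(s)}\leq\tfrac{1}{2m_0}<\infty$, again by \cref{z05a}.

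The whole argument is elementary and I do not expect any genuine obstacle; the only points requiring a little care are the continuous extension and the strict positivity of $g$ at the origin (so that the extreme value theorem delivers a \emph{positive} minimum), and the elementary observation that taking the modulus of the complex quantities in \cref{z05b} and \cref{z05d} collapses both of them onto the single function $g$, at which point \cref{z05a} does all the work.
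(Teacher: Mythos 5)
Your proposal is correct and follows essentially the same route as the paper: extend the quotient continuously to $s=0$, invoke positivity and the extreme value theorem on the compact interval $[-\pi,\pi]$, and deduce all three bounds. The only differences are cosmetic and in your favour — by using the identity $|e^{-\ima s}-1|^2=2(1-\cos s)$ you collapse \cref{z05b} and \cref{z05d} onto the single function $g$ instead of the paper's second auxiliary function $h$, and you make explicit two details the paper leaves implicit, namely the strict middle inequality in \cref{z05a} (via the values $g(\pi/2)$ and $g(\pi)$) and the derivative computation behind \cref{z05d}.
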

\begin{proof}[Proof of \cref{z05}]Throughout the proof let $g,h\colon  [-\pi,\pi]\to \R$ be the functions which satisfy 
for all $s\in [-\pi,\pi]\setminus\{0\}$ that
\begin{align}
g(s)= \frac{1-\cos (s)}{s^2},\quad
h(s)=\left|\frac{e^{-\ima s}-1}{s}\right|,\quad
g(0)=\frac12, \quad\text{and}\quad h(0)=1.
\end{align}
The fact that
$\lim_{s\to0}\frac{1-\cos (s)}{s^2}= \frac{1}{2}$ and the fact that
$\left|\frac{e^{-\ima s}-1}{s}\right|=1$
show that $g,h\in C([-\pi,\pi],\R)$. The extreme value theorem and the fact  that  $\forall\, s\in [-\pi,\pi]\colon\min\{g(s),h(s)\}> 0$ then imply that
\begin{align}
0<\inf_{s\in [-\pi,\pi]}g(s)\leq\inf_{s\in [-\pi,\pi]\setminus \{0\}}\tfrac{1-\cos (s)}{s^2} \leq \sup_{s\in [-\pi,\pi]\setminus \{0\}}\tfrac{1-\cos (s)}{s^2} \leq 
\sup_{s\in [-\pi,\pi]}g(s)<\infty.
\end{align}and
\begin{align}
0<\inf_{s\in [-\pi,\pi]}h(s)\leq\inf_{s\in [-\pi,\pi]\setminus \{0\}}\left|\tfrac{e^{-\ima s}-1}{s}\right| \leq \sup_{s\in [-\pi,\pi]\setminus \{0\}}\left|\tfrac{e^{-\ima s}-1}{s}\right| \leq 
\sup_{s\in [-\pi,\pi]}h(s)<\infty.
\end{align}
This proves \cref{z05a,z05b}. Finally, \cref{z05d} follows from \cref{z05b}. The proof of \cref{z05} is thus completed.
\end{proof}
\subsection{The simple random walk representation without martingale theory}\label{c01}
For convenience of the reader we include an elementary proof without using martingales.
\begin{proof}[Proof of \cref{z01a} in \cref{z01} without martingale theory]
First, it holds for all $n\in \N$ that $\{S_{n-1}=x,T>{n-1}\}$ depends only on $X_1,\ldots,X_{n-1}$ and is therefore independent of $X_n$. 
The fact that 
$\forall\, n\in \N\colon S_{n}=S_{n-1}+X_n$, the assumption on the distribution of $X_n$, $n\in\N$, and the assumption that $\forall\, x\in \Z^{d-1}\times \N\colon \Laplace u(x)=0$ imply for all $x\in \Z^{d-1}\times \N$, $n\in\N$ that
\begin{align}
\begin{split}
&\E\left [u(S_{n})\1 _{S_{n-1}=x}\1_{T>n-1}\right]=
\E\left[u(S_{n-1}+X_n)\1 _{S_{n-1}=x}\1_{T>n-1}\right]\\
&=
\E\left[u(x+X_n)\1 _{S_{n-1}=x}\1_{T>n-1}\right]
= \E\left[u(x+X_n)\right]\P\left(S_{n-1}=x,T>n-1\right)\\
&=\frac{1}{2d}\left[\sum_{i=1}^{d}u(x+\unit{d}{i})+u(x-\unit{d}{i})\right]\P\left(S_{n-1}=x,T>n-1\right)\\
&=u(x)\P(S_{n-1}=x,T>n-1).
\end{split}\label{a02}\end{align}
This and the fact that $\forall\, x\in \Z^{d-1}\times\{0\},\,n\in\N\colon \P(S_{n-1}=x,T>n-1)=0$ prove for all $x\in\Z^{d-1}\times\N_0$,
$n\in\N$ that
$\E\left [u(S_{n})\1 _{S_{n-1}=x}\1_{T>n-1}\right]=u(x)\P(S_{n-1}=x,T>n-1)$
and
\begin{align}\begin{split}
&\E \left[u(S_{n\wedge T})\1 _{S_{(n-1)\wedge T=x}}\right]\\
&=
\E\left [u(S_{n\wedge T})\1 _{S_{(n-1)\wedge T}=x}\1_{T>n-1}\right]+
\E \left[u(S_{n\wedge T})\1 _{S_{(n-1)\wedge T}=x}\1_{T\leq n-1}\right]\\
&=\E \left[u(S_{n})\1 _{S_{n-1}=x}\1_{T>n-1}\right]+
\E\left[u(S_{T})\1 _{S_{T}=x}\1_{T\leq n-1}\right]\\
&=\E\left[u(S_{n})\1 _{S_{n-1}=x}\1_{T>n-1}\right]+
u(x)\P\left (S_{T}=x,T\leq n-1\right)\\
&
=u(x)\left(\P(S_{n-1}=x,T>n-1)+\P (S_{T}=x,T\leq n-1)\right)\\
&=u(x)\left(\P(S_{(n-1)\wedge T}=x,T>n-1)+\P (S_{(n-1)\wedge T}=x,T\leq n-1)\right)\\
&=u(x)\P(S_{(n-1)\wedge T}=x).
\end{split}.
\end{align}
This and the assumption that $u$ is bounded yield that
\begin{align}\begin{split}
\E \left[u(S_{n\wedge T})\right]&=
\E\left [u(S_{n\wedge T})\sum_{x\in \Z^{d-1}\times\N_0}\1 _{S_{(n-1)\wedge T=x}}\right]=
\sum_{x\in \Z^{d-1}\times\N_0}\E \left[u(S_{n\wedge T})\1 _{S_{(n-1)\wedge T=x}}\right]\\
&=\sum_{x\in \Z^{d-1}\times\N_0}u(x)\P(S_{(n-1)\wedge T}=x)=\E [u(S_{(n-1)\wedge T})].\end{split}
\end{align}
An induction argument shows for all $n\in\N$, $(x,y)\in\Z^{d-1}\times\N_0$ that $\E \left[u(S_{n\wedge T})\right]=\E [u(S_0)]$ and
$\E \left[u(S_{n\wedge T})|S_0=(x,y)\right]=\E [u(S_0)|S_0=(x,y)]=u(x,y)$.
The bounded convergence theorem then ensures with $n $ tending to infinity that for  all $x\in \Z^{d-1}$, $y\in\N_0$ it holds that
$\E[u(S_T)|S_0=(x,y)]= u(x,y)$. This (with 
$u\defeq u(\cdot +(x,0))$ and $(x,y)\defeq (0,y) $ for $x\in \Z^{d-1}$, $y\in\N_0$) establishes that for all $(x,y)\in \Z^{d-1}\times\N_0$ it holds that
$u(x,y)
=\mathbbm{E}\left[u\left(S_T+(x,0)\right)\middle| S_0= (0,y)\right]$. The proof is thus completed.
\end{proof}
\subsection{An interpolation argument}
\cref{r21b} below gives a version of the Marcinkiewicz interpolation theorem in 
\emph{the discrete case}.  Its formulation
is 
unfortunately
not found in the literature, although its proof is quite routine. 
We follow the proof of Theorem~9.1 in the book by DiBenedetto~\cite{DiB02}.
\begin{lemma}[$L_w^ p$-$L^\infty$-interpolation]\label{r21b}
Let 
$N\in\N$,  let
$E\subseteq \Z^N$ be a finite set,  
let
$p,r\in [1,\infty)$,
$N_p,N_\infty\in (0,\infty)$, assume that
$1\leq p<r<\infty$, 
let
$T\colon E^\R\to E^\R$ be linear and satisfy for all $f\colon E\to\R$ that
\begin{align}\label{r21}
\left|\{y\in E\colon |(T(f))(y)|>t\}\right|\leq(N_p/t)^p
\quad\text{and}\quad
\|T(f)\|_{L^\infty(E)}\leq N_\infty \|f\|_{L^\infty(E)}
.
\end{align}
Then it holds for all $f\colon E\to\R$ that
\begin{align}\label{r24}
\|T(f)\|_{L^r(E)}\leq 2\left(\frac{r}{r-p}\right)^{1/r} N_p^{\frac{p}{r}} N_\infty^{1-\frac{p}{r}}\|f\|_{L^r(E)}.
\end{align}
\end{lemma}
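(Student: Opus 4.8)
The plan is to run the classical Marcinkiewicz interpolation argument in the limiting case in which the upper exponent is infinite, following the proof of Theorem~9.1 in DiBenedetto~\cite{DiB02}. Because $E$ is finite and every ``integral'' below is really a finite sum against the counting measure, there are no measurability or convergence subtleties; the only genuinely analytic ingredient is the convergence of a one-variable integral, and this is exactly the point at which the hypothesis $p<r$ is used. The starting point is the layer-cake identity: for every $g\colon E\to\R$,
\[
\|g\|_{L^r(E)}^r=\sum_{y\in E}|g(y)|^r=r\int_0^{\infty}t^{r-1}\,\bigl|\{y\in E\colon |g(y)|>t\}\bigr|\,dt ,
\]
which holds because $t\mapsto\bigl|\{|g|>t\}\bigr|$ is a nonnegative step function that vanishes for $t\geq\|g\|_{L^\infty(E)}$. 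I apply this with $g=T(f)$, so that everything reduces to bounding the distribution function of $T(f)$.

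Fix $t>0$ and decompose $f=f_t^{\mathrm{hi}}+f_t^{\mathrm{lo}}$ with $f_t^{\mathrm{hi}}=f\cdot\1_{\{|f|>\gamma t\}}$ and $f_t^{\mathrm{lo}}=f\cdot\1_{\{|f|\leq\gamma t\}}$, where $\gamma\in(0,\infty)$ will be fixed in a moment. By linearity of $T$ and the triangle inequality, $\{|T(f)|>t\}\subseteq\{|T(f_t^{\mathrm{hi}})|>t/2\}\cup\{|T(f_t^{\mathrm{lo}})|>t/2\}$. The low part satisfies $\|f_t^{\mathrm{lo}}\|_{L^\infty(E)}\leq\gamma t$, so the second estimate in \eqref{r21} gives $\|T(f_t^{\mathrm{lo}})\|_{L^\infty(E)}\leq N_\infty\gamma t$; choosing $\gamma=1/(2N_\infty)$ makes this at most $t/2$, whence the set $\{|T(f_t^{\mathrm{lo}})|>t/2\}$ is empty. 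Therefore $\bigl|\{|T(f)|>t\}\bigr|\leq\bigl|\{|T(f_t^{\mathrm{hi}})|>t/2\}\bigr|$, and the weak-type bound in \eqref{r21} applied to $f_t^{\mathrm{hi}}$ at height $t/2$ (read in its homogeneous form, i.e.\ with the factor $\|f_t^{\mathrm{hi}}\|_{L^p(E)}^p$ on the right-hand side, which is what makes the powers in \eqref{r24} match) controls this by $(2N_p/t)^p\sum_{y\colon|f(y)|>\gamma t}|f(y)|^p$.

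Substituting this bound into the layer-cake identity and exchanging the finite sum over $y$ with the integral over $t$ by Tonelli's theorem, the inner integral becomes $\int_0^{|f(y)|/\gamma}t^{r-1-p}\,dt=\frac{(|f(y)|/\gamma)^{\,r-p}}{r-p}$, which is finite precisely because $r-p>0$. Summing over $y$ reconstitutes $\|f\|_{L^r(E)}^r$, and collecting the numerical factors — the factor $r$ from the layer-cake identity, the factor $2^{p}$ from evaluating the weak bound at height $t/2$, and the factor $\gamma^{-(r-p)}=(2N_\infty)^{r-p}$ coming from the substitution — yields
\[
\|T(f)\|_{L^r(E)}^r\leq\frac{r}{r-p}\,2^{r}\,N_p^{\,p}\,N_\infty^{\,r-p}\,\|f\|_{L^r(E)}^r ,
\]
which is exactly \eqref{r24} after taking $r$-th roots. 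I do not expect a substantive obstacle here: the argument is entirely routine, and the only points requiring care are the bookkeeping of constants so as to land on precisely $2\bigl(\tfrac{r}{r-p}\bigr)^{1/r}N_p^{p/r}N_\infty^{1-p/r}$, and the observation that the finiteness of $E$ lets one invoke the layer-cake identity and Tonelli's theorem without any caveats.
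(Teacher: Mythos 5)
Your proposal is correct and follows essentially the same route as the paper's proof: the same splitting of $f$ at height $\lambda t$ with $\lambda=(2N_\infty)^{-1}$, the $L^\infty$ bound killing the low part, the weak-type bound on the high part, and the layer-cake identity plus Tonelli yielding the constant $\tfrac{r}{r-p}(2N_p)^p(2N_\infty)^{r-p}$ before taking $r$-th roots. Your explicit remark that \eqref{r21} must be read in its homogeneous form (with $\|f\|_{L^p(E)}^p$ on the right) is exactly the intended reading that the paper's own proof uses implicitly.
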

\begin{proof}[Proof of \cref{r21b}]
Throughout this proof 
let $f\colon E\to\R$ and let 
$f_i= (f_i^{t,\lambda}(x))_{t,\lambda\in (0,\infty),x\in E}$, $i\in\{1,2\}$,
 be the functions which satisfy that
\begin{align}\label{r22}
\forall\, x\in E,\, t,\lambda\in(0,\infty)\colon \left( f_1^{t,\lambda}(x)= f(x)\1_{f(x)>\lambda t}\right)\wedge \left(f_2^{t,\lambda}(x)= f(x)\1_{f(x)\leq \lambda t}\right).
\end{align}
First, Markov's inequality and \eqref{r21} show for all $\lambda, t>0$ that
\begin{align}\begin{split}
&\left|\left\{y\in E \colon \left|(T(f_1^{\lambda,t}))(y)\right|>\frac{t}{2}\right\}\right|
\leq 
\left(\frac{2}{t}\right)^p\left\|(T(f_1^{\lambda,t}))\right\|_{L^p(E)}^p\\
&\leq 
\left(\frac{2N_p}{t}\right)^p\sum_{y\in E}\left|f_1^{\lambda,t}(y)\right|^p
\leq 
\left(\frac{2N_p}{t}\right)^p\left[\sum_{y\in E}\left|f(y)\right|^p
\1_{f(y)>\lambda t}\right].\label{r25}
\end{split}
\end{align}
Observe that \eqref{r21} shows for all $\lambda\in [0,(2N_\infty)^{-1}]$, $t\in (0,\infty)$ that
\begin{align}
 \|T(f_2^{\lambda,t})\|_{L^\infty(E)}\leq N_\infty^{-1}
\|f_2^{\lambda,t}\|_{L^\infty(E)}
\leq N_\infty^{-1}\lambda t\leq \frac{t}{2}
\end{align}
Hence,
it holds for all $\lambda\in [(2N_\infty)^{-1},\infty)$, $t\in (0,\infty)$ that
\begin{align}\begin{split}
\left|\left\{y\in E \colon \left|(T(f_2^{\lambda,t}))(y)\right|>\frac{t}{2}\right\}\right|
=0.\end{split}
\end{align}
The fact that $T$ is linear, 
the fact that $f_1+f_2=f$ (see \eqref{r22}), 
the triangle inequality, and \eqref{r25} therefore
show for all $\lambda\in [(2N_\infty)^{-1},\infty)$, $t\in (0,\infty)$ that
\begin{align}\begin{split}
&\left|\left\{y\in E \colon \left|(T(f))(y)\right|>t\right\}\right|
\\
&\leq 
\left|\left\{y\in E \colon \left|(T(f_1^{\lambda,t}))(y)\right|>\frac{t}{2}\right\}\right|
+
\left|\left\{y\in E \colon \left|(T(f_2^{\lambda,t}))(y)\right|>\frac{t}{2}\right\}\right|\\
&\leq \left(\frac{2N_p}{t}\right)^p\left[\sum_{y\in E}\left|f(y)\right|^p
\1_{f(y)>\lambda t}\right]
.
\end{split}
\end{align}
The fact that
$\forall\, x\in [0,\infty)\colon x^r=\int_0^\infty rt^{r-1}\1_{x>t}  $ Tonelli's theorem, and a direct calculation hence yield for all $\lambda\in [(2N_\infty)^{-1},\infty)$, $t\in (0,\infty)$
that
\begin{align}\begin{split}
&\sum_{y\in E} \left|(T(f))(y)\right|^r = 
\sum_{y\in E} \int_0^\infty rt^{r-1}\1_{ |(T(f))(y)|>t }\, dt
=
\int_0^\infty rt^{r-1}\left[\sum_{y\in E}  \1_{ |(T(f))(y)|>t }\right] dt\\
&=
\int_0^\infty rt^{r-1}\left|\left\{y\in E \colon (T(f))(y)>t\right\}\right| dt
\leq \int_0^\infty rt^{r-1}\left(\frac{2N_p}{t}\right)^p\left[\sum_{y\in E}\left|f(y)\right|^p
\1_{f(y)>\lambda t}\right] dt\\
&=r(2N_p)^p
\sum_{y\in E}\left[\left|f(y)\right|^p
\int_0^\infty rt^{r-p-1}
\1_{f(y)>\lambda t} dt\right]
=\frac{r(2N_p)^p}{(r-p)\lambda^{r-p}}\sum_{y\in E} |f(y)|^r
.
\end{split}\label{r23}
\end{align}
This (with $\lambda\defeq (2N_\infty)^{-1}$) implies that
$\|f\|_{L^r(E)}^r\leq \frac{r}{r-p}(2N_p)^p(2N_\infty)^{r-p}$. This and the fact that $f$ was arbitrary complete the proof of \cref{r21b}.
\end{proof}
\bibliography{lit-lp}{}

\begin{thebibliography}{10}

\bibitem{BFO18}
P.~{Bella}, B.~{Fehrman}, and F.~{Otto}.
\newblock {A Liouville theorem for elliptic systems with degenerate ergodic
  coefficients}.
\newblock {\em Annal of Applied Probability}, 28(3):1379--1422, 2018.

\bibitem{CCF+78}
A.~P. Calderon, C.~P. Calderon, E.~Fabes, M.~Jodeit, and N.~Rivi\`erie.
\newblock {Application of the Cauchy integral on Lipschitz curves}.
\newblock {\em Bulletin of American Mathematical Society}, 84(2), March 1978.

\bibitem{DK87}
B.~E.~J. Dahlberg and C.~E. Kenig.
\newblock {Hardy Spaces and the Neumann Problem in Lp for Laplace's Equation in
  Lipschitz Domains}.
\newblock {\em Annals of Mathematics}, 125(3):437--465, 1987.

\bibitem{DiB02}
E.~DiBenedetto.
\newblock {\em Real Analysis}.
\newblock Birkh\"auser, 2002.

\bibitem{Duf53}
R.~J. Duffin.
\newblock Discrete potential theory.
\newblock {\em Duke Mathematical Journal}, 2(20):233--251, 1953.

\bibitem{FEBJR78}
E.~B. Fabes, M.~Jodeit~Jr, and N.~M. Rivi\`erie.
\newblock {Potential techniques for boundary value problems on $C^1$-domains}.
\newblock {\em Acta Mathematica}, 141:165--186, 1978.

\bibitem{FB05}
E.~Freitag and R.~Busam.
\newblock {\em Complex Analysis}.
\newblock Springer, 2005.

\bibitem{Gua03}
M.~Guadie.
\newblock Stability estimates for discrete harmonic functions on product
  domains.
\newblock {\em Applicable Analysis and Discrete Mathematics}, 7:143--160, 2013.

\bibitem{GH01}
K.~G\"urlebeck and A.~Hommel.
\newblock On finite difference dirac operators and their fundamental solutions.
\newblock {\em Advances in Applied Clifford Algebras}, 2001.

\bibitem{GH02}
K.~G\"urlebeck and A.~Hommel.
\newblock On finite difference potentials and their applications in a discrete
  function theory.
\newblock {\em Mathematical Methods in Applied Sciences}, 25:1563–1576, 2002.

\bibitem{Hei49}
H.~A. Heilbronn.
\newblock {On discrete harmonic functions}.
\newblock {\em {Mathematical Proceedings of the Cambridge Philosophical
  Society}}, 45(2):194--206, April 1949.

\bibitem{Hom98}
A.~Hommel.
\newblock {\em Fundamentallösungen partieller Differenzenoperatoren und die
  Lösung diskreter Randwertprobleme mit Hilfe von Differenzenpotentialen}.
\newblock PhD thesis, Bauhaus-Universit\"at Weimar, 1998.

\bibitem{JK81}
D.~S. Jerison and C.~E. Kenig.
\newblock {The Neumann problem on Lipschitz domains}.
\newblock {\em Bulletin of the American Mathematical Society}, 4(2):203--207,
  1981.

\bibitem{JS14}
B.~S. Jovanovi{\'c} and E.~S{\"u}li.
\newblock {\em Analysis of finite difference schemes - For linear partial
  differential equations with generalized solutions}, volume~46 of {\em
  Springer Series in Computational Mathematics}.
\newblock Springer, 2014.

\bibitem{LL10}
G.~F. Lawler and V.~Limic.
\newblock {\em Random Walk: A Modern Introduction}.
\newblock Cambridge University Press, 2010.

\bibitem{LFC28}
H.~Lewy, K.~Friedrichs, and R.~Courant.
\newblock {\"Uber die partiellen Differenzengleichungen der mathematischen
  Physik}.
\newblock {\em Mathematische Annalen}, 100:32--74, 1928.

\bibitem{Mae73}
I.~D. Maergoiz.
\newblock Limits on the gradient of a harmonic function at the boundary of a
  domain.
\newblock {\em {\it translated from} Sibirskii Matematicheskii Zhurnal},
  14(6):1266--1284, 1973.
\newblock Original article submitted April 18, 1972.

\bibitem{Mik65}
S.~G. Mikhlin.
\newblock {\em Multidimensional Singular Integrals and Integrals Equations}.
\newblock Pergamon Press, 1965.

\bibitem{MM13}
I.~Mitrea and M.~Mitrea.
\newblock {\em Multi-Layer Potentials and Boundary Problems for Higher-Order
  Elliptic Systems in Lipschitz Domains}.
\newblock Springer, 2013.

\bibitem{Ngu17}
T.~A. Nguyen.
\newblock {\em {The random conductance model under degenerate conditions}}.
\newblock PhD thesis, Technische Universit\"at Berlin, 2017.

\bibitem{Stu67}
F.~Stummel.
\newblock {Elliptische Differenzenoperatoren unter Dirichletrandbedingungen}.
\newblock {\em Mathematische Zeitschrift}, 97:169--211, 1967.

\bibitem{Ver84}
G.~Verchota.
\newblock {Layer Potentials and Regularity for the Dirichlet Problem for
  Laplace’s Equation in Lipschitz Domains}.
\newblock {\em Journal of Functional Analysis}, 59:572--611, 1984.

\end{thebibliography}
\bibliographystyle{plain} 
\end{document}